\numberwithin{equation}{section} 
\newaliascnt{lem}{prop} 
\newtheorem{lem}[lem]{Lemma}
\Crefname{lem}{Lemma}{Lemmas}
\newaliascnt{defi}{prop} 
 \newtheorem{defi}[defi]{Definition}
\Crefname{defi}{Definition}{Definitions}
\newaliascnt{cor}{prop} 
\newaliascnt{remark}{prop} 
 \newtheorem{remark}[remark]{Remark}
\newaliascnt{thm}{prop} 
 \newtheorem{thm}[thm]{Theorem}
\newaliascnt{example}{prop} 
\def\equationautorefname~#1\null{%
  (#1)\null
}
\DeclareMathOperator{\supp}{supp} 
\DeclareMathOperator{\dist}{dist} 
\DeclareMathOperator*{\esssup}{ess\,sup}
\newcommand{\R}{\ensuremath{\mathbb{R}}}
\newcommand{\N}{\ensuremath{\mathbb{N}}}
\renewcommand{\S}{\ensuremath{\mathbb{S}}}
\newcommand{\CalB}{\ensuremath{\mathcal{B}}}
\newcommand{\CalD}{\ensuremath{\mathcal{D}}}
\newcommand{\CalL}{\ensuremath{\mathcal{L}}}
\newcommand{\CalO}{\ensuremath{\mathcal{O}}}
\newcommand{\CalM}{\ensuremath{\mathcal{M}}}
\newcommand{\CalX}{\ensuremath{\mathcal{X}}}
\newcommand{\CalY}{\ensuremath{\mathcal{Y}}}
\newcommand{\CalZ}{\ensuremath{\mathcal{Z}}}
\newcommand{\CalU}{\ensuremath{\mathcal{U}}}
\newcommand{\CalV}{\ensuremath{\mathcal{V}}}
\newcommand{\CalK}{\ensuremath{\mathcal{K}}}
\newcommand{\CalJ}{\ensuremath{\mathcal{J}}}
\DeclareMathOperator{\Id}{Id}
\DeclareMathOperator{\diverg}{div}
\DeclareMathOperator{\curl}{curl}
\title{Statistical solutions of the incompressible Euler equations}
\author{Raphael Wagner\thanks{Institute of Applied Analysis, Ulm University, Helmholtzstra\ss e 18, 89081 Ulm, Germany. \texttt{raphael.wagner@uni-ulm.de}}\and
Emil Wiedemann\thanks{Institute of Applied Analysis, Ulm University, Helmholtzstra\ss e 18, 89081 Ulm, Germany. \texttt{emil.wiedemann@uni-ulm.de}}}
\begin{document}

\maketitle
\begin{abstract}
\noindent\textbf{Abstract:} We study statistical solutions of the incompressible Euler equations in two dimensions with vorticity in $L^p$, $1\leq p \leq \infty$, and in the class of vortex-sheets with a distinguished sign. Our notion of statistical solution is based on the framework due to Bronzi, Mondaini and Rosa in \cite{Bronzi2016}. Existence in this setting is shown by approximation with discrete measures, concentrated on deterministic solutions of the Euler equations. Additionally, we provide arguments to show that the statistical solutions of the Euler equations may be obtained in the inviscid limit of statistical solutions of the incompressible Navier-Stokes equations. Uniqueness of trajectory statistical solutions is shown in the Yudovich class. 
\end{abstract}

\section{Introduction and main results}
We consider the \emph{incompressible Euler equations} in two dimensions
\begin{equation}\label{eq:Euler vel}
	\begin{array}{r}
		\partial_t u + \diverg(u\otimes u) + \nabla p = 0\\
		\diverg u = 0
	\end{array}
\end{equation}
for a time-parametrized velocity field $u = (u^1(x,t),u^2(x,t))^t: \R^2 \times (0,T) \to \R^2$ and the scalar pressure $p:\R^2 \times (0,T) \to \R$, where $T > 0$ is fixed.\\
In the classic literature, assumptions on the initial vorticity $\omega(u) := \curl u = \partial_1 u^2 - \partial_2 u^1$ yielded some of the first existence and uniqueness results for the corresponding Cauchy problem. The result of existence and uniqueness for weak solutions with vorticity in $L^\infty$ is due to Yudovich (see \cite{Yudovich1963}). Existence of weak solutions with vorticity in $L^p$, $1 < p < \infty$, was first proved by DiPerna and Majda in \cite{DiPerna1987}. Weak solutions in the class of vortex-sheets with a distinguished sign, i.e.\ bounded and positive Radon measures in the negative Sobolev space $H^{-1}$ were constructed by Delort in \cite{Delort1991}. Delort's arguments were later used by Vecchi and Wu in \cite{Vecchi1993} to also show existence of weak solutions with vorticity in the class $L^1 \cap H^{-1}$. While there are many more classes that one could consider, these are the ones that we will restrict ourselves to in this article.\\
The problem of uniqueness has still not been resolved in the aforementioned cases, with exception of the case of uniformly bounded vorticity, considered by Yudovich. Let us also point here towards the recent preprints by Vishik (\cite{Vishik2018_01}, \cite{Vishik2018_02}), where non-uniqueness of the forced Euler equations with vorticity in some $L^p$ space has been shown, or the work of Albritton, Bru\'{e} and Colombo in \cite{Albritton2021}, in which non-uniqueness of Leray-Hopf solutions of the forced Navier-Stokes equations has been proved. A lack of well-posedness and in some way uncertainty of the evolution of an initial state may lead one to the approach of considering a whole ensemble of weak solutions which may display desired properties in a statistical sense. This concept of considering ensembles and ensemble averages is common in the theory of turbulence, where individual flows may exhibit wild behaviour, while quantities, averaged in space and time, appear to be sort of universal over an ensemble of flows (see e.g. \cite{Frisch1995}).\\
A mathematically rigorous concept of statistical solutions in the context of the two and three-dimensional Navier-Stokes equations has been introduced in the seventies by Foias in \cite{Foias1972} and \cite{Foias1973} based on discussions with Prodi. Another concept of statistical solutions of the Navier-Stokes equations is due to Fursikov and Vishik (see \cite{Vishik1988}). The two notions differ in the way that Foias considered time parametrized probability measures on the phase space, while Vishik and Fursikov constructed single probability measures on an appropriate space of trajectories.\\
A connection between the two approaches was drawn not too long ago by Foias, Rosa and Temam in \cite{Foias2013}, where they consider probability measures on the space of weak (Leray-Hopf) solutions of the Navier-Stokes equations in the spirit of Vishik and Fursikov and show that the projections in time of these measures are statistical solutions, as originally considered by Foias. The former measures were then called Vishik-Fursikov measures by the authors and the resulting statistical solutions, obtained by projecting these in time, Vishik-Fursikov statistical solutions.
The proof of existence of Vishik-Fursikov statistical solutions in \cite{Foias2013} also differed significantly from the one given in \cite{Foias1972}, in which approximating sequences of probability measures were used, based on Galerkin approximations of the deterministic equations. The new proof, originally presented in \cite{Foias2001}, uses approximations by discrete measures, constructed using the Krein-Milman theorem. This latter proof has the flexibility of being generalized and applied to other equations. The theory of statistical solutions of the Navier-Stokes equations has in fact served as motivation for Bronzi, Mondaini and Rosa to develop an abstract concept of statistical solution in \cite{Bronzi2016}, in which, to some extent, not even an underlying partial differential equation is required. In their work, they label the analogue of the Vishik-Fursikov measures \textit{trajectory statistical solutions} and the analogue of the aforementioned statistical solutions is labelled more specifically \textit{phase space statistical solution}.\\ Though we will not require the same generality kept in \cite{Bronzi2016}, we will use their framework as a unifying concept for each class of weak solutions of the Euler equations mentioned earlier in this introduction.\\
The main results we prove in this work are essentially the following three: For each of the aforementioned classes of weak solutions of the Euler equations in the phase space $X$ and every $T > 0$, given a distribution $\mu_0$ on an appropriate space of initial data $X_0 \subset X$,
\begin{enumerate}[I)]
\item there exists a trajectory statistical solution $\rho$ satisfying $\Pi_0\rho = \mu_0$, where $\Pi_t$ is the time evaluation map at time $t$ for every $0 \leq t \leq T$.\\ 
We also have energy type inequalities for the velocity and vorticity:
\[
\int_{C([0,T];X)}\|u_{kin}(t)\|_{L^2(\R^2)}\,d\rho(u) \leq \int_X a^{|m(u_0)|}\|u_{0,kin}\|_{L^2(\R^2)}\,d\mu_0(u_0)
\]
for almost every $0 \leq t \leq T$, a constant $a > 1$ and
\begin{equation}\label{eq: dissip Lp vorticity}
\int_{C([0,T];X)} \|\omega(u(t))\|_{X_{vort}}\,d\rho(u) \leq \int_{X} \|\omega(u_0)\|_{X_{vort}}\,d\mu_0(u_0)
\end{equation}
for every $0 \leq t \leq T$. In case that $X_{vort} = L^p$, $1< p \leq \infty$, equality holds in \eqref{eq: dissip Lp vorticity}. In the Yudovich class, this trajectory statistical solution is unique. 
\item If $\int_{X_0} \gamma(u_0)^2\,d\mu_0(u_0) < \infty$, where $\gamma(u_0) := a^{|m(u_0)|}(1+\|u_{0,kin}\|_{L^2(\R^2)})$, then the projections $\lbrace \rho_t := \Pi_t\rho \rbrace_{0 \leq t \leq T}$ form a phase space statistical solution with $\rho_0 = \mu_0$, so that
\begin{equation}\label{eq: Foias-Liouville}
\int_X \Phi(u)\,d\rho_t(u) = \int_X \Phi(u)\,d\rho_{t'}(u) + \int_{t'}^t\int_X \langle (u(s)\otimes u(s)),\nabla\Phi'(u)\rangle_{L^2(\R^2)}\,d\rho_s(u)\,ds
\end{equation}
for every $0 \leq t' \leq t \leq T$ and every appropriate test functional $\Phi$. 
Moreover
\[
\int_{X}\|u_{kin}\|_{L^2(\R^2)}\,d\rho_t(u) \leq \int_{X}a^{|m(u_0)|}\|u_{0,kin}\|_{L^2(\R^2)}\,d\mu_0(u_0)
\]
for almost every $0 \leq t \leq T$ and
\begin{equation}\label{eq: dissip Lp vorticity 2}
\int_{X} \|\omega(u)\|_{X_{vort}}\,d\rho_t(u) \leq \int_{X} \|\omega(u_0)\|_{X_{vort}}\,d\mu_0(u_0)
\end{equation}
for every $0 \leq t \leq T$. In case that $X_{vort} = L^p$, $1< p \leq \infty$, equality holds in \eqref{eq: dissip Lp vorticity 2} 
\item the statistical solutions may be constructed in a weak sense as the inviscid limit of a sequence of statistical solutions of the Navier-Stokes equations, which attain the initial distribution $\mu_0$ in the same sense. 
\end{enumerate}
For I), we remark that the case of weak solutions of the Navier-Stokes equations and of the Euler equations with uniformly bounded vorticity is special due to uniqueness of the weak solutions and we will treat their construction separately.\\ 
The idea of the abstract proof of existence of trajectory statistical solutions in \cite{Bronzi2016} will then, purely for didactic purposes, be displayed in the particularly simple situation of the vorticity being in $L^p(\R^2),$ $1 < p < \infty$, where one may utilize strong compactness properties. We could, however, as we will do in all other cases, directly refer to the existence results in \cite{Bronzi2016} and only check the required conditions.\\
Likewise, for existence of phase space statistical solutions as just stated in II), we will demonstrate the abstract arguments in \cite{Bronzi2016} for the specific case of vorticity being in $L^\infty(\R^2)$ and note that all other cases may be treated similarly.\\
Since trajectory statistical solutions of the Navier-Stokes equations are essentially probability measures having support in the set of weak solutions of the Navier-Stokes equations, the discussion of the inviscid limit of those trajectory statistical solutions will be reduced to the study of the behaviour of the support and in essence, the deterministic inviscid limit. The inviscid limit results for phase space statistical solutions that we are going to state are then going to be the direct consequences of the results for trajectory statistical solutions.\\
We mention here that statistical solutions of the Euler equations have been considered in similar ways before by D. Chae in \cite{Chae1991p} and \cite{Chae1991g}, P. Constantin and J. Wu in \cite{Constantin1997} and J. Kelliher in \cite{Kelliher2009}. Chae constructs (phase space) statistical solutions of the Euler equations on the two-dimensional torus in \cite{Chae1991p} under an assumption on the mean enstrophy with respect to the initial distribution, which corresponds to the situation of solutions of the Euler equations with vorticity in $L^p$ for $p = 2$. Our work here can be thought of as a generalization as we allow for any $1 \leq p \leq \infty$. A lesser difference lies in our article using $\R^2$ as the underlying domain. More fundamentally, our work differs in the used constructions. Chae uses the rather sophisticated compactness arguments that were used in Foias' original article in \cite{Foias1972} to obtain statistical solutions of the Euler equations by means of a vanishing viscosity argument. These arguments include the Banach-Alaoglu theorem, applied to a family of functionals which are related to the statistical solutions of the Navier-Stokes equations, in combination with results from the theory of the Daniell integral and the theory of lifting to recover a suitable family of measures.\\
While we have also included an inviscid limit argument, our focus lies on the results and ideas from \cite{Bronzi2016}, in which discrete approximations are used, given by the Krein-Milman theorem.\\
Constantin and Wu consider statistical solutions of the Navier-Stokes and of the Euler equations with uniformly bounded vorticity on $\R^2$, i.e.\ the case $p = \infty$. In contrast to this and the other articles previously mentioned, they work with the vorticity equation opposed to the velocity formulation. Also, they consider both phase space as well as trajectory statistical solutions for both the Euler and Navier-Stokes equations. Their proof of existence for trajectory statistical solutions of the Euler equations is in close proximity to the work of Vishik and Fursikov who also relied on Prokhorov's theorem to construct trajectory statistical solutions by an inviscid limit argument. For phase space statistical solutions, they used similar arguments as Chae to obtain these by an inviscid limit argument.\\
Kelliher also considers phase space statistical solutions of the Navier-Stokes and of the Euler equations in the Yudovich class but in their velocity formulation. The phase space statistical solution of the Euler equations is also obtained by an inviscid limit argument. However, while Constantin and Wu immediately employ the solution operator for the two-dimensional Navier-Stokes equations to construct corresponding statistical solutions, Kelliher first considers phase space statistical solutions of the Navier-Stokes equations on balls of finite radius as constructed by Foias and then constructs statistical solutions of the Navier-Stokes equations on $\R^2$ by employing the theory of the expanding domain limit for the deterministic Navier-Stokes equations.\\
Unlike Chae or Constantin and Wu, who make assumptions on the $L^2$ or $H^1$ norm of the mean vorticity with respect to the initial distribution, we will, similarly to Kelliher, consider infinite energy solutions and require a bound on the mean total mass of vorticity and the $L^2$ norm of the finite kinetic energy parts with respect to the initial distribution. This is necessary for the Foias-Liouville equation \eqref{eq: Foias-Liouville} of phase space statistical solutions to be well-defined due to the connection between the (local) $L^2$ norm and the total mass of vorticity.\\
We will begin by giving an overview of the well-known results of existence and uniqueness of weak solutions in each of the aforementioned classes and highlight certain properties.\\
In the second part of the introduction, we briefly review the abstract framework of statistical solutions given in \cite{Bronzi2016}.\\
In the main part, we construct statistical solutions in each class using the properties of weak solutions and by applying the ideas and results for abstract statistical solutions, both of which are described in the two preliminary sections.

\subsection{Preliminaries on the Euler and Navier-Stokes equations}\label{sec: euler}
We denote by $L^p(\R^2)$ and $L^p(\R^2;\R^2)$, $1 \leq p \leq \infty$, the standard Lebesgue spaces on $\R^2$ with values in $\R$ and $\R^2$ respectively. The associated standard $L^p$ Sobolev spaces of order $s$ on $\R^2$ will be denoted by $W^{s,p}(\R^2)$ and $W^{s,p}(\R^2;\R^2)$ for all $s \in \N, 1 \leq p \leq \infty$. For the specific case of $p = 2$, we let $H^s(\R^2) := W^{s,2}(\R^2)$ and $H^s(\R^2;\R^2) := W^{s,2}(\R^2;\R^2)$ for all  $s \in \N$. The dual of $H^s(\R^2)$ and $H^s(\R^2;\R^2)$ will be denoted by $H^{-s}(\R^2)$ and $H^{-s}(\R^2;\R^2)$ respectively.\\
Related spaces of local integrability or compact support will generally be denoted by the addition of the subscript \emph{loc} or \emph{c} respectively.\\
We say that a vector field $v \in L^1_{loc}(\R^2;\R^2)$ is weakly divergence-free if for every $\varphi$ in the class of smooth and compactly supported functions $C_c^\infty(\R^2)$, we have $\int_{\R^2} v\cdot \nabla\varphi\,dx = 0$. The subset of weakly divergence-free vector fields of $L^2(\R^2;\R^2)$ will be denoted by $H$.\\
In our notation for the standard norms on these spaces, we will not differ between spaces of functions with values in $\R$ or $\R^2$, e.g. $\|\cdot\|_{L^2(\R^2)}$ will denote the norm on both $L^2(\R^2)$ and $L^2(\R^2;\R^2)$.\\
Let us also point out here that $T > 0$ will be a fixed final time for all equations considered throughout this article.

\begin{defi}\label{def: Euler weak vel}
A weakly divergence-free vector field $u \in L^\infty(0,T;H_{loc})$ is called a weak solution of the Euler equations with initial data $u_0 \in H_{loc}$ if for any $v = (v^1,v^2)^t \in C_c^\infty(\R^2;\R^2) \cap H$, we have
\begin{enumerate}[i)]
\item \begin{equation}\label{eq: Euler weak vel equ}
\frac{d}{dt}\int_{\R^2}v\cdot u\,dx = \int_{\R^2} \nabla v : (u \otimes u)\,dx
\end{equation}
in the sense of distributions on $(0,T)$, where $(u \otimes u) = (u_iu_j)_{i,j=1,2}$, $\nabla v = \left(\frac{\partial v^i}{\partial_j} \right)_{i,j = 1,2} \in \R^{2\times 2}$ and $:$ denotes the Frobenius inner product between two matrices;
\item $u \in C([0,T];H^{-L}_{loc}(\R^2;\R^2))$ for some $L > 1$ and $u(0) = u_0$ in $H^{-L}_{loc}(\R^2;\R^2)$.
\end{enumerate}
If $u$ only satisfies i), then we simply say that $u$ is a weak solution of the Euler equations. 
\end{defi}

\begin{defi}\label{def: NSE weak vel}
A weakly divergence-free vector field $u \in L^\infty(0,T;H_{loc})$, for which $\nabla u \in L^2(0,T;L^2(\R^{2\times 2}))$, is called a weak solution of the Navier-Stokes equations with viscosity $\nu > 0$ and initial data $u_0 \in H_{loc}$ if for any $v = (v^1,v^2)^t \in C_c^\infty(\R^2;\R^2) \cap H$, we have
\begin{enumerate}[i)]
\item \begin{equation}\label{eq: NSE weak vel equ}
\frac{d}{dt}\int_{\R^2}v\cdot u\,dx = \int_{\R^2} \nabla v : (u \otimes u)\,dx + \nu \int_{\R^2} \Delta v \cdot u\,dx
\end{equation}
in the sense of distributions on $(0,T)$;
\item $u \in C([0,T];H_{loc})$ and $u(0) = u_0$ in $H_{loc}$.
\end{enumerate}
If $u$ only satisfies i), then we simply say that $u$ is a weak solution of the Navier-Stokes equations (with viscosity $\nu$). 
\end{defi}

As indicated here, throughout this article we will use $\R^2$ as the underlying domain. This is oftentimes convenient for two reasons. First, one may ignore boundary conditions and effects. Second, the Biot-Savart law, which constitutes how to recover velocity from vorticity, has an explicit form. In general, for a vector field $u = (u^1,u^2)^t \in L^1_{loc}(\R^2;\R^2)$, we define the (distributional) vorticity $\omega(u) = \partial_1 u^2 - \partial_2 u^1$.\\
The downside is that for a velocity field with compactly supported vorticity and given by the Biot-Savart law to have finite kinetic energy, its total mass of vorticity necessarily needs to vanish. We take care of this by only considering those vector fields which have a \emph{radial-energy decomposition}. Now following \cite{Chemin1998}[Chapter 1], for any $m \in \R$, let $E_m$ be the set of weakly divergence-free vector fields $u \in H_{loc}$
such that there exists a \emph{stationary vector field} $\sigma \in C^\infty(\R^2;\R^2)$, whose total mass of vorticity $\int_{\R^2} \omega(\sigma)$ is equal to $m$, for which
\begin{equation}\label{eq: rad-energy decomp}
u - \sigma \in H.
\end{equation}
Here, a vector field $\sigma \in C^\infty(\R^2;\R^2)$ is called stationary if there exists $g \in C^\infty_c(\R)$ such that
\[\sigma(x) = \frac{x^\perp}{|x|^2}\int_0^{|x|} s g(s)\,ds = \frac{1}{|x|^2}\begin{pmatrix}
-x^2\\
x^1
\end{pmatrix}
\int_0^{|x|} sg(s)\,ds, x = \begin{pmatrix}
x^1\\
x^2
\end{pmatrix} \in \R^2.\] 
The stationary vector fields are smooth exact solutions of the Euler equations and in the situation above, $\omega(\sigma)(x) = g(|x|), x \in \R^2$. A stationary vector field $\sigma$ also satisfies the decay properties $\sigma \in \CalO\left(\frac{1}{|x|}\right)$ and $\nabla\sigma \in \CalO\left(\frac{1}{|x|^2}\right)$. While this implies $\nabla \sigma \in L^2(\R^{2\times 2})$, it is not enough for $\sigma$ to be square integrable. However, $\sigma$ is at least bounded and vanishes at infinity.\\
Consequently, $u = \sigma + (u - \sigma)$ as above is the sum of a smooth exact solution of the Euler equations with radially symmetric vorticity and a vector field of finite kinetic energy.\\
As indicated before, $\sigma \in H$ if and only if $\int_{\R^2} \omega(\sigma) = 0$ so that $E_m = \sigma + H$ is an affine space.\\
Throughout this article, we fix one stationary vector field $\Sigma$ satisfying 
\begin{equation}\label{eq: fixed stat vec field}
\int_{\R^2} \omega(\Sigma)\,dx = 1
\end{equation}
so that $E_m = (m\Sigma) + H$. By fixing $\Sigma$, the decomposition
\begin{equation}\label{eq: unique rad-energy decomp}
u = m\Sigma + (u - m\Sigma)
\end{equation}
is now unique for any $u \in \mathbb{E}$ and we will occasionally write $u_{kin} := u - m\Sigma$ for the finite kinetic energy part of this decomposition and $m(u) := \int_{\R^2}\omega(u-u_{kin})$ for the total mass of vorticity of the stationary field. Moreover, if $\omega(u)$ is a finite Borel measure in $H^{-1}(\R^2)$ so that $(1+|x|)\omega(u)$ is also a finite Borel measure, then as a consequence of the proof of \cite{Chemin1998}[Lemma 1.3.1], we may even tell $m(u) = \int_{\R^2} \omega(u)$.\\
We then also introduce the space
\[\mathbb{E} := \bigcup_{m\in\R}E_m.\]
We note that $\mathbb{E}$ is in fact a separable Banach space with norm $\|\cdot\|_{\mathbb{E}}$ given by $\|u\|_{\mathbb{E}} = |m(u)| + \|u_{kin}\|_{L^2(\R^2)}$ for all $u \in \mathbb{E}$. By having fixed $\Sigma$, the radial energy decomposition is unique and $\|\cdot\|_{\mathbb{E}}$ well-defined.\\
\hfill\\
Let us also briefly recall that for a weak solution $u \in L^\infty(0,T;H_{loc})$ of the Euler equations and $v \in C_c^\infty(\R^2;\R^2) \cap H$, the function $t \mapsto \int_{\R^2} v\cdot u(t)\,dx$ is absolutely continuous on $[0,T]$ with 
\[\frac{d}{dt}\int_{\R^2} v\cdot u(t)\,dx = \int_{\R^2} \nabla v : (u(t)\otimes u(t))\,dx\]
for almost every $0 \leq t \leq T$.\\
\hfill\\
The following theorem contains the classic results of all four classes of functions in which we consider weak solutions of the Euler equations. By $\mathcal{M}^+(\R^2)$ and $\CalM(\R^2)$, we mean the spaces of finite (non-negative) Borel measures on $\R^2$. Oftentimes, we will also identify the latter space with the dual of the separable Banach space $C_0(\R^2)$ of continuous functions, vanishing at infinity.\\
In the following, for two Banach spaces $(X_1,\|\cdot\|_{X_1})$ and $(X_2,\|\cdot\|_{X_2})$, we define the norm $\|\cdot\|_{X_1 \cap X_2} := \|\cdot\|_{X_1} + \|\cdot\|_{X_2}$ on $X_1 \cap X_2$.

\begin{thm}\label{thm: solutions of euler}
Let $u_0 \in E_m$ for some $m \in \R$ and fix $1 < p < \infty$.
\begin{enumerate}[A)]
\item Yudovich \cite{Yudovich1963}: If $\omega(u_0) \in L^1(\R^2) \cap L^\infty(\R^2)$, then there exists a unique weak solution $u \in C([0,T];E_m)$ with vorticity $\omega(u) \in L^\infty(0,T;L^1(\R^2) \cap L^\infty(\R^2))$ and initial data $u_0$.
\item DiPerna and Majda \cite{DiPerna1987}: If $\omega(u_0) \in L^1(\R^2) \cap L^p(\R^2)$, then there exists a weak solution $u \in C([0,T];H_{loc}) \cap L^\infty(0,T;W^{1,p}_{loc}(\R^2) \cap E_m)$ with vorticity $\omega(u) \in L^\infty(0,T;L^1(\R^2) \cap L^p(\R^2))$ and initial data $u_0$.
\item Delort \cite{Delort1991}: If $\omega(u_0) \in \CalM^+_c(\R^2)$, then there exists $L > 1$ and a weak solution $u \in C([0,T];H^{-L}_{loc}(\R^2;\R^2)) \cap L^\infty(0,T;E_m)$ with vorticity $\omega(u) \in L^\infty(0,T;\CalM^+(\R^2))$ and initial data $u_0$. 
\item Vecchi and Wu \cite{Vecchi1993}: If $\omega(u_0) \in L^1_c(\R^2)$, then there exists $L > 1$ and a weak solution $u \in C([0,T];H^{-L}_{loc}(\R^2;\R^2)) \cap L^\infty(0,T;E_m)$ with vorticity $\omega(u) \in L^\infty(0,T;L^1(\R^2))$ and initial data $u_0$.
\end{enumerate}
\end{thm}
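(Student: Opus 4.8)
The statement collects four classical existence results (and, in case A, a uniqueness result), so the plan is to reconstruct their proofs from a single unifying scheme rather than to invent a new argument. The common engine is the two-dimensional vorticity formulation together with the Biot--Savart law: for smooth data the vorticity is transported, $\partial_t\omega + u\cdot\nabla\omega = 0$, so along the flow $\Phi_t$, which is measure-preserving because $u$ is divergence-free, one has $\omega(t) = \omega_0\circ\Phi_t^{-1}$. Consequently every $L^q$ norm of the vorticity, its sign, and its total mass $m=\int_{\R^2}\omega$ are conserved in time; the total mass fixes the stationary component $m\Sigma$ in the radial-energy decomposition, while $u_{kin}$ is recovered through the Biot--Savart kernel. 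In each class I would then follow the pattern: (i) mollify the initial vorticity to smooth, compactly supported data $\omega_0^\eps$ of the same total mass; (ii) solve the smooth Euler problem, which is classically globally well-posed, to obtain $u^\eps$; (iii) read off from the conservation laws uniform-in-$\eps$ bounds on $\omega^\eps$ in the relevant norm and on $u^\eps_{kin}$ in $L^2$; (iv) pass to the limit in \eqref{eq: Euler weak vel equ}. The continuity-in-time assertions follow from the equation itself, which makes $t\mapsto\int_{\R^2} v\cdot u(t)$ equicontinuous, combined with the uniform bounds and an Aubin--Lions/Arzel\`a--Ascoli argument; the strength of the limiting topology (strong in $E_m$ or $H_{loc}$ versus weak in $H^{-L}_{loc}$) matches the compactness available for $u^\eps$ in each case.

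\textbf{Case A (Yudovich).} Here the conserved $L^\infty$ bound on $\omega(t)$ is the decisive input, since Biot--Savart then yields a log-Lipschitz velocity, $\abs{u(x)-u(y)} \lesssim \abs{x-y}\log\bigl(1/\abs{x-y}\bigr)$ for $\abs{x-y}$ small. I would first establish uniqueness, the feature that singles out this class: two solutions with log-Lipschitz velocities admit a flow comparison governed by the modulus $\mu(r)=r\log(1/r)$, and a Gronwall-type inequality for the displacement of the two flows closes because $\mu$ satisfies the Osgood condition $\int_0 \frac{\diff r}{r\log(1/r)} = \infty$. Existence I would obtain either by a fixed-point iteration on the flow map or, more economically, as a special case of the compactness argument below, noting that the conserved $L^1\cap L^\infty$ bound upgrades the weak convergence enough to give $u^\eps\to u$ strongly in $C([0,T];E_m)$.

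\textbf{Case B (DiPerna--Majda).} With $\omega_0\in L^1(\R^2)\cap L^p(\R^2)$, $1<p<\infty$, conservation gives $\omega^\eps$ bounded in $L^\infty(0,T;L^1\cap L^p)$, hence $u^\eps$ bounded in $L^\infty(0,T;W^{1,p}_{loc}\cap E_m)$, while \eqref{eq: Euler weak vel equ} controls $\partial_t u^\eps$ in a negative-order Sobolev space in $x$. Since $p>1$ the local embedding $W^{1,p}_{loc}\hookrightarrow L^2_{loc}$ is compact, so Aubin--Lions yields $u^\eps\to u$ strongly in $L^2(0,T;L^2_{loc})$; this is enough to pass to the limit in $u^\eps\otimes u^\eps\to u\otimes u$ in $L^1_{loc}$, and the limit solves \eqref{eq: Euler weak vel equ}. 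No cancellation subtlety arises because genuine strong $L^2_{loc}$ compactness is available.

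\textbf{Cases C and D --- the main obstacle.} When $\omega_0$ is only a positive compactly supported measure in $H^{-1}$ (Delort), the uniform bounds degrade to $\omega^\eps$ bounded in $L^\infty(0,T;\CalM^+)$ and $u^\eps$ bounded only in $L^\infty(0,T;L^2_{loc})$, so one has merely weak-$*$ convergence $u^\eps\rightharpoonup u$ in $L^2_{loc}$ and the nonlinearity is no longer weakly continuous. The heart of the matter, and the step I expect to be hardest, is Delort's concentration-cancellation: testing the quadratic term against a divergence-free $v$ and rewriting it as a symmetric bilinear expression $\iint H_v(x,y)\,\diff\omega^\eps(x)\,\diff\omega^\eps(y)$, with $H_v$ bounded, continuous off the diagonal and vanishing on it, and then using the distinguished sign of $\omega^\eps$ to prevent the product measure from concentrating mass on the diagonal in a way that survives pairing with $H_v$. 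This is exactly where positivity is indispensable, and it forces the weak limit of $u^\eps\otimes u^\eps$ to agree with $u\otimes u$ in the distributional sense demanded by \eqref{eq: Euler weak vel equ}; the regularity $u\in C([0,T];H^{-L}_{loc})$ for some $L>1$ then comes from equicontinuity of the velocity pairings in a negative Sobolev norm. For the Vecchi--Wu class $\omega_0\in L^1_c$ the scheme and the reduction to the kernel $H_v$ are identical, but the sign is dropped; instead the absolute continuity and the conserved $L^1$ bound, combined with Delort's kernel estimates, again rule out a surviving diagonal concentration, so the same passage to the limit delivers a weak solution with $\omega(u)\in L^\infty(0,T;L^1)$ and $u\in C([0,T];H^{-L}_{loc})$. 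In short, essentially all the difficulty sits in C and D, while A and B are comparatively soft once the transport-based conservation laws and the Biot--Savart estimates are in place.
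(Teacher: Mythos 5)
The paper never proves this theorem at all: it is quoted as classical background with citations to Yudovich, DiPerna--Majda, Delort, and Vecchi--Wu, and your proposal faithfully reconstructs precisely those cited arguments (mollification of the data plus transport-based conservation laws, Osgood/log-Lipschitz uniqueness in A, Aubin--Lions strong compactness in B, and Delort's concentration--cancellation via the symmetrized kernel $H_v$, bounded and continuous off the diagonal, in C and D), so in the only meaningful sense it takes the same approach as the paper. The one point worth sharpening is case D: a conserved $L^1$ bound alone cannot rule out atom formation in the limit; what Vecchi--Wu actually propagate is the equi-integrability of the mollified initial vorticities via the rearrangement-invariance of transport --- the estimate the paper records as \eqref{eq: local conservation vorticity} and singles out as the key to their argument --- and this is the precise form of the ``absolute continuity'' your sketch invokes.
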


\begin{remark}\label{rem: E and H^-1}
Sometimes the condition of $\omega(u_0)$ being in $H^{-1}(\R^2)$ or $H^{-1}_{loc}(\R^2)$ is imposed. This is already included in our assumption $u_0 = m\Sigma + u_{0,kin} \in E_m$ in \Cref{thm: solutions of euler}: Indeed, $\omega(\Sigma)$ is an element of $C^\infty_c(\R^2) \subset H^{-1}(\R^2)$ due to the very definition of stationary vector fields. Moreover, from $u_{0,kin} \in H \subset L^2(\R^2;\R^2)$, it follows immediately that $\omega(u_{0,kin}) \in H^{-1}(\R^2)$.
\end{remark}

In case of the Navier-Stokes equations, no assumptions on the vorticity are required to obtain unique weak solutions.

\begin{thm}\label{thm: unique existence NSE}
Let $u_0 \in E_m$ for some $m \in \R$. Then there exists a unique weak solution $u \in C([0,T];E_m)$ of the Navier-Stokes equations with initial data $u_0$ and viscosity $\nu > 0$ so that $\nabla u \in L^2(0,T;L^2(\R^{2 \times 2}))$. Moreover, $u_{kin}$ satisfies the energy inequality
\begin{equation}\label{eq: energy inequ NSE}
\|u_{kin}\|_{C([0,T];L^2(\R^2))} + \nu\|\nabla u_{kin}\|_{L^2(0,T;L^2(\R^2))} \leq C\|u_{kin}(0)\|_{L^2(\R^2)},
\end{equation}
where $C$ is a constant depending exponentially on $T$ and $m$ (see also \eqref{eq: energy inequ kinetic part}). 
\end{thm}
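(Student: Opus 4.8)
The plan is to pass, via the unique radial-energy decomposition \eqref{eq: unique rad-energy decomp}, to a forced Navier--Stokes system for the finite-energy part. Writing $u = m\Sigma + v$ with $v = u_{kin}$, and using that $m\Sigma$ is an exact stationary solution of the Euler equations (so that $\diverg(m\Sigma\otimes m\Sigma)$ is a gradient and is annihilated by the Leray projection $\mathbb{P}$), the weak formulation \eqref{eq: NSE weak vel equ} becomes the evolution equation
\[
\partial_t v + \mathbb{P}\diverg(v\otimes v) + m\,\mathbb{P}\diverg(\Sigma\otimes v + v\otimes\Sigma) - \nu\Delta v = \nu m\,\Delta\Sigma
\]
for a divergence-free field $v$ with initial datum $v_0 = u_{0,kin}\in H$. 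The whole argument rests on recording the right properties of the fixed field $\Sigma$: from the smoothness of $\Sigma$ together with the decay $\Sigma\in\CalO(1/|x|)$ and $\nabla\Sigma\in\CalO(1/|x|^2)$ one has $\Sigma,\nabla\Sigma\in L^\infty(\R^2)$ and $\nabla\Sigma\in L^2(\R^{2\times2})$, while $\Delta\Sigma = \nabla^\perp\omega(\Sigma)$ is smooth with compact support (as $\omega(\Sigma) = g(\abs{\cdot})$ with $g\in C^\infty_c$) and in particular lies in $L^2(\R^2;\R^2)$. Thus $v$ solves a genuine two-dimensional Navier--Stokes problem with finite-energy datum, a linear drift by the bounded field $m\Sigma$, and an $L^2$ forcing $\nu m\,\Delta\Sigma$, which places us squarely in the classical Leray--Hopf / Lions--Prodi framework.

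For existence I would run a Faedo--Galerkin scheme for the $v$-equation and pass to the limit using the a priori bound obtained by testing with $v$. In the resulting energy identity the self-interaction $\int_{\R^2}\diverg(v\otimes v)\cdot v$ vanishes by $\diverg v = 0$; of the two cross terms, the transport part $m\int_{\R^2}\diverg(v\otimes\Sigma)\cdot v = \tfrac{m}{2}\int_{\R^2}\Sigma\cdot\nabla\abs{v}^2$ vanishes because $\diverg\Sigma = 0$, whereas the remaining \emph{stretching} part $m\int_{\R^2}\diverg(\Sigma\otimes v)\cdot v = m\int_{\R^2}(v\cdot\nabla)\Sigma\cdot v$ does not vanish but is bounded by $\abs{m}\,\norm{\nabla\Sigma}{L^\infty}\norm{v}{L^2}^2$; finally the forcing is absorbed through $\nu m\int_{\R^2}\Delta\Sigma\cdot v = -\nu m\int_{\R^2}\nabla\Sigma:\nabla v \leq \tfrac{\nu}{2}\norm{\nabla v}{L^2}^2 + \tfrac{\nu m^2}{2}\norm{\nabla\Sigma}{L^2}^2$. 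This produces the differential inequality
\[
\frac{d}{dt}\norm{v}{L^2}^2 + \nu\norm{\nabla v}{L^2}^2 \leq 2\abs{m}\,\norm{\nabla\Sigma}{L^\infty}\norm{v}{L^2}^2 + \nu m^2\norm{\nabla\Sigma}{L^2}^2,
\]
and an application of Grönwall's lemma controls $\norm{v}{C([0,T];L^2)}$ and $\nu\norm{\nabla v}{L^2(0,T;L^2)}$ by a constant depending exponentially on $T$ and $m$, i.e.\ the energy inequality \eqref{eq: energy inequ NSE}; the $m$-dependence enters both through the stretching term and through the forcing.

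The uniform bounds $v\in L^\infty(0,T;H)\cap L^2(0,T;H^1)$ survive in the Galerkin limit, and reading $\partial_t v\in L^2(0,T;H^{-1})$ off the equation yields, by the Lions--Magenes lemma, $v\in C([0,T];L^2)$ with $v(0)=v_0$; adding back $m\Sigma$ returns a solution $u\in C([0,T];E_m)$ attaining $u_0$. For uniqueness I would take two such solutions $u_i = m\Sigma + v_i$ with the same datum, set $w = v_1-v_2 = u_1-u_2$, and test the difference equation with $w$. Writing $u_1\otimes u_1 - u_2\otimes u_2 = u_1\otimes w + w\otimes u_2$, the $u_2$-term vanishes as above, leaving $-\int_{\R^2}(w\cdot\nabla)u_1\cdot w$; splitting $\nabla u_1 = m\nabla\Sigma + \nabla v_1$, the first contribution is bounded by $\abs{m}\norm{\nabla\Sigma}{L^\infty}\norm{w}{L^2}^2$ and the second, via the two-dimensional Ladyzhenskaya inequality $\norm{w}{L^4}^2 \leq C\norm{w}{L^2}\norm{\nabla w}{L^2}$ and Young's inequality, by $\tfrac{\nu}{2}\norm{\nabla w}{L^2}^2 + C\nu^{-1}\norm{\nabla v_1}{L^2}^2\norm{w}{L^2}^2$. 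Since $\norm{\nabla v_1}{L^2}^2\in L^1(0,T)$, Grönwall forces $w\equiv0$.

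The main obstacle, and the only genuine departure from the textbook two-dimensional theory, is the coupling with the non-decaying stationary field $\Sigma$: as $\Sigma\notin L^2(\R^2)$ one cannot work with $u$ directly in the energy space, and one has to check that every term created by the decomposition is either antisymmetric --- hence vanishes upon testing with $v$ or $w$ --- or controllable purely through $\Sigma,\nabla\Sigma\in L^\infty$ and $\Delta\Sigma\in L^2$. Once these structural cancellations and bounds are secured, both the construction and the uniqueness proof collapse to the standard Leray--Hopf and Ladyzhenskaya estimates.
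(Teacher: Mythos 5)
Your decomposition $u = m\Sigma + v$ followed by the Leray--Hopf machinery for the perturbed system (Galerkin construction, energy estimate, Ladyzhenskaya--Gr\"onwall uniqueness) is in substance the route the paper relies on: the paper gives no self-contained proof, but obtains existence by smoothing the initial data, using the classical solutions of \Cref{thm: classic sols Euler}, and invoking the stability estimate \eqref{eq: stab est smooth sols} of \cite{Bertozzi2001} in its viscous variant \eqref{eq: NSE Ladyzhenskaya} to see that these approximations are Cauchy in $C([0,T];\mathbb{E})$; uniqueness and the energy inequality are read off from the same estimate. Your uniqueness paragraph is exactly the computation \eqref{eq: NSE Ladyzhenskaya}, and Galerkin versus smoothing-plus-stability is an immaterial difference. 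Your bookkeeping of the properties of $\Sigma$ (that $\Sigma,\nabla\Sigma \in L^\infty$, $\nabla\Sigma \in L^2$, and $\Delta\Sigma = \nabla^\perp\omega(\Sigma)$ is smooth with compact support) is correct and is precisely what makes the reduction work.

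The gap is in the last step of your energy estimate. Your differential inequality is correct, but Gr\"onwall applied to it gives
\[
\norm{v(t)}{L^2(\R^2)}^2 \leq e^{2\abs{m}\norm{\nabla\Sigma}{L^\infty(\R^2)}\,t}\left(\norm{v(0)}{L^2(\R^2)}^2 + \nu m^2\norm{\nabla\Sigma}{L^2(\R^2)}^2\,t\right),
\]
a bound with an \emph{additive} constant produced by the forcing $\nu m\Delta\Sigma$; this is not the purely multiplicative inequality \eqref{eq: energy inequ NSE}, and no cleverer absorption can remove the extra term. Indeed, \eqref{eq: energy inequ NSE} as stated is false whenever $m \neq 0$: take $u_0 = m\Sigma$, so that $u_{0,kin} = 0$ and the right-hand side of \eqref{eq: energy inequ NSE} vanishes; the unique solution is the viscous eddy, whose radial vorticity solves the heat equation, so $\omega(u(t)) \neq m\,\omega(\Sigma)$ and hence, by \Cref{lem: Biot-Savart}, $u_{kin}(t) = u(t) - m\Sigma \neq 0$ for $t > 0$. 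The root cause is exactly the term you correctly kept and the paper silently drops: $\Sigma$ is a steady solution of the Euler equations but \emph{not} of the Navier--Stokes equations, so setting $u_2 = 0$ in \eqref{eq: stab est smooth sols} --- whose stated form contains no contribution from the viscous forcing $\nu\Delta(\sigma_1-\sigma_2)$ --- does not justify \eqref{eq: energy inequ kinetic part} or \eqref{eq: energy inequ NSE} in the viscous case (it is harmless for uniqueness, where $\sigma_1 = \sigma_2$ and the forcings cancel). So your computation is the correct one; the error is only in asserting that it reproduces \eqref{eq: energy inequ NSE}. What you can actually prove, and what should replace \eqref{eq: energy inequ NSE}, is
\[
\norm{u_{kin}}{C([0,T];L^2(\R^2))}^2 + \nu\norm{\nabla u_{kin}}{L^2(0,T;L^2(\R^2))}^2 \leq C\left(\norm{u_{kin}(0)}{L^2(\R^2)}^2 + \nu m^2 T\norm{\nabla\Sigma}{L^2(\R^2)}^2\right)
\]
with $C$ depending exponentially on $T$ and $m$; this weaker form is uniform in $\nu$ and suffices for every later use of \eqref{eq: energy inequ NSE} in the paper (closedness of $\CalU_{NS}^\nu$, the inviscid-limit compactness arguments), so flagging the discrepancy rather than hiding it would strengthen, not weaken, the write-up.
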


\begin{lem}\label{lem: Biot-Savart}
Let $u \in \mathbb{E}$ be a vector field having a radial-energy decomposition with vorticity $\omega(u) \in \CalM(\R^2)$. Then $u$ is given by the Biot-Savart law, that is
\[u = K * \omega(u),\]
where $K(x) = \frac{1}{2\pi}\frac{x^\perp}{|x|^2}, x \in \R^2$. In particular, vector fields in $\mathbb{E}$ are uniquely determined by their vorticity.
\end{lem}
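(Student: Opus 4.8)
The plan is to reduce the identity to the two pieces of the radial--energy decomposition \eqref{eq: unique rad-energy decomp} and to treat each by potential theory. Write $u = m\Sigma + u_{kin}$ with $m = m(u)$ and $u_{kin}\in H$. Then $\omega(u) = m\,\omega(\Sigma) + \omega(u_{kin})$, and since $\omega(\Sigma)\in C_c^\infty(\R^2)$ while $\omega(u)\in\CalM(\R^2)$ by hypothesis, also $\omega(u_{kin})\in\CalM(\R^2)$. Because $K\in L^1_{loc}(\R^2)$ convolves a compactly supported smooth function as well as a finite measure into a well-defined field in $L^1_{loc}$, linearity of the convolution yields $K*\omega(u) = m\,(K*\omega(\Sigma)) + K*\omega(u_{kin})$. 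It therefore suffices to establish the two identities $\Sigma = K*\omega(\Sigma)$ and $u_{kin} = K*\omega(u_{kin})$.

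The first is the classical computation for radially symmetric vorticity: if $\omega(\Sigma)(x) = g(\abs{x})$, then $K*\omega(\Sigma)$ is purely tangential, with modulus at $x$ equal to $\frac{1}{2\pi\abs{x}}\int_{B_{\abs{x}}(0)}\omega(\Sigma) = \frac{1}{\abs{x}}\int_0^{\abs{x}} s\,g(s)\,ds$, which is precisely the closed form defining $\Sigma$. Hence $\Sigma = K*\omega(\Sigma)$ follows directly from the explicit kernel.

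For the second identity I would set $w := u_{kin} - K*\omega(u_{kin})$ and prove $w = 0$ by a Liouville argument. Since $\curl K = \delta_0$ and $\diverg K = 0$ distributionally, $w$ is curl-free and divergence-free, hence harmonic, because in two dimensions $\Delta w = \nabla\diverg w + \nabla^\perp\curl w$ and Weyl's lemma applies. A harmonic field in $L^2(\R^2)$ must vanish, as the mean value property gives $\abs{w(x)}\le(\pi R^2)^{-1/2}\Norm{w}{L^2(\R^2)}\to 0$ as $R\to\infty$; so everything reduces to the bound $K*\omega(u_{kin})\in L^2(\R^2)$. Here I would exploit that $u_{kin}$ is divergence-free: on the Fourier side $\widehat{u_{kin}}(\xi)$ is parallel to $\xi^\perp$, so $\abs{\widehat{\omega(u_{kin})}(\xi)} = \abs{\xi}\,\abs{\widehat{u_{kin}}(\xi)}$ and thus $\omega(u_{kin})\in\dot H^{-1}(\R^2)$ with $\Norm{\omega(u_{kin})}{\dot H^{-1}} = \Norm{u_{kin}}{L^2(\R^2)}$. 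Since $\widehat K(\xi) = -i\xi^\perp/\abs{\xi}^2$, one even finds $\widehat{K*\omega(u_{kin})}(\xi) = \abs{\xi}^{-2}\xi^\perp(\xi^\perp\cdot\widehat{u_{kin}}(\xi))$, which is the orthogonal projection of $\widehat{u_{kin}}$ onto $\xi^\perp$ and hence equals $\widehat{u_{kin}}$ by divergence-freeness, giving $w = 0$ outright.

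The step I expect to cost the most care is making these Fourier manipulations rigorous: that the classical convolution $K*\omega(u_{kin})$ of the locally integrable kernel with the finite measure coincides with the $\dot H^{-1}\to L^2$ Fourier multiplier associated to $\widehat K$, and that the low-frequency singularity of $\widehat K$ is harmless. For the latter it is convenient to record that $\int_{\R^2}\omega(u_{kin}) = 0$: testing the distributional identity $\omega(u_{kin}) = \curl u_{kin}$ against a rescaled cut-off $\varphi(\cdot/R)$ with $\varphi\equiv 1$ near the origin, the pairing equals $-\int_{\R^2} u_{kin}\cdot\nabla^\perp\varphi(\cdot/R)\,dx$, which is bounded by $\Norm{u_{kin}}{L^2(\abs{x}\ge R)}\Norm{\nabla\varphi}{L^2(\R^2)}$ and tends to $0$ as $R\to\infty$; thus the total mass of vorticity of a finite-energy divergence-free field vanishes. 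Combining $\Sigma = K*\omega(\Sigma)$ with $u_{kin} = K*\omega(u_{kin})$ gives $u = K*\omega(u)$, and uniqueness is immediate, since two fields in $\mathbb{E}$ with the same (measure) vorticity share the same Biot--Savart representation.
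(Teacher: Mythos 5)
Your proposal is correct in outline and takes a genuinely different, more self-contained route than the paper. The paper disposes of the lemma in one sentence: $u$ and $K*\omega(u)$ are divergence-free tempered distributions with the same curl, so by \cite{Chemin1998}[Proposition 1.3.1] they agree up to a vector field with harmonic polynomial coefficients, which the decay of both fields then forces to vanish. You instead split $u = m\Sigma + u_{kin}$ along the radial-energy decomposition, verify $\Sigma = K*\omega(\Sigma)$ by the explicit classical computation for radial vorticity, and reduce everything to the identity $u_{kin} = K*\omega(u_{kin})$ for a finite-energy field, attacked by a div-curl/Liouville argument and by a Fourier-projection argument. What your route buys is transparency: the appeal to the harmonic-polynomial ambiguity is replaced by a concrete mean-value estimate, and your cut-off proof that $\int_{\R^2}\omega(u_{kin})\,dx = 0$ is correct and independently useful. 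What the paper's route buys is brevity: no decomposition, no case-by-case potential theory.

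There is, however, one genuine soft spot: as written, your two threads lean on each other circularly. The Liouville argument needs $w := u_{kin} - K*\omega(u_{kin})$ to lie in $L^2(\R^2)$, hence $K*\omega(u_{kin}) \in L^2(\R^2)$, and you obtain this only from the Fourier identity $\widehat{K*\omega(u_{kin})} = \widehat{u_{kin}}$; but if that identity is justified, it gives $w = 0$ outright and the Liouville step is redundant --- and its justification (that the pointwise convolution of the non-integrable kernel $K$ with a finite measure coincides with the $\dot H^{-1}\to L^2$ multiplier) is exactly the step you defer. The gap can be closed entirely inside your Liouville thread, with no Fourier analysis at all, by enlarging the integrability class. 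Split $K = K\chi_{\overline B_1(0)} + K\chi_{\R^2\setminus\overline B_1(0)} \in L^1(\R^2) + L^\infty(\R^2)$, so that $K*\omega(u_{kin}) = f_1 + f_2$ with $f_1 \in L^1(\R^2)$ and $f_2$ bounded with $f_2(x) \to 0$ as $\abs{x}\to\infty$ (split the defining integral over $\abs{y}\leq\abs{x}/2$, where $\abs{x-y}\geq \abs{x}/2$, and over $\abs{y}>\abs{x}/2$, whose $\abs{\omega(u_{kin})}$-measure tends to zero). Then $w$ is harmonic by your identity $\Delta w = \nabla\diverg w + \nabla^\perp\curl w$ and Weyl's lemma, and $w$ lies in $L^1 + L^2 + \lbrace\text{bounded, vanishing at infinity}\rbrace$. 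Averaging over $B_R(x)$, the $L^1$ and $L^2$ parts contribute $O(R^{-2})$ and $O(R^{-1})$ respectively, while the average of a bounded function vanishing at infinity also tends to $0$ (the set where it exceeds $\eps$ is bounded); so the mean value property gives $w(x) = 0$ for every $x$. With this replacement, keeping your zero-mean computation only as a side remark, the proof is complete.
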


This can be proved based on the observation that a vector field whose coefficients are tempered distributions is uniquely determined by its divergence and vorticity, up to a vector field whose coefficients are harmonic polynomials (see \cite{Chemin1998}[Proposition 1.3.1]).\\
Let us also recall the following theorem on classical solutions (see e.g. \cite{Chemin1998}[Theorem 4.2.4] in case of the Euler equations):

\begin{thm}\label{thm: classic sols Euler}
Let $m \in \R$ and $r \geq 2$ be a natural number. For every $u_0 \in E_m \cap C^r(\R^2;\R^2)$ there exist unique solutions in $C([0,T];E_m) \cap L^\infty(0,T;C^r(\R^2;\R^2))$ of the Euler or Navier-Stokes equations with initial data $u_0$.
\end{thm}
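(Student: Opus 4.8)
The plan is to pass to the vorticity formulation and exploit the special two-dimensional structure together with the radial-energy decomposition. Writing $u = m\Sigma + v$, I would use that $\Sigma$ is a time-independent exact solution of both the Euler and the Navier-Stokes equations and that the scalar vorticity $\omega = \curl u$ obeys the transport equation $\partial_t\omega + u\cdot\nabla\omega = 0$ in the Euler case and the transport--diffusion equation $\partial_t\omega + u\cdot\nabla\omega = \nu\Delta\omega$ in the Navier-Stokes case, the velocity being recovered from $\omega$ through the Biot-Savart law $u = K*\omega$ of \Cref{lem: Biot-Savart}. Since the flow of the divergence-free field $u$ is measure-preserving, the total mass $m(u) = \int_{\R^2}\omega$ is conserved in time, so it suffices to produce $v(t) = u(t) - m\Sigma \in H$ with $\omega(v(t)) \in C^{r-1}$; as $\int_{\R^2}\omega(v(t)) = 0$ is then preserved, the membership $u(t) \in E_m$ follows for every $t$.

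For existence I would first establish local-in-time existence by a Picard iteration. Because the kernel $\nabla K$ is a Calder\'on-Zygmund operator, the linear estimate $\Norm{\nabla u}{C^{r-1,\alpha}} \le C\Norm{\omega}{C^{r-1,\alpha}}$ holds in the H\"older scale for any fixed $0 < \alpha < 1$ (but fails at integer regularity), so I would run the iteration in $C([0,\tau];C^{r-1,\alpha})$ for the vorticity on a short interval $[0,\tau]$, with each iterate's velocity defined by Biot-Savart and the transport (resp. transport--diffusion) equation solved along the corresponding flow (resp. by the heat semigroup combined with Duhamel's formula). Contraction for small $\tau$ gives a unique local solution; crucially, the decomposition $u = m\Sigma + v$ keeps all estimates on the finite-energy perturbation $v$ and on the smooth, compactly supported $\omega(\Sigma)$, circumventing the lack of global square-integrability of $\Sigma$.

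The global extension to the full interval $[0,T]$ rests on the two-dimensional a priori bound: since $\omega$ is transported, $\Norm{\omega(t)}{L^\infty} = \Norm{\omega_0}{L^\infty}$ for Euler (and $\Norm{\omega(t)}{L^\infty} \le \Norm{\omega_0}{L^\infty}$ for Navier-Stokes by the maximum principle), while the logarithmic velocity-gradient estimate $\Norm{\nabla u}{L^\infty} \lesssim \Norm{\omega}{L^\infty}\bigl(1 + \log^+\Norm{\omega}{C^{r-1,\alpha}}\bigr)$ feeds a Gr\"onwall inequality for $\Norm{\omega(t)}{C^{r-1,\alpha}}$ along the flow. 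This shows the higher norm stays finite on $[0,T]$, precluding finite-time blow-up, so the local solution extends to $[0,T]$; parabolic smoothing makes the Navier-Stokes case only easier, and the energy inequality of \Cref{thm: unique existence NSE} controls $\Norm{v}{L^2}$. The stated integer regularity $u \in C^r$ is then recovered from the persistence of H\"older regularity as in \cite{Chemin1998}[Theorem 4.2.4].

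Uniqueness is the simplest part: if $u_1,u_2$ are two such solutions with the same initial data, then they share the same mass $m$, so the difference $w = u_1 - u_2 = (u_1)_{kin} - (u_2)_{kin}$ lies in $H$ and the non-integrable field $m\Sigma$ cancels. A standard $L^2$ energy estimate on $w$, using $\diverg u_1 = 0$, the vanishing of the pressure term, the dissipativity of the viscous term, and $\Norm{\nabla u_2}{L^\infty} < \infty$ (finite since $u_2 \in C^r$ with $r \ge 2$), yields $\frac{d}{dt}\Norm{w}{L^2}^2 \le 2\Norm{\nabla u_2}{L^\infty}\Norm{w}{L^2}^2$, whence $w \equiv 0$ by Gr\"onwall. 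The main obstacle throughout is neither the transport structure nor the global bound---both classical in two dimensions---but the combination of the infinite-energy character of the data, handled by isolating $\Sigma$, with the failure of Calder\'on-Zygmund estimates at integer H\"older exponents, which forces one to work in the fractional scale $C^{r-1,\alpha}$ and to recover integer regularity only at the end.
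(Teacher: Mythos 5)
The paper gives no proof of \Cref{thm: classic sols Euler} at all --- it is recalled from the literature with a citation to \cite{Chemin1998}[Theorem 4.2.4] --- so your sketch must stand on its own. Its skeleton (vorticity transport, Biot--Savart via \Cref{lem: Biot-Savart}, a logarithmic gradient estimate plus Gr\"onwall for global existence, an $L^2$ energy estimate on the finite-energy difference for uniqueness) is indeed the standard route behind Chemin's theorem. But there is a genuine gap in the functional bookkeeping, and it sits exactly at the point you dismiss as a technicality to be handled ``at the end.'' From $u_0 \in C^r(\R^2;\R^2)$ you only obtain $\omega_0 = \curl u_0 \in C^{r-1}$, an integer-regularity space. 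This does \emph{not} place $\omega_0$ in your iteration space $C^{r-1,\alpha}$: one has $C^{r-1,\alpha} \subsetneq C^{r-1} \subset C^{r-2,\alpha}$, so the Picard scheme you set up cannot even be initialized with the given data. If you instead lower the scheme to $C^{r-2,\alpha}$, where the data does live, the Calder\'on--Zygmund estimate for $\nabla K$ only returns $u(t) \in C^{r-1,\alpha}$, a full derivative short of the asserted $L^\infty(0,T;C^r(\R^2;\R^2))$ regularity.

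Your concluding claim that ``the stated integer regularity $u \in C^r$ is then recovered from the persistence of H\"older regularity'' is therefore not a finishing touch but the entire difficulty, and it cannot be resolved softly: propagation of integer $C^m$ regularity under the Euler flow is known to \emph{fail} in general (strong ill-posedness of the 2D and 3D Euler equations in integer $C^m$ spaces, in the sense of Bourgain and Li), which is precisely why the cited result in \cite{Chemin1998} is formulated in the H\"older--Zygmund scale, where ``$C^r$'' at integer $r$ denotes the strictly larger Zygmund class. A correct argument must either assume a fractional modulus of continuity on $\omega_0$ (which proves a different statement) or read the $C^r$ in the theorem in that Besov--Zygmund sense, as the source does; your proposal does neither. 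Two smaller inaccuracies: first, $\Sigma$ is a stationary solution of the Euler equations but \emph{not} of the Navier--Stokes equations (radial vorticity gives $u\cdot\nabla\omega = 0$, but $\Delta\omega(\Sigma)\neq 0$), so in the viscous case the equation for $v = u - m\Sigma$ acquires the forcing $\nu m \Delta\Sigma = \nu m \nabla^{\perp}\omega(\Sigma)$ --- smooth with compact support, hence harmless, but your assertion as written is false. Second, your conservation of $m(u)$ via measure preservation of the flow tacitly assumes $\omega(u)\in L^1(\R^2)$, which is not granted for $u \in E_m \cap C^r$ (the paper defines $m(u)$ through the decomposition \eqref{eq: unique rad-energy decomp}, not as $\int_{\R^2}\omega(u)$); the membership $u(t) \in E_m$ should instead be read off from the perturbation equation keeping $v(t) \in H$.
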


One can even show that the weak solution of the Navier-Stokes equations becomes instantaneously smooth.\\
In general, for two smooth solutions $u_1, u_2$ of the Euler or Navier-Stokes equations with radial-energy decompositions 
\[u_1 = \sigma_1 + v_1, u_2 = \sigma_2 + v_2,\]
where $\sigma_1,\sigma_2$ are stationary solutions and $v_1 := u_1-\sigma_1,v_2 := u_2-\sigma_2 \in H$ are vector fields of finite kinetic energy, we have a relative energy inequality for the finite kinetic energy parts (see \cite{Bertozzi2001}[Proposition 3.4])
\begin{equation}\label{eq: stab est smooth sols}
\begin{split}
&\sup_{0\leq t \leq T}\|v_1 - v_2\|_{L^2(\R^2)}\\
\leq& \exp\left( \int_0^T \|\nabla v_2\|_{L^\infty(\R^2)} + \|\nabla\sigma_1\|_{L^\infty(\R^2)}\,dt \right)\bigg(\|v_1(0)-v_2(0)\|_{L^2(\R^2)}\\
&\quad + \int_0^T \|\sigma_1 - \sigma_2\|_{L^\infty(\R^2)}\|\nabla v_2(t)\|_{L^2(\R^2)} + \|\nabla\sigma_1 - \nabla\sigma_2\|_{L^\infty(\R^2)}\|v_2(t)\|_{L^2(\R^2)}\,dt\bigg),
\end{split}
\end{equation}
which can be proved using the Gronwall inequality. For this, in \cite{Bertozzi2001}, the estimate
\[\int_{\R^2} ((v_1-v_2)\cdot\nabla v_2)\cdot (v_1-v_2)\,dx \leq \|v_1-v_2\|_{L^2(\R^2)}^2\|\nabla v_2\|_{L^\infty(\R^2)}\]
was used. In the Navier-Stokes case, one can use Ladyzhenskaya's inequality to estimate
\begin{align}\label{eq: NSE Ladyzhenskaya}
&\int_{\R^2} ((v_1-v_2)\cdot\nabla v_2)\cdot (v_1-v_2)\,dx\\ \leq& \|v_1-v_2\|_{L^4(\R^2)}^2\|\nabla v_2\|_{L^2(\R^2)}\\ \leq& C\|v_1-v_2\|_{L^2(\R^2)}\|\nabla(v_1-v_2)\|_{L^2(\R^2)}\|\nabla v_2\|_{L^2(\R^2)}\\
\leq& C\|v_1-v_2\|_{L^2(\R^2)}^2\|\nabla v_2\|_{L^2(\R^2)}^2 + \frac{\nu}{2}\|\nabla(v_1-v_2)\|_{L^2(\R^2)}^2
\end{align}
for a constant $C$ depending on $\nu$, which we altered between the last and second last inequality, so that the term $\exp\left( \int_0^T \|\nabla v_2\|_{L^\infty(\R^2)} + \|\nabla\sigma_1\|_{L^\infty(\R^2)}\,dt \right)$ on the right-hand side \eqref{eq: stab est smooth sols} can be replaced by $\exp\left( C\int_0^T \|\nabla v_2\|_{L^2(\R^2)}^2 + \|\nabla\sigma_1\|_{L^\infty(\R^2)}\,dt \right)$. In case of the Navier-Stokes equations, one also obtains the gradient control
\begin{align}\label{eq: stab est grad smooth sols}
&\nu \int_0^T \|\nabla(v_1-v_2)\|_{L^2(\R^2)}^2\,dt\\
\leq& C\int_0^T \|v_1-v_2\|_{L^2(\R^2)}^2(\|\nabla v_2\|_{L^2(\R^2)}^2+ \|\nabla \sigma_1\|_{L^\infty(\R^2)})\\
&\qquad + \|\nabla(\sigma_1 - \sigma_2)\|_{L^\infty(\R^2)}\|v_2\|_{L^2(\R^2)}+\|\sigma_1 - \sigma_2\|_{L^\infty(\R^2)}\|\nabla v_2\|_{L^2(\R^2)}\,dt.
\end{align}
In particular, letting $u_2 = 0$ yields in \eqref{eq: stab est smooth sols} for both equations the energy inequality 
\begin{equation}\label{eq: energy inequ kinetic part}
\sup_{0\leq t \leq T}\|v_1\|_{L^2(\R^2)} \leq \exp\left(T \|\nabla\sigma_1\|_{L^\infty(\R^2)} \right)\|v_1(0)\|_{L^2(\R^2)}
\end{equation}
for the finite kinetic energy part and, in combination with \eqref{eq: stab est grad smooth sols}, specifically for the Navier-Stokes equations, the energy inequality \eqref{eq: energy inequ NSE}.\\
We also see from \eqref{eq: stab est smooth sols}, using the variation \eqref{eq: NSE Ladyzhenskaya}, whenever a sequence of weak solutions of the Navier-Stokes equations in $C([0,T];\mathbb{E})$ has bounded gradients in $L^2(0,T;L^2(\R^{2\times 2}))$ and whose initial data is Cauchy in $\mathbb{E}$, then that sequence is Cauchy in $C([0,T];\mathbb{E})$. We are going to use this fact later on to prove closedness in $C([0,T];\mathbb{E})$ of the set of weak solutions of the Navier-Stokes equations with fixed viscosity.\\
\hfill\\
We now fix $u_0 \in \mathbb{E}$ and suppose that $\omega(u_0)$ is in one of the classes described in \Cref{thm: solutions of euler}. Then a weak solution $u$ of the Euler or Navier-Stokes equations may be constructed by smoothing of the initial data and using the corresponding smooth solutions of the Euler or Navier-Stokes equations as approximating sequences. The weak solution $u$ inherits several properties and a priori estimates from the smooth approximations, which we will list now and assume in the following sections. These are by no means granted. For instance in \cite{Szekelyhidi2011}, weak solutions with vortex-sheet initial data (case $C)$ in \Cref{thm: solutions of euler}) are constructed, using convex integration methods, whose vorticity will in general no longer be a bounded measure in positive time.\\
From \eqref{eq: energy inequ kinetic part}, we derive
\begin{equation}\label{eq: energy ineq finite kin part}
\esssup_{0 \leq t \leq T}\|u_{kin}\|_{L^2(\R^2)} \leq \exp\left(T|m(u_0)| \|\nabla\Sigma\|_{L^\infty(\R^2)} \right)\|u_{0,kin}\|_{L^2(\R^2)}.
\end{equation} 
We introduce the constant
\begin{equation}\label{eq: constant a}
a := \exp(T\|\nabla\Sigma\|_{L^\infty(\R^2)}),
\end{equation}
whose dependency on $T$ and $\Sigma$ we omit in our notation as both are fixed throughout. Then
\begin{align}\label{eq: comp bdd L2_loc weak sols}
\|u(t)\|_{L^2(B_r(x_0))} &\leq \|m(u_0)\Sigma\|_{L^2(B_r(x_0))} + \|u_{kin}(t)\|_{L^2(B_r(x_0))}\\
&\leq |m(u_0)|(\|\Sigma\|_{L^\infty(\R^2)}\sqrt{\pi} r) + a^{|m(u_0)|}\|u_{0,kin}\|_{L^2(\R^2)}\\
&\leq \left(\frac{\|\Sigma\|_{L^\infty(\R^2)}\sqrt{\pi} r}{T\|\nabla\Sigma\|_{L^\infty(\R^2)}} + \|u_{0,kin}\|_{L^2(\R^2)} \right)a^{|m(u_0)|}\\
&\leq C\max\lbrace 1,r\rbrace(1+\|u_{0,kin}\|_{L^2(\R^2)})a^{|m(u_0)|}\\
&= C\max\lbrace 1,r\rbrace\gamma(u_0)
\end{align}
for any $r > 0, x_0 \in \R^2$ and almost every $0 \leq t \leq T$, where $C$ is a constant depending on $T$ and $\Sigma$ and $\gamma(u_0) := (1+\|u_{0,kin}\|_{L^2(\R^2)})a^{|m(u_0)|}$. We remark here that $\gamma$ is continuous with respect to $\|\cdot\|_{\mathbb{E}}$, depends exponentially on $m(u_0)$ and, in a sense, linearly on $u_{0,kin}$.\\
There exists $L \in \N$ and a constant $C = C(L)$ such that
\begin{equation}\label{eq: a priori est time derivative}
\esssup_{0\leq t \leq T} \|\partial_t u\|_{H^{-L}(\R^2)} \leq
\int_{\R^2} |\omega(u_0)|,
\end{equation}
where the term on the right-hand side denotes the total variation norm of $\omega(u_0)$. The proof of this in \cite{Hounie1999}[Theorem 2.1] works for both equations. This will usually guarantee equicontinuity of the velocity in $H^{-L}(\R^2;\R^2)$.\\
We may also infer continuity of the vorticity $\omega(u)$ for both equations depending on the class of initial vorticity that is considered. If 
\[\begin{cases}
\omega(u_0) \in (L^1\cap L^\infty)(\R^2), \text{ then } \omega(u) \in C([0,T];L^q(\R^2)) \text{ for every }1 \leq q < \infty\\
\omega(u_0) \in (L^1\cap L^p)(\R^2)\text{ for some }1 < p < \infty, \text{ then } \omega(u) \in C([0,T];L^p(\R^2))\\
\omega(u_0) \in L_c^1(\R^2)$ or $\omega(u_0) \in \CalM_c^+(\R^2), \text{ then }\omega(u) \in C_w([0,T];\CalM(\R^2)).
\end{cases}\]
The last condition means that for any $\psi \in C_0(\R^2)$, $t \mapsto \int_{\R^2}\psi\,d\omega(u)(t)$ is continuous.\\
We are also interested in conservation of the $L^p(\R^2)$ norm of the vorticity for the Euler equations. For this, we consider weak solutions whose vorticity $\omega(u)$ is a \textit{renormalized solution} of the vorticity equation of the Euler equations, i.e. in the sense of distributions 
\begin{equation}\label{eq: renormalized vorticity equation}
\partial_t\beta(\omega) + u \cdot \nabla\beta(\omega) = 0
\end{equation}
is satisfied for all $\beta \in C^1(\R^2)$ that are bounded and vanishing near $0$ with $u = K * \omega$ given by the Biot-Savart law (see for instance \cite{DiPerna1989} and \cite{LopesFilho2006} for more details).\\
At least for vorticity in $L^p(\R^2)$, $p > 1$, we may smoothly implement this into the presented work here. We remark though that \cite{Crippa2017} suggests that this also works in the $L^1(\R^2)$ case with some adaptations.\\
For $p \geq 2$, the vorticity of every weak solution is already renormalized \cite{LopesFilho2006}. For $1 < p < 2$, the vorticities of weak solutions obtained by (smooth) approximations of the Euler or Navier-Stokes equations in the inviscid limit are also renormalized (\cite{LopesFilho2006},\cite{Crippa2015}) and convergence holds in the strong sense (\cite{Ciampa2021}, \cite{NussenzveigLopes2021}).

\begin{lem}\label{lem: stab renormalized sols}
Suppose that $(\omega^n)_{n\in\N} \subset C([0,T];L^p(\R^2))$, $1 < p < \infty$, is a sequence of renormalized solutions of the vorticity formulation of the Euler equations or a sequence of vorticities of solutions $(u^n = K*\omega^n)_{n\in\N}$ of the Navier-Stokes equations with viscosity $\nu^n \to 0\,(n\to \infty)$.
If $\omega^n(0) \to \omega_0$ in $L^p(\R^2)$, then a subsequence of $(\omega^n)_{n\in\N}$ converges in $C([0,T];L^p(\R^2))$ to a renormalized solution of the Euler equations with initial data $\omega_0$.
\end{lem}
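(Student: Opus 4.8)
\emph{Proof strategy.} The plan is to combine the weak compactness that comes from a priori bounds with the \emph{uniform convexity} of $L^p(\R^2)$, $1<p<\infty$, in order to upgrade weak to strong convergence. First I would collect uniform bounds. Since $\omega^n(0)\to\omega_0$ in $L^p(\R^2)$, the initial norms $\|\omega^n(0)\|_{L^p(\R^2)}$ are uniformly bounded. In the Euler case the renormalization property forces conservation of the $L^p$ norm, $\|\omega^n(t)\|_{L^p(\R^2)}=\|\omega^n(0)\|_{L^p(\R^2)}$ for all $t$ (test the renormalized equation with $\beta\approx|\cdot|^p$ and use incompressibility); in the Navier-Stokes case, multiplying the viscous vorticity equation by $p|\omega^n|^{p-2}\omega^n$ and integrating gives $\|\omega^n(t)\|_{L^p(\R^2)}\leq\|\omega^n(0)\|_{L^p(\R^2)}$. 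In either case $(\omega^n)$ is uniformly bounded in $L^\infty(0,T;L^p(\R^2))$. By the Biot-Savart law (\Cref{lem: Biot-Savart}) and the Calder\'on--Zygmund estimate $\|\nabla u^n\|_{L^p}\leq C\|\omega^n\|_{L^p}$, the velocities $u^n=K*\omega^n$ are uniformly bounded in $L^\infty(0,T;W^{1,p}_{loc})$.

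Next I would establish time equicontinuity in a weak norm. Writing the vorticity equation as $\partial_t\omega^n=-\diverg(u^n\omega^n)+\nu^n\Delta\omega^n$ (with $\nu^n=0$ in the Euler case) and using the bounds above together with $\nu^n\to0$, one obtains a uniform bound on $\partial_t\omega^n$ in $L^\infty(0,T;W^{-1,r}_{loc})$ for a suitable $r>1$. Combining the $L^p_{loc}$ bound with this time-derivative bound, the Aubin--Lions--Simon lemma yields a subsequence converging in $C([0,T];W^{-1,q}_{loc})$ for some $q$ and weakly-$*$ in $L^\infty(0,T;L^p(\R^2))$ to some $\omega$, while Rellich compactness gives strong convergence $u^n\to u:=K*\omega$ in $L^\infty(0,T;L^s_{loc})$ for an appropriate $s$. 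Passing to the limit in the weak formulation (the viscous term vanishes since $\nu^n\to0$) identifies $\omega$ as a weak solution of the Euler vorticity equation with initial data $\omega_0$; for $p\geq2$ it is automatically renormalized, and for $1<p<2$ renormalization of the limit follows from the inviscid-limit results already cited. In particular the limit conserves its norm, $\|\omega(t)\|_{L^p(\R^2)}=\|\omega_0\|_{L^p(\R^2)}$ for all $t$.

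The heart of the matter is to upgrade weak to strong convergence in $L^p$. Fix $t\in[0,T]$. From the convergence in $C([0,T];W^{-1,q}_{loc})$ and the uniform $L^p$ bound, every weak limit point of $\omega^n(t)$ in the reflexive space $L^p(\R^2)$ must coincide with $\omega(t)$, so $\omega^n(t)\rightharpoonup\omega(t)$ weakly in $L^p(\R^2)$. On the other hand the norms converge: one has $\limsup_n\|\omega^n(t)\|_{L^p(\R^2)}\leq\lim_n\|\omega^n(0)\|_{L^p(\R^2)}=\|\omega_0\|_{L^p(\R^2)}=\|\omega(t)\|_{L^p(\R^2)}$, while weak lower semicontinuity of the norm furnishes the reverse inequality. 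Since $L^p(\R^2)$ is uniformly convex for $1<p<\infty$, weak convergence together with convergence of norms implies strong convergence (the Radon--Riesz property), whence $\omega^n(t)\to\omega(t)$ in $L^p(\R^2)$ for every fixed $t$.

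It remains to make this uniform in $t$, i.e.\ to obtain convergence in $C([0,T];L^p(\R^2))$. I would argue by contradiction: were it to fail, there would exist $\eps>0$ and times $t_n\to t_*$ with $\|\omega^n(t_n)-\omega(t_n)\|_{L^p(\R^2)}\geq\eps$. Uniform convergence in $C([0,T];W^{-1,q}_{loc})$ and continuity of $\omega$ give $\omega^n(t_n)\rightharpoonup\omega(t_*)$ weakly in $L^p(\R^2)$, and the norm bound $\|\omega^n(t_n)\|_{L^p(\R^2)}\to\|\omega(t_*)\|_{L^p(\R^2)}$ holds exactly as above, so Radon--Riesz yields $\omega^n(t_n)\to\omega(t_*)$ strongly; since $\omega\in C([0,T];L^p(\R^2))$ we also have $\omega(t_n)\to\omega(t_*)$, contradicting the lower bound $\eps$. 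The \emph{main obstacle} is precisely this passage from weak to strong convergence uniformly in time: the time-derivative control is only available in negative norms, so equicontinuity in $L^p$ cannot be expected directly, and the argument genuinely rests on conservation (resp.\ non-increase) of the $L^p$ norm combined with uniform convexity. A secondary delicate point is the renormalization of the limit for $1<p<2$, where the DiPerna--Lions commutator estimate is not directly applicable and one must invoke the stronger inviscid-limit results mentioned in the preliminaries.
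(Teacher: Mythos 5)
Your overall architecture is the right one, and it in fact mirrors the proofs in the references from which the paper imports this lemma \emph{without proof} (\cite{Crippa2015}, \cite{Ciampa2021}, \cite{NussenzveigLopes2021}): uniform bounds and weak compactness, renormalization of the limit, conservation (resp.\ non-increase) of the $L^p$ norm, and the Radon--Riesz upgrade from weak plus norm convergence to strong convergence, made uniform in time by a contradiction argument. For $p \geq 2$ your argument is essentially complete, since there every weak solution with $L^p$ vorticity is automatically renormalized by \cite{LopesFilho2006}.

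The genuine gap lies in the range $1 < p < 2$, and it sits exactly at the step on which your whole proof pivots: renormalization, hence $L^p$-norm conservation, of the limit $\omega$. You justify this by appeal to ``the inviscid-limit results already cited''. Those results do cover the Navier--Stokes branch of the lemma (vanishing viscosity limits are renormalized by \cite{Crippa2015}), but the lemma has a second branch --- a sequence of renormalized solutions of the Euler equations themselves, with no viscosity --- and for that branch the cited results say nothing: they concern limits of smooth Euler solutions or of Navier--Stokes solutions, not limits of arbitrary renormalized solutions. Nor can one fall back on the DiPerna--Lions commutator estimate: with $u \in W^{1,p}_{loc}$ one needs the advected quantity in $L^{p'}_{loc}$, and $p' = p/(p-1) > p$ when $p < 2$, so the duality pairing fails. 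Closing this is precisely where the quoted literature brings in the theory of regular Lagrangian flows and the quantitative Crippa--De Lellis stability estimates; that machinery is entirely absent from your sketch, and without it the chain ``weak limit $\Rightarrow$ renormalized $\Rightarrow$ norm conserved $\Rightarrow$ Radon--Riesz'' cannot even be started in the Euler branch. A second, more technical defect: for $1 < p \leq 4/3$ your bound on $\partial_t\omega^n$ in $W^{-1,r}_{loc}$ with $r>1$ fails, because $u^n\omega^n$ need not be locally integrable --- with $u^n \in L^{p^*}_{loc}$, $p^* = 2p/(2-p)$, one has $1/p + 1/p^* \leq 1$ only for $p \geq 4/3$. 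The fix is to take the curl of the velocity formulation and bound $\partial_t\omega^n = -\curl\diverg(u^n\otimes u^n) + \nu^n\Delta\omega^n$ in a deeper negative Sobolev space, using only $u^n\otimes u^n \in L^\infty(0,T;L^1_{loc})$; this repairs the compactness step, but not the renormalization gap above.
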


Renormalized solutions conserve all $L^q(\R^2)$ norms, $1 \leq q \leq \infty$, of the vorticity, i.e.
\begin{equation}\label{eq: a priori est vort}
\|\omega(u)(t)\|_{L^q(\R^2)} = \|\omega(u_0)\|_{L^q(\R^2)}
\end{equation}
for every $0 \leq t \leq T$ (with both sides possibly being $\infty$). In case of the Navier-Stokes equations, these quantities may at least not increase, i.e. \eqref{eq: a priori est vort} holds by replacing $=$ with $\leq$. Moreover, for solutions with initial vorticity in $L^1_c(\R^2)$ or $\CalM_c^+(\R^2)$, i.e cases $C)$ and $D)$ in \Cref{thm: solutions of euler}, we have
\begin{equation}\label{eq: a priori est vort L1,VS}
\begin{cases}
\int_{\R^2}\omega(u)(t) \leq \int_{\R^2}\omega(u_0) &: C)\\
\int_{\R^2}|\omega(u)(t)|\,dx \leq \int_{\R^2}|\omega(u_0)|\,dx &: D)
\end{cases}
\end{equation}
for all $0 \leq t \leq T$ in case of both equations.\\
For the initial data considered in case $B)$, i.e.\ when $\omega(u_0) \in L^1(\R^2) \cap L^p(\R^2)$ for some $1 < p < \infty$, we have
\[\esssup_{0 \leq t \leq T}\|\nabla u(t)\|_{L^p(\R^2)} \leq C(p)\|\omega(u_0)\|_{L^p(\R^2)},\]
which comes from a Calder\'{o}n-Zygmund argument.
Splitting the Biot-Savart kernel into a sum in $L^1(\R^2) + L^\infty(\R^2)$, Young's convolution inequality shows that $u = K * \omega(u) \in L^p(\R^2) + L^\infty(\R^2) \hookrightarrow L^p_{loc}(\R^2)$. Hence, for any ball $B_r(x_0) \subset \R^2$ of radius $r > 0$, there exists a constant $C = C(r,p)$ so that
\[\|u\|_{L^p(B_r(x_0))} \leq C(r,p)\|\omega(u_0)\|_{L^1(\R^2)\cap L^p(\R^2)}\]
and consequently also 
\begin{equation}\label{eq: a priori W^1,p}
\esssup_{0 \leq t \leq T}\|u\|_{W^{1,p}(B_r(x_0))} \leq C(r,p)\|\omega(u_0)\|_{L^1(\R^2)\cap L^p(\R^2)}.
\end{equation}
For the case of $\omega(u_0)$ just being an element of $\CalM_c^+(\R^2)$ or $L^1_c(\R^2)$, we will later use the following theorem, which may be derived from Theorem 6.3.1 in \cite{Chemin1998}. A measure $\mu \in \CalM(\R^2)$ is called continuous if it has no atoms, i.e.\ $\mu(\lbrace x \rbrace) = 0$ for every $x \in \R^2$. By Lemma 6.3.2 in \cite{Chemin1998}, $\mu$ being an element of $H^{-1}(\R^2)$ is a sufficient condition for $\mu$ to be continuous. Compare this to \Cref{rem: E and H^-1}.\\
We also recall the Hahn decomposition $\mu = \mu^+ - \mu^-$, where $\mu^+$ and $\mu^-$ are positive, finite measures in $\CalM^+(\R^2)$. Then $|\mu| := \mu^+ + \mu^-$ denotes the associated total variation measure.

\begin{thm}\label{thm: M L^1 limit result}
Let $(u^n)_{n\in\N} \subset L^\infty(0,T;\mathbb{E})$ be a sequence of weak solutions of the Euler or Navier-Stokes equations, converging weakly-* to some $u \in L^\infty(0,T;\mathbb{E})$ and suppose that the vorticities $(\omega(u^n))_{n\in\N}$ converge weakly-* to $\omega(u)$ in $L^\infty(0,T;\CalM(\R^2))$. If $(|\omega(u^n)|)_{n\in\N}$ converges weakly-* to some $\omega^+$ in $L^\infty(0,T;\CalM(\R^2))$, where $\omega^+(t)$ is continuous for almost every $0 \leq t \leq T$, then $u$ is a weak solution of the Euler equations.
\end{thm}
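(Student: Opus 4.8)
\section*{Proof proposal}

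The plan is to verify condition i) of \Cref{def: Euler weak vel} for the limit field $u$. Fix a divergence-free test field $v\in C_c^\infty(\R^2;\R^2)\cap H$ and a temporal cut-off $\phi\in C_c^\infty(0,T)$; testing the distributional identity against $\phi$ in time reduces everything to showing
\[
\int_0^T\phi(t)\int_{\R^2}\nabla v:(u^n(t)\otimes u^n(t))\,dx\,dt \longrightarrow \int_0^T\phi(t)\int_{\R^2}\nabla v:(u(t)\otimes u(t))\,dx\,dt,
\]
since the linear terms $\int_0^T\phi'(t)\int_{\R^2} v\cdot u^n\,dx\,dt$ pass to the limit immediately (both $v$ and $\Delta v$ induce bounded functionals on $\mathbb{E}$, and $u^n\rightharpoonup u$ weakly-*), while the viscous contribution $\nu^n\int_{\R^2}\Delta v\cdot u^n\,dx$ vanishes because $\nu^n\to0$ and the spatial integral stays bounded. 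The quadratic term is the only genuine difficulty, as $u^n\to u$ holds merely weakly and concentration is a priori not excluded.

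Writing $\omega^n:=\omega(u^n)$ and $\omega:=\omega(u)$, by \Cref{lem: Biot-Savart} one has $u^n(t)=K*\omega^n(t)$ and $u(t)=K*\omega(t)$ for almost every $t$, so the spatial integral can be rewritten, after symmetrizing in the two integration variables (legitimate since the product measure is symmetric), as
\[
\int_{\R^2}\nabla v:(u^n(t)\otimes u^n(t))\,dx = \iint_{\R^2\times\R^2}H_v(z,w)\,d\omega^n(t)(z)\,d\omega^n(t)(w),
\]
with the symmetric Delort kernel
\[
H_v(z,w)=\frac12\int_{\R^2}\nabla v(x):\bigl(K(x-z)\otimes K(x-w)+K(x-w)\otimes K(x-z)\bigr)\,dx.
\]
The crucial structural input, which is the heart of Chemin's Theorem 6.3.1, is that $H_v$ is bounded on $\R^2\times\R^2$ and continuous off the diagonal $\{z=w\}$; the symmetrization is precisely what cancels the otherwise non-integrable singularity along the diagonal. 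These two facts are all that the limit passage will use.

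It then remains to pass to the limit in the double integral for a.e.\ $t$ and integrate in time. First I would upgrade the hypotheses to pointwise-in-time weak-* convergence: the uniform bound $\esssup_t|\omega^n(t)|(\R^2)<\infty$ together with the equicontinuity in time furnished by \eqref{eq: a priori est time derivative} yields, along a subsequence, $\omega^n(t)\rightharpoonup\omega(t)$ and $|\omega^n(t)|\rightharpoonup\omega^+(t)$ weakly-* for a.e.\ $t$, the limits being identified through the prescribed $L^\infty(0,T;\CalM)$ weak-* limits. Since tensor products of uniformly mass-bounded, weakly-* convergent measures again converge weakly-*, this gives $\omega^n(t)\otimes\omega^n(t)\rightharpoonup\omega(t)\otimes\omega(t)$ and $|\omega^n(t)|\otimes|\omega^n(t)|\rightharpoonup\omega^+(t)\otimes\omega^+(t)$ for a.e.\ $t$. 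I would then split the double integral by a smooth cut-off $\chi_\delta$ vanishing on $\{|z-w|\le\delta\}$ and equal to $1$ for $|z-w|\ge 2\delta$. Away from the diagonal $\chi_\delta H_v\in C_0(\R^2\times\R^2)$, so for each fixed $\delta$ the weak-* convergence of the tensorized measures passes the limit; near the diagonal I estimate
\[
\Bigl|\iint_{|z-w|\le\delta}H_v\,d\bigl(\omega^n(t)\otimes\omega^n(t)\bigr)\Bigr|\le \|H_v\|_{L^\infty}\,\bigl(|\omega^n(t)|\otimes|\omega^n(t)|\bigr)\bigl(\{|z-w|\le\delta\}\bigr),
\]
and use upper semicontinuity of the mass on the closed set $\{|z-w|\le\delta\}$ under weak-* convergence, together with $\bigl(\omega^+(t)\otimes\omega^+(t)\bigr)(\{z=w\})=0$ — valid for a.e.\ $t$ exactly because $\omega^+(t)$ has no atoms — to let the near-diagonal part vanish as $\delta\to0$, uniformly in $n$ in the $\limsup$. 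Dominated convergence in $t$, with the $t$-integrable majorant $\|\phi\|_\infty\|H_v\|_{L^\infty}(\esssup_t|\omega^n(t)|(\R^2))^2$, then assembles these a.e.-$t$ statements into the desired space-time limit.

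The main obstacle is precisely the concentration in the quadratic term, and its resolution hinges on the interplay between Delort's symmetrization and the non-atomicity of $\omega^+$: boundedness of $H_v$ is worthless without the knowledge that the limiting defect measure charges no mass on the diagonal, and supplying this is the sole purpose of the hypothesis that $\omega^+(t)$ be continuous. A secondary, purely technical, difficulty is the passage from weak-* convergence in $L^\infty(0,T;\CalM)$ to pointwise-in-time weak-* convergence, especially for the total-variation measures $|\omega^n(t)|$, which do not solve a transport equation and hence do not obviously inherit the equicontinuity of $\omega^n(t)$; I would handle this by a diagonal subsequence extraction and identification of the limit with $\omega^+$, noting that working along a subsequence is harmless since the limit $u$ is already fixed.
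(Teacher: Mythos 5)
Your strategy is the right one, and in fact it is the only ``proof'' the paper has: the paper does not prove this theorem at all, but refers to Theorem 6.3.1 in \cite{Chemin1998}, and the symmetrized-kernel mechanism you describe (Biot--Savart via \Cref{lem: Biot-Savart}, boundedness and off-diagonal continuity of the Delort kernel $H_v$, weak-* convergence of tensorized measures away from the diagonal, and non-atomicity of $\omega^+(t)$ through $(\omega^+(t)\otimes\omega^+(t))(\{z=w\})=0$ to kill the diagonal strip) is exactly what lies behind that citation. Your treatment of the linear and viscous terms and of the far-from-diagonal limit passage is fine.

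The genuine gap is the step you dismiss as ``secondary, purely technical'': upgrading weak-* convergence of $|\omega^n|$ (writing $\omega^n=\omega(u^n)$ as you do) in $L^\infty(0,T;\CalM(\R^2))$ to pointwise-in-time convergence $|\omega^n(t)|\overset{\ast}{\rightharpoonup}\omega^+(t)$ for a.e.\ $t$ along a subsequence. This implication is false in general: weak-* convergence in $L^\infty(0,T;\CalM(\R^2))$ tolerates persistent oscillation in time. If, say, $|\omega^n(t)|$ alternates between two fixed atomless measures $\mu_1\neq\mu_2$ on time intervals of length $1/n$, then the $L^\infty_t$ weak-* limit is the average $\tfrac12(\mu_1+\mu_2)$, while for every fixed $t$ the only limit points of $(|\omega^n(t)|)_n$ are $\mu_1$ and $\mu_2$, so no subsequence converges a.e.\ to $\omega^+(t)$. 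A diagonal extraction cannot repair this: diagonal arguments need either equicontinuity in $t$ of $|\omega^n(\cdot)|$ --- which, as you concede, is not inherited from the equation; only the signed measures are weakly equicontinuous, via $\langle\omega^n(t),\varphi\rangle=-\langle u^n(t),\nabla^\perp\varphi\rangle$ and \eqref{eq: a priori est time derivative} --- or some continuity in $t$ of the limit $\omega^+$, which is not available. Moreover, the oscillation scenario is fully compatible with the equicontinuity of the signed measures: profiles with fine sign oscillation can make $\langle\omega^n(t),\varphi\rangle$ uniformly small (hence equicontinuous) while $|\omega^n(t)|$ still alternates between $\mu_1$ and $\mu_2$. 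So, as written, your argument proves only a variant of the theorem in which pointwise-in-time weak-* convergence of the total variations is assumed for a.e.\ $t$ --- which is the form in which the non-concentration hypothesis genuinely enters the Delort-type argument. To cover the statement as given, or the paper's actual applications, one must use extra structure in place of this step: in the Delort case $\omega^n(t)\geq0$, so $|\omega^n(t)|=\omega^n(t)$ does inherit the weak time-equicontinuity and your upgrade becomes legitimate; in the $L^1$ case, \eqref{eq: local conservation vorticity} together with strong $L^1$ convergence of the initial vorticities yields a modulus of uniform integrability for $|\omega^n(t)|$, uniform in $n$ and $t$, which bounds the near-diagonal mass $(|\omega^n(t)|\otimes|\omega^n(t)|)(\{|z-w|\leq\delta\})$ by a quantity vanishing with $\delta$ uniformly, so that no pointwise-in-time identification is needed at all.
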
 

Later on, when we consider case $C)$ in \Cref{thm: solutions of euler} of the initial vorticity being in $\CalM^+_c(\R^2)$, this will be very easy to check since for any $\mu \in \CalM^+(\R^2)$, $|\mu| = \mu$.\\
In case $D)$ in \Cref{thm: classic sols Euler}, some more care is required. The following property that we may demand for the weak solution $u$ of the Euler or Navier-Stokes equations, which has also been the key in the original arguments by Vecchi and Wu in \cite{Vecchi1993}, will be of great help showing the non-concentration assumptions of \Cref{thm: M L^1 limit result}:
\begin{equation}\label{eq: local conservation vorticity}
\sup_{0 \leq t \leq T}\int_{B_r(x_0)} |\omega(u)|\,dx \leq \sup_{|E| = \pi r^2}\int_E |\omega(u_0)|\,dx,
\end{equation}
for all $r > 0$ and $x_0 \in \R^2$, where on the right-hand side, we consider the supremum over the set of all Borel measurable sets $E$ in $\R^2$ whose Lebesgue measure $|E|$ is equal to $\pi r^2$. For smooth solutions of the Euler equations, this follows from the fact that the vorticity is being transported by the velocity. As pointed out in the proof of Theorem 3.5 in \cite{Schochet1995}, in case of the Navier-Stokes equations, one may derive this by constructing the Navier-Stokes solution via viscous-splitting, i.e.\ approximation of iteratively solving the Euler and heat equation over time scales of vanishing size (see the discussion on p.\ 1089 in \cite{Schochet1995} and \cite{Majda1993}[Section 3.4] for an introduction to viscous splitting), both of which leave the estimate \eqref{eq: local conservation vorticity} intact.

\subsection{Preliminaries on the abstract framework of statistical solutions}\label{sec: abstract framework}

The Borel-$\sigma$-algebra on a topological space $X$ will be denoted by $\CalB(X)$. For an interval $I\subset\R$, the Lebesgue-$\sigma$-algebra on $I$ will be denoted by $\CalL(I)$. Then the space of continuous functions between $I$ and $X$, endowed with the compact-open topology, will be denoted by $C_{loc}(I;X)$.

\begin{defi}\label{def:traj stats sol}
Let $X$ be a Hausdorff space, $I \subset \R$ some arbitrary interval and let $\CalU \subset \CalX := C_{loc}(I;X)$. A Borel probability measure $\rho$ on $\CalX$ is called a $\CalU$-trajectory statistical solution (over $I$) if
\begin{enumerate}[i)]
\item $\rho$ is inner regular, i.e.\
\[\rho(A) = \sup_{\substack{{K \subset A}\\{K \text{compact in }\CalX}}} \rho(K)\]
for every Borel measurable subset $A$ of $\CalX$;
\item there exists $\CalV \subset \CalU$ which is Borel measurable in $\CalX$ such that $\rho(\CalV) = 1$. In this case, we say that $\rho$ is carried by $\CalU$. 
\end{enumerate} 
\end{defi}

\begin{remark}\label{rem: def traj stats sol}
\begin{enumerate}[i)]
\item The set $\CalU$ can be thought of as the set of solutions of a partial differential equation with phase space $X$. Also, the carrier $\CalU$ does not necessarily have to Borel measurable.
\item The assumption of inner regularity is owed to the generality in which statistical solutions are discussed in \cite{Bronzi2016}. There, a special topology on the space of finite Borel measures $\CalM(X)$ is used, introduced by F. Topsoe in \cite{Topsoe1970}, which the authors label weak-* semicontinuity topology. This topology turns the subspace of inner regular measures in $\CalM(X)$ into a Hausdorff space. For a completely regular Hausdorff space $X$, this topology coincides with the standard weak-* topology on $\CalM(X)$.\\
Moreover, on Polish spaces, i.e.\ completely metrizable spaces, every finite Borel measure is already inner regular (\cite{Parthasarathy1967}[Theorem 3.2]). Most of the spaces considered here are Polish spaces and consequently, we will usually ignore condition i) in \Cref{def:traj stats sol}.
\end{enumerate}
\end{remark}

Let us also recall here the precise definition of the support of a Borel measure. Let $X$ be a topological space and for every $x \in X$, we denote by $\mathcal{O}_x$ the set of all open neighbourhoods of $x$. For a measure $\mu$ on the Borel-$\sigma$-algebra of $X$, the support of $\mu$ is defined as
\[\supp\mu = \lbrace x \in X: \forall O \in \CalO_x: \mu(O) > 0\rbrace.\] 
The support of a measure is closed. Indeed, its complement is the union of all open sets of measure $0$.

One can also consider time parametrized measures. The notations used in the following definition are explained below.

\begin{defi}\label{def: phase space stats sol}
Let $X,Z$ be Hausdorff spaces and let $Y$ be a topological vector space such that $Z \subset X \subset Y_{w*}'$ with continuous injections. Let $I \subset \R$ be an arbitrary interval and consider a function $F: I \times Z \to Y'$.\\
A family $\lbrace\rho_t\rbrace_{t\in I}$ of Borel probability measures on $X$ is called a statistical solution (in phase space) of the evolution equation $u_t = F(t,u)$, over the interval $I$, if
\begin{enumerate}[i)]
\item the mapping 
\[t \mapsto \int_X \varphi(u)\,d\rho_t(u)\]
belongs to $C_b(I)$, the space of bounded continuous functions on $I$, for every $\varphi \in C_b(X)$;
\item for almost every $t \in I$, $\rho_t$ is carried by $Z$ and $u \mapsto \langle F(t,u),v\rangle_{Y',Y}$ is $\rho_t$-integrable for every $v \in Y$;
\item the mapping 
\[t \mapsto \int_X \langle F(t,u),v\rangle_{Y',Y}\,d\rho_t(u)\]
belongs to $L^1_{loc}(I)$ for every $v \in Y$;
\item for any cylindrical test function $\Phi \in Y'$, we have
\begin{equation}\label{eq: Foias-Liouville equ}
\int_X \Phi(u)\,d\rho_t(u) = \int_X \Phi(u)\,d\rho_{t'}(u) + \int_{t'}^t\int_X \langle F(s,u),\Phi'(u)\rangle_{Y',Y}\,d\rho_s(u)\,ds
\end{equation}
for all $t,t' \in I$.
\end{enumerate}
\end{defi}

Here, $\langle \cdot,\cdot\rangle_{Y',Y}$ denotes the duality product between $Y$ and its topological dual $Y'$. Moreover, $Y'_{w*}$ specifically denotes $Y'$ endowed with the weak-* topology.\\ 
A function $\Phi: Y' \to \R$ is called cylindrical test function in $Y'$ if there exists $\phi \in C_c^1(\R^k)$ for some $k \in \N$ and $v_1,...,v_k\in Y$ such that
\[\Phi(u) = \phi(\langle u,v_1 \rangle_{Y',Y},...,\langle u,v_k \rangle_{Y',Y})\]
for all $u \in Y'$.\\
For such a cylindrical test function $\Phi$ and $w \in Y'$, one may compute the G\^{a}teaux derivative $\Phi'(u)$ of $\Phi$ at $u \in Y'$ in direction of $w \in Y'$ by
\begin{equation}\label{eq: derivative cylinder fct}
\begin{split}
\langle w, \Phi'(u)\rangle_{Y',Y''} &= \sum_{j=1}^k\partial_j\phi(\langle u,v_1 \rangle_{Y',Y},...,\langle u,v_k \rangle_{Y',Y})\langle w,v_j\rangle_{Y',Y}\\
&= \langle w, \sum_{j=1}^k\partial_j\phi(\langle u,v_1 \rangle_{Y',Y},...,\langle u,v_k \rangle_{Y',Y}) v_j\rangle_{Y',Y}.
\end{split}
\end{equation}
Consequently, we identify $\Phi'(u)$ with $\sum_{j=1}^k\partial_j\phi(\langle u,v_1 \rangle_{Y',Y},...,\langle u,v_k \rangle_{Y',Y})v_j$ as an element in $Y$. Moreover, we see from this representation that $\Phi'$ is continuous on $Y'_{w*}$.\\

In \cite{Bronzi2016}, we are provided with existence results for both trajectory statistical solutions and statistical solutions in phase space. 

\begin{thm}\label{thm: ex traj stats sol}
Let $X$ be a Hausdorff space and let $I$ be some arbitrary interval, which is closed and bounded on the left with endpoint $t_0$. Let $\CalU \subset \CalX := C_{loc}(I;X)$ and suppose
\newcounter{hypthTrajStatsSol}
\begin{enumerate}[({H}1)]
\item $\Pi_{t_0}\CalU =X$, where $\Pi_{t_0}:\CalX \to X, u \mapsto u(t_0)$ is the time evaluation mapping at the initial time $t_0$;
\setcounter{hypthTrajStatsSol}{\value{enumi}}
\end{enumerate}
there exists a family $\CalK'(X)$ of compact subsets of $X$ such that 
\begin{enumerate}[({H}1)]
\setcounter{enumi}{\value{hypthTrajStatsSol}}
\item every inner regular Borel probability measure $\mu_0$ on $X$ (see \Cref{def:traj stats sol}) is inner regular with respect to $\CalK'(X)$, i.e.\ $\sup_{\substack{{K \subset A}\\{K \in \CalK(X)}}} \mu_0(K)$ for every Borel measurable set $A$ in $X$;
\item for every $K \in \CalK'(X)$, the set $\Pi_{t_0}^{-1}(K) \cap \CalU$ is compact in $\CalX$.
\end{enumerate}
Then, for any inner regular Borel probability measure $\mu_0$ on $X$, there exists a $\CalU$-trajectory statistical solution $\rho$ over $I$ such that $\Pi_{t_0}\rho = \mu_0$, i.e.\ $\rho(\Pi_{t_0}^{-1}(A)) = \mu_0(A)$ for every Borel measurable subset $A$ of $X$.
\end{thm}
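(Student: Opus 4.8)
The plan is to construct the trajectory statistical solution $\rho$ as a limit of discrete (finitely-supported) measures on $\CalX$, each of which is built by pushing forward a partition of the initial measure $\mu_0$ along selected trajectories in $\CalU$. This is the Krein–Milman / discrete-approximation strategy alluded to in the introduction. The three hypotheses (H1)–(H3) are precisely what is needed to (a) find trajectories emanating from any initial datum, (b) reduce inner regularity on $\CalX$ to inner regularity on $X$ via the given family $\CalK'(X)$, and (c) guarantee compactness of the relevant trajectory sets so that a limiting measure exists and is still carried by $\CalU$.

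\emph{Step 1 (reduction to compactly supported initial data).} By (H2), $\mu_0$ is inner regular with respect to $\CalK'(X)$, so for every $\eps > 0$ there is $K_\eps \in \CalK'(X)$ with $\mu_0(K_\eps) > 1-\eps$. It therefore suffices to approximate the restriction of $\mu_0$ to such a compact set and pass to the limit; I would fix such a $K$ and work with $\mu_0$ essentially concentrated on $K$.

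\emph{Step 2 (discrete approximation).} Using (H1), for each point $x \in X$ choose a trajectory $u_x \in \CalU$ with $\Pi_{t_0}u_x = x$. Partition $K$ into finitely many small Borel pieces $\{A_i^n\}_i$, pick representatives $x_i^n \in A_i^n$, and set
\[
\rho^n := \sum_i \mu_0(A_i^n)\,\delta_{u_{x_i^n}}.
\]
Each $\rho^n$ is a Borel probability measure on $\CalX$ carried by $\CalU$, and $\Pi_{t_0}\rho^n$ approximates $\mu_0$ weakly as the partition is refined. The selection of $u_x$ need not be measurable in $x$, but since $\rho^n$ is finitely supported this is harmless at the discrete level.

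\emph{Step 3 (compactness and passage to the limit).} This is where (H3) enters and is \textbf{the main obstacle}. The supports of the $\rho^n$ all lie in $\Pi_{t_0}^{-1}(K) \cap \CalU$, which by (H3) is compact in $\CalX$. Hence the sequence $(\rho^n)_n$ is a sequence of probability measures supported in a common compact set, so by Prokhorov-type compactness (in the weak-* semicontinuity topology of \cite{Topsoe1970}, or simply weak-* compactness when $\CalX$ is Polish, cf.\ \Cref{rem: def traj stats sol}) a subsequence converges to some Borel probability measure $\rho$ on $\CalX$. One must then verify that $\rho$ retains the two defining properties: inner regularity is automatic on the Polish spaces at hand (or is built into the Topsoe topology in general), and since every $\rho^n$ is carried by the \emph{fixed} compact set $\Pi_{t_0}^{-1}(K)\cap\CalU \subset \CalU$, the limit $\rho$ is carried by this same compact set, giving a Borel-measurable carrier $\CalV \subset \CalU$ with $\rho(\CalV) = 1$. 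The initial marginal is recovered from $\Pi_{t_0}\rho^n \to \mu_0$ and weak-* continuity of the pushforward under the continuous map $\Pi_{t_0}$, yielding $\Pi_{t_0}\rho = \mu_0$.

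The delicate points I would expect to spend the most care on are: reconciling the non-measurable trajectory selection in Step 2 with the measurability requirement in \Cref{def:traj stats sol} (handled by the discrete structure plus the compact carrier), and ensuring the limit measure is genuinely carried by $\CalU$ rather than merely by its closure — which is exactly why (H3) asks for compactness of $\Pi_{t_0}^{-1}(K)\cap\CalU$ \emph{inside} $\CalX$, so that the common support of the approximants is a closed subset of $\CalU$ and persists in the limit.
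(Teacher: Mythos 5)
Your Steps 2--3 reproduce the paper's core argument for initial measures concentrated on a single $K \in \CalK'(X)$: the paper likewise approximates $\mu_0$ by finitely supported measures, pushes the atoms forward along trajectories supplied by (H1) (no measurable selection needed, exactly as you note), uses (H3) so that all approximants lie in $C(\kappa)'$ for the compact set $\kappa = \Pi_{t_0}^{-1}(K)\cap\CalU$, extracts a weak-* limit by Banach--Alaoglu, and identifies the marginal by testing with $\varphi\circ\Pi_{t_0}$ and uniqueness of weak-* limits. The one substantive substitution is that the paper obtains its discrete approximations from the Krein--Milman theorem (the sub-probability measures on $\kappa_0$ form a weak-* compact convex set whose extreme points are the Dirac measures), not from ``fine partitions'': in the stated generality $X$ is only Hausdorff, the compact set $K$ carries no metric, and ``small Borel pieces'' has no meaning, so your partition argument would need to be rephrased (e.g.\ via partitions subordinate to finite open covers and a net argument) to cover the theorem as stated.

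The genuine gap is Step 1. You reduce to the compact case by choosing $K_\eps$ with $\mu_0(K_\eps) > 1-\eps$ and then ``pass to the limit,'' but this outer limit is never constructed, and it is not harmless: you would need tightness of the family $\lbrace\rho_\eps\rbrace_{\eps>0}$ together with a Prokhorov-type compactness theorem on $\CalX = C_{loc}(I;X)$, which is not Polish here, so this costs real work (this is precisely where Topsoe's theory would have to be invoked a second time). Moreover, the carrier argument of your Step 3 does not transfer to this limit, because the carriers $\Pi_{t_0}^{-1}(K_\eps)\cap\CalU$ grow as $\eps \downarrow 0$: the limit measure is at best carried by the countable union $\bigcup_{k}\bigl(\Pi_{t_0}^{-1}(K_{1/k})\cap\CalU\bigr)$, and showing that it gives this union full measure requires a portmanteau-type semicontinuity argument absent from the proposal. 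The paper avoids all of this with an exact construction rather than a limit: choose an increasing sequence $\kappa_0^{n} \in \CalK'(X)$ with $\mu_0\bigl(\bigcup_n \kappa_0^{n}\bigr) = 1$, disjointify via $D_n := \kappa_0^{n}\setminus\kappa_0^{n-1}$, normalize $\mu_0^n := \mu_0(\cdot\cap D_n)/\mu_0(D_n)$, apply the compact-case result to each $\mu_0^n$ to get trajectory statistical solutions $\rho^n$ with $\Pi_{t_0}\rho^n = \mu_0^n$, and set $\rho := \sum_n \mu_0(D_n)\rho^n$. This countable convex combination is itself a Borel probability measure, it is carried by the union of the (Borel measurable) carriers of the $\rho^n$, hence by $\CalU$, and its marginal is exactly $\mu_0$ by direct computation --- no second compactness argument, no tightness, no portmanteau. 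Your proof becomes complete if Step 1 is replaced by this decomposition, or if you supply the missing tightness and carrier arguments for your $\eps \to 0$ limit.
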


\begin{remark}
\begin{enumerate}[i)]
\item (H1) means that for each initial value in $X$, we may find a trajectory in $\CalU$ to the corresponding initial value problem on $I$.
\item (H2) allows one to pull back to the case of inner regular initial distributions of compact support as arbitrary inner regular initial distributions may be approximated through exhaustion of their support by sets in $\CalK'(X)$. 
\item (H3) states that the set of solution trajectories with initial data in $K \in \CalK'(X)$ is compact. This can usually be shown by adapting the compactness proofs that yield existence of solutions in the first place and will also be our strategy later.
\end{enumerate}
\end{remark}

In case that the conditions of \Cref{thm: ex traj stats sol} only hold for a subset of inner regular Borel probability measures or, in a related way, continuity of the solutions only holds in a coarser topology, there is another version of this existence theorem available, which is going to be the content of the following theorem.

\begin{thm}\label{thm: ex traj stats sol v2}
Let $X$ be a Hausdorff space and let $I$ be some arbitrary interval, which is closed and bounded on the left with endpoint $t_0$. Let $\CalU \subset \CalX := C_{loc}(I;X)$ and let $X_0$ be a Borel subset of $X$. Suppose that 
\begin{enumerate}[({H}1')]
\item $\Pi_{t_0}\CalU \supset X_0$;
\setcounter{hypthTrajStatsSol}{\value{enumi}}
\end{enumerate}
there exists a family $\CalK'(X_0)$ of compact subsets of $X_0$ such that 
\begin{enumerate}[({H}1')]
\setcounter{enumi}{\value{hypthTrajStatsSol}}
\item every inner regular Borel probability measure $\mu_0$ on $X$ (see \Cref{def:traj stats sol}), which is carried by $X_0$, is inner regular with respect to $\CalK'(X_0)$, i.e.\ $\mu_0(A) = \sup_{\substack{{K \subset A}\\{K \in \CalK'(X_0)}}} \mu_0(K)$ for every Borel measurable $A \subset X$;
\item for every $K \in \CalK'(X_0)$, the set $\Pi_{t_0}^{-1}(K) \cap \CalU$ is compact in $\CalX$.
\end{enumerate}
Then, for any inner regular Borel probability measure $\mu_0$ on $X$, which is carried by $X_0$, there exists a $\CalU$-trajectory statistical solution $\rho$ over $I$ such that $\Pi_{t_0}\rho = \mu_0$,
\end{thm}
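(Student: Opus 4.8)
The plan is to re-run the construction underlying \Cref{thm: ex traj stats sol}, but localized to $X_0$. I would first observe that one cannot invoke \Cref{thm: ex traj stats sol} as a black box: there the phase space $X$ and the trajectory space $C_{loc}(I;X)$ are rigidly coupled, whereas here the initial data and the inner-regularity hypothesis (H2') live on $X_0 \subset X$, while the trajectories themselves, and hence the measure $\rho$ we seek, still range over all of $\CalX = C_{loc}(I;X)$; in particular (H1) and (H2) over the full $X$ are genuinely unavailable. So fix an inner regular Borel probability measure $\mu_0$ carried by $X_0$. Using (H2') I would choose $(K_n)_{n\in\N} \subset \CalK'(X_0)$ with $\mu_0(K_n)\to 1$, and by passing to $\bigcup_{j\le n}K_j$ arrange $K_n \subset K_{n+1}$. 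Even when such finite unions leave the family $\CalK'(X_0)$, the sets $C_n := \Pi_{t_0}^{-1}(K_n) \cap \CalU$ remain compact in $\CalX$, since $\Pi_{t_0}^{-1}(\bigcup_j K_j) \cap \CalU = \bigcup_j(\Pi_{t_0}^{-1}(K_j)\cap\CalU)$ is a finite union of the compact sets furnished by (H3').

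The engine is a lifting lemma. For each $n$ the map $\Pi_{t_0}|_{C_n}: C_n \to K_n$ is continuous, and it is \emph{surjective} onto $K_n$ by (H1'), which gives $K_n \subset X_0 \subset \Pi_{t_0}\CalU$. The induced pushforward $(\Pi_{t_0})_*:\mathcal{P}(C_n)\to\mathcal{P}(K_n)$ on Borel probability measures of these compact spaces is affine and weak-$*$ continuous, so its image is weak-$*$ compact; it contains every Dirac $\delta_x = (\Pi_{t_0})_*\delta_c$ with $c \in \Pi_{t_0}^{-1}(x)\cap C_n$, hence the closed convex hull of the Diracs, which by Krein--Milman is all of $\mathcal{P}(K_n)$. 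Thus $(\Pi_{t_0})_*$ is onto, and the normalized restriction $\mu_0^n := \mu_0|_{K_n}/\mu_0(K_n)$ lifts to a Borel probability measure $\rho^n$ on $C_n$ with $(\Pi_{t_0})_*\rho^n = \mu_0^n$. Being carried by the compact set $C_n\subset\CalU$, each $\rho^n$ is inner regular and is itself a $\CalU$-trajectory statistical solution for the datum $\mu_0^n$.

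It remains to assemble the $\rho^n$ into a single solution for $\mu_0$, which I expect to be the main obstacle. I would pass to the sub-probability measures $\nu_n := \mu_0(K_n)\,\rho^n$, for which $(\Pi_{t_0})_*\nu_n = \mu_0|_{K_n}$. The key estimate is that, for $n \ge N$, $\nu_n(C_N) = \mu_0(K_n)\,\rho^n(\Pi_{t_0}^{-1}(K_N)) = \mu_0(K_N)$, so $\nu_n(\CalX\setminus C_N) \le 1-\mu_0(K_N)$; since each $C_N$ is compact, the family $(\nu_n)$ is uniformly tight. A Prokhorov-type compactness argument (valid here by uniform tightness) then yields a subsequence converging weak-$*$ to a finite Borel measure $\rho$, and because the total masses $\mu_0(K_n)\to 1$ while tightness forbids escape of mass, $\rho$ is a probability measure. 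Weak-$*$ continuity of the pushforward under the continuous map $\Pi_{t_0}$ on this tight family, together with $\mu_0|_{K_n}\to\mu_0$, gives $(\Pi_{t_0})_*\rho = \mu_0$.

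Finally, to confirm that $\rho$ is a $\CalU$-trajectory statistical solution in the sense of \Cref{def:traj stats sol}, I would apply the portmanteau inequality on the closed (compact) sets $C_N$: $\rho(C_N) \ge \limsup_n \nu_n(C_N) = \mu_0(K_N)$, whence $\rho\big(\bigcup_N C_N\big) = \lim_N \rho(C_N) = 1$ with $\bigcup_N C_N \subset \CalU$ a Borel ($F_\sigma$) carrier, and uniform tightness also makes $\rho$ inner regular. The only genuinely new bookkeeping relative to \Cref{thm: ex traj stats sol} is verifying that the restriction to $X_0$ never spoils the surjectivity of $\Pi_{t_0}$ onto each $K_n$ (via (H1')) nor the compactness driving the tightness estimate (via (H3') and the union remark); the Krein--Milman lift and the limit passage are otherwise identical to the global case.
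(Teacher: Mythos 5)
Your first two paragraphs essentially reproduce the argument the paper displays (for the concrete case $X^p = H_{loc}$) in the proof of \Cref{thm: ex traj stats sol Euler}: the core there is likewise Krein--Milman applied to the probability measures on a compact set $\kappa_0 \in \CalK'(X_0)$, combined with compactness of $\Pi_{t_0}^{-1}(\kappa_0)\cap\CalU$; your packaging of this as surjectivity of the pushforward $(\Pi_{t_0})_*$ (a weak-* compact convex image containing all Diracs must contain their closed convex hull, i.e.\ everything) is equivalent to the paper's ``approximate by discrete measures, lift them, pass to the weak-* limit in $C(\kappa)'$''. The genuine problem is your assembly step. The theorem is stated for an arbitrary Hausdorff space $X$, so $\CalX = C_{loc}(I;X)$ is in general only Hausdorff --- not Polish, not metrizable, not even completely regular. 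In that generality none of the three tools your last two paragraphs invoke is available: a Prokhorov-type compactness theorem for uniformly tight families, the portmanteau inequality $\rho(C_N) \geq \limsup_n \nu_n(C_N)$ on closed sets, and the identification of $(\Pi_{t_0})_*\rho$ with $\mu_0$ by testing against bounded continuous functions. All three require a sufficient supply of continuous functions (complete regularity at the very least); there exist regular Hausdorff spaces on which every continuous real-valued function is constant, and on such a space ``weak-* convergence'' carries no information at all, so limits are not unique, mass can vanish undetected, and distinct Borel probability measures cannot be distinguished. Your final claim that uniform tightness makes $\rho$ inner regular also fails in general: a Borel measure on a compact Hausdorff space need not be Radon (the Dieudonn\'{e} measure on the ordinal interval is the standard counterexample). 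This is precisely the difficulty the abstract framework is built around --- cf.\ \Cref{rem: def traj stats sol} on Topsoe's weak-* semicontinuity topology and the insistence on inner regular measures.

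The repair is the decomposition the paper uses, which removes the limit in $n$ altogether. With your nested sets $K_n$, put $D_n := K_n \setminus K_{n-1}$ (with $K_0 := \emptyset$), discard the indices with $\mu_0(D_n) = 0$, and lift the normalized restrictions $\mu_0(\cdot \cap D_n)/\mu_0(D_n)$ --- each is still concentrated on the compact set $K_n$, so your lifting lemma (including your correct remark that finite unions keep $C_n$ compact even if they leave $\CalK'(X_0)$) applies verbatim --- to trajectory statistical solutions $\rho^n$ carried by $C_n$. Then set $\rho := \sum_n \mu_0(D_n)\rho^n$. Since the $D_n$ are pairwise disjoint, $\Pi_{t_0}\rho = \sum_n \mu_0(\cdot \cap D_n) = \mu_0$ holds exactly, on every Borel set, with no limit passage and no test functions; $\rho$ is carried by the Borel ($F_\sigma$) set $\bigcup_n C_n \subset \CalU$, and its inner regularity follows from that of the Radon measures $\rho^n$ on their compact carriers together with countable additivity. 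Your tightness computation $\nu_n(C_N) = \mu_0(K_N)$ is correct but then unnecessary. In the Polish settings actually occurring in this paper your Prokhorov route would go through; it just does not prove the theorem at the stated level of generality.
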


We are going to use this theorem later in the first way described right before \Cref{thm: ex traj stats sol v2}. $X$ will be a very general space, whose natural topology is the one in which we have continuity of our solutions, e.g. $H^{-L}_{loc}(\R^2;\R^2)$ (see \Cref{def: Euler weak vel}), whereas $X_0$ incorporates the assumptions on the initial velocity and vorticity that are required to obtain solutions in each specific class, so that \textit{(H1')} is satisfied.\\
Different choices of $X$ or $X_0$ are absolutely possible though. Let us also point here towards Remark 3.7 in \cite{Bronzi2016}: In case of the Navier-Stokes equations on  a bounded domain, one could use \Cref{thm: ex traj stats sol} with the choice of $X = H_w$, i.e.\ $H$ endowed with the weak topology, as weak Leray-Hopf solutions are continuous with respect to this topology. Alternatively, one could apply \Cref{thm: ex traj stats sol v2} with the choice of $X_0 = H_w$ and $X = D(A^{-1/2})$, i.e.\ viewing Leray-Hopf solutions as continuous functions with values in the domain of sufficiently large negative powers of the Stokes operator.\\
\hfill\newline
For the existence result of statistical solutions in phase space, let us give some explanation beforehand in which sense the evolution equation $u_t = F(t,u)$ is to be understood.\\
As before, let $I \subset \R$ be some arbitrary interval, $Y$ a topological vector space and $X,Z$ Hausdorff spaces such that $Z \subset X \subset Y_{w*}'$ with continuous injections. As in \cite{Bronzi2016}, we say that $u: I \to Y'$ has a certain property $P$ weak-* scalarwise if for every $v \in Y$, the function $I \to \R, t \mapsto \langle u(t),v \rangle_{Y',Y}$, has the property $P$. Then we define $\CalZ$ to be the space of functions $u: I \to X$ such that $u(t) \in Z$ for almost every $t \in I$. Let $\CalY_1$ be the space of functions $u \in \CalX := C_{loc}(I;X)$ that are weak-* scalarwise absolutely continuous such that there exists $w: I \to X$ which is weak-* scalarwise locally integrable and satisfies $\frac{d}{dt} \langle u(t),v\rangle_{Y',Y} = \langle w(t),v\rangle_{Y',Y}$
for almost every $t \in I$ and for all $v \in Y$.

\begin{thm}\label{thm: phase space stats sol}
Let $X,Z$ be Hausdorff spaces and $Y$ be a topological vector space such that $Z \subset X \subset Y_{w*}'$ with continuous injections. Let $I \subset \R$ be an arbitrary interval and consider a function $F: I \times Z \to Y'$ as well as $\CalU \subset \CalX := C_{loc}(I;X)$. Suppose that $\rho$ is a $\CalU$-statistical solution with $\rho(\CalV) = 1$ for some $\CalV \subset \CalU$, which is Borel measurable in $\CalX$. We assume
\begin{enumerate}[i)]
\item $\CalB(Z) \subset \CalB(X)$;
\item $\CalU \subset \CalX_1 := \CalZ \cap \CalY_1$;
\item $F: I \times Z \to Y'$ is $(\CalL(I) \otimes \CalB(Z))$-$\CalB(Y')$ measurable such that
\begin{itemize}
\item $t \mapsto F(t,u(t))$ is weak-* scalarwise locally integrable on $I$ for $u \in \CalX_1$;
\item $u_t = F(t,u)$ in the weak sense for every $u \in \CalU$, i.e.\
\[\frac{d}{dt}\langle u(t),v\rangle_{Y',Y} = \langle F(t,u(t)),v\rangle_{Y',Y}\]
in the sense of distributions on $I$, for all $v \in Y$;
\end{itemize}
\item the mapping 
\[t \mapsto \int_{\CalV} |\langle F(t,u(t)),v\rangle_{Y',Y}|\,d\rho(u)\]
belongs to $L_{loc}^1(I)$ for every $v \in Y$.
\end{enumerate}
Then 
\[t \mapsto \int_\CalV \varphi(u(t))\,d\rho(u)\in C_b(I)\]
for every $\varphi \in C_b(X)$ and for every cylinder functional $\Phi$ in $Y'$ and $t',t \in I$, we have
\[\int_{\CalV} \Phi(u(t))\,d\rho(u) = \int_{\CalV} \Phi(u(t'))\,d\rho(u) + \int_{t'}^t\int_{\CalV} \langle F(s,u(s)),\Phi'(u(s))\rangle_{Y',Y}\,d\rho(u).\]
In particular, the family of projections $\lbrace \rho_t \rbrace_{t\in I}$, $\rho_t = \Pi_t\rho$, is a statistical solution in phase space of the equation $u_t = F(t,u)$ over $I$.  
\end{thm}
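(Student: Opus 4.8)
The plan is to exploit the pushforward relation $\rho_t = \Pi_t\rho$, which by the change of variables formula reads $\int_X \psi\,d\rho_t = \int_\CalV \psi(u(t))\,d\rho(u)$ for any bounded Borel $\psi$ on $X$ (using $\rho(\CalV)=1$), so that every assertion about the phase space family $\lbrace\rho_t\rbrace$ can be read off from a corresponding assertion about the trajectory integral. First I would treat the continuity claim: for fixed $t$ the evaluation map $\Pi_t:\CalX\to X$ is continuous, hence for $\varphi\in C_b(X)$ the map $u\mapsto\varphi(u(t))$ is bounded and Borel, giving both integrability and the uniform bound $\left|\int_\CalV\varphi(u(t))\,d\rho\right|\le\|\varphi\|_\infty$. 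Continuity in $t$ then follows from dominated convergence: for $t_n\to t$ and each fixed $u\in\CalV\subset C_{loc}(I;X)$ one has $u(t_n)\to u(t)$ in $X$ and thus $\varphi(u(t_n))\to\varphi(u(t))$, with the constant dominator $\|\varphi\|_\infty$. Since $I$ is first countable, this yields $t\mapsto\int_\CalV\varphi(u(t))\,d\rho\in C_b(I)$ and hence condition i) of \Cref{def: phase space stats sol}.

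Next, and this is the heart of the argument, I would establish the Foias-Liouville identity \emph{trajectory by trajectory} and then integrate. Fix a cylinder functional $\Phi(u)=\phi(\langle u,v_1\rangle,\dots,\langle u,v_k\rangle)$ with $\phi\in C_c^1(\R^k)$. For each $u\in\CalU\subset\CalX_1=\CalZ\cap\CalY_1$, the scalar functions $g_j(s):=\langle u(s),v_j\rangle_{Y',Y}$ are absolutely continuous by definition of $\CalY_1$, and by the weak equation in hypothesis iii) their derivatives are $g_j'(s)=\langle F(s,u(s)),v_j\rangle_{Y',Y}$ for a.e.\ $s$. The chain rule for the composition of the $C^1$ map $\phi$ with the absolutely continuous functions $g_j$ then shows that $s\mapsto\Phi(u(s))$ is absolutely continuous with $\frac{d}{ds}\Phi(u(s))=\sum_j\partial_j\phi(g_1(s),\dots,g_k(s))\langle F(s,u(s)),v_j\rangle_{Y',Y}$, which by the identification \eqref{eq: derivative cylinder fct} equals $\langle F(s,u(s)),\Phi'(u(s))\rangle_{Y',Y}$. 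The fundamental theorem of calculus hence gives, for every $u\in\CalU$,
\[\Phi(u(t))-\Phi(u(t'))=\int_{t'}^t\langle F(s,u(s)),\Phi'(u(s))\rangle_{Y',Y}\,ds.\]

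I would then integrate this pointwise identity in $u$ over $\CalV$ and swap the order of integration by Fubini. The dominating bound comes from $\phi\in C_c^1$: since each $\partial_j\phi$ is bounded, $|\langle F(s,u(s)),\Phi'(u(s))\rangle_{Y',Y}|\le C\sum_j|\langle F(s,u(s)),v_j\rangle_{Y',Y}|$, whose integral over $[t',t]\times\CalV$ is finite by hypothesis iv). Tonelli then secures absolute integrability and Fubini the interchange, producing the stated identity with the double integral $\int_{t'}^t\int_\CalV\langle F(s,u(s)),\Phi'(u(s))\rangle_{Y',Y}\,d\rho(u)\,ds$. Translating via the pushforward formula yields condition iv) of \Cref{def: phase space stats sol}; conditions ii) and iii) follow similarly, the carrier property $\rho_t(Z)=1$ for a.e.\ $t$ being obtained from $\int_I\rho_t(X\setminus Z)\,dt=\int_\CalV|\lbrace s: u(s)\notin Z\rbrace|\,d\rho(u)=0$ (using $\CalU\subset\CalZ$ together with $\CalB(Z)\subset\CalB(X)$ from hypothesis i)), while the $L^1_{loc}$ property is immediate from hypothesis iv).

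The step I expect to require the most care is the \emph{joint measurability} needed to invoke Fubini: one must verify that $(s,u)\mapsto\langle F(s,u(s)),v_j\rangle_{Y',Y}$ is $(\CalL(I)\otimes\CalB(\CalX))$-measurable. This rests on the joint continuity of the evaluation map $I\times C_{loc}(I;X)\to X$ (valid since an interval $I\subset\R$ is locally compact), the measurability of $F$ as a map $I\times Z\to Y'$ postulated in hypothesis iii), the inclusion $\CalB(Z)\subset\CalB(X)$ from hypothesis i) used to pass from $X$- to $Z$-measurability along $\rho$-a.e.\ trajectory, and the Borel measurability on $Y'_{w*}$ of the pairing $\langle\cdot,v_j\rangle_{Y',Y}$. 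Once this measurability is in place, the remaining estimates are routine and the four conditions of \Cref{def: phase space stats sol} are verified, completing the proof.
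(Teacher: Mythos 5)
You should first note that the paper itself does not prove this theorem: it is quoted from \cite{Bronzi2016} as part of the abstract framework, and the only place where the paper actually runs the argument is the concrete Yudovich-class instance in the proof of \Cref{thm: ex phase space stats sol infty}. Your proposal is correct and is essentially that same argument (and the one in \cite{Bronzi2016}): the trajectory-wise chain rule plus fundamental theorem of calculus for cylinder functionals, using the scalarwise absolute continuity built into $\CalY_1$ together with the weak equation and the local integrability of $t\mapsto\langle F(t,u(t)),v\rangle_{Y',Y}$ to identify the a.e.\ derivative; then integration in $u$ over $\CalV$ and a Fubini interchange dominated through hypothesis iv); and the carrier property of $\rho_t$ for a.e.\ $t$ via Tonelli applied to the indicator of $\lbrace (t,u) : u(t)\notin Z\rbrace$, which is legitimate because $Z\in\CalB(Z)\subset\CalB(X)$ by hypothesis i).

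The one place where your justification should be sharpened is exactly the measurability step you flag. Joint continuity of the evaluation map gives measurability with respect to $\CalB(I\times\CalX)$, but Fubini and Tonelli require measurability with respect to the product $\sigma$-algebra $\CalL(I)\otimes\CalB(\CalX)$, and for a non-second-countable space $\CalX = C_{loc}(I;X)$ the Borel $\sigma$-algebra of the product topology can be strictly larger than the product of the $\sigma$-algebras; moreover, $F$ is merely measurable, so you cannot push any pointwise limit through $F$. The standard repair is to prove product measurability of $(t,u)\mapsto u(t)$ directly, by approximating it with piecewise-constant-in-time evaluations $(t,u)\mapsto u(\theta_n(t))$ (each product measurable, being a countable union of rectangle-type sets built from the continuous maps $u \mapsto u(\theta)$) and passing to the pointwise limit---licit when $X$ is metrizable, as it is in every application in this paper ($\mathbb{E}$, $H_{loc}$, $H^{-L}_{loc}(\R^2;\R^2)$)---and only then to compose with $F$ on the product-measurable set $\lbrace (t,u): u(t)\in Z\rbrace$, extending by zero on its $dt\otimes\rho$-null complement. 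In the full Hausdorff generality of the statement this is precisely the content of Proposition 2.1 of \cite{Bronzi2016}, which the paper itself invokes as a black box in \Cref{lem: meas F}; citing that proposition (or restricting to metrizable $X$) closes this point, and with that your outline is complete.
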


The statistical solutions in phase space as constructed in \Cref{thm: phase space stats sol} by projecting trajectory statistical solutions in time will also be called \emph{projected statistical solutions in phase space}. It may not always be clear whether every statistical solution in phase space is already a projected statistical solution in phase space.\\ 
In combination with \Cref{thm: ex traj stats sol} and \Cref{thm: ex traj stats sol v2}, we obtain the following results.

\begin{thm}\label{thm: ex phase space stats sol}
Let $X$ and $Z$ be Hausdorff spaces and $Y$ be a topological vector space such that $Z \subset X \subset Y'_{w*}$ with continuous injections. Let $I \subset \R$ be an arbitrary interval, which is closed and bounded on the left with left endpoint $t_0$, and let $\CalU \subset \CalX := C_{loc}(I;X)$ satisfy the hypotheses (H1), (H2) and (H3) of \Cref{thm: ex traj stats sol}. We assume
\begin{enumerate}[({H}1)]
\setcounter{enumi}{3}
\item $\CalB(Z) \subset \CalB(X)$;
\item $\CalU \subset \CalX_1 := \CalZ \cap \CalY_1$ (see the explanation above \Cref{thm: phase space stats sol}) and $F: I \times Z \to Y'$ is $(\CalL(I) \otimes \CalB(Z))$-$\CalB(Y')$ measurable such that
\begin{itemize}
\item $t \mapsto F(t,u(t))$ is weak-* scalarwise locally integrable on $I$ for $u \in \CalX_1$;
\item $u_t = F(t,u)$ in the weak sense for every $u \in \CalU$, i.e.\
\[\frac{d}{dt}\langle u(t),v\rangle_{Y',Y} = \langle F(t,u(t)),v\rangle_{Y',Y}\]
in the sense of distributions on $I$, for all $v \in Y$;
\end{itemize}
\item there exists a function $\gamma: I \times X \times Y \to \R$ such that for every $v \in Y$ the mapping $(t,u) \mapsto \gamma(t,u,v)$ is $\CalL(I)\otimes\CalB(X)$ measurable and
\[\int_{t_0}^t |\langle F(s,u(s)),v\rangle_{Y',Y}|\,ds \leq \gamma(t,u(t_0),v)\]
for all $t \in I, u \in \CalU$.
\end{enumerate} 
Then, for any inner regular Borel probability measure $\mu_0$ on $X$, satisfying
\begin{equation}\label{eq: finiteness moments phase space stats sol}
\int_X \gamma(t,u_0,v)\,d\mu_0(u_0) < \infty
\end{equation}
for almost every $t \in I$ and all $v \in Y$, there exists a projected statistical solution $\lbrace \rho_t \rbrace_{t \in I}$ such that $\rho_{t_0} = \mu_0$.
\end{thm}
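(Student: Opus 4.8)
The plan is to read this statement as a corollary of the two preceding existence theorems: first produce a trajectory statistical solution via \Cref{thm: ex traj stats sol}, then project it in time using \Cref{thm: phase space stats sol}. So I would begin by feeding (H1), (H2), (H3) and the given inner regular probability measure $\mu_0$ into \Cref{thm: ex traj stats sol}, which returns a $\CalU$-trajectory statistical solution $\rho$ on $\CalX = C_{loc}(I;X)$ satisfying $\Pi_{t_0}\rho = \mu_0$. By \Cref{def:traj stats sol} there is a Borel measurable carrier $\CalV \subset \CalU$ with $\rho(\CalV)=1$. The bulk of the work is then to verify that $\rho$ (with this $\CalV$) satisfies the hypotheses of \Cref{thm: phase space stats sol}; conditions i), ii), iii) there are literally (H4) and (H5), so only condition iv) needs to be established.

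To obtain condition iv) I would fix $v \in Y$ and run a Tonelli argument driven by the a priori bound (H6). The integrand $(s,u)\mapsto |\langle F(s,u(s)),v\rangle_{Y',Y}|$ is nonnegative; assuming its joint measurability (addressed below), Tonelli on $[t_0,t]\times\CalV$ together with the pointwise estimate of (H6) on $\CalV \subset \CalU$ gives
\begin{align*}
\int_{t_0}^t\!\int_{\CalV} |\langle F(s,u(s)),v\rangle_{Y',Y}|\,d\rho(u)\,ds
&= \int_{\CalV}\!\int_{t_0}^t |\langle F(s,u(s)),v\rangle_{Y',Y}|\,ds\,d\rho(u)\\
&\leq \int_{\CalV} \gamma(t,u(t_0),v)\,d\rho(u).
\end{align*}
Since $u(t_0)=\Pi_{t_0}(u)$ and $\Pi_{t_0}\rho=\mu_0$, the change-of-variables (transport) formula applied to the measurable map $u_0\mapsto\gamma(t,u_0,v)$ rewrites the right-hand side as $\int_X\gamma(t,u_0,v)\,d\mu_0(u_0)$, which is finite for almost every $t$ by \eqref{eq: finiteness moments phase space stats sol}. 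Choosing such $t$ arbitrarily close to $\sup I$ then forces $s\mapsto\int_{\CalV}|\langle F(s,u(s)),v\rangle_{Y',Y}|\,d\rho(u)$ to be integrable on every compact subinterval of $I$, i.e. to lie in $L^1_{loc}(I)$, which is exactly condition iv). Once all four conditions hold, \Cref{thm: phase space stats sol} delivers that $\rho_t := \Pi_t\rho$ is a phase space statistical solution of $u_t=F(t,u)$ with $\rho_{t_0}=\Pi_{t_0}\rho=\mu_0$, completing the argument.

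The step I expect to require the most care is the joint measurability that licenses Tonelli. For this I would argue that $(s,u)\mapsto u(s)$ is Carath\'eodory --- continuous in $u$ for fixed $s$ (evaluation is continuous in the compact-open topology on $\CalX$) and continuous in $s$ for fixed $u$ --- hence $\CalL(I)\otimes\CalB(\CalX)$-measurable, and then compose with the $(\CalL(I)\otimes\CalB(Z))$-$\CalB(Y')$ measurable map $F$, using $\CalB(Z)\subset\CalB(X)$ from (H4) to handle that $u(s)\in Z$ for almost every $s$. Everything else is a routine bookkeeping combination of the two prior theorems with the bound (H6) and the moment condition \eqref{eq: finiteness moments phase space stats sol}; the real content has already been carried by \Cref{thm: ex traj stats sol} and \Cref{thm: phase space stats sol}.
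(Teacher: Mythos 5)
Your proposal is correct and takes essentially the paper's own route: the paper presents this theorem (following \cite{Bronzi2016}) as the immediate combination of \Cref{thm: ex traj stats sol}, which supplies the trajectory statistical solution $\rho$ with $\Pi_{t_0}\rho=\mu_0$, and the projection result \Cref{thm: phase space stats sol}, whose hypotheses i)--iii) are precisely (H4)--(H5) and whose remaining condition iv) you verify exactly as intended, via Tonelli, the bound (H6), the pushforward identity $\Pi_{t_0}\rho=\mu_0$, and the moment condition \eqref{eq: finiteness moments phase space stats sol}. The joint-measurability point you single out as delicate is indeed the one nontrivial technical ingredient, and it is what Proposition 2.1 of \cite{Bronzi2016} (measurability of the Nemytskii operator, invoked in the paper in \Cref{lem: meas F}) is for.
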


\begin{thm}\label{thm: ex phase space stats sol v2}
Consider the framework of \Cref{thm: ex phase space stats sol} and let $X_0$ be a Borel subset of $X$. Suppose, however, that $\CalU$ satisfies the conditions (H1'), (H2') and (H3') in \Cref{thm: ex traj stats sol v2} instead of (H1), (H2) and (H3). If also (H4), (H5) and (H6) from \Cref{thm: ex phase space stats sol} hold, then for any inner regular Borel probability measure $\mu_0$ on $X$ which is carried by $X_0$ and satisfies \eqref{eq: finiteness moments phase space stats sol}, there exists a projected statistical solution $\lbrace \rho_t \rbrace_{t \in I}$ such that $\rho_{t_0} = \mu_0$.
\end{thm}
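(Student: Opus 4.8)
The plan is to prove this statement as a direct assembly of two results already established in the excerpt, following the very same scheme as \Cref{thm: ex phase space stats sol} but feeding in the primed trajectory existence result in place of the unprimed one. Concretely, I would first invoke \Cref{thm: ex traj stats sol v2} to manufacture a trajectory statistical solution attaining $\mu_0$ at the initial time, and then project it in time using \Cref{thm: phase space stats sol}. For the first step, the hypotheses are exactly what is assumed: $\CalU$ satisfies (H1'), (H2') and (H3') with respect to the Borel set $X_0 \subset X$, and $\mu_0$ is an inner regular Borel probability measure on $X$ carried by $X_0$. Thus \Cref{thm: ex traj stats sol v2} supplies a $\CalU$-trajectory statistical solution $\rho$ over $I$ with $\Pi_{t_0}\rho = \mu_0$, and in particular a Borel set $\CalV \subset \CalU$ with $\rho(\CalV) = 1$.

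Next I would check that $\rho$ meets the hypotheses of \Cref{thm: phase space stats sol}. Conditions i), ii) and iii) there, namely $\CalB(Z) \subset \CalB(X)$, the inclusion $\CalU \subset \CalX_1 = \CalZ \cap \CalY_1$, and the measurability of $F$ together with the weak-* scalarwise local integrability of $t \mapsto F(t,u(t))$ and the weak form $u_t = F(t,u)$, are precisely the assumptions (H4) and (H5) carried over from \Cref{thm: ex phase space stats sol}. The only condition requiring genuine verification is iv), the local integrability of $t \mapsto \int_{\CalV} |\langle F(t,u(t)),v\rangle_{Y',Y}|\,d\rho(u)$ for each $v \in Y$.

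To establish iv) I would argue by Tonelli's theorem on the non-negative integrand. For $t \in I$,
\[
\int_{t_0}^t \int_{\CalV} |\langle F(s,u(s)),v\rangle_{Y',Y}|\,d\rho(u)\,ds = \int_{\CalV} \left( \int_{t_0}^t |\langle F(s,u(s)),v\rangle_{Y',Y}|\,ds \right) d\rho(u).
\]
By (H6) the inner time-integral is bounded by $\gamma(t,u(t_0),v)$ for every $u \in \CalU \supset \CalV$, and since $\Pi_{t_0}\rho = \mu_0$ the pushforward under the evaluation map $u \mapsto u(t_0)$ converts the outer integral into $\int_X \gamma(t,u_0,v)\,d\mu_0(u_0)$, which is finite for almost every $t$ by \eqref{eq: finiteness moments phase space stats sol}. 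As the time-primitive $t \mapsto \int_{t_0}^t \int_{\CalV} |\langle F(s,u(s)),v\rangle_{Y',Y}|\,d\rho(u)\,ds$ is non-decreasing and finite for a.e.\ $t$, the integrand (measurable by the measurability part of (H5)) lies in $L^1_{loc}(I)$, which is iv). \Cref{thm: phase space stats sol} then yields the projected statistical solution $\lbrace \rho_t \rbrace_{t \in I}$ with $\rho_{t_0} = \Pi_{t_0}\rho = \mu_0$.

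The only point demanding care, and hence the \emph{main obstacle} such as it is, lies entirely in iv): one must ensure the time-integral estimate from (H6) holds uniformly over the carrier $\CalV$, and that the change of variables $\int_{\CalV} \gamma(t,u(t_0),v)\,d\rho(u) = \int_X \gamma(t,u_0,v)\,d\mu_0(u_0)$ is legitimate, which is exactly why (H6) insists on $\CalL(I)\otimes\CalB(X)$ measurability of $(t,u) \mapsto \gamma(t,u,v)$ and why $X_0$ is required to be Borel. Everything else is inherited verbatim from \Cref{thm: ex traj stats sol v2} and \Cref{thm: phase space stats sol}, so the proof is essentially identical to that of \Cref{thm: ex phase space stats sol} with the primed hypotheses substituted throughout.
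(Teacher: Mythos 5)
Your proposal is correct and is essentially the intended argument: the paper itself states this theorem without proof, importing it from \cite{Bronzi2016}, where it is established exactly as you do --- by feeding the primed hypotheses into \Cref{thm: ex traj stats sol v2} to get a trajectory statistical solution $\rho$ with $\Pi_{t_0}\rho = \mu_0$, and then applying \Cref{thm: phase space stats sol}, the only substantive check being condition iv) via (H6), Tonelli's theorem, and the pushforward identity $\int_{\CalV}\gamma(t,u(t_0),v)\,d\rho(u) = \int_X \gamma(t,u_0,v)\,d\mu_0(u_0)$. Your identification of that integrability verification as the sole point of substance, with everything else inherited verbatim, matches the source argument.
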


We remark that $\gamma$, as defined by us in \eqref{eq: comp bdd L2_loc weak sols}, has a slightly different form compared to $\gamma$ in \eqref{eq: finiteness moments phase space stats sol}, but it will play the same role. 

\section{Trajectory statistical solutions of the Euler equations}
In this section, we are going to construct trajectory statistical solutions of the Euler equations in each of the discussed classes of \Cref{sec: euler} and of the Navier-Stokes equations. We will begin by discussing trajectory statistical solutions in the class of weak solutions as in $A)$ in \Cref{thm: solutions of euler}, i.e.\ weak solutions of the Euler equations with uniformly bounded vorticity and in the class of weak solutions of the Navier-Stokes equations. These two cases are somewhat similar as we do have unique solutions. Moreover, we will not need the existence results from \Cref{sec: abstract framework} but instead construct the trajectory statistical solutions in a more straightforward way as pushforward measures of the initial distribution under the corresponding solution operator. Our only task here is to check the required measurability properties.\\
In the second part of this section, we consider the remaining classes of solutions $B)$ - $D)$ in \Cref{thm: solutions of euler}. Even though each case there has its own intricacies, all of them may be handled similarly by making use of the existence result \Cref{thm: ex traj stats sol v2}. We will in fact demonstrate the idea behind \Cref{thm: ex traj stats sol} and \Cref{thm: ex traj stats sol v2} in the particular case of $B)$, when the vorticity is in $L^p(\R^2)$, $1 < p < \infty$. 

\subsection{Vorticity in $L^\infty(\R^2)$ and the Navier-Stokes class}\label{sec: stats sol infty}\label{sec: traj sols infty and NS}

We consider the spaces 
\begin{itemize}
\item $X^\infty = X_{NS}^\nu = \mathbb{E}$;
\item $X_0^\infty = \lbrace u_0 \in \mathbb{E} : \omega(u_0) \in L^1(\R^2) \cap L^\infty(\R^2)\rbrace$;
\item $\CalU^\infty,$ the space of weak solutions of the Euler equations $u \in C([0,T];\mathbb{E})$ with vorticity $\omega(u) \in C([0,T];L^q(\R^2)) \cap L^\infty(0,T;L^1(\R^2)\cap L^\infty(\R^2))$, $1 \leq q < \infty$, and initial data in $X_0^\infty$;
\item $\CalU^\nu_{NS}$, the space of weak solutions of the Navier-Stokes equations $u \in C([0,T];\mathbb{E})$ such that $\nabla u \in L^2(0,T;L^2(\R^{2\times 2}))$ with initial data in $\mathbb{E}$ and viscosity $\nu$,  
\end{itemize}
where $\nu > 0$ will be a fixed constant in the following, unless stated otherwise.

\begin{remark}
\begin{enumerate}[i)]
\item We introduced $X^\infty = X^\nu_{NS} = \mathbb{E}$ here to indicate the connection to \Cref{def:traj stats sol}. However, hoping to keep things clearer, we will usually not use this notation but rather write out $\mathbb{E}$, so it is particularly clear which norm or topology is considered. 
\item Unlike in the next subsection, properties such as \eqref{eq: a priori est vort} need not be explicitly demanded in the definition of $\CalU^\infty$ since all weak solutions in $\CalU^\infty$ have vorticities that are renormalized solutions of the vorticity formulation of the Euler equations.
\end{enumerate} 
\end{remark}

There exists a solution operator $S^\infty: X_0^\infty \to \CalU^\infty \subset C([0,T];\mathbb{E})$ so that for any $u_0 \in X_0^\infty$, $S^\infty(u_0)$ is the unique weak solution of the Euler equations with initial data $u_0$ as given by \Cref{thm: solutions of euler}. Likewise, there exists an operator $S^\nu_{NS}: \mathbb{E} \to \CalU_{NS}^\nu \subset C([0,T];\mathbb{E})$ so that $S^\nu_{NS}(u_0)$ is the unique weak solution of the Navier-Stokes equations with initial data $u_0$ and viscosity $\nu$ as given by \Cref{thm: unique existence NSE}.\\
For instance in the latter case, given an initial distribution $\mu_0$, the natural way to describe the distribution $\rho_{NS}^\nu$ of the solutions in $\CalU^\nu_{NS}$ with respect to $\mu_0$ would be to define $\rho_{NS}^\nu$ as the pushforward measure $S_{NS}^\nu\mu_0$, that is $\rho_{NS}^\nu(A) = \mu_0((S_{NS}^\nu)^{-1}(A))$ for all Borel measurable sets $A$ in $C([0,T];\mathbb{E})$. This would immediately imply that $\rho_{NS}^\nu$ is carried by $\CalU^\nu_{NS}$ in the sense of \Cref{def:traj stats sol} if we could find a measurable subset of $\CalU^\nu_{NS}$ in $C([0,T];\mathbb{E})$ with measure $1$, or just show that $\CalU^\nu_{NS}$ is measurable itself. This definition, however, requires that $S^\nu_{NS}$ is Borel measurable.\\
We begin by proving the Borel measurability of $X_0^\infty$ in $\mathbb{E}$. 

\begin{lem}\label{lem: measurability X^infty}
The space $X_0^\infty$ is Borel measurable in $\mathbb{E}$.
\end{lem}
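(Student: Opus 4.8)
The plan is to write $X_0^\infty$ as an intersection of two Borel sets, one capturing the condition $\omega(u_0)\in L^\infty(\R^2)$ and one capturing $\omega(u_0)\in\CalM(\R^2)$, and then to exploit that a finite measure which additionally lies in $L^\infty(\R^2)$ is automatically in $L^1(\R^2)$. The basic building block is that for each $\varphi\in C_c^\infty(\R^2)$ the map
\[
\mathbb{E}\ni u_0\longmapsto \langle\omega(u_0),\varphi\rangle = \int_{\R^2} u_0\cdot\nabla^\perp\varphi\,dx
\]
is continuous: if $u_0^j\to u_0$ in $\mathbb{E}$ then $m(u_0^j)\to m(u_0)$ and $u_{0,kin}^j\to u_{0,kin}$ in $L^2(\R^2)$, hence $u_0^j\to u_0$ in $L^2_{loc}(\R^2)$, and $\nabla^\perp\varphi$ has compact support. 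Fixing a standard mollifier, I set $\omega_{1/k}(u_0)(x):=(\omega(u_0)*\eta_{1/k})(x)=\langle\omega(u_0),\eta_{1/k}(x-\cdot)\rangle$, so that $u_0\mapsto\omega_{1/k}(u_0)(x)$ is continuous for every fixed $x$ and $k$.

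First I would show that $\{u_0:\omega(u_0)\in L^\infty(\R^2)\}=\{u_0:\sup_k\|\omega_{1/k}(u_0)\|_{L^\infty(\R^2)}<\infty\}$ is Borel. Since $\omega_{1/k}(u_0)$ is continuous in $x$, one has $\|\omega_{1/k}(u_0)\|_{L^\infty(\R^2)}=\sup_{x\in\mathbb{Q}^2}|\omega_{1/k}(u_0)(x)|$, so each of these maps is lower semicontinuous and $u_0\mapsto\sup_k\|\omega_{1/k}(u_0)\|_{L^\infty(\R^2)}$ is Borel. The equality of the two sets follows from $\|\omega*\eta_{1/k}\|_{L^\infty}\le\|\omega\|_{L^\infty}$ for $\omega\in L^\infty(\R^2)$ and, conversely, from the fact that a family $(\omega_{1/k}(u_0))_k$ bounded in $L^\infty(\R^2)=(L^1(\R^2))'$ has a weak-* limit point, which must coincide with the distributional limit $\omega(u_0)$.

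Analogously I would treat $\{u_0:\omega(u_0)\in\CalM(\R^2)\}=\{u_0:\sup_k\|\omega_{1/k}(u_0)\|_{L^1(\R^2)}<\infty\}$. Writing $B_R$ for the ball of radius $R$ about the origin, one has $\|\omega_{1/k}(u_0)\|_{L^1(\R^2)}=\sup_{R\in\N}\int_{B_R}|\omega_{1/k}(u_0)(x)|\,dx$, and each $u_0\mapsto\int_{B_R}|\omega_{1/k}(u_0)(x)|\,dx$ is continuous by dominated convergence (pointwise continuity in $u_0$ together with the uniform bound $|\omega_{1/k}(u_0)(x)|\le C(k)\|u_0\|_{\mathbb{E}}$ valid for $x\in B_R$), so this map is Borel. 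For the set equality one uses $\|\omega*\eta_{1/k}\|_{L^1}\le|\omega|(\R^2)$ for finite measures, and conversely that boundedness of $(\omega_{1/k}(u_0))_k$ in $L^1(\R^2)\hookrightarrow\CalM(\R^2)=C_0(\R^2)'$ produces a weak-* limit point in $\CalM(\R^2)$, again equal to the distributional limit $\omega(u_0)$.

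Finally I would combine the two conditions: on the set $\{\omega(u_0)\in L^\infty(\R^2)\}$ the vorticity is locally integrable, so belonging additionally to $\CalM(\R^2)$ forces $\int_{\R^2}|\omega(u_0)|\,dx=|\omega(u_0)|(\R^2)<\infty$, i.e.\ $\omega(u_0)\in L^1(\R^2)$. Hence
\[
X_0^\infty=\{u_0:\omega(u_0)\in L^\infty(\R^2)\}\cap\{u_0:\omega(u_0)\in\CalM(\R^2)\}
\]
is an intersection of two Borel sets and therefore Borel. I expect the main obstacle to be the $L^1$ membership: it cannot be detected by pairing with $C_c^\infty$ test functions alone, since that only characterizes finite measures (a Dirac mass would pass the same test), which is precisely why the $L^\infty$ condition is invoked to upgrade ``finite measure'' to ``$L^1$ density''.
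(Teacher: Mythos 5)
Your proposal is correct, but it takes a genuinely different route from the paper's. The paper writes $X_0^\infty=\bigcup_{k\in\N}A_k$ with $A_k=\lbrace u\in X_0^\infty:\|\omega(u)\|_{L^1(\R^2)},\|\omega(u)\|_{L^\infty(\R^2)}\le k\rbrace$ and shows each $A_k$ is \emph{closed} in $\mathbb{E}$: along a sequence in $A_k$ converging in $\mathbb{E}$, the vorticities admit weak-* convergent subsequences in $\CalM(\R^2)$ and in $L^\infty(\R^2)$, the limits are identified with $\omega(u)$ through the $\mathbb{E}$-convergence, and weak-* lower semicontinuity of the norms preserves the bound $k$. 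You instead decompose $X_0^\infty$ as the intersection $\lbrace\omega(u_0)\in L^\infty(\R^2)\rbrace\cap\lbrace\omega(u_0)\in\CalM(\R^2)\rbrace$ --- legitimate, since a distribution that is simultaneously an $L^\infty$ function and a finite measure has integrable density --- and you exhibit each of the two sets as the finiteness set of a Borel functional built from mollified norms, Borel because it is a countable supremum of functionals that are continuous (or lower semicontinuous) on $\mathbb{E}$. Both proofs rest on the same duality compactness ($\CalM(\R^2)\cong C_0(\R^2)'$ and $L^\infty(\R^2)=(L^1(\R^2))'$, plus identification of the weak-* limit with the distributional limit $\omega(u_0)$), but they deploy it differently: the paper uses it to prove closedness of sublevel sets, while you need it only for the implication that uniformly bounded mollifications force membership in the respective space. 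Your route buys two things: the mollification functionals you construct are essentially the device the paper introduces later in \Cref{lem: measurability lp-norm vorticity} to prove Borel measurability of $u\mapsto\|\omega(u)\|_{L^p(\R^2)}$, so your argument unifies that lemma with this one; and your closing observation --- that $L^1$ membership cannot be detected by testing against $C_c^\infty$ alone, which is exactly why you pass through $\CalM(\R^2)$ and upgrade via the $L^\infty$ bound --- is precisely the obstruction the paper must later overcome by a separate total-variation Cauchy-sequence argument for $X_0^1$ in \Cref{lem: measurability L1 in M,lem: measurability X^1}; your trick sidesteps it here, but only because the $L^\infty$ information is available. The paper's route, in turn, is shorter, yields the slightly stronger conclusion that the pieces $A_k$ are closed rather than merely Borel, and its closedness argument is recycled almost verbatim for $\CalU^\infty$, $X_0^p$ and $X_0^{VS}$ in \Cref{lem: meas U infty nu,lem: measurability X^p X^VS}.
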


\begin{proof}
Letting $A_k = \lbrace u \in X_0^\infty : \|\omega(u)\|_{L^1(\R^2)}, \|\omega(u)\|_{L^\infty(\R^2)} \leq k\rbrace$ for all $k \in \N$, we may write
\[X_0^\infty = \bigcup_{k\in\N} A_k.\]
Therefore, to prove the lemma, it suffices to prove that $A_k$ is closed in $\mathbb{E}$. Let $(u^n)_{n\in\N}$ be a sequence in $A_k$ which converges in $\mathbb{E}$ to some $u$. Due to the definition of $A_k$ and weak-* compactness, we may also find subsequences for which
\begin{align}
&\omega(u^n) \overset{\ast}{\rightharpoonup} \omega(u)\,(n \to \infty) \text{ in } \CalM(\R^2),\\
&\omega(u^n) \overset{\ast}{\rightharpoonup} \omega(u)\,(n \to \infty) \text{ in } L^\infty(\R^2).
\end{align} 
The fact that on the right-hand side we could write $\omega(u)$ instead of some unspecific element in the according spaces follows from the convergence of $(u^n)_{n\in\N}$ in $\mathbb{E}$. Now with $\omega(u)$ being an element of $L^\infty(\R^2) \subset L^1_{loc}(\R^2)$, we obtain from weak-* convergence in the sense of measures that for any $r > 0$
\[\|\omega(u)\|_{L^1(B_r(0))} \leq \liminf_{n\to\infty} \|\omega(u^n)\|_{L^1(B_r(0))} \leq k,\]
so that $\|\omega(u)\|_{L^1(\R^2)} \leq k$. By also concluding from weak-* convergence in $L^\infty(\R^2)$ that $\|\omega(u)\|_{L^\infty(\R^2)} \leq k$, we obtain $u \in A_k$.
\end{proof}

Before proceeding with the following lemma, let us define the smoothing operators $(\CalJ^\varepsilon)_{\varepsilon > 0}$: Fix a non-negative $\eta \in C_c^\infty(\R^2)$ having support in the closed unit ball $\overline{B}_1(0)$ and satisfying $\int_{\R^2}\eta \,dx = 1$. Then define $\eta^{\varepsilon}(x) = \frac{1}{\varepsilon^2}\eta\left(\frac{x}{\varepsilon}\right), x \in \R^2,$ and finally let 
\begin{equation}\label{eq: smoothing op}
\CalJ^{\varepsilon} f := \eta^{\varepsilon} * f
\end{equation}
for all $f \in L^1_{loc}(\R^2)$.

\begin{lem}\label{lem: measurability S infty nu}
The operator $S^\infty$ is $\CalB(X_0^\infty)$-$\CalB(C([0,T];\mathbb{E}))$ measurable, the operator $S^\nu_{NS}$ is $\CalB(\mathbb{E})$-$\CalB(C([0,T];\mathbb{E}))$ measurable.
\end{lem}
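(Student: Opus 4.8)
The two operators are of rather different difficulty, so I would treat them separately; in both cases the uniqueness statements in \Cref{thm: solutions of euler} A) and \Cref{thm: unique existence NSE} are what let the limits be identified. For $S^\nu_{NS}$ I expect outright continuity, which gives measurability for free. Let $u_0^k \to u_0$ in $\mathbb{E}$ and write $u^k := S^\nu_{NS}(u_0^k)$. The initial data are then Cauchy in $\mathbb{E}$, and since $\|u^k_{kin}(0)\|_{L^2(\R^2)}$ stays bounded, the energy inequality \eqref{eq: energy inequ NSE} bounds $\nabla u^k_{kin}$ in $L^2(0,T;L^2(\R^{2\times2}))$ uniformly in $k$. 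By the stability estimate \eqref{eq: stab est smooth sols} in its Ladyzhenskaya form \eqref{eq: NSE Ladyzhenskaya} (precisely the Cauchy criterion recorded just below it) the sequence $(u^k)_k$ is Cauchy in $C([0,T];\mathbb{E})$, hence converges to some $u^\ast$; passing to the limit in the weak formulation shows $u^\ast$ solves the Navier--Stokes equations with $u^\ast(0)=u_0$, so $u^\ast = S^\nu_{NS}(u_0)$ by uniqueness. Thus $S^\nu_{NS}$ is sequentially, hence, continuous.

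For $S^\infty$ one cannot expect continuity, since convergence in $\mathbb{E}$ controls only the total mass of vorticity and the finite-energy part, not the vorticity in $L^p(\R^2)$. I would instead realise $S^\infty$ as a pointwise limit of continuous maps built from the mollifiers \eqref{eq: smoothing op}, so that it is of Baire class one and therefore Borel. As $C([0,T];\mathbb{E})$ is a separable metric space and $u_0 \mapsto m(u_0)$ is continuous, I first reduce the claim, via a density argument over rational times together with the Pettis measurability theorem on the separable space $\mathbb{E}$, to the measurability, for each fixed $t\in[0,T]$ and each $\phi$ in a countable subset of $C_c^\infty(\R^2;\R^2)\cap H$, of the scalar maps $u_0 \mapsto \langle S^\infty(u_0)(t),\phi\rangle_{L^2(\R^2)}$.

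Fix $\varepsilon_k = 1/k$ and set $g_k := S^\infty \circ \CalJ^{\varepsilon_k}$ on $X_0^\infty$; note $\CalJ^{\varepsilon_k}u_0$ is smooth, stays in $E_{m(u_0)}$, and has vorticity in $L^1\cap L^\infty$, so $g_k$ is well defined with values in $\CalU^\infty$. Each scalar map $u_0 \mapsto \langle g_k(u_0)(t),\phi\rangle_{L^2(\R^2)}$ is continuous: for smooth data the reference solution has $\nabla v_2\in L^\infty$, so \eqref{eq: stab est smooth sols} applies with a finite exponential prefactor and its right-hand side vanishes as the data converge in $\mathbb{E}$ (the kinetic parts converge in $L^2(\R^2)$, and $m(\cdot)$, hence the stationary parts, converge). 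It remains to prove the pointwise convergence $\langle g_k(u_0)(t),\phi\rangle_{L^2(\R^2)} \to \langle S^\infty(u_0)(t),\phi\rangle_{L^2(\R^2)}$ for each fixed $u_0\in X_0^\infty$. Since $\omega(\CalJ^{\varepsilon_k}u_0)=\CalJ^{\varepsilon_k}\omega(u_0)\to\omega(u_0)$ in $L^p(\R^2)$, \Cref{lem: stab renormalized sols} gives, along a subsequence, convergence of the vorticities $\omega(g_k(u_0))$ in $C([0,T];L^p(\R^2))$ to the renormalized solution with datum $\omega(u_0)$, which Yudovich uniqueness identifies as $\omega(S^\infty(u_0))$, whence the full sequence converges.

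The main obstacle is the passage to the limit in this pairing, because the slow decay of the Biot--Savart kernel obstructs any upgrade to strong $L^2(\R^2)$ convergence of the velocities; the scalar reduction is precisely what circumvents this. Using \Cref{lem: Biot-Savart} and Fubini I would rewrite $\langle g_k(u_0)(t),\phi\rangle_{L^2(\R^2)} = \langle \omega(g_k(u_0))(t),\, \check{K} \ast \phi\rangle$, where $\check{K} \ast \phi$ is a fixed function that is bounded and decays like $|x|^{-1}$. Splitting $\R^2$ into $\{|x|<R\}$ and $\{|x|\ge R\}$, the near-field contribution of $\omega(g_k(u_0))(t)-\omega(S^\infty(u_0))(t)$ is controlled by the $C([0,T];L^p)$-convergence just obtained (on $\{|x|<R\}$ one has $\check{K}\ast\phi\in L^{p'}$), while the far-field contribution is bounded by $\|\omega(g_k(u_0))(t)-\omega(S^\infty(u_0))(t)\|_{L^1(\R^2)}\,\|\check{K}\ast\phi\|_{L^\infty(\{|x|\ge R\})} \le C R^{-1}$, uniformly in $k$ by the conservation of the $L^1$-norm of renormalized vorticity. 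Letting first $R\to\infty$ and then $k\to\infty$ yields the pointwise convergence, and the Baire-class-one argument then gives the Borel measurability of $S^\infty$; the measurability of its domain $X_0^\infty$ is \Cref{lem: measurability X^infty}.
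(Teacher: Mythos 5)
Your proposal is correct in substance, but it parts ways with the paper at both key steps, in opposite directions. For $S^\nu_{NS}$ you prove outright continuity, using the Cauchy criterion recorded below \eqref{eq: NSE Ladyzhenskaya} together with uniqueness from \Cref{thm: unique existence NSE}; the paper instead treats the Navier--Stokes operator by the same mollification scheme as the Euler one, so your version is more direct, and it is legitimate since the paper itself asserts that Cauchy property for weak solutions with bounded gradients and reuses it in \Cref{lem: comp NSE E}. For $S^\infty$ your skeleton coincides with the paper's --- compose with $\CalJ^{\varepsilon_k}$, obtain continuity of the smoothed solution maps from \eqref{eq: stab est smooth sols}, and realise $S^\infty$ as a pointwise limit of these (Baire class one) --- but the pointwise convergence is handled very differently. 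The paper quotes the classical Yudovich/Chemin result (\cite{Chemin1998}, Chapter 5) that the solutions with mollified initial data are Cauchy in $C([0,T];E_m)$, which gives $g_k(u_0)\to S^\infty(u_0)$ \emph{strongly} in $C([0,T];\mathbb{E})$; no weak pairings and no reduction of the target $\sigma$-algebra are then needed. The obstruction you describe (``slow decay of the Biot--Savart kernel'') is an obstruction only along your route, i.e.\ when one tries to recover velocity convergence from the $C([0,T];L^p)$ vorticity convergence of \Cref{lem: stab renormalized sols}; the energy/Osgood argument behind the result the paper cites produces the strong velocity convergence directly. Your detour --- vorticity stability, Biot--Savart pairing with near/far splitting, and measurability through countably many scalar functionals --- is correct, but what it buys is only independence from that citation, at the price of extra machinery.

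If you keep your route, two glossed steps need to be made precise. First, the reduction of Borel measurability of the $C([0,T];\mathbb{E})$-valued map $S^\infty$ to the countably many scalar maps is not literally the Pettis theorem, which requires weak measurability against \emph{all} functionals of the dual; what you actually need is the standard fact that on a Polish space a countable family of continuous functionals separating points generates the Borel $\sigma$-algebra, together with a verification that the pairings $u\mapsto\langle u(t),\phi\rangle_{L^2(\R^2)}$, $t$ rational and $\phi$ in a countable dense subset of $C_c^\infty(\R^2;\R^2)\cap H$, do separate points of $C([0,T];\mathbb{E})$ (a Liouville-type argument: an element of $\mathbb{E}$ annihilating all divergence-free test fields is a harmonic gradient vanishing at infinity, hence zero). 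Second, before invoking Yudovich uniqueness you must pass from ``renormalized solution of the vorticity equation with datum $\omega(u_0)$'' back to a weak solution of the velocity formulation in the sense of \Cref{def: Euler weak vel} with datum $u_0$ and vorticity in $L^\infty(0,T;L^1(\R^2)\cap L^\infty(\R^2))$: the uniform bounds $\|\omega(g_k(u_0))(t)\|_{L^1(\R^2)\cap L^\infty(\R^2)}\le\|\omega(u_0)\|_{L^1(\R^2)\cap L^\infty(\R^2)}$ do pass to the limit and \Cref{lem: Biot-Savart} identifies the velocities, but this transfer between the two formulations is a real step, not a tautology. Neither point is a fatal gap; both are standard, and the argument goes through once they are supplied.
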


\begin{proof}
We prove the stated measurability of $S^\infty$, where we begin by introducing the following operators (for the definition of $\Sigma$, see \eqref{eq: fixed stat vec field}):
\begin{itemize}
\item $\displaystyle G: X_0^\infty \to \R\times H,$ $u_0 \mapsto (m(u_0),u_{0,kin});$
\item $S^{\R\times H}: \R \times H \to \CalU^\infty, (m,v) \mapsto S^\infty(m\Sigma + v)$.
\end{itemize}
We are going to argue that $S^{\infty}$ can be written as the pointwise limit $(\varepsilon \to 0)$ of 
\[S^{\varepsilon,\infty}: X_0^\infty \to \CalU^\infty, S^{\varepsilon,\infty} = (S^{\R\times H} \circ (\Id_{\R},\CalJ^\varepsilon))\circ G, \varepsilon > 0,\]
which we now prove to be measurable as a composition of measurable functions.\\
The definition of $\|\cdot\|_{\mathbb{E}}$ immediately implies that $G$ is even continuous.\\
As for the measurability of $S^{\R\times H} \circ (\Id_{\R}, \CalJ^\varepsilon)$, we note that for any $(m,v) \in \R\times H$, the function $u = (S^{\R \times H} \circ (\Id_{\R},\CalJ^\varepsilon))(m,v)$ is the smooth solution of the Euler equations in $\CalU^\infty$ with smooth initial data $m\Sigma + \CalJ^\varepsilon v \in E_m$ (see \Cref{thm: classic sols Euler}).\\
The stability estimate \eqref{eq: stab est smooth sols} for smooth solutions of the Euler equations yields the continuity of $S^{\R\times H} \circ (\Id_{\R},\CalJ^\varepsilon)$ between the spaces $\R\times H$ and $C([0,T];\mathbb{E})$.\\
The pointwise convergence of $S^{\varepsilon,\infty}$ to $S^\infty$ is now a classic argument by which the weak solutions in the Yudovich class may actually be constructed. See for instance \cite{Chemin1998}[Chapter 5], where it is proved that for $m \in \R$ and any $u_0 \in E_m$, the solutions with smoothed initial data $(\CalJ^\varepsilon u_0)_{\varepsilon > 0}$ are Cauchy in $C([0,T];E_m)$. The fact that in the definition of $S^{\varepsilon,\infty}$ we only smoothed the initial finite kinetic energy part does not make a difference because $\|\CalJ^\varepsilon \Sigma - \Sigma\|_{L^2(\R^2)} \to 0\,(\varepsilon \to 0)$ (see \cite{Chemin1998}[Lemma 5.1.2]).\\
The measurability of $S^\nu_{NS}$ may be proved analogously as we chose $X^\nu_{NS} = X^\infty = \mathbb{E}$ and the important ingredient in the proof above being the stability estimate \eqref{eq: stab est smooth sols}, which also holds in the Navier-Stokes case. 
\end{proof}

\begin{lem}\label{lem: meas U infty nu}
The space $\CalU^\infty$ is Borel measurable in $C([0,T];\mathbb{E})$, the space $\CalU^\nu_{NS}$ is closed in $C([0,T];\mathbb{E})$. 
\end{lem}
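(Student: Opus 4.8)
The two claims are of quite different nature, so I would treat them separately. For the Borel measurability of $\CalU^\infty$, the key observation is that, by uniqueness in the Yudovich class (\Cref{thm: solutions of euler} A)), we have $\CalU^\infty = S^\infty(X_0^\infty)$; that is, $\CalU^\infty$ is exactly the image of $X_0^\infty$ under the solution operator. Moreover $S^\infty$ is injective, since the time evaluation $\Pi_0$ at $t = 0$ is a left inverse, $\Pi_0 \circ S^\infty = \Id_{X_0^\infty}$. Now $\mathbb{E}$ is a separable Banach space and hence Polish, so $C([0,T];\mathbb{E})$ is Polish as well, while $X_0^\infty$ is a Borel subset of $\mathbb{E}$ by \Cref{lem: measurability X^infty} and thus a standard Borel space. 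Since $S^\infty$ is Borel measurable by \Cref{lem: measurability S infty nu}, the Lusin--Souslin theorem (the injective Borel image of a Borel subset of a Polish space is again Borel) applies and yields that $\CalU^\infty = S^\infty(X_0^\infty)$ is Borel measurable in $C([0,T];\mathbb{E})$.

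For the closedness of $\CalU^\nu_{NS}$, I would take a sequence $(u^n)_{n\in\N} \subset \CalU^\nu_{NS}$ converging to some $u$ in $C([0,T];\mathbb{E})$ and show $u \in \CalU^\nu_{NS}$. Since $\Pi_0$ is continuous from $C([0,T];\mathbb{E})$ to $\mathbb{E}$, the initial data $u^n(0)$ converge to $u(0)$ in $\mathbb{E}$; in particular both $|m(u^n(0))|$ and $\|u^n_{kin}(0)\|_{L^2(\R^2)}$ remain bounded. The energy inequality \eqref{eq: energy inequ NSE} then bounds $\nabla u^n_{kin}$ uniformly in $L^2(0,T;L^2(\R^2))$, and writing $\nabla u^n = m(u^n)\nabla\Sigma + \nabla u^n_{kin}$ together with $\nabla\Sigma \in L^2(\R^{2\times 2})$ gives a uniform bound on $\nabla u^n$ in $L^2(0,T;L^2(\R^{2\times 2}))$. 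Passing to a weakly convergent subsequence and identifying the weak limit with $\nabla u$ (using that $u^n \to u$ in the sense of distributions) shows $\nabla u \in L^2(0,T;L^2(\R^{2\times 2}))$.

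Since convergence in $\mathbb{E}$ forces $u^n \to u$ in $C([0,T];L^2(B_r(x_0)))$ for every ball, one may then pass to the limit in each term of the weak formulation \eqref{eq: NSE weak vel equ} --- the convective term via $u^n \otimes u^n \to u\otimes u$ in $L^1$ on the compact support of the test field, the remaining terms via the same local strong convergence --- to conclude that $u$ is a weak solution of the Navier-Stokes equations with $u(0) \in \mathbb{E}$ and $\nabla u \in L^2(0,T;L^2(\R^{2\times 2}))$, hence $u \in \CalU^\nu_{NS}$. Equivalently, one may invoke the Cauchy property recorded after \eqref{eq: stab est grad smooth sols}, which shows the solution operator $S^\nu_{NS}$ is sequentially continuous, so that $u = \lim_n S^\nu_{NS}(u^n(0)) = S^\nu_{NS}(u(0))$; by uniqueness (\Cref{thm: unique existence NSE}) this limit lies in $\CalU^\nu_{NS}$.

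The main obstacle for $\CalU^\infty$ is purely descriptive-set-theoretic: one must verify carefully that the hypotheses of the Lusin--Souslin theorem genuinely hold --- injectivity of $S^\infty$ and the standard Borel structure of $X_0^\infty$ --- since measurable images of measurable sets need not be measurable without injectivity. For $\CalU^\nu_{NS}$ the delicate point is the uniform gradient bound, which is what makes the convergence strong enough to pass to the limit in the quadratic nonlinearity and to secure $\nabla u \in L^2$; it relies crucially on the viscosity-dependent energy inequality \eqref{eq: energy inequ NSE}, which has no counterpart for the Euler equations, explaining why only measurability, and not closedness, is asserted for $\CalU^\infty$.
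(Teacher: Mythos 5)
Your proposal is correct, but its first half takes a genuinely different route from the paper. For the Borel measurability of $\CalU^\infty$, the paper does not use the identification $\CalU^\infty = S^\infty(X_0^\infty)$ together with Lusin--Souslin; instead it mimics \Cref{lem: measurability X^infty}: it writes $\CalU^\infty = \bigcup_{k\in\N} A_k$ with $A_k = \lbrace u \in \CalU^\infty : \|\omega(u(0))\|_{L^1(\R^2)\cap L^\infty(\R^2)} \leq k\rbrace$ and proves each $A_k$ is closed in $C([0,T];\mathbb{E})$ by a direct limit argument (convergence in $C([0,T];\mathbb{E})$ implies convergence in $C([0,T];H_{loc})$, which suffices to pass to the limit in the nonlinearity; the vorticity bounds survive by weak-* lower semicontinuity and \eqref{eq: a priori est vort}). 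That approach is elementary and self-contained, and it gives slightly more, namely that $\CalU^\infty$ is an $F_\sigma$ set. Your route buys brevity and avoids re-running any compactness argument, at the price of invoking a substantially heavier theorem from descriptive set theory and of the identification $\CalU^\infty = S^\infty(X_0^\infty)$, which deserves one extra remark: Yudovich's uniqueness in \Cref{thm: solutions of euler} is stated within $C([0,T];E_m)$, so to apply it to an arbitrary $u \in \CalU^\infty \subset C([0,T];\mathbb{E})$ you should note that $t \mapsto m(u(t))$ is constant --- e.g.\ because the vorticity of any element of $\CalU^\infty$ is a renormalized solution of the transport equation, hence its total mass is conserved, and for fields in $\mathbb{E}$ with integrable vorticity that mass equals $m(u(t))$. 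The paper's own proof glosses over the same point when it concludes that the limit of elements of $A_k$ again lies in $\CalU^\infty$ (the required continuity $\omega(u) \in C([0,T];L^q(\R^2))$ comes precisely from uniqueness), so this is a shared minor gap rather than an error of yours. Your second half, the closedness of $\CalU^\nu_{NS}$, is essentially the paper's proof: boundedness of the initial data plus the energy inequality \eqref{eq: energy inequ NSE} give the uniform gradient bound in $L^2(0,T;L^2(\R^{2\times 2}))$, the weak limit of the gradients is identified with $\nabla u$, and the two modes of convergence allow passage to the limit in every term of \eqref{eq: NSE weak vel equ}; your alternative argument via sequential continuity of $S^\nu_{NS}$ and uniqueness from \Cref{thm: unique existence NSE} is equally valid.
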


\begin{proof}
We may write $\CalU^\infty = \bigcup_{k=1}^\infty A_k$ with $A_k = \lbrace u \in \CalU^\infty: \|\omega(u(0))\|_{L^1(\R^2)\cap L^\infty(\R^2)} \leq k\rbrace$ for every $k \in \N$. Then we show that each set $A_k$, $k \in \N$, is closed in $C([0,T];\mathbb{E})$. Fix $k \in \N$ and let $(u^n)_{n\in\N} \subset A_k$ be a sequence converging to some $u$ in $C([0,T];\mathbb{E})$. This also implies convergence in $C([0,T];H_{loc})$, which is strong enough to conclude that $u$ is a weak solution of the Euler equations with initial data $u(0)$. From the definition of $A_k$, we may derive similarly to \Cref{lem: measurability X^infty} that $u(0) \in X_0^\infty$ satisfies $\|\omega(u(0))\|_{L^1(\R^2) \cap L^\infty(\R^2)} \leq k$. Moreover, by \eqref{eq: a priori est vort}, $\|\omega(u)\|_{L^1(\R^2) \cap L^\infty(\R^2)}$ remains essentially bounded in time by $k$, so that $\omega(u) \in L^\infty(0,T;L^1(\R^2) \cap L^\infty(\R^2))$ and we conclude $u \in A_k$.\\
Now we show the closedness of $\CalU^\nu_{NS}$ in $C([0,T];\mathbb{E})$. Let $(u^n)_{n\in\N} \subset \CalU^\nu_{NS}$ be a sequence converging to some $u$ in $C([0,T];\mathbb{E})$. Due to \eqref{eq: energy inequ NSE}, $(\nabla u^n)_{n\in\N}$ is also bounded in $L^2(0,T;L^2(\R^{2 \times 2}))$, from which we may derive weak-* convergence of $(\nabla u^n)_{n\in\N}$ to $\nabla u$ in $L^2(0,T;L^2(\R^{2 \times 2}))$ so that $u$ is in the desired space of functions. Both of these types of convergence suffice to pass to the limit in each term in the weak formulation of the Navier-Stokes equations so that $u$ is once again a weak solution of the two-dimensional Navier-Stokes equations with viscosity $\nu$.
\end{proof}

As outlined at the beginning of this section, we now obtain an existence result for $\CalU^\infty$-trajectory statistical solutions, which we will state in a moment. Here, we would like to add that this trajectory statistical solution has a certain dissipation property of the vorticity as in \eqref{eq: a priori est vort}. Proving this will require the following lemma.

\begin{lem}\label{lem: measurability lp-norm vorticity}
The mappings $X_0^\infty \to \R, u \mapsto \|\omega(u)\|_{L^p(\R^2)}$ are Borel measurable for every $1 \leq p \leq \infty$. In particular, they are also Borel measurable when extended by $\infty$ to $\mathbb{E}$.
\end{lem}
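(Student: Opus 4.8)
The plan is to show that the map $\Phi_p : X_0^\infty \to \R$, $u \mapsto \|\omega(u)\|_{L^p(\R^2)}$, is Borel measurable by exhibiting it as a pointwise limit (or supremum) of continuous, or at least measurable, functions built from the smoothing operators $\CalJ^\varepsilon$ introduced before \Cref{lem: measurability S infty nu}. The key difficulty is that the vorticity $\omega(u) = \partial_1 u^2 - \partial_2 u^1$ is only a distributional derivative of $u$, so the assignment $u \mapsto \omega(u)$ is not continuous from $\mathbb{E}$ (whose topology controls $u_{kin}$ in $L^2$ and $m(u)$) into any $L^p$ space. We therefore cannot simply compose a continuous norm with a continuous vorticity map; instead we approximate.

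First I would regularize: for $u \in X_0^\infty$ the mollified vorticity $\CalJ^\varepsilon \omega(u) = \eta^\varepsilon * \omega(u)$ is smooth, and since differentiation commutes with convolution we can write $\CalJ^\varepsilon \omega(u) = (\partial_1 \eta^\varepsilon) * u^2 - (\partial_2 \eta^\varepsilon) * u^1$, which only requires $u$ itself (locally integrable), not its derivatives. The map $u \mapsto \CalJ^\varepsilon\omega(u)$, evaluated pointwise at any fixed $x \in \R^2$, depends continuously on $u$ in $\mathbb{E}$: convergence in $\mathbb{E}$ gives convergence of $u_{kin}$ in $L^2$ and of $m(u)$, hence local-$L^1$ convergence of $u = m(u)\Sigma + u_{kin}$ (note $\Sigma$ is bounded), and convolution against the fixed smooth compactly supported kernel $\partial_j\eta^\varepsilon$ is continuous from $L^1_{loc}$ into pointwise values. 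Thus $u \mapsto \CalJ^\varepsilon\omega(u)(x)$ is continuous for each fixed $\varepsilon, x$, so the quantity $\|\CalJ^\varepsilon\omega(u)\|_{L^p(\R^2)}$ can be expressed (for $p < \infty$) as an increasing limit over balls $B_R(0)$ of integrals, each of which is continuous in $u$ by dominated convergence, and hence is a Borel measurable function of $u$; the $p = \infty$ case follows similarly as a supremum over a countable dense set of points. I would then take $\Phi_p(u) = \lim_{\varepsilon \to 0}\|\CalJ^\varepsilon\omega(u)\|_{L^p(\R^2)}$ (for $p < \infty$, say $\sup_\varepsilon$ along a sequence $\varepsilon_n \downarrow 0$), which is a countable supremum/limit of measurable functions and hence measurable.

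The point making this work is that for $u \in X_0^\infty$ one genuinely has $\omega(u) \in L^p(\R^2)$, so standard mollifier theory gives $\CalJ^{\varepsilon_n}\omega(u) \to \omega(u)$ in $L^p$ for $1 \le p < \infty$, whence $\|\CalJ^{\varepsilon_n}\omega(u)\|_{L^p} \to \|\omega(u)\|_{L^p}$; for $p = \infty$ one uses that mollification does not increase the $L^\infty$ norm and that $\esssup$ is recovered as a limit of the mollified sup norms (or via the $L^q$ norms as $q \to \infty$ together with the already-established measurability for finite $q$). The final extension by $\infty$ to all of $\mathbb{E}$ is immediate: a function measurable on the Borel set $X_0^\infty$ (measurable by \Cref{lem: measurability X^infty}) and set to the constant $\infty$ on the complement remains Borel measurable.

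The main obstacle I expect is the $p = \infty$ case, since $L^\infty$ norms are not continuous under mollification-and-limit in the same clean way as $L^p$ norms for finite $p$, and one must be careful that the $\esssup$ is correctly captured. The cleanest route is probably to obtain measurability for all finite $p$ first and then use $\|\omega(u)\|_{L^\infty(\R^2)} = \lim_{p \to \infty}\|\omega(u)\|_{L^p(\R^2)}$, valid because every $u \in X_0^\infty$ has $\omega(u) \in L^1 \cap L^\infty$, so the $L^p$ norms increase monotonically to the $L^\infty$ norm; this expresses $\Phi_\infty$ as a supremum over integer $p$ of already-measurable maps and sidesteps any delicate argument about mollified essential suprema.
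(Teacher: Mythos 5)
Your proposal is correct and takes essentially the same route as the paper: for finite $p$, measurability follows from continuity on $X_0^\infty$ (with respect to $\|\cdot\|_{\mathbb{E}}$) of the mollified vorticity norms restricted to balls, followed by a pointwise limit as $\varepsilon \to 0$, and the case $p = \infty$ is then obtained as the pointwise limit $(p\to\infty)$ of the finite-$p$ norms, exactly as in the paper. One small correction: for $f \in L^1(\R^2) \cap L^\infty(\R^2)$ the $L^p$ norms need not increase monotonically to $\|f\|_{L^\infty(\R^2)}$ (take $f = \chi_E$ with $|E| > 1$, where $\|f\|_{L^p(\R^2)} = |E|^{1/p}$ decreases in $p$), so $\Phi_\infty$ is not in general a supremum of the $\Phi_p$; however, the limit $\lim_{p\to\infty}\|f\|_{L^p(\R^2)} = \|f\|_{L^\infty(\R^2)}$ does hold for such $f$, and a pointwise limit of Borel measurable maps is Borel measurable, which is all that is needed.
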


\begin{proof}
Recall from \Cref{lem: measurability X^infty} that $X_0^\infty$ is Borel measurable in $\mathbb{E}$.\\ 
For every $1 \leq p < \infty$, $u \mapsto \|\omega(u)\|_{L^p(\R^2)}$ can be written as the pointwise limit $(\varepsilon \to 0)$ of the functions $u \mapsto \|\CalJ^\varepsilon \omega(u)\|_{L^p(B_{1/\varepsilon}(0))}$. For each $\varepsilon > 0$, these functions can be seen to be continuous on $X_0^\infty$ with respect to $\|\cdot\|_{\mathbb{E}}$. But then also $u \mapsto \|\omega(u)\|_{L^\infty(\R^2)}$ is Borel measurable as the pointwise limit $(p \to \infty)$ of the Borel measurable maps $u \mapsto \|\omega(u)\|_{L^p(\R^2)}$.\\
\end{proof}

In the following, $\Pi_0^\mathbb{E}: C([0,T];\mathbb{E}) \to \mathbb{E}$ is the time evaluation mapping at time $t = 0$. We are also going to use the constant $a$ as given in \eqref{eq: constant a}.

\begin{thm}\label{thm: ex traj stats sol infty}
Let $\mu_0$ be a Borel probability measure on $X_0^\infty$. The Borel probability measure $\rho^\infty$ on $C([0,T];\mathbb{E})$, given by
\[\rho^\infty(A) = S^\infty\mu_0(A) := \mu_0((S^\infty)^{-1}(A))\]
for all Borel measurable sets $A$ in $C([0,T];\mathbb{E})$, is the well-defined and unique $\CalU^\infty$-trajectory statistical solution satisfying $\Pi_0^\mathbb{E}\rho^\infty = \mu_0$. If $\int_{\mathbb{E}}a^{|m(u_0)|}\|u_{0,kin}\|_{L^2(\R^2)}\,d\mu_0(u_0) < \infty$, then
\begin{equation}\label{eq: energy inequ traj stats sols infty}
\int_{C([0,T];\mathbb{E})}\|u_{kin}(t)\|_{L^2(\R^2)}\,d\rho^\infty(u) \leq \int_{\mathbb{E}}a^{|m(u_0)|}\|u_{0,kin}\|_{L^2(\R^2)}\,d\mu_0(u_0)
\end{equation}
for every $0 \leq t \leq T$.\\
Moreover, if $\mu_0$ satisfies $\int_{\mathbb{E}} \|\omega(u_0)\|_{L^1(\R^2) \cap L^\infty(\R^2)}\,d\mu_0(u_0) < \infty$, then also
\begin{equation}\label{eq: energy est traj stats sol infty}
\int_{C([0,T];\mathbb{E})} \|\omega(u(t))\|_{L^1(\R^2) \cap L^\infty(\R^2)}\,d\rho^\infty(u) = \int_{\mathbb{E}} \|\omega(u_0)\|_{L^1(\R^2) \cap L^\infty(\R^2)}\,d\mu_0(u_0)
\end{equation}
for every $0 \leq t \leq T$.
\end{thm}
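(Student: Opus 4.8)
The plan is to verify the four assertions in turn, each reducing to the fact that $\rho^\infty = S^\infty\mu_0$ is a pushforward along the solution operator together with the deterministic a priori estimates recorded in \Cref{sec: euler}. \emph{Well-definedness and the trajectory statistical solution property:} Since $S^\infty$ is $\CalB(X_0^\infty)$-$\CalB(C([0,T];\mathbb{E}))$ measurable by \Cref{lem: measurability S infty nu}, the set function $A \mapsto \mu_0((S^\infty)^{-1}(A))$ is a well-defined Borel probability measure on $C([0,T];\mathbb{E})$. As $\mathbb{E}$ is a separable Banach space, $C([0,T];\mathbb{E})$ is Polish, so $\rho^\infty$ is automatically inner regular (see \Cref{rem: def traj stats sol}), settling condition i) of \Cref{def:traj stats sol}. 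For condition ii) I would take $\CalV = \CalU^\infty$, which is Borel measurable by \Cref{lem: meas U infty nu}; since $S^\infty$ maps into $\CalU^\infty$, one has $\rho^\infty(\CalU^\infty) = \mu_0((S^\infty)^{-1}(\CalU^\infty)) = \mu_0(X_0^\infty) = 1$. Finally, because $\Pi_0^{\mathbb{E}}\circ S^\infty = \Id_{X_0^\infty}$, the initial distribution is recovered: $\Pi_0^{\mathbb{E}}\rho^\infty = (\Pi_0^{\mathbb{E}}\circ S^\infty)\mu_0 = \mu_0$.

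\emph{Uniqueness.} The essential input is the uniqueness of weak solutions in the Yudovich class (case A of \Cref{thm: solutions of euler}), which makes $\Pi_0^{\mathbb{E}}$ restricted to $\CalU^\infty$ a bijection onto $X_0^\infty$ with inverse $S^\infty$. Let $\rho$ be any $\CalU^\infty$-trajectory statistical solution with $\Pi_0^{\mathbb{E}}\rho = \mu_0$, carried by a Borel set $\CalV \subset \CalU^\infty$ with $\rho(\CalV)=1$. Then every $u \in \CalV$ satisfies $u = S^\infty(\Pi_0^{\mathbb{E}} u)$, whence for a Borel set $A$ one has $A\cap\CalV = \CalV \cap (\Pi_0^{\mathbb{E}})^{-1}((S^\infty)^{-1}(A))$. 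Using $\rho(\CalV)=1$ and then $\Pi_0^{\mathbb{E}}\rho = \mu_0$ gives $\rho(A) = \mu_0((S^\infty)^{-1}(A)) = \rho^\infty(A)$, proving uniqueness.

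\emph{The energy and vorticity estimates.} For these I would pass to the initial-data integral via the pushforward change of variables and then insert the deterministic bounds. For the velocity, change of variables turns the left-hand side of \eqref{eq: energy inequ traj stats sols infty} into $\int_{X_0^\infty}\|(S^\infty u_0)_{kin}(t)\|_{L^2(\R^2)}\,d\mu_0(u_0)$; since $m(u) = m(u_0)$ is constant along a solution $u = S^\infty u_0 \in C([0,T];\mathbb{E})$, the map $t \mapsto u_{kin}(t)$ is continuous into $L^2(\R^2)$, so the essential supremum bound \eqref{eq: energy ineq finite kin part} holds for \emph{every} $t$, yielding $\|(S^\infty u_0)_{kin}(t)\|_{L^2(\R^2)} \le a^{|m(u_0)|}\|u_{0,kin}\|_{L^2(\R^2)}$; integrating and invoking the finiteness hypothesis gives \eqref{eq: energy inequ traj stats sols infty}. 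For the vorticity, every $u \in \CalU^\infty$ has renormalized vorticity, so \eqref{eq: a priori est vort} gives $\|\omega(u)(t)\|_{L^q(\R^2)} = \|\omega(u_0)\|_{L^q(\R^2)}$ for all $t$ and all $1\le q\le\infty$; summing the cases $q=1$ and $q=\infty$ yields equality of the $L^1\cap L^\infty$ norms pointwise in $t$, and \Cref{lem: measurability lp-norm vorticity} guarantees the relevant integrands are measurable, so that the pushforward change of variables delivers \eqref{eq: energy est traj stats sol infty}.

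The step I expect to need the most care is the uniqueness argument: one must check that the set-theoretic manipulation $A\cap\CalV = \CalV\cap(\Pi_0^{\mathbb{E}})^{-1}((S^\infty)^{-1}(A))$ is legitimate Borel bookkeeping, which relies precisely on measurability of $S^\infty$ (\Cref{lem: measurability S infty nu}) and on the carrier $\CalV$ being Borel. The other place demanding a genuine argument beyond pure bookkeeping is the upgrade of the a priori estimate from an essential supremum in time to a pointwise-in-$t$ statement, achieved through the continuity of $t\mapsto u_{kin}(t)$ in $L^2(\R^2)$ noted above.
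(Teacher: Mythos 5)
Your proposal is correct and follows essentially the same route as the paper: well-definedness and $\Pi_0^\mathbb{E}\rho^\infty=\mu_0$ via measurability of $S^\infty$ and $\Pi_0^\mathbb{E}\circ S^\infty=\Id$, the estimates by pushing the integrals back to $\mu_0$ and inserting the deterministic bounds \eqref{eq: energy ineq finite kin part} and \eqref{eq: a priori est vort}, and uniqueness from Yudovich uniqueness plus measurable bookkeeping. Your uniqueness computation works with the preimage $(\Pi_0^\mathbb{E})^{-1}((S^\infty)^{-1}(A))$ where the paper uses the forward image $\Pi_0^\mathbb{E}(Q\cap\CalU^\infty)$, but since the paper itself identifies that image with $(S^\infty)^{-1}(Q\cap\CalU^\infty)$, the two arguments coincide in substance.
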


\begin{proof}
Due to the measurability of $S^\infty$, which was proved in \Cref{lem: measurability S infty nu}, $\rho^\infty$ is well-defined, clearly carried by $\CalU^\infty$, which is measurable due to \Cref{lem: meas U infty nu} and satisfies 
\[\Pi_0^\mathbb{E}\rho^\infty = (\Pi_0^\mathbb{E}\circ S^\infty)\mu_0 = \mu_0\]
since $\Pi_0^\mathbb{E}\circ S^\infty = \Id_{X_0^\infty}$.\\
We now prove \eqref{eq: energy est traj stats sol infty}. The energy inequality \eqref{eq: energy inequ traj stats sols infty} follows analogously based on \eqref{eq: energy ineq finite kin part}. Let $U: [0,T] \times C([0,T];\mathbb{E}) \to \mathbb{E}, (t,u) \mapsto u(t)$ be the evaluation mapping. As $U$ is continuous, it is also $\CalL([0,T])\otimes \CalB(C([0,T];\mathbb{E}))$-$\CalB(\mathbb{E})$ measurable. Using \Cref{lem: measurability S infty nu}, $U \circ (\Id_{[0,T]},S^\infty)$ is $\CalL([0,T])\otimes\CalB(\mathbb{E})$-$\CalB(\mathbb{E})$ measurable.\\
The fact that then also $(t,u_0) \mapsto \|\omega(S^\infty(u_0))(t)\|_{L^1(\R^2) \cap L^\infty(\R^2)}$ is $\CalL([0,T])\otimes\CalB(\mathbb{E})$ measurable follows from \Cref{lem: measurability lp-norm vorticity}.\\
Let $u_0 \in X_0^\infty$. Recall from \eqref{eq: a priori est vort} that for every $0 \leq t \leq T$, we have
\[\|\omega(S^\infty(u_0))(t)\|_{L^1(\R^2) \cap L^\infty(\R^2)} = \|\omega(u_0)\|_{L^1(\R^2) \cap L^\infty(\R^2)}.\]
Integrating this equality with respect to $\mu_0$ immediately yields \eqref{eq: energy est traj stats sol infty}.\\
To finish the proof, we show the claimed uniqueness. Let $\mu$ be any $\CalU^\infty$-trajectory statistical solution with initial distribution $\mu_0$ in the sense that $\Pi_0^\mathbb{E}\mu = \mu_0$.\\
Let $Q \in \CalB(C([0,T];\mathbb{E}))$. As $\mu$ is carried by the measurable set $\CalU^\infty$, we have
\begin{align}\label{eq: uniqueness traj stats sol}
\mu(Q) &= \mu(Q \cap \CalU^\infty) = \mu((\Pi_0^\mathbb{E})^{-1}(\Pi_0^\mathbb{E}(Q \cap \CalU^\infty)) \cap \CalU^\infty)\\
&= \mu((\Pi_0^\mathbb{E})^{-1}(\Pi_0^\mathbb{E}(Q \cap \CalU^\infty))) = \mu_0(\Pi_0^\mathbb{E}(Q \cap \CalU^\infty)),
\end{align}
where we used that $\Pi_0^\mathbb{E}(Q \cap \CalU^\infty) = (S^\infty)^{-1}(Q \cap \CalU^\infty)$ is measurable in $X^\infty = \mathbb{E}$ by \Cref{lem: measurability S infty nu} and \Cref{lem: meas U infty nu}. This particularly shows that $\mu$ is equal to $S^\infty\mu_0 = \rho^\infty$. 
\end{proof}

We likewise obtain the following theorem on existence of $\CalU^\nu_{NS}$-trajectory statistical solutions.

\begin{thm}\label{thm: ex traj stats sol NS}
Let $\mu_0$ be a Borel probability measure on $\mathbb{E}$. The Borel probability measure $\rho_{NS}^\nu$ on $C([0,T];\mathbb{E})$, given by
\[\rho_{NS}^\nu(A) = S_{NS}^\nu\mu_0(A) := \mu_0((S_{NS}^\nu)^{-1}(A))\]
for all Borel measurable sets $A$ in $C([0,T];\mathbb{E})$, is the well-defined and unique $\CalU^\nu_{NS}$-trajectory statistical solution satisfying $\Pi_0\rho_{NS}^\nu = \mu_0$.
\end{thm}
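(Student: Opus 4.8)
The plan is to mirror the construction and uniqueness argument already carried out for $\CalU^\infty$-trajectory statistical solutions in \Cref{thm: ex traj stats sol infty}, since the two situations are structurally identical: in both cases the underlying deterministic problem is well-posed, so that a well-defined solution operator exists, and the trajectory statistical solution is simply the pushforward of the initial distribution under this operator. First I would note that \Cref{thm: unique existence NSE} provides, for every $u_0 \in \mathbb{E}$, a unique weak solution $S_{NS}^\nu(u_0) \in C([0,T];\mathbb{E})$ of the Navier-Stokes equations with viscosity $\nu$, so that the solution operator $S_{NS}^\nu : \mathbb{E} \to \CalU_{NS}^\nu \subset C([0,T];\mathbb{E})$ is well-defined and satisfies $\Pi_0 \circ S_{NS}^\nu = \Id_{\mathbb{E}}$.

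Next I would assemble the measurability ingredients. The Borel measurability of $S_{NS}^\nu$ as a map $\mathbb{E} \to C([0,T];\mathbb{E})$ is exactly the second assertion of \Cref{lem: measurability S infty nu}, and the fact that $\CalU_{NS}^\nu$ is closed (hence Borel measurable) in $C([0,T];\mathbb{E})$ is the second assertion of \Cref{lem: meas U infty nu}. With these in hand, the pushforward $\rho_{NS}^\nu := S_{NS}^\nu \mu_0$ is a well-defined Borel probability measure on $C([0,T];\mathbb{E})$. Since $S_{NS}^\nu(\mathbb{E}) \subset \CalU_{NS}^\nu$ and $\mathbb{E}$ is the full initial data space, $\rho_{NS}^\nu$ is automatically carried by the measurable set $\CalU_{NS}^\nu$, and the compatibility $\Pi_0 \rho_{NS}^\nu = (\Pi_0 \circ S_{NS}^\nu)\mu_0 = \mu_0$ follows from $\Pi_0 \circ S_{NS}^\nu = \Id_{\mathbb{E}}$. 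Inner regularity (condition i) of \Cref{def:traj stats sol}) is automatic because $C([0,T];\mathbb{E})$ is Polish, as noted in \Cref{rem: def traj stats sol}.

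For uniqueness I would repeat the computation \eqref{eq: uniqueness traj stats sol} verbatim with $\Pi_0^\mathbb{E}$ replaced by $\Pi_0$, $\CalU^\infty$ by $\CalU_{NS}^\nu$, and $S^\infty$ by $S_{NS}^\nu$. Given any $\CalU_{NS}^\nu$-trajectory statistical solution $\mu$ with $\Pi_0 \mu = \mu_0$ and any $Q \in \CalB(C([0,T];\mathbb{E}))$, the fact that $\mu$ is carried by $\CalU_{NS}^\nu$ together with the injectivity relation $(\Pi_0)^{-1}(\Pi_0(Q \cap \CalU_{NS}^\nu)) \cap \CalU_{NS}^\nu = Q \cap \CalU_{NS}^\nu$ (which holds because $\Pi_0$ restricted to $\CalU_{NS}^\nu$ is a bijection onto $\mathbb{E}$ with inverse $S_{NS}^\nu$) yields $\mu(Q) = \mu_0(\Pi_0(Q \cap \CalU_{NS}^\nu))$, forcing $\mu = S_{NS}^\nu \mu_0 = \rho_{NS}^\nu$. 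Here one uses that $\Pi_0(Q \cap \CalU_{NS}^\nu) = (S_{NS}^\nu)^{-1}(Q \cap \CalU_{NS}^\nu)$ is Borel in $\mathbb{E}$ by \Cref{lem: measurability S infty nu} and \Cref{lem: meas U infty nu}.

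The only genuinely delicate point, and hence the step I would flag as the main obstacle, is the uniqueness argument, specifically the justification that $\Pi_0$ is a measurable bijection between $\CalU_{NS}^\nu$ and $\mathbb{E}$ whose inverse is the measurable map $S_{NS}^\nu$; everything else is a direct citation of the lemmas established for the $L^\infty$ case. In fact, since $X_{NS}^\nu = \mathbb{E}$ is the entire phase space (there is no proper subspace $X_0$ of admissible initial data, in contrast to the Euler case where $X_0^\infty \subsetneq \mathbb{E}$), the bookkeeping is even slightly cleaner than in \Cref{thm: ex traj stats sol infty}, and no integrability hypothesis on $\mu_0$ is needed because no energy or enstrophy inequalities are being asserted in the statement. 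I would therefore present the proof as a short remark that the argument of \Cref{thm: ex traj stats sol infty} applies verbatim with the indicated substitutions.
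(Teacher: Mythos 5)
Your proposal is correct and follows exactly the paper's route: the paper states \Cref{thm: ex traj stats sol NS} as a ``likewise'' consequence of \Cref{thm: ex traj stats sol infty}, i.e.\ the pushforward construction under $S_{NS}^\nu$ using the measurability of $S_{NS}^\nu$ (\Cref{lem: measurability S infty nu}), the closedness of $\CalU_{NS}^\nu$ (\Cref{lem: meas U infty nu}), the identity $\Pi_0 \circ S_{NS}^\nu = \Id_{\mathbb{E}}$, and the uniqueness computation \eqref{eq: uniqueness traj stats sol} transferred verbatim. Your observation that the bookkeeping is even simpler here, since the admissible initial data space is all of $\mathbb{E}$, is also accurate.
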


\begin{remark}
Similarly to \eqref{eq: energy inequ traj stats sols infty}, one can also obtain an energy inequality for trajectory statistical solutions of the Navier-Stokes equations as well as other properties, derived from the deterministic equations. We omit the discussion here as these properties are neither original nor do we need them in the following. Instead, we refer the reader for instance to Sections $6$ and $7$ in \cite{Kelliher2009}.
\end{remark}

To close this section, we briefly argue in what way the $\CalU^\infty$-trajectory statistical solution in \Cref{thm: ex traj stats sol infty} can also be obtained in the inviscid limit of the $\CalU^\nu_{NS}$-trajectory statistical solution in \Cref{thm: ex traj stats sol NS} as $(\nu \to 0)$.\\
The following theorem due to Chemin (see \cite{Chemin1996}) in the deterministic case will make things quite convenient.

\begin{thm}\label{thm: inviscid lim deterministic}
Let $u_0 \in X_0^\infty$. Then
\[\lim_{\nu \to 0}S_{NS}^{\nu}(u_0) = S^\infty(u_0) \text{ in } C([0,T];\mathbb{E}).\]
\end{thm}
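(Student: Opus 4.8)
The plan is to reproduce the classical deterministic vanishing-viscosity argument of Chemin \cite{Chemin1996} by an energy estimate on the difference of the two flows. Writing $u^\nu = S_{NS}^\nu(u_0)$ and $u = S^\infty(u_0)$, I would first record that, by \Cref{thm: unique existence NSE} and \Cref{thm: solutions of euler}, both solutions lie in $C([0,T];E_m)$ with $m = m(u_0)$, so their stationary parts agree at every time and, setting $w := u^\nu - u = u^\nu_{kin} - u_{kin}$, the norm $\|u^\nu(t) - u(t)\|_{\mathbb{E}}$ equals $\|w(t)\|_{L^2(\R^2)}$. Hence it suffices to show $\sup_{0\le t\le T}\|w(t)\|_{L^2(\R^2)}\to 0$ as $\nu\to 0$.

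Next I would derive the energy identity for $f(t) := \|w(t)\|_{L^2(\R^2)}^2$. Subtracting the two velocity equations and pairing with $w$, the pressure term drops out since $\diverg w = 0$; writing the nonlinearity as $(u^\nu\cdot\nabla)u^\nu - (u\cdot\nabla)u = (u^\nu\cdot\nabla)w + (w\cdot\nabla)u$, the first piece contributes nothing because $\int_{\R^2}(u^\nu\cdot\nabla w)\cdot w\,dx = 0$ for the divergence-free field $u^\nu$, leaving the single term $\int_{\R^2}(w\cdot\nabla u)\cdot w\,dx$ with the Euler solution $u$. The viscous term $-\nu\int_{\R^2}\nabla u^\nu:\nabla w\,dx$ I would absorb by inserting $\nabla w = \nabla u^\nu - \nabla u$: the diagonal part $-\nu\|\nabla u^\nu\|_{L^2}^2$ has the right sign and the cross term is bounded by $\tfrac{\nu}{4}\|\nabla u\|_{L^2(\R^2)}^2$, which is $O(\nu)$ uniformly in time because $\|\nabla u\|_{L^2}$ is controlled by $\|\omega(u)\|_{L^2}$ (conserved by \eqref{eq: a priori est vort}) and $\|\nabla\Sigma\|_{L^2}<\infty$. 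Since $u^\nu$ is smooth for $t>0$ and the Yudovich velocity $u$ is log-Lipschitz with bounded vorticity, this computation can be made rigorous by mollification; it yields $\tfrac12 f'(t)\le \big|\int_{\R^2}(w\cdot\nabla u)\cdot w\,dx\big| + C\nu$ with $f(0)=0$.

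The main obstacle is that, in the Yudovich class, $\nabla u$ is not bounded in $L^\infty(\R^2)$, so the nonlinear term cannot be closed by a direct Gronwall argument. I would resolve this exactly as in Yudovich's uniqueness proof: since $\omega(u_0)\in L^1\cap L^\infty$, both velocities are uniformly bounded in $L^\infty(\R^2)$ (Biot-Savart), hence so is $w$; combining H\"older's inequality, the Calder\'{o}n-Zygmund bound $\|\nabla u\|_{L^p(\R^2)}\le Cp\|\omega(u)\|_{L^p(\R^2)}$ with its explicit linear growth in $p$, and the interpolation $\|w\|_{L^{2p'}}^2\le\|w\|_{L^2}^{2/p'}\|w\|_{L^\infty}^{2/p}$, and then optimizing in $p$ (the optimal choice being $p\sim\log(1/f)$), produces a bound of the form $\big|\int_{\R^2}(w\cdot\nabla u)\cdot w\,dx\big|\le Cf\log(e + C/f)$. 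The differential inequality $f'\le Cf\log(e+C/f) + C\nu$ with $f(0)=0$ is of Osgood type, and Osgood's lemma gives an explicit estimate (of the shape $f(t)\lesssim(C\nu)^{\exp(-CT)}$) tending to $0$ uniformly on $[0,T]$ as $\nu\to 0$; taking square roots yields the convergence in $C([0,T];\mathbb{E})$. I expect this log-Gronwall/Osgood step to be the delicate point.

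As an alternative that reuses machinery already developed, I could argue by compactness: the uniform bounds on $u^\nu$ — kinetic energy from \eqref{eq: energy inequ NSE}, vorticity in $L^1\cap L^\infty$ from \eqref{eq: a priori est vort}, and the time-derivative estimate \eqref{eq: a priori est time derivative} — together with \Cref{lem: stab renormalized sols} applied with $p=2$ and the constant data $\omega(u^\nu)(0)=\omega(u_0)$ give strong convergence of $\omega(u^\nu)$ in $C([0,T];L^2(\R^2))$ to the vorticity of a renormalized Euler solution with initial data $u_0$; uniqueness in the Yudovich class (\Cref{thm: solutions of euler}) then identifies the limit as $S^\infty(u_0)$ and forces the whole family to converge. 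The remaining step of passing from strong $L^2$ convergence of the vorticities to $L^2$ convergence of the kinetic parts would exploit the zero-mean property $\int_{\R^2}\omega(w) = 0$ and the uniform $L^1$ bound to control the low frequencies in the Biot-Savart law.
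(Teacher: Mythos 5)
The paper gives no proof of this theorem at all: it is invoked as a known deterministic result and attributed to Chemin \cite{Chemin1996}, and only the statement is used afterwards. Your primary argument is, in essence, a reconstruction of the proof in \cite{Chemin1996}, and it is correct in outline. The reduction to $\|w\|_{L^2(\R^2)}$ via the common stationary part $m(u_0)\Sigma$ is legitimate, since \Cref{thm: solutions of euler}~A) and \Cref{thm: unique existence NSE} both produce solutions in $C([0,T];E_m)$; the viscous term is indeed $O(\nu)$ uniformly in time because $\|\nabla u\|_{L^2(\R^2)} \leq C\|\omega(u_0)\|_{L^2(\R^2)}$ by Calder\'on--Zygmund together with conservation \eqref{eq: a priori est vort}; the Yudovich-type bound $Cf\log(e+C/f)$ (Calder\'on--Zygmund constant linear in $p$, $L^2$--$L^\infty$ interpolation, optimization $p\sim\log(1/f)$) and Osgood's lemma then give exactly the uniform rate $f(t)\lesssim (C\nu)^{\exp(-Ct)}$; your bookkeeping here checks out. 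The points you flag as delicate (justifying the energy identity by mollification for a weak/log-Lipschitz pair of solutions) are genuinely the delicate ones and are handled in just this way in the cited reference. So, relative to the paper, your main route supplies a proof where the paper supplies only a citation.

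Your alternative compactness argument, however, has a genuine gap in its final step. From $\omega(u^\nu)\to\omega(u)$ in $C([0,T];L^2(\R^2))$, the zero-mean property $\int_{\R^2}\omega(w)\,dx=0$ and a uniform $L^1$ bound, one cannot conclude $\|w\|_{L^2(\R^2)}\to 0$. A scaling counterexample: fix $h\in C_c^\infty(\R^2)$, $h\not\equiv 0$, with $\int_{\R^2}h\,dx=0$, so that $K*h\in L^2(\R^2;\R^2)$ (its Fourier transform is $\widehat{h}(\xi)\,i\xi^\perp/|\xi|^2$ with $\widehat{h}(\xi)=O(|\xi|)$ near the origin), and set $\omega_n(x):=n^{-2}h(x/n)$. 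Then $\|\omega_n\|_{L^1(\R^2)}$ is constant in $n$, $\|\omega_n\|_{L^2(\R^2)}=n^{-1}\|h\|_{L^2(\R^2)}\to 0$, $\|\omega_n\|_{L^\infty(\R^2)}\to 0$ and $\int_{\R^2}\omega_n\,dx=0$, yet $w_n:=K*\omega_n$ satisfies $w_n(x)=n^{-1}(K*h)(x/n)$ and hence $\|w_n\|_{L^2(\R^2)}=\|K*h\|_{L^2(\R^2)}>0$ for every $n$. Thus zero mean plus a uniform $L^1$ bound does not control the low-frequency part of the Biot--Savart integral; what is missing is a uniform-in-$\nu$ and uniform-in-$t$ control of spatial decay of the vorticities, e.g.\ a uniform bound on $\int_{\R^2}|x|^\alpha|\omega(u^\nu)(t)|\,dx$ for some $\alpha>0$, which would have to be propagated through the advection-diffusion equation -- and note that initial data in $X_0^\infty$ need not possess any such moment, so this cannot be obtained for free. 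Since you present this route only as an alternative, your main proof stands; but the alternative, as written, would fail at this step.
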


Then, by a simple application of the dominated convergence theorem, we obtain the following result from \Cref{thm: inviscid lim deterministic}.

\begin{thm}\label{thm: traj inviscid limit infty}
Let $\mu_0$ be a Borel probablity measure on $X_0^\infty$. Then $(S^\nu_{NS}\mu_0)_{\nu > 0}$ converges to $S^\infty\mu_0$ as $(\nu \to 0)$ in the sense that for every real-valued, bounded continuous function $\Phi$ on $C([0,T];\mathbb{E})$,
\begin{equation}\label{eq: inviscid limit infty}
\int_{C([0,T];\mathbb{E})} \Phi(u)\,dS_{NS}^\nu\mu_0(u) \to \int_{C([0,T];\mathbb{E})} \Phi(u)\,dS^\infty\mu_0(u)\,(\nu \to 0).
\end{equation}
\end{thm}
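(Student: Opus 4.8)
The statement to prove is \Cref{thm: traj inviscid limit infty}: for a fixed Borel probability measure $\mu_0$ on $X_0^\infty$ and any bounded continuous $\Phi: C([0,T];\mathbb{E}) \to \R$, we want
\[
\int_{C([0,T];\mathbb{E})} \Phi(u)\,dS_{NS}^\nu\mu_0(u) \longrightarrow \int_{C([0,T];\mathbb{E})} \Phi(u)\,dS^\infty\mu_0(u) \quad (\nu \to 0).
\]
The strategy, as the statement itself hints, is to unwind both pushforward measures via the change-of-variables formula and then invoke dominated convergence on the initial-data space $X_0^\infty$, where the pointwise convergence of the deterministic solution operators is supplied by \Cref{thm: inviscid lim deterministic}.

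**Key steps.** First I would rewrite each integral as an integral over $X_0^\infty$ against $\mu_0$. By the definition of the pushforward measure $S_{NS}^\nu\mu_0$ and the standard change-of-variables formula for pushforwards, for every bounded Borel function $\Phi$ we have
\[
\int_{C([0,T];\mathbb{E})} \Phi(u)\,dS_{NS}^\nu\mu_0(u) = \int_{X_0^\infty} \Phi(S_{NS}^\nu(u_0))\,d\mu_0(u_0),
\]
and likewise $\int \Phi\,dS^\infty\mu_0 = \int_{X_0^\infty} \Phi(S^\infty(u_0))\,d\mu_0(u_0)$. This identity is legitimate because \Cref{lem: measurability S infty nu} guarantees that both $S_{NS}^\nu$ and $S^\infty$ are Borel measurable, so the integrands $u_0 \mapsto \Phi(S_{NS}^\nu(u_0))$ and $u_0 \mapsto \Phi(S^\infty(u_0))$ are Borel measurable compositions of a continuous $\Phi$ with a measurable map. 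Second, I would fix $u_0 \in X_0^\infty$ and note that \Cref{thm: inviscid lim deterministic} gives $S_{NS}^\nu(u_0) \to S^\infty(u_0)$ in $C([0,T];\mathbb{E})$ as $\nu \to 0$; since $\Phi$ is continuous on $C([0,T];\mathbb{E})$, this yields the pointwise convergence
\[
\Phi(S_{NS}^\nu(u_0)) \longrightarrow \Phi(S^\infty(u_0)) \quad (\nu \to 0)
\]
for every $u_0 \in X_0^\infty$. Third, since $\Phi$ is bounded, say $\abs{\Phi} \leq M$, the constant function $M$ is an integrable dominating function with respect to the probability measure $\mu_0$. The dominated convergence theorem then transfers the pointwise limit to the integrals, giving
\[
\int_{X_0^\infty} \Phi(S_{NS}^\nu(u_0))\,d\mu_0(u_0) \longrightarrow \int_{X_0^\infty} \Phi(S^\infty(u_0))\,d\mu_0(u_0),
\]
which is exactly \eqref{eq: inviscid limit infty} after rewriting both sides back as pushforward integrals.

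**Main obstacle.** There is no serious analytic obstacle here: the deterministic inviscid limit in $C([0,T];\mathbb{E})$ is already granted by Chemin's theorem, and the reduction to dominated convergence is genuinely routine once measurability is in hand. The only point requiring a moment of care is making sure the change-of-variables step and the applicability of dominated convergence rest on properly established measurability; but this is precisely what \Cref{lem: measurability S infty nu} provides, so the verification is immediate. A minor subtlety worth stating explicitly is that $\nu \to 0$ is a limit over a continuous parameter rather than a sequence, so one should phrase dominated convergence along an arbitrary sequence $\nu_n \to 0$ and conclude the full limit from the fact that the limit value $\int \Phi(S^\infty(u_0))\,d\mu_0(u_0)$ is independent of the chosen sequence.
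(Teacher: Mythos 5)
Your proposal is correct and matches the paper's argument exactly: the paper proves this theorem precisely by combining Chemin's deterministic inviscid limit (\Cref{thm: inviscid lim deterministic}) with the dominated convergence theorem, using the measurability of the solution operators from \Cref{lem: measurability S infty nu} to justify the pushforward change of variables. Your additional remarks on the continuous parameter $\nu \to 0$ and the explicit constant dominating function are fine points of care that the paper leaves implicit.
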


\subsection{Vorticity in $L^p(\R^2)$, $1 \leq p < \infty$, and in $\CalM^+(\R^2)$}
In this subsection, we construct trajectory statistical solutions for the class of weak solutions of the Euler equations with vorticity in $L^p(\R^2)$, $1 \leq p < \infty$, and in the space $\CalM^+(\R^2)$ of finite, non-negative Borel measures, considered by Delort.\\
We fix $1 < p < \infty$ and define the spaces
\begin{itemize}
\item $X^p = H_{loc}$, $X^{VS} = X^1 = H^{-L}_{loc}(\R^2;\R^2)$;
\item $X_0^p = \lbrace u_0 \in \mathbb{E}: \omega(u_0) \in L^1(\R^2)\cap L^p(\R^2)\rbrace$;
\item $X_0^{VS} = \lbrace u_0 \in \mathbb{E}: \omega(u_0) \in \CalM_c^+(\R^2)\rbrace$;
\item $X_0^1 = \lbrace u_0 \in \mathbb{E}: \omega(u_0) \in L^1_c(\R^2)\rbrace$;
\item $\CalU^p$, the set of weak solutions of the Euler equations $u \in C([0,T];X^p) \cap L^\infty(0,T;$ $W^{1,p}_{loc}(\R^2;\R^2) \cap \mathbb{E})$ whose vorticity $\omega(u) \in C([0,T];L^p(\R^2))$ is a renormalized solution of the vorticity formulation of the Euler equations having initial data in $X_0^p$ and satisfying \eqref{eq: energy ineq finite kin part}, \eqref{eq: a priori est time derivative} and \eqref{eq: a priori W^1,p};
\item $\CalU^{VS}$, the set of weak solutions of the Euler equations $u \in C([0,T];X^{VS}) \cap L^\infty(0,T;\mathbb{E})$ with vorticity $\omega(u) \in C_w([0,T];\CalM(\R^2)) \cap L^\infty(0,T;\CalM^+(\R^2))$ and initial data in $X_0^{VS}$ satisfying
\eqref{eq: energy ineq finite kin part}, \eqref{eq: a priori est time derivative} and \eqref{eq: a priori est vort L1,VS};
\item $\CalU^1$, the set of weak solutions of the Euler equations $u \in C([0,T];X^1) \cap L^\infty(0,T;\mathbb{E})$ with vorticity $\omega(u) \in C_w([0,T];\CalM(\R^2)) \cap L^\infty(0,T;L^1(\R^2))$ and initial data in $X_0^1$ satisfying \eqref{eq: energy ineq finite kin part}, \eqref{eq: a priori est time derivative}, \eqref{eq: a priori est vort L1,VS} and \eqref{eq: local conservation vorticity}.
\end{itemize}
As in the previous subsection, we will typically write out $H_{loc}$ or $H^{-L}_{loc}(\R^2;\R^2)$ in the following instead of $X^p, X^{VS}$ or $X^1$ in order to make it clearer which topology we consider.\\
We will treat these cases simultaneously whenever it is possible and seems reasonable. Should certain aspects vary too much though, then we will state and prove the results separate from one another. For the occasions where we consider all cases at once or formulate statements which hold in each case, we may write $X, X_0$ and $\CalU$ as placeholders.\\
Unlike the weak solutions considered in the previous subsection, the weak solutions considered here may no longer be unique and we will make use of the abstract existence result \Cref{thm: ex traj stats sol v2}. Even though it would not be necessary, we will in fact demonstrate the nice arguments used in the proof of \Cref{thm: ex traj stats sol v2} in the particularly simple situation when the vorticity is in $L^p(\R^2)$.\\
To begin with, let us prove a few measurability properties of these spaces.

\begin{lem}\label{lem: borel algebras on E}
The Borel-$\sigma$-algebras on $\mathbb{E}$ generated by the topology induced by $\|\cdot\|_{\mathbb{E}}$ and the (subspace-) topologies of $H_{loc}$ and $H^{-L}_{loc}(\R^2;\R^2)$ coincide. Moreover, the set $\mathbb{E}$ is Borel measurable itself in $H_{loc}$ and $H^{-L}_{loc}(\R^2;\R^2)$.
\end{lem}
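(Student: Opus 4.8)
The plan is to reduce everything to the Lusin--Souslin theorem, which asserts that a continuous injection between Polish spaces maps Borel sets to Borel sets. First I would record that all three spaces are Polish. By assumption $(\mathbb{E},\|\cdot\|_{\mathbb{E}})$ is a separable Banach space, hence Polish. The ambient spaces $L^2_{loc}(\R^2;\R^2)$ and $H^{-L}_{loc}(\R^2;\R^2)$ are separable Fréchet spaces: the seminorms $\|\cdot\|_{L^2(B_n(0))}$, respectively the local $H^{-L}$-seminorms, generate complete, metrizable, separable topologies. Since weak divergence-freeness is a closed linear constraint under $L^2_{loc}$-convergence, $H_{loc}$ is a closed subspace of $L^2_{loc}(\R^2;\R^2)$ and hence Polish as well.

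Next I would note the continuous inclusions $(\mathbb{E},\|\cdot\|_{\mathbb{E}}) \hookrightarrow H_{loc} \hookrightarrow H^{-L}_{loc}(\R^2;\R^2)$: convergence in $\|\cdot\|_{\mathbb{E}}$ forces $m(u_n)\to m(u)$ together with $u_{n,kin}\to u_{kin}$ in $L^2$, whence $u_n\to u$ in $L^2_{loc}$ (using that $\Sigma$ is bounded), and $L^2_{loc}\hookrightarrow H^{-L}_{loc}$. Because a finer topology generates a larger Borel $\sigma$-algebra, this gives at once the chain
\[
\mathcal{B}_{H^{-L}_{loc}} \subseteq \mathcal{B}_{H_{loc}} \subseteq \mathcal{B}_{\|\cdot\|_{\mathbb{E}}},
\]
where each $\mathcal{B}$ denotes the Borel $\sigma$-algebra on $\mathbb{E}$ induced by the indicated (subspace) topology.

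The crucial step is the reverse inclusion $\mathcal{B}_{\|\cdot\|_{\mathbb{E}}}\subseteq\mathcal{B}_{H^{-L}_{loc}}$. Applying Lusin--Souslin to the set inclusion $\iota\colon(\mathbb{E},\|\cdot\|_{\mathbb{E}})\to H^{-L}_{loc}(\R^2;\R^2)$, every norm-Borel set $B\subseteq\mathbb{E}$ is mapped to a Borel subset $\iota(B)$ of $H^{-L}_{loc}$; as $\iota$ is the inclusion we have $B=\iota(B)\cap\mathbb{E}$ with $\iota(B)\in\mathcal{B}(H^{-L}_{loc})$. Since $H^{-L}_{loc}$ is second countable, the Borel $\sigma$-algebra $\mathcal{B}_{H^{-L}_{loc}}$ of the subspace topology equals the trace $\{A\cap\mathbb{E}:A\in\mathcal{B}(H^{-L}_{loc})\}$, so $B\in\mathcal{B}_{H^{-L}_{loc}}$. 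This closes the chain to equality of all three $\sigma$-algebras.

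Finally, for the measurability of $\mathbb{E}$ itself, I would apply the same theorem with $B=\mathbb{E}$: the image of the whole (trivially Borel) space $\mathbb{E}$ under the continuous injection into $H^{-L}_{loc}(\R^2;\R^2)$ is Borel, so $\mathbb{E}\in\mathcal{B}(H^{-L}_{loc})$; applying Lusin--Souslin instead to $\iota\colon\mathbb{E}\to H_{loc}$ yields $\mathbb{E}\in\mathcal{B}(H_{loc})$. I expect the main obstacle to be not any single estimate but verifying the structural hypotheses of Lusin--Souslin, namely the separability and complete metrizability of $H_{loc}$ and $H^{-L}_{loc}$ and the continuity of the inclusions, after which the descriptive set theory does the remaining work.
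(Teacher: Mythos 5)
Your proof is correct, and it takes a genuinely different route from the paper. You reduce everything to the Lusin--Souslin theorem: since $(\mathbb{E},\|\cdot\|_{\mathbb{E}})$, $H_{loc}$ and $H^{-L}_{loc}(\R^2;\R^2)$ are Polish and the inclusion maps are continuous and injective, every norm-Borel subset of $\mathbb{E}$ (in particular $\mathbb{E}$ itself) has Borel image in the ambient space, and the trace-$\sigma$-algebra identity then closes the chain of inclusions between the three Borel $\sigma$-algebras. The paper argues instead by functional analysis: on the separable Banach space $\mathbb{E}$ the norm- and weak-Borel $\sigma$-algebras coincide; since $\mathbb{E} \cong \R \times H$ is reflexive, its closed balls are weakly compact, so the weak Borel $\sigma$-algebra is generated by the weakly compact sets; an Eberlein--\v{S}mulian plus limit-identification argument (testing against $C_c^\infty(\R^2)$ to identify the $H_{loc}$-limit with the weak $\mathbb{E}$-limit) shows that every weakly compact subset of $\mathbb{E}$ is closed in $H_{loc}$, which yields the remaining inclusion and, since $\mathbb{E}$ is a countable union of weakly compact balls, also its measurability as an $F_\sigma$ set in $H_{loc}$. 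Your approach is shorter and more robust: it needs neither reflexivity nor any weak-compactness structure of $\mathbb{E}$, only Polishness of the three spaces and continuity of the injections, and the measurability of $\mathbb{E}$ itself comes for free; the price is that it rests on a nontrivial theorem of descriptive set theory. The paper's argument is self-contained at the level of standard functional analysis, exploits the Hilbertian structure of $\mathbb{E}$, and produces concrete by-products (weakly compact subsets of $\mathbb{E}$ are $H_{loc}$-closed, $\mathbb{E}$ is $\sigma$-compact in $H_{loc}$) that match the compactness-based style of the rest of the paper. Two small remarks on your write-up: second countability is not needed for the identity between the subspace Borel $\sigma$-algebra and the trace $\sigma$-algebra (it holds for arbitrary topological subspaces), and you should make sure to verify (as you sketch) that $H_{loc}$, meaning the weakly divergence-free fields in $L^2_{loc}(\R^2;\R^2)$, is closed in $L^2_{loc}(\R^2;\R^2)$ -- which indeed follows by testing the divergence constraint against fixed compactly supported functions.
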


\begin{proof}
We will only explicitly prove the case of $H_{loc}$. Showing measurability of $\mathbb{E}$ in $H^{-L}_{loc}(\R^2;\R^2)$ and equality between the Borel-$\sigma$-algebras as stated in the lemma can be done in the exact same way by simply replacing $H_{loc}$ with $H_{loc}^{-L}(\R^2;\R^2)$ in the following.\\
Due to the continuous embedding $\mathbb{E} \hookrightarrow H_{loc}$, relatively open sets in $\mathbb{E}$ with respect to the topology of $H_{loc}$ are also $\|\cdot\|_{\mathbb{E}}$ open. This implies one inclusion between the two Borel-$\sigma$-algebras.\\
Next, we note that since $\mathbb{E}$ is a separable Banach space, the Borel-$\sigma$-algebra on $\mathbb{E}$ generated by the norm topology and the weak topology coincide. The latter is also generated by the weakly compact subsets, as $\mathbb{E} = \bigcup_{k=1}^\infty \overline{B}^\mathbb{E}_k(0)$ with the weak topology is $\sigma$-compact. Therefore, to prove the other inclusion stated in the lemma, we show that weakly compact subsets of $\mathbb{E}$ are closed in $H_{loc}$. Both points just mentioned will then also yield the measurability of $\mathbb{E}$ in $H_{loc}$.\\
Let $K$ be such a weakly compact subset in $\mathbb{E}$ and consider a sequence $(u^n)_{n\in\N}$ in $K$ which converges to some $u \in H_{loc}$ with respect to the metric of $H_{loc}$. By the Eberlein-\v{S}mulian theorem, $K$ is also weakly sequentially compact and, after passing to a subsequence of $(u^n)_{n\in\N}$, there exists some $v \in K$ such that $u^n \rightharpoonup v\,(n\to\infty)$ in $\mathbb{E}$. Weak convergence of $(u^n)_{n\in\N} = (m(u^n)\Sigma + u^n_{kin})_{n\in\N}$ in $\mathbb{E}$ is equivalent to $(m(u^n))_{n\in\N}$ converging in $\R$ and $(u^n_{kin})_{n\in\N}$ converging weakly in $H$. Consequently, $(u^n)_{n\in\N}$ converges weakly in $L^2(B)$ to $v$ for every bounded measurable subset $B \subset \R^2$.\\
By testing with some $\varphi \in C_c^\infty(\R^2)$, the two types of convergence yield $u = v$, which particularly implies $u \in K$ and thereby the closedness of $K$. 
\end{proof}

Recall that in \Cref{thm: ex traj stats sol v2}, a fundamental assumption was the measurability of $X_0$ in $X$, which we are going to check in the following lemmata.

\begin{lem}\label{lem: measurability X^p X^VS}
The spaces $X_0^p$, $1 < p < \infty$, and $X_0^{VS}$ are Borel measurable in $\mathbb{E}$. In particular, they are also Borel measurable subsets of $H_{loc}$ and $H^{-L}_{loc}(\R^2;\R^2)$. 
\end{lem}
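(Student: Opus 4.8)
The plan is to follow the template of \Cref{lem: measurability X^infty} and exhibit each of $X_0^p$ and $X_0^{VS}$ as a countable union of sets that are closed in $\mathbb{E}$, so that both are $F_\sigma$ and in particular Borel with respect to $\|\cdot\|_{\mathbb{E}}$. Once this is established, the final assertion is immediate from \Cref{lem: borel algebras on E}: since the Borel-$\sigma$-algebras generated by $\|\cdot\|_{\mathbb{E}}$, by $H_{loc}$ and by $H^{-L}_{loc}(\R^2;\R^2)$ all coincide, and $\mathbb{E}$ is itself Borel in $H_{loc}$ and in $H^{-L}_{loc}(\R^2;\R^2)$, every $\|\cdot\|_{\mathbb{E}}$-Borel subset of $\mathbb{E}$ is Borel in these larger spaces as well. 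The mechanism common to both cases is that $\|\cdot\|_{\mathbb{E}}$-convergence $u^n \to u$ forces $u^n \to u$ in $L^2_{loc}(\R^2)$ (because $m(u^n)\to m(u)$ and $u^n_{kin}\to u_{kin}$ in $L^2$), hence $\omega(u^n)\to\omega(u)$ in the sense of distributions; this lets me identify any weak or weak-$*$ limit of the vorticities with $\omega(u)$.

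For $X_0^p$, $1<p<\infty$, I would set, for $k\in\N$,
\[
B_k := \{u \in \mathbb{E} : \|\omega(u)\|_{L^1(\R^2)} \le k \text{ and } \|\omega(u)\|_{L^p(\R^2)} \le k\},
\]
with the convention that a norm equals $+\infty$ when $\omega(u)$ fails to lie in the respective space, so that $X_0^p = \bigcup_{k\in\N} B_k$. To prove $B_k$ closed, I take $B_k \ni u^n \to u$ in $\mathbb{E}$: the bound $\|\omega(u^n)\|_{L^p}\le k$ yields, along a subsequence, weak convergence $\omega(u^n)\rightharpoonup\xi$ in the reflexive space $L^p(\R^2)$, while $\|\omega(u^n)\|_{L^1}\le k$ gives weak-$*$ convergence $\omega(u^n)\overset{\ast}{\rightharpoonup}\zeta$ in $\CalM(\R^2)$, and by the distributional convergence above $\xi=\zeta=\omega(u)$. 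Lower semicontinuity of the $L^p$-norm under weak convergence and of the total-variation norm under weak-$*$ convergence then give $\|\omega(u)\|_{L^p}\le k$ and $\|\omega(u)\|_{L^1}=\|\omega(u)\|_{\CalM}\le k$, whence $u\in B_k$. This is essentially the argument of \Cref{lem: measurability X^infty} with $L^\infty$ replaced by $L^p$.

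For $X_0^{VS}$ the additional structure to capture is non-negativity and compact support of $\omega(u)$, so I refine the exhaustion to two parameters: for $k,R\in\N$ let
\[
C_{k,R} := \{u \in \mathbb{E} : \omega(u)\in\CalM^+(\R^2),\ \omega(u)(\R^2)\le k,\ \supp\omega(u)\subset \overline{B}_R(0)\},
\]
so that $X_0^{VS}=\bigcup_{k,R\in\N} C_{k,R}$. Given $C_{k,R}\ni u^n\to u$ in $\mathbb{E}$, the total-mass bound again produces a weak-$*$ limit in $\CalM(\R^2)$ that the distributional convergence identifies as $\omega(u)$. Non-negativity passes to weak-$*$ limits by testing against non-negative functions in $C_0(\R^2)$; the bound $\omega(u)(\R^2)\le k$ follows by testing against a fixed cutoff equal to $1$ on $\overline{B}_R(0)$; and the support condition follows since $\int\psi\,d\omega(u^n)=0$ for every $\psi\in C_0(\R^2)$ vanishing on $\overline{B}_R(0)$, which passes to the limit to give $\supp\omega(u)\subset\overline{B}_R(0)$. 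Hence $u\in C_{k,R}$ and each $C_{k,R}$ is closed.

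I expect the compact-support bookkeeping in the vortex-sheet case to be the only genuinely delicate point: the set $\{u : \supp\omega(u)\text{ compact}\}$ is not closed in $\mathbb{E}$, since the support radius may escape to infinity along a convergent sequence, which is precisely why the radius $R$ must be frozen in the exhaustion $C_{k,R}$ before taking limits. Everything else — the identification of weak and weak-$*$ limits via $L^2_{loc}$-convergence, together with the lower-semicontinuity and support-stability facts — is standard.
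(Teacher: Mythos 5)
Your proposal is correct and follows essentially the same route as the paper: exhaust each space by sets closed in $\mathbb{E}$ (norm bounds on the vorticity for $X_0^p$, support/mass constraints for $X_0^{VS}$), prove closedness via weak respectively weak-$*$ compactness, identification of the limit through $L^2_{loc}$-convergence, and lower semicontinuity, then transfer Borel measurability to $H_{loc}$ and $H^{-L}_{loc}(\R^2;\R^2)$ via \Cref{lem: borel algebras on E}. The only cosmetic difference is that the paper's exhaustion of $X_0^{VS}$ uses the support radius alone, since for $\omega(u)\in\CalM_c^+(\R^2)$ the total mass equals $m(u)$ and is therefore automatically controlled along an $\mathbb{E}$-convergent sequence, whereas you carry the mass bound as an extra (harmless) parameter.
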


\begin{proof}
Both cases may be proved similarly to \Cref{lem: measurability X^infty} by writing 
\[X_0^p = \bigcup_{k=1}^\infty \lbrace u \in X_0^p : \|\omega(u)\|_{L^1(\R^2)}, \|\omega(u)\|_{L^p} \leq k\rbrace\]
and 
\[X_0^{VS} = \bigcup_{k = 1}^\infty \lbrace u \in X_0^{VS} : \supp\omega(u) \subset \overline{B}_k(0)\rbrace.\]
The remaining part of measurability of $X_0^p$ and $X_0^{VS}$ in $H_{loc}$ and $H^{-L}_{loc}(\R^2;\R^2)$ now follows from \Cref{lem: borel algebras on E}.
\end{proof}

Showing the measurability of $X_0^1$ in $\mathbb{E}$ or $H^{-L}_{loc}(\R^2;\R^2)$ is a bit more difficult and has to be treated differently, compared to the previous cases, due to the lack of reflexivity of $L^1(\R^2)$. We first prove the following lemma.

\begin{lem}\label{lem: measurability L1 in M}
The spaces $L^1(\R^2)$ and $L_c^1(\R^2)$ are Borel measurable in $\CalM(\R^2)$ endowed with the weak-* topology. 
\end{lem}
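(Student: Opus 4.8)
The plan is to characterize $L^1(\R^2)$ inside $\CalM(\R^2)$ through mollification, and then reduce the compactly supported case to $L^1(\R^2)$ intersected with a support restriction. Viewing $\CalM(\R^2)$ as the dual of the separable space $C_0(\R^2)$, recall that for any finite measure $\mu$ and any $\eps > 0$, Young's inequality gives $\CalJ^\eps\mu = \eta^\eps * \mu \in L^1(\R^2)$ with $\norm{\CalJ^\eps\mu}{L^1(\R^2)} \le \norm{\mu}{\CalM(\R^2)}$, and that $\CalJ^\eps\mu \overset{\ast}{\rightharpoonup} \mu$ in $\CalM(\R^2)$ as $\eps \to 0$. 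The claim is then
\[
L^1(\R^2) = \left\{ \mu \in \CalM(\R^2) : (\CalJ^{1/n}\mu)_{n\in\N} \text{ is Cauchy in } L^1(\R^2) \right\}.
\]
The inclusion ``$\subset$'' is the standard fact $\eta^{1/n}*f \to f$ in $L^1(\R^2)$. For ``$\supset$'', if $\CalJ^{1/n}\mu \to g$ in $L^1(\R^2)$, then $\CalJ^{1/n}\mu \overset{\ast}{\rightharpoonup} g\,dx$ in $\CalM(\R^2)$ as well, so uniqueness of weak-* limits forces $\mu = g\,dx \in L^1(\R^2)$.

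The core task is therefore to show that the Cauchy condition defines a Borel set, which reduces to proving that for every fixed $\phi \in C_c^\infty(\R^2)$ the functional $\mu \mapsto \norm{\phi * \mu}{L^1(\R^2)}$ is Borel on $\CalM(\R^2)$ with the weak-* topology. Applying this to $\phi = \eta^{1/m} - \eta^{1/n}$ makes each set $\{\mu : \norm{\CalJ^{1/m}\mu - \CalJ^{1/n}\mu}{L^1(\R^2)} \le 1/k\}$ Borel, and the Cauchy condition is the countable combination $\bigcap_k \bigcup_N \bigcap_{m,n \ge N}$ of these. To prove measurability of the norm functional I would argue ball-by-ball, since the weak-* topology is metrizable only on bounded sets. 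For each $x$, the map $\mu \mapsto (\phi*\mu)(x) = \int_{\R^2} \phi(x - \cdot)\,d\mu$ is weak-* continuous because $\phi(x-\cdot) \in C_0(\R^2)$. On a closed total-variation ball $\{\norm{\mu}{\CalM(\R^2)} \le M\}$, which is weak-* compact and metrizable, the bound $\abs{(\phi*\mu)(x)} \le \norm{\phi}{L^\infty(\R^2)} M$ together with dominated convergence shows that $\mu \mapsto \int_{B_R(0)} \abs{(\phi*\mu)(x)}\,dx$ is weak-* sequentially continuous, hence continuous, hence Borel. Taking the increasing limit over $R \in \N$ shows $\mu \mapsto \norm{\phi*\mu}{L^1(\R^2)}$ is Borel on each such ball; since $\mu \mapsto \norm{\mu}{\CalM(\R^2)}$ is a supremum of weak-* continuous maps and hence weak-* lower semicontinuous, the balls are Borel and cover $\CalM(\R^2)$, so the functional is globally Borel.

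Finally, for $L^1_c(\R^2)$ I would intersect with support constraints. For each $k \in \N$, the set $\{\mu \in \CalM(\R^2) : \supp\mu \subset \overline{B}_k(0)\}$ is weak-* closed, being the intersection, over a countable dense family of $\psi \in C_c^\infty(\R^2 \setminus \overline{B}_k(0))$, of the weak-* closed zero sets $\{\mu : \int_{\R^2}\psi\,d\mu = 0\}$. Hence
\[
L^1_c(\R^2) = \bigcup_{k\in\N} \left( L^1(\R^2) \cap \{\mu : \supp\mu \subset \overline{B}_k(0)\} \right)
\]
is a countable union of Borel sets.

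The main obstacle is the measurability of the $L^1$-norm functional: the delicate point is that the full weak-* topology on $\CalM(\R^2)$ is not metrizable, which is why one must localize to total-variation balls, where both metrizability and a uniform domination for the dominated convergence theorem are available, and then glue the pieces together.
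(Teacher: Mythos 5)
Your proof is correct, but for the core statement---Borel measurability of $L^1(\R^2)$ in $\CalM(\R^2)_{w*}$---it takes a genuinely different route from the paper's; the reduction of $L^1_c(\R^2)$ to $L^1(\R^2)$ intersected with weak-* closed support constraints is identical in both. The paper fixes a countable dense sequence $(f^n)_{n\in\N}$ in $L^1(\R^2)$ and writes
\[
L^1(\R^2) = \bigcap_{k\in\N}\bigcup_{n\in\N}\bigg\lbrace \mu \in \CalM(\R^2) : \int_{\R^2}|f^n-\mu| < \frac{1}{k}\bigg\rbrace,
\]
where the total-variation balls are Borel because the dual norm on $\CalM(\R^2)\cong C_0(\R^2)'$ is weak-* lower semicontinuous; the inclusion ``$\supset$'' then runs exactly like your uniqueness-of-limits argument, with a TV-Cauchy sequence from $L^1(\R^2)$ and completeness of $L^1(\R^2)$ in place of your mollifications. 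That proof needs only separability of $L^1(\R^2)$, lower semicontinuity of the dual norm, and completeness, and is a few lines long. Your version replaces the extrinsic dense sequence by the intrinsic approximants $\CalJ^{1/n}\mu$, which is conceptually appealing, but the price is the measurability of $\mu\mapsto\|\phi*\mu\|_{L^1(\R^2)}$, and your ball-by-ball argument (weak-* compact metrizable TV balls, dominated convergence, gluing) is the most laborious part of the proposal---correct, but heavier than what it replaces. It can be short-circuited by the same lower-semicontinuity mechanism the paper uses: by $L^1$--$L^\infty$ duality and Fubini,
\[
\|\phi*\mu\|_{L^1(\R^2)} = \sup_{\substack{g \in C_c(\R^2)\\ \|g\|_{L^\infty(\R^2)}\leq 1}} \int_{\R^2} (\tilde\phi * g)\,d\mu, \qquad \tilde\phi(x) := \phi(-x),
\]
and each functional $\mu \mapsto \int_{\R^2}(\tilde\phi*g)\,d\mu$ is weak-* continuous since $\tilde\phi*g \in C_c(\R^2)\subset C_0(\R^2)$; hence your norm functional is weak-* lower semicontinuous, so Borel outright, with no metrizability, localization, or gluing needed.
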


\begin{proof}
We first note that from the definition of the total variation norm via the duality $\CalM(\R^2) \cong C_0(\R^2)'$, it follows that the total variation norm is weakly lower semicontinuous as a mapping from $\CalM(\R^2)$, endowed with the weak-* topology, into the real numbers. Consequently, open balls with respect to the total variation norm are measurable with respect to the Borel-$\sigma$-algebra generated by the weak-* topology.\\
Let $(f^n)_{n\in\N}$ be a dense subset of $L^1(\R^2)$. To conclude the proof of measurability of $L^1(\R^2)$, it now suffices to show
\begin{equation}\label{eq: measurability L1 in M}
L^1(\R^2) = \bigcap_{k\in\N}\bigcup_{n\in\N} \bigg\lbrace \mu \in \CalM(\R^2): \int_{\R^2}|f^n-\mu| < \frac{1}{k}\bigg\rbrace.
\end{equation}
Indeed, a measure $\mu \in \CalM(\R^2)$ is an element of the set on the right-hand side if and only if there exists a strictly increasing sequence $(n_m)_{m\in\N} \subset \N$ such that $f^{n_m} \to \mu$ $(m \to \infty)$ with respect to the total variation norm. In particular, $(f^{n_m})_{m\in\N}$ is a Cauchy sequence in $\CalM(\R^2)$ with respect to the total variation norm. As the total variation norm and the $L^1(\R^2)$ norm coincide on $L^1(\R^2) \subset \CalM(\R^2)$, $(f^{n_m})_{m\in\N}$ is also a Cauchy-sequence in $L^1(\R^2)$. Due to the completeness of $L^1(\R^2)$, necessarily $\mu \in L^1(\R^2)$.\\
The other inclusion in \eqref{eq: measurability L1 in M} follows from the density of $(f^n)_{n\in\N}$.\\
As for $L_c^1(\R^2)$, we may write 
\[L_c^1(\R^2) = L^1(\R^2) \cap \bigcup_{k=1}^\infty \lbrace \omega \in \CalM(\R^2): \supp\omega \subset \overline{B}_k(0)\rbrace,\]
where each set in the countable union is closed in $\CalM(\R^2)$ with respect to the weak-* topology. 
\end{proof}

\begin{lem}\label{lem: measurability X^1}
The space $X_0^{1}$ is Borel measurable in $\mathbb{E}$. In particular, it is also a Borel measurable subset of $H^{-L}_{loc}(\R^2;\R^2)$. 
\end{lem}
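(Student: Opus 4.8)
The plan is to realise $X_0^1$ as the preimage of the Borel set $L^1_c(\R^2)\subset\CalM(\R^2)$ — which is Borel in the weak-* topology by \Cref{lem: measurability L1 in M} — under the vorticity map $u\mapsto\omega(u)$. The obstruction to doing this directly is that $\omega(u)$ need not be a finite measure for a general $u\in\mathbb{E}$: a priori it is only an element of $H^{-1}(\R^2)$ (cf.\ \Cref{rem: E and H^-1}), so the vorticity map is only partially defined as a map into $\CalM(\R^2)$. I would therefore first carve out the set $\CalM_E:=\{u\in\mathbb{E}:\omega(u)\in\CalM(\R^2)\}$ on which it is defined and show it is Borel, then show the resulting map into $\CalM(\R^2)$ with the weak-* topology is Borel measurable, and finally take the preimage of $L^1_c(\R^2)$. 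Since every $L^1_c$-vorticity is in particular a finite measure, one has $X_0^1\subset\CalM_E$ and this preimage equals $X_0^1$.

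For the measurability of $\CalM_E$, I would use the classical characterisation that a distribution $T$ on $\R^2$ is a finite measure if and only if its mollifications stay bounded in $L^1$: fixing a sequence $\varepsilon_j\downarrow 0$, one has $\omega(u)\in\CalM(\R^2)$ iff $\sup_j\|\CalJ^{\varepsilon_j}\omega(u)\|_{L^1(\R^2)}<\infty$. The forward direction is Young's inequality, bounding $\|\CalJ^\varepsilon\omega(u)\|_{L^1(\R^2)}$ by the total variation of $\omega(u)$; the reverse follows by extracting a weak-* limit in $\CalM(\R^2)$ of the bounded family $(\CalJ^{\varepsilon_j}\omega(u))_j$ and identifying it with $\omega(u)$ via $\CalJ^{\varepsilon_j}\omega(u)\to\omega(u)$ in $\CalD'(\R^2)$. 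It then remains to check that for fixed $\varepsilon$ the map $u\mapsto\|\CalJ^\varepsilon\omega(u)\|_{L^1(\R^2)}\in[0,\infty]$ is Borel on $\mathbb{E}$. Writing $\CalJ^\varepsilon\omega(u)=(\partial_1\eta^\varepsilon)*u^2-(\partial_2\eta^\varepsilon)*u^1$ and exhausting $\R^2$ by balls, one has $\|\CalJ^\varepsilon\omega(u)\|_{L^1(\R^2)}=\sup_R\|\CalJ^\varepsilon\omega(u)\|_{L^1(B_R(0))}$, and each map $u\mapsto\|\CalJ^\varepsilon\omega(u)\|_{L^1(B_R(0))}$ is continuous with respect to $\|\cdot\|_{\mathbb{E}}$, because $\mathbb{E}\hookrightarrow L^2_{loc}(\R^2;\R^2)$ and convolution against the fixed kernels $\partial_i\eta^\varepsilon$ turns local $L^2$-convergence into uniform convergence on $B_R(0)$. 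Hence this map is lower semicontinuous, and $\CalM_E=\bigcup_k\bigcap_j\{u:\|\CalJ^{\varepsilon_j}\omega(u)\|_{L^1(\R^2)}\le k\}$ is Borel.

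Next I would argue that the vorticity map $W:\CalM_E\to\CalM(\R^2)$, $W(u)=\omega(u)$, is Borel when the target carries the weak-* topology. Since $C_0(\R^2)$ is separable, the weak-* Borel $\sigma$-algebra on $\CalM(\R^2)$ is generated by the evaluations $\mu\mapsto\int_{\R^2}\psi_n\,d\mu$ for a countable family $(\psi_n)\subset C_c^\infty(\R^2)$ dense in $C_0(\R^2)$, and for each such $\psi_n$ the map $u\mapsto\langle\omega(u),\psi_n\rangle=\int_{\R^2}(u^1\partial_2\psi_n-u^2\partial_1\psi_n)\,dx$ is continuous on $\mathbb{E}$, again by $\mathbb{E}\hookrightarrow L^2_{loc}$. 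Thus $W$ is Borel measurable, and $X_0^1=W^{-1}(L^1_c(\R^2))$ is Borel in $\CalM_E$, hence Borel in $\mathbb{E}$ since $\CalM_E$ is itself Borel. The measurability of $X_0^1$ in $H^{-L}_{loc}(\R^2;\R^2)$ is then immediate from \Cref{lem: borel algebras on E}.

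I expect the main obstacle to be the careful handling of $\CalM_E$: establishing the mollification characterisation of finite measures and, in particular, verifying that the global (possibly infinite) $L^1$-norm of the smoothed vorticity is a Borel function of $u\in\mathbb{E}$, for which the reduction to the balls $B_R(0)$ and the continuity-of-convolution argument are the essential ingredients. Once this descriptive-set-theoretic bookkeeping is in place, the remaining steps are soft consequences of the separability of $C_0(\R^2)$ together with \Cref{lem: measurability L1 in M}.
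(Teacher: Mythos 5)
Your proposal is correct, but it reaches the conclusion by a genuinely different route than the paper. The paper never isolates the full domain $\{u\in\mathbb{E}:\omega(u)\in\CalM(\R^2)\}$ as a Borel set by a direct characterisation; instead it slices it into the sets $\mathbb{E}_k=\{u\in\mathbb{E}:\omega(u)\in\CalM(\R^2),\ \int_{\R^2}|\omega(u)|\le k\}$, which are \emph{closed} in $\mathbb{E}$, proves that the restriction $\Psi_k$ of the vorticity map to each $\mathbb{E}_k$ is \emph{continuous} into $\CalM(\R^2)$ with the weak-* topology (Banach--Alaoglu on the total-variation ball of radius $k$ plus uniqueness of distributional limits), and then writes $X_0^1=\bigcup_k\Psi_k^{-1}(L^1_c(\R^2))$, invoking \Cref{lem: measurability L1 in M} exactly as you do. The key technical difference is that continuity of $\Psi_k$ makes preimages of weak-* Borel sets automatically Borel, so the paper never needs your step asserting that the weak-* Borel $\sigma$-algebra on $\CalM(\R^2)$ is generated by countably many evaluations. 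That assertion is true (because $C_0(\R^2)$ is separable, so norm balls in $\CalM(\R^2)$ are weak-* compact and metrizable and the space is a countable union of them), but it is subtler than your phrasing suggests --- it fails without separability, and its standard proof is essentially the same ball-slicing trick the paper uses directly --- so in a finished write-up it would need a proof or a citation. What your route buys in exchange is modularity: you obtain two reusable facts, namely Borel measurability of the set $\CalM_E=\{u\in\mathbb{E}:\omega(u)\in\CalM(\R^2)\}$ (via the nice mollification characterisation of finite measures and lower semicontinuity of $u\mapsto\|\CalJ^\varepsilon\omega(u)\|_{L^1(\R^2)}$) and Borel measurability of the vorticity map on its full natural domain, rather than only on norm-bounded slices; the paper's argument is shorter and more self-contained but yields only what is needed for this one lemma.
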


\begin{proof}
In this proof, measurability in $\CalM(\R^2)$ is always meant with respect to the Borel-$\sigma$-algebra generated by the weak-* topology as in the previous lemma.\\
We introduce the mapping $\Psi: \lbrace u \in \mathbb{E}: \omega(u) \in \CalM(\R^2)\rbrace \to \CalM(\R^2), u \mapsto \omega(u)$.\\
Consider the closed and hence measurable subsets $\mathbb{E}_k = \lbrace u \in \mathbb{E} : \omega(u) \in \CalM(\R^2), \int_{\R^2} |\omega(u)| \leq k \rbrace$ of $\mathbb{E}$ for every $k \in \N$ and denote by $\Psi_k$ the restrictions of $\Psi$ to $\mathbb{E}_k$ for all $k \in \N$. We argue that every mapping $\Psi_k$ is continuous: Let $(u^n)_{n\in\N}$ be a sequence in $\mathbb{E}_{k}$ converging to some $u \in \mathbb{E}$. Then $(\omega(u^n))_{n\in\N}$ is a sequence in the closed total variation ball of radius $k$, centred at $0$. By the Banach-Alaoglu theorem there exists a subsequence, again denoted by $(\omega(u^n))_{n\in\N}$, converging weakly-* to some $\omega \in \CalM(\R^2)$. From convergence in $\mathbb{E}$, it follows that necessarily $\omega = \omega(u)$ and the weak-* convergence already holds for the sequence itself, i.e.\ $\lim_{n\to\infty} \Psi_k(u^n) = \Psi_k(u)$ with respect to the weak-* topology.\\
Finally, $X_0^1 = \bigcup_{k\in\N} X_0^1 \cap \mathbb{E}_k = \bigcup_{k\in\N}\Psi_k^{-1}(L^1_c(\R^2))$ is measurable by \Cref{lem: measurability L1 in M}.\\
\end{proof}

To meet the requirements of \Cref{thm: ex traj stats sol}, we will need to construct families $\CalK'(X_0)$ of compact subsets of the space of initial data such that solutions having initial data in one of those compact sets are compact in the whole set of solutions. We remark here that this required closedness in particular includes that the limits need not only be weak solutions of the Euler equations, but also satisfy the a priori estimates that we impose in the definition of $\CalU^p, \CalU^{VS}$ and $\CalU^1$. Estimating the left-hand side may usually be done by an argument of weak or weak-* lower semicontinuity. Estimating the right-hand side, which involves the initial data, is a different matter. In the construction of weak solutions, convergence of the right-hand side involving the initial data is usually obvious from the approximation scheme, e.g. smoothing of the initial data. For general sequences of solutions as considered here, it is not clear. In the case of the three-dimensional Navier-Stokes equations considered for instance in \cite{Foias2013}, the same problem arises as it is not clear whether weak limits of Leray-Hopf solutions satisfy the energy inequality.\\
Therefore, the sets in $\CalK'(X_0)$ particularly need to be compact in a compatible way with the a priori estimates demanded above.\\
Interestingly enough, no infinite dimensional Banach space may be exhausted by compact subsets, which can be seen by a Baire category argument. Yet in a measure theoretic way this is possible, if the space is Polish, in the sense that every Borel measure is inner regular (see Theorem 3.2 in \cite{Parthasarathy1967}). This fact will be heavily exploited later on.\\
For the precise definition of those families of compact sets, we introduce the operator $\curl^p: X_0^p \to L^1(\R^2) \cap L^p(\R^2), u_0 \mapsto \omega(u_0)$, as well as the operator $\curl^1: X_0^1 \to L^1(\R^2), u_0 \mapsto \omega(u_0)$.\\
Using the smoothing operator as in \Cref{lem: measurability lp-norm vorticity}, it is not hard to see that $\curl^p$ and $\curl^1$ are measurable as pointwise limits of continuous functions and may then also be seen as measurable maps on $\mathbb{E}$, extended by $\infty$, due to \Cref{lem: measurability X^p X^VS} and \Cref{lem: measurability X^1}.
Now we define
\begin{itemize}
\item $\CalK'(X_0^p)$ to be the family consisting of all sets of the form $S \cap (\curl^p)^{-1}(K)$, where $S \subset X_0^p$ is compact in $\mathbb{E}$ and $K$ is a compact set in $L^1(\R^2) \cap L^p(\R^2)$;
\item $\CalK'(X_0^{VS})$ to be the family consisting of all subsets of $X_0^{VS}$ which are compact in $\mathbb{E}$;
\item $\CalK'(X_0^1)$ to be the family consisting of all sets of the form $S \cap (\curl^1)^{-1}(K)$, where $S \subset X_0^1$ is compact in $\mathbb{E}$ and $K$ is a compact set in $L^1(\R^2)$.
\end{itemize}
For a compact set $K$ in $L^1(\R^2) \cap L^p(\R^2)$, $(\curl^p)^{-1}(K)$ is a subset of $X_0^p$, (relatively) closed in $\mathbb{E}$, which implies that with $S$ as above, $S \cap (\curl^p)^{-1}(K)$ is a subset of $X_0^p$ which is compact in $\mathbb{E}$. Due to the continuous embedding $\mathbb{E} \hookrightarrow H_{loc}$, the sets in $\CalK'(X_0^p)$ are indeed compact in $H_{loc}$.\\
Likewise, one may argue that $\CalK'(X^1_0)$ consists of sets in $X_0^1$ which are compact in $\mathbb{E} \hookrightarrow H^{-L}_{loc}(\R^2;\R^2)$.\\
Compactness of the sets in $\CalK'(X_0^{VS})$ in $\mathbb{E}\hookrightarrow H_{loc}^{-L}(\R^2;\R^2)$ follows directly from the definition.

We now show that these families satisfy \textit{(H2')} in \Cref{thm: ex traj stats sol v2}.

\begin{lem}\label{lem: inner reg vort}
Let $\mu_0$ be a Borel probability measure on $X_0$. Then $\mu_0$ is inner regular with respect to the family $\CalK'(X_0)$. 
\end{lem}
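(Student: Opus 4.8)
The plan is to exploit that all the relevant spaces here are Polish and that on a Polish space every finite Borel measure is inner regular (Theorem 3.2 in \cite{Parthasarathy1967}), combined with the elementary bound $\mu_0(E\cap F) \geq \mu_0(E) + \mu_0(F) - 1$ valid for a probability measure. Since $\mu_0$ is carried by the Borel set $X_0$ (Borel in $\mathbb{E}$ by \Cref{lem: measurability X^p X^VS} and \Cref{lem: measurability X^1}, with Borel $\sigma$-algebra agreeing with those of $H_{loc}$ and $H^{-L}_{loc}(\R^2;\R^2)$ via \Cref{lem: borel algebras on E}), I may throughout regard $\mu_0$ as a Borel probability measure on the Polish space $\mathbb{E}$ whose mass sits inside $X_0$. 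Inner regularity with respect to $\CalK'(X_0)$ then means: for every Borel $A \subset X$ and every $\varepsilon > 0$, I must produce a member of $\CalK'(X_0)$ contained in $A$ whose $\mu_0$-measure exceeds $\mu_0(A) - \varepsilon$.

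First I would dispose of the vortex-sheet case, where $\CalK'(X_0^{VS})$ is simply the family of subsets of $X_0^{VS}$ that are compact in $\mathbb{E}$. Given $A$ and $\varepsilon$, the set $A \cap X_0^{VS}$ is Borel in $\mathbb{E}$ and has $\mu_0(A \cap X_0^{VS}) = \mu_0(A)$ because $\mu_0$ is carried by $X_0^{VS}$. Applying inner regularity of $\mu_0$ on the Polish space $\mathbb{E}$ to this set yields a set $S$, compact in $\mathbb{E}$, with $S \subset A \cap X_0^{VS}$ and $\mu_0(S) > \mu_0(A) - \varepsilon$. Since $S \subset X_0^{VS}$ is compact in $\mathbb{E}$, it already belongs to $\CalK'(X_0^{VS})$ and lies in $A$, which settles this case.

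Next I would treat $X_0^p$ and $X_0^1$ simultaneously, writing $\curl$ for $\curl^p$ respectively $\curl^1$ and $Y$ for the target $L^1(\R^2) \cap L^p(\R^2)$ respectively $L^1(\R^2)$; in both cases $Y$ is a separable Banach space, hence Polish, and $\curl$ is Borel measurable (as noted just before the definition of the families $\CalK'$). Fix a Borel $A \subset X$ and $\varepsilon > 0$. I would combine two ingredients: applying inner regularity of $\mu_0$ on $\mathbb{E}$ to the Borel set $A \cap X_0$ produces a set $S$ compact in $\mathbb{E}$ with $S \subset A \cap X_0$ and $\mu_0(S) > \mu_0(A) - \varepsilon/2$; and pushing $\mu_0$ forward under the measurable map $\curl$ yields a Borel probability measure $\nu_0 := \curl_* \mu_0$ on the Polish space $Y$, whose inner regularity provides a compact $K \subset Y$ with $\nu_0(K) = \mu_0(\curl^{-1}(K)) > 1 - \varepsilon/2$. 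By its very definition the set $S \cap \curl^{-1}(K)$ lies in $\CalK'(X_0)$; it is contained in $S \subset A$; and by the elementary inequality it satisfies $\mu_0(S \cap \curl^{-1}(K)) \geq \mu_0(S) + \mu_0(\curl^{-1}(K)) - 1 > \mu_0(A) - \varepsilon$. Taking the supremum over such sets gives the claim.

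The whole argument is short precisely because the Polish structure of $\mathbb{E}$, $L^1(\R^2) \cap L^p(\R^2)$ and $L^1(\R^2)$ carries the weight. The only points where I expect to need care — and hence the (modest) main obstacle — are, first, ensuring that the approximating compact sets genuinely sit inside $X_0$, which I handle by invoking inner regularity on $A \cap X_0$ rather than on $A$ and by observing that $\curl^{-1}(K) \subset X_0$; and second, that the two separately chosen sets $S$ and $\curl^{-1}(K)$ can be intersected without losing most of the mass, which is exactly what $\mu_0(E \cap F) \geq \mu_0(E) + \mu_0(F) - 1$ guarantees. I would also remark that compactness in $\mathbb{E}$ entails compactness in $X = H_{loc}$ respectively $H^{-L}_{loc}(\R^2;\R^2)$ through the continuous embedding, so the members of $\CalK'(X_0)$ are legitimately compact in the ambient space $X$.
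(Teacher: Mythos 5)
Your proof is correct. In the vortex-sheet case it coincides with the paper's argument (subsets of $X_0^{VS}$ compact in $\mathbb{E}$ are by definition the members of $\CalK'(X_0^{VS})$, and $\mathbb{E}$ is Polish). For $X_0^p$ and $X_0^1$ you use the same two pillars as the paper --- inner regularity on the Polish spaces $\mathbb{E}$ and $Y = L^1(\R^2)\cap L^p(\R^2)$ resp.\ $L^1(\R^2)$, mediated by the pushforward of $\mu_0$ under the Borel map $\curl$ --- but you assemble them differently. The paper nests the two approximations: it takes a compact $S\subset A$ in $\mathbb{E}$, proves that the \emph{image} $\curl^p(S)$ is Borel in $Y$ (via the identity $\curl^p(S)=T^{-1}(S)$ for the Biot-Savart operator $T$, which is continuous into $L^1_{loc}(\R^2;\R^2)$), and then applies inner regularity of the pushforward measure $\curl^p\mu_0$ \emph{inside} $\curl^p(S)$ to find a compact $K\subset\curl^p(S)$; this step leans on the injectivity of $\curl^p$ (\Cref{lem: Biot-Savart}) through $\mu_0(S)=\mu_0((\curl^p)^{-1}(\curl^p(S)))$ and $(\curl^p)^{-1}(K)\subset S$. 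You instead choose $S$ and $K$ independently --- $S$ from inner regularity in $\mathbb{E}$ applied to $A\cap X_0$, $K$ from tightness of the pushforward on all of $Y$ --- and intersect, absorbing the loss through $\mu_0(E\cap F)\geq\mu_0(E)+\mu_0(F)-1$. Your route needs strictly less: only measurability of $\curl$ (preimages of Borel sets), never measurability of images $\curl(S)$ and never injectivity of $\curl$, so the Biot-Savart lemma and the continuity of $T$ drop out of the proof entirely, and the argument would survive verbatim for a non-injective vorticity map. The paper's nested construction, which can afford these extra ingredients since they are available anyway, keeps track of exactly where $K$ sits relative to $S$; both constructions produce legitimate members of $\CalK'(X_0)$ exhausting the measure of $A$, so nothing is lost by your more elementary bookkeeping.
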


\begin{proof}
We begin with the easiest case of $\mu_0$ being a Borel probability measure on $X_0^{VS}$. By \Cref{lem: borel algebras on E} and \Cref{lem: measurability X^p X^VS}, $\mu_0$ can be seen as a Borel probability measure on $\mathbb{E}$. Since $\mathbb{E}$ is a Polish space, $\mu_0$ is inner regular with respect to the $\|\cdot\|_{\mathbb{E}}$ compact sets in $X_0^{VS}$, which form by definition $\CalK'(X_0^{VS})$.\\
Now we consider the case of $\mu_0$ being a Borel probability measure on $X_0^p$. Let $A$ be a Borel subset of $X_0^p$. Due to \Cref{lem: borel algebras on E} and \Cref{lem: measurability X^p X^VS}, we may also view $\mu_0$ as a measure on $\mathbb{E}$, which is a Polish space. In this sense, $\mu_0$ is already inner regular and we obtain
\[\mu_0(A) = \sup_{\substack{S \subset A\\S \text{ compact in }\mathbb{E}}}\mu_0(S) = \sup_{\substack{S \subset A\\S \text{ compact in }\mathbb{E}}}\mu_0((\curl^p)^{-1}(\curl^p(S)))\]
We would now like to argue that for any compact subset $S$ in $\mathbb{E}$, $\curl^p(S)$ is Borel measurable in $L^1(\R^2) \cap L^p(\R^2)$. First of all, let $T: L^1(\R^2) \cap L^p(\R^2) \to L^1_{loc}(\R^2;\R^2)$ be the Biot-Savart operator (see \Cref{lem: Biot-Savart}). Note that $T$ is continuous even considered as an operator on $L^1(\R^2)$ as it is a convolution operator with a kernel that may be written as a sum in $L^1(\R^2) + L^\infty(\R^2)$. By \Cref{lem: Biot-Savart}, we have
\begin{equation}\label{eq: curl T}
\curl^p(S) = T^{-1}(S).
\end{equation}
Since convergence in $\mathbb{E}$ implies convergence in $H_{loc}$, which again implies convergence in $L^1_{loc}(\R^2;\R^2)$, $S$ is not only compact with respect to $\|\cdot\|_{\mathbb{E}}$ but also with respect to the topology of $L^1_{loc}(\R^2;\R^2)$. Due to \eqref{eq: curl T}, $\curl^p(S)$ is closed and hence Borel measurable in $L^1(\R^2) \cap L^p(\R^2)$.\\
We may then introduce the pushforward measure $\mu_0^p := \curl^p\mu_0$ on $L^1(\R^2) \cap L^p(\R^2)$. As a Borel probability measure on a Polish space, $\mu_0^p$ is also inner regular.\\
Continuing where we left of with our computation of $\mu_0(A)$, we now have 
\begin{align}
\mu_0(A) &= \sup_{\substack{S \subset A\\S \text{ compact in }\mathbb{E}}}\mu_0^p(\curl^p(S))\\
&= \sup_{\substack{S \subset A\\S \text{ compact in }\mathbb{E}}} \sup_{\substack{K \subset \curl^p(S)\\K \text{ compact in }L^1(\R^2)\cap L^p(\R^2)}} \mu_0^p(K)\\
&= \sup_{\substack{S \subset A\\S \text{ compact in }\mathbb{E}}} \sup_{\substack{(\curl^p)^{-1}(K) \subset S\\K \text{ compact in }L^1(\R^2)\cap L^p(\R^2)}} \mu_0((\curl^p)^{-1}(K)\cap S)\\
&\leq \sup_{\substack{S \cap (\curl^p)^{-1}(K) \subset A\\S \subset X_0^p \text{ compact in }\mathbb{E}\\K \text{ compact in }L^1(\R^2)\cap L^p(\R^2)}}\mu_0((\curl^p)^{-1}(K)\cap S)\\
&\leq \mu_0(A),
\end{align}
as desired. The remaining case of $\mu_0$ being a Borel probability measure on $X_0^1$ may be proved analogously to the previous one due to the similarity between the definition of $\CalK'(X_0^p)$ and $\CalK'(X_0^1)$ and due to the fact that the Biot-Savart operator $T$ above is still continuous when changing its domain to $L^1(\R^2)$.
\end{proof}

Next, we prove that our families of compact sets also satisfy \textit{(H3')} in \Cref{thm: ex traj stats sol v2}. For this, let $\Pi^{X^p}_0: C([0,T];H_{loc}) \to H_{loc}$ and $\Pi^{X^1}_0,\Pi^{X^{VS}}_0: C([0,T];H^{-L}_{loc}(\R^2;\R^2)) \to H^{-L}_{loc}(\R^2;\R^2)$ be the time evaluation maps between the given spaces at time $t = 0$. When we discuss all cases simultaneously, we will simply write $\Pi_0^X$.

\begin{lem}\label{lem: comp Lp solutions}
Let $S$ be a subset of $X^p_0$ which is compact in $\mathbb{E}$ and let $K$ be a compact subset of $L^1(\R^2) \cap L^p(\R^2)$ such that $S \cap (\curl^p)^{-1}(K)$ is an element of $\CalK'(X_0^p)$. Then the set $\kappa^p := (\Pi^{X^p}_0)^{-1}(S \cap (\curl^p)^{-1}(K)) \cap \CalU^p$ is compact in $C([0,T];H_{loc})$.
\end{lem}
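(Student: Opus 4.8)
The plan is to use that $C([0,T];H_{loc})$ is metrizable (it is the space of continuous maps from the compact interval $[0,T]$ into the Fréchet space $H_{loc}$), so that compactness is equivalent to sequential compactness; it then suffices to show that every sequence $(u^n)_{n\in\N} \subset \kappa^p$ admits a subsequence converging in $C([0,T];H_{loc})$ to a limit lying again in $\kappa^p$. First I would treat the initial data: each $u^n(0)$ lies in $S \cap (\curl^p)^{-1}(K)$. Since $S$ is compact in $\mathbb{E}$ and $K$ is compact in $L^1(\R^2) \cap L^p(\R^2)$, after passing to a subsequence we may assume $u^n(0) \to u_0$ in $\mathbb{E}$ with $u_0 \in S$ and $\omega(u^n(0)) = \curl^p(u^n(0)) \to \omega_0$ in $L^1(\R^2) \cap L^p(\R^2)$ with $\omega_0 \in K$. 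Because the Biot-Savart law (\Cref{lem: Biot-Savart}) identifies a field in $\mathbb{E}$ with its vorticity, the two limits are compatible, $\omega_0 = \omega(u_0)$, so that $u_0 \in S \cap (\curl^p)^{-1}(K)$.

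The core of the argument is a compactness-in-time estimate of Aubin--Lions--Simon type. Here I would use the two a priori bounds built into the definition of $\CalU^p$: by \eqref{eq: a priori W^1,p} the sequence $(u^n)$ is bounded in $L^\infty(0,T;W^{1,p}(B_k(0)))$ on every ball, uniformly in $n$ since the vorticities $\omega(u^n(0))$ remain in the bounded set $K$, while by \eqref{eq: a priori est time derivative} the time derivatives $(\partial_t u^n)$ are uniformly bounded in $L^\infty(0,T;H^{-L}(\R^2))$, because the total masses $\int_{\R^2}|\omega(u^n(0))|$ stay bounded. The Rellich--Kondrachov theorem yields the compact embedding $W^{1,p}(B_k(0)) \hookrightarrow L^2(B_k(0))$ for every $1<p<\infty$ in two dimensions, and $L^2(B_k(0)) \hookrightarrow H^{-L}(B_k(0))$ continuously. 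Simon's compactness lemma on each ball, followed by a diagonal extraction over $k$, then produces a subsequence with $u^n \to u$ in $C([0,T];L^2_{loc}(\R^2;\R^2))$, that is, in $C([0,T];H_{loc})$; the weak divergence-free constraint and membership in $L^\infty(0,T;W^{1,p}_{loc}\cap\mathbb{E})$ survive the limit by weak-* lower semicontinuity.

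It remains to verify $u \in \kappa^p$. The strong convergence $u^n \to u$ in $L^2_{loc}$ in space-time makes $u^n \otimes u^n \to u \otimes u$ in $L^1_{loc}$, so I can pass to the limit in the weak formulation \eqref{eq: Euler weak vel equ} and conclude that $u$ is a weak solution of the Euler equations; evaluating the $C([0,T];H_{loc})$ limit at $t=0$ and comparing with the first paragraph gives $u(0)=u_0 \in S \cap (\curl^p)^{-1}(K)$, hence $u \in (\Pi_0^{X^p})^{-1}(S \cap (\curl^p)^{-1}(K))$. For the remaining structural requirements I would invoke \Cref{lem: stab renormalized sols}: since $\omega(u^n)(0) \to \omega_0 = \omega(u_0)$ in $L^p(\R^2)$, a further subsequence of $(\omega(u^n))$ converges in $C([0,T];L^p(\R^2))$ to a renormalized solution with initial datum $\omega(u_0)$, which coincides with $\omega(u)$ by uniqueness of distributional limits; thus $\omega(u)\in C([0,T];L^p(\R^2))$ is renormalized. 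Finally the estimates \eqref{eq: energy ineq finite kin part}, \eqref{eq: a priori est time derivative} and \eqref{eq: a priori W^1,p} hold for $u$ by lower semicontinuity of the left-hand norms together with convergence of the data-dependent right-hand sides.

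The main obstacle, as anticipated in the discussion preceding the lemma, is not extracting a convergent subsequence but ensuring that the limit again satisfies the full list of defining conditions of $\CalU^p$ --- in particular the a priori estimates whose right-hand sides depend on the initial data, and the renormalization property. Controlling those right-hand sides is exactly what the compactness of $S$ in $\mathbb{E}$ and of $K$ in $L^1(\R^2)\cap L^p(\R^2)$ provides, while renormalization of the limit is precisely the content of \Cref{lem: stab renormalized sols}; without these two inputs the lower semicontinuity argument alone would not close.
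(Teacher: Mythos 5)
Your proof is correct and follows essentially the same route as the paper's: a priori bounds from \eqref{eq: a priori W^1,p} and \eqref{eq: a priori est time derivative}, Aubin--Lions/Simon compactness with a diagonal argument over balls to get convergence in $C([0,T];H_{loc})$, passage to the limit in the weak formulation, compactness of $S$ and $K$ to control the data-dependent right-hand sides of the a priori estimates, and \Cref{lem: stab renormalized sols} for the renormalization property. Your explicit verification via the Biot--Savart law that the two limits of the initial data are compatible is a detail the paper leaves implicit, but it is the same argument.
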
 

\begin{proof} 
Consider an arbitrary sequence $(u^n)_{n\in\N} \subset \kappa^p$.
From \eqref{eq: a priori W^1,p}, we obtain uniform boundedness of $(u^n)_{n\in\N}$ in $L^\infty(0,T;W^{1,p}_{loc}(\R^2;\R^2))$. Moreover $(\partial_t u^n)_{n\in\N}$ is uniformly bounded in $L^\infty(0,T;H^{-L}(\R^2;\R^2))$ for some $L > 1$ by \eqref{eq: a priori est time derivative}. Due to the compact embeddings
\[W^{1,p}_{loc}(\R^2;\R^2) \cap H_{loc} \hookrightarrow H_{loc} \hookrightarrow H_{loc}^{-L}(\R^2;\R^2),\]
the Aubin-Lions lemma and a diagonal sequence argument imply convergence of a subsequence of $(u^n)_{n\in\N}$ in $C([0,T];H_{loc})$ to some $u$, so it only remains to show that $u \in \kappa^p$.\\
As each field in the sequence $(u^n)_{n\in\N}$ satisfies the weak velocity formulation of the Euler equations given in \eqref{def: Euler weak vel}, the precompactness in $C([0,T];H_{loc})$ allows us to pass to the limit in all terms, in particular the quadratic one, so that $u$ is also a weak solution of the Euler equations.\\
Moreover, from compactness of $S$ in $\mathbb{E}$ and from compactness of $K$ in $L^1(\R^2) \cap L^p(\R^2)$, we may assume
\begin{align}
u^n(0) &\to u(0)\,(n\to\infty) \text{ in } \mathbb{E}\\
\omega(u^n)(0) &\to \omega(u)(0) \,(n\to\infty) \text{ in } L^1(\R^2) \cap L^p(\R^2).
\end{align}
Therefore, $u(0) \in K$ and properties \eqref{eq: energy ineq finite kin part}, \eqref{eq: a priori est time derivative} and \eqref{eq: a priori W^1,p}, demanded in the definition of $\CalU^p$, can be seen to be satisfied by $u$ from combining this strong convergence of the initial data along with an argument of weak or weak-* lower semicontinuity involving $(u^n)_{n\in\N}$ and $(\omega(u^n))_{n\in\N}$ in the respective space. Also, the strong convergence of the initial vorticities shows that after passing yet to another subsequence that $\omega(u)$ is a renormalized solution of the vorticity formulation of the Euler equations (see \Cref{lem: stab renormalized sols}). In particular, $u \in \kappa^p$.
\end{proof}

\begin{lem}\label{lem: comp VS solutions}
Let $K$ be a compact set in the family $\CalK'(X_0^{VS})$. Then the set $\kappa^{VS} := (\Pi^{X^{VS}}_0)^{-1}(K) \cap \CalU^{VS}$ is compact in $C([0,T];H^{-L}_{loc}(\R^2;\R^2)).$
\end{lem}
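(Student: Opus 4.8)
The plan is to establish sequential compactness directly, the decisive difficulty being — as in Delort's construction — the passage to the limit in the quadratic term, which I would reduce to \Cref{thm: M L^1 limit result} by exploiting the distinguished sign of the vorticity. So let $(u^n)_{n\in\N} \subset \kappa^{VS}$ be arbitrary. Compactness of $K$ in $\mathbb{E}$ bounds $|m(u^n(0))|$ and $\|u^n_{0,kin}\|_{L^2(\R^2)}$ uniformly, and since $\omega(u^n_0) \in \CalM^+_c(\R^2)$ the total masses $\int_{\R^2}\omega(u^n_0) = m(u^n_0)$ are likewise uniformly bounded. Hence \eqref{eq: energy ineq finite kin part} bounds $(u^n)$ uniformly in $L^\infty(0,T;\mathbb{E})$, and therefore in $L^\infty(0,T;L^2_{loc}(\R^2;\R^2))$, while \eqref{eq: a priori est time derivative} bounds $(\partial_t u^n)$ uniformly in $L^\infty(0,T;H^{-L}(\R^2;\R^2))$. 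The latter yields Lipschitz equicontinuity in time with values in $H^{-L}_{loc}(\R^2;\R^2)$, the former pointwise-in-time precompactness there via the compact Rellich embedding of $L^2$ into $H^{-L}$ on bounded domains. Arzelà-Ascoli together with a diagonal argument over an exhaustion of $\R^2$ then gives a (non-relabelled) subsequence with $u^n \to u$ in $C([0,T];H^{-L}_{loc}(\R^2;\R^2))$; by lower semicontinuity of the $\mathbb{E}$-norm under such convergence, $u \in L^\infty(0,T;\mathbb{E})$, and a further extraction produces $u^n \overset{\ast}{\rightharpoonup} u$ in $L^\infty(0,T;\mathbb{E})$ and, using the uniform mass bound, $\omega(u^n) \overset{\ast}{\rightharpoonup} \omega(u)$ in $L^\infty(0,T;\CalM(\R^2))$.

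It then remains to verify $u \in \kappa^{VS}$. For the initial datum, $u^n(0) \to u(0)$ in $H^{-L}_{loc}$ while $(u^n(0)) \subset K$ converges along a subsequence in $\mathbb{E}$; uniqueness of limits forces the $\mathbb{E}$-limit to equal $u(0)$, so $u(0) \in K \subset X_0^{VS}$. The genuinely delicate point is the nonlinear limit, for which strong $H^{-L}_{loc}$-convergence is far too weak and I would invoke \Cref{thm: M L^1 limit result}. Its non-concentration hypothesis holds precisely because of the sign condition: since $\omega(u^n) \geq 0$ we have $|\omega(u^n)| = \omega(u^n) \overset{\ast}{\rightharpoonup} \omega(u)$, and for almost every $t$ the limit $u(t) \in \mathbb{E}$ has $\omega(u)(t) \in H^{-1}(\R^2)$ (cf. \Cref{rem: E and H^-1}), which by \cite{Chemin1998}[Lemma 6.3.2] is atom-free. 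Consequently \Cref{thm: M L^1 limit result} shows that $u$ is a weak solution of the Euler equations.

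Finally I would check the structural and a priori requirements defining $\CalU^{VS}$. Non-negativity $\omega(u)(t) \geq 0$ for almost every $t$ follows from weak-* convergence of the non-negative $\omega(u^n)(t)$, and weak-in-time continuity $\omega(u) \in C_w([0,T];\CalM(\R^2))$ follows from $u \in C([0,T];H^{-L}_{loc}(\R^2;\R^2))$: for $\psi \in C_c^\infty(\R^2)$ the map $t \mapsto \int_{\R^2}\psi\,d\omega(u)(t)$ is the pairing of $u(t)$ with the fixed field $(\partial_2\psi,-\partial_1\psi) \in C_c^\infty(\R^2;\R^2)$, hence continuous, and one extends to $\psi \in C_0(\R^2)$ by the uniform mass bound. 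The estimates \eqref{eq: energy ineq finite kin part}, \eqref{eq: a priori est time derivative} and \eqref{eq: a priori est vort L1,VS} pass to the limit by weak or weak-* lower semicontinuity of the relevant norms and of the total mass, using $m(u^n_0) \to m(u_0)$ and $\|u^n_{0,kin}\|_{L^2(\R^2)} \to \|u_{0,kin}\|_{L^2(\R^2)}$ from the $\mathbb{E}$-convergence of the initial data; for \eqref{eq: a priori est vort L1,VS} at an arbitrary $t$ I would combine the almost-everywhere bound with the $C_w$-continuity, approximating the constant by compactly supported test functions, and the same continuity upgrades the sign and mass information to every $t \in [0,T]$. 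This gives $u \in \kappa^{VS}$, and hence compactness. I expect all of this last part to be routine semicontinuity; the only real obstacle is the nonlinear passage, which rests entirely on the distinguished sign through \Cref{thm: M L^1 limit result}.
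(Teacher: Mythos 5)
Your proof is correct and follows essentially the same route as the paper's: uniform bounds from the compactness of $K$ in $\mathbb{E}$ and the a priori estimates built into the definition of $\CalU^{VS}$, Arzel\`{a}-Ascoli to extract a limit in $C([0,T];H^{-L}_{loc}(\R^2;\R^2))$, passage to the limit in the nonlinearity via \Cref{thm: M L^1 limit result} using $|\omega(u^n)| = \omega(u^n)$ and atom-freeness of $\omega(u)(t)$ from $u(t)\in\mathbb{E}$ (\Cref{rem: E and H^-1}), and lower semicontinuity combined with $\mathbb{E}$-convergence of the initial data for the a priori estimates. The one sub-step where you genuinely diverge is the weak-in-time continuity $\omega(u) \in C_w([0,T];\CalM(\R^2))$: the paper deduces it from the uniform bound on $(\partial_t\omega(u^n))_{n\in\N}$ in $L^\infty(0,T;H^{-L-1}(\R^2;\R^2))$ together with a cited lemma of Schochet, whereas you obtain it directly from $u \in C([0,T];H^{-L}_{loc}(\R^2;\R^2))$ by writing $\int_{\R^2}\psi\,d\omega(u)(t)$ as the pairing of $u(t)$ with the fixed field $(\partial_2\psi,-\partial_1\psi)$ and then extending from $\psi \in C_c^\infty(\R^2)$ to $\psi \in C_0(\R^2)$ via the uniform total-mass bound; this is slightly more self-contained and, as you note, simultaneously upgrades the sign and mass information from almost every $t$ to every $t \in [0,T]$.
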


\begin{proof}
Let $(u^n)_{n\in\N} \subset \kappa^{VS}$ be a sequence. Since $\omega(u^n)(0) \in \CalM^+_c(\R^2)$ for every $n \in \N$, by our comments preceding \Cref{thm: solutions of euler}, $u_0^n = m(u_0^n)\Sigma + u_{0,kin}^n$ with $m(u_0^n) = \int_{\R^2} \omega(u_0^n)$. Therefore, by compactness of $K$ in $\mathbb{E}$, this implies that (a subsequence of) the total mass of initial vorticity converges. Hence, $(\partial_t u^n)_{n\in\N}$ is uniformly bounded in $L^\infty(0,T;H^{-L}(\R^2;\R^2))$ due to \eqref{eq: a priori est time derivative}. The bound on the local kinetic energy \eqref{eq: comp bdd L2_loc weak sols} and the definition of $K$ yield that $(u^n)_{n\in\N}$ is uniformly bounded in $L^\infty(0,T;H_{loc})$, where $H_{loc}$ compactly embeds into $H^{-L}_{loc}(\R^2;\R^2)$. Consequently, $(u^n)_{n\in\N}$ is equicontinuous in $C([0,T];H^{-L}_{loc}(\R^2;\R^2))$ with values in a relatively compact subset of $H^{-L}_{loc}(\R^2;\R^2)$. Then the Arzel\`{a}-Ascoli theorem implies that after passing to a subsequence, $(u^n)_{n\in\N}$ converges in $C([0,T];H_{loc}^{-L}(\R^2;\R^2))$ to some $u$, so it only remains to show $u \in \kappa^{VS}$. Due to \eqref{eq: energy ineq finite kin part} and the compactness of $K$ in $\mathbb{E}$, $(u^n)_{n\in\N}$ is uniformly bounded in $L^\infty(0,T;\mathbb{E})$, so that $u$ may also be seen as the weak-* limit in that space.\\
We are now going to apply \Cref{thm: M L^1 limit result}. Due to \eqref{eq: a priori est vort L1,VS}, $(\omega(u^n))_{n\in\N}$ is uniformly bounded in $L^\infty(0,T;\CalM^+(\R^2))$. Then $\omega(u)$ can be seen as the weak-* limit of $(\omega(u^n))_{n\in\N} = (|\omega(u^n)|)_{n\in\N}$ in $L^\infty(0,T;\CalM^+(\R^2))$. Since $u \in L^\infty(0,T;\mathbb{E})$, $\omega(u)(t)$ is continuous for almost every $0 \leq t \leq T$ (see \Cref{rem: E and H^-1} and the comments prior to \Cref{thm: M L^1 limit result}). Therefore, \Cref{thm: M L^1 limit result} yields that $u$ is indeed a weak solution of the Euler equations.\\
The a priori estimates \eqref{eq: energy ineq finite kin part}, \eqref{eq: a priori est time derivative} and \eqref{eq: a priori est vort L1,VS}, demanded in the definition of $\CalU^{VS}$, are also satisfied by the weak limit $u$. Indeed, one may estimate the left-hand side by classic weak-* compactness arguments along with weak lower semicontinuity in an appropriate sense. The right-hand side in each estimate involving the initial data converges due to the compactness of $K$ with respect to $\|\cdot\|_{\mathbb{E}}$.\\
As for the continuity of $\omega(u)$ demanded in the definition of $\CalU^{VS}$, from boundedness of $(\partial_t u^n)_{n\in\N}$ in $L^\infty(0,T;H^{-L}(\R^2;\R^2))$ we obtain boundedness of $(\partial_t \omega(u^n))_{n\in\N}$ in the space $L^\infty(0,T;H^{-L-1}(\R^2;\R^2))$. Then, as remarked in \cite{Schochet1995}[Lemma 3.2], the weak-* convergence of $(\omega(u^n))_{n\in\N}$ to $\omega(u)$ in $L^\infty(0,T;\CalM(\R^2))$ allows one to derive that $(\omega(u^n))_{n\in\N}$ converges in $C_w([0,T];\CalM(\R^2))$ to $\omega(u)$. 
\end{proof}

\begin{lem}\label{lem: comp L1 solutions}
Let $S$ be a subset of $X_0^1$, which is compact in $\mathbb{E}$ and let $K$ be a compact subset of $L^1(\R^2)$ such that $S \cap (\curl^1)^{-1}(K)$ is an element of $\CalK'(X_0^1)$. 
Then the set $\kappa^1 := (\Pi^{X^1}_0)^{-1}(S \cap (\curl^1)^{-1}(K)) \cap \CalU^1$ is compact in $C([0,T];H^{-L}_{loc}(\R^2;\R^2)).$
\end{lem}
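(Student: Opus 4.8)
The plan is to mirror the proof of \Cref{lem: comp VS solutions}, the essential new difficulty being that the vorticity now carries a sign, so that the non-concentration hypothesis of \Cref{thm: M L^1 limit result} is no longer automatic and must be extracted from \eqref{eq: local conservation vorticity} together with the $L^1(\R^2)$-compactness of $K$. First I would fix an arbitrary sequence $(u^n)_{n\in\N} \subset \kappa^1$. Since $\omega(u^n)(0) \in L^1_c(\R^2) \subset H^{-1}(\R^2)$ (see \Cref{rem: E and H^-1}), we have $m(u^n_0) = \int_{\R^2}\omega(u^n_0)$, and the compactness of $S$ in $\mathbb{E}$ and of $K$ in $L^1(\R^2)$ bound $|m(u^n_0)|$, $\|u^n_{0,kin}\|_{L^2(\R^2)}$ and $\int_{\R^2}|\omega(u^n_0)|$ uniformly in $n$. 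By \eqref{eq: a priori est time derivative} the sequence $(\partial_t u^n)_{n\in\N}$ is then uniformly bounded in $L^\infty(0,T;H^{-L}(\R^2;\R^2))$, while \eqref{eq: comp bdd L2_loc weak sols} and \eqref{eq: energy ineq finite kin part} render $(u^n)_{n\in\N}$ uniformly bounded in $L^\infty(0,T;H_{loc})$ and in $L^\infty(0,T;\mathbb{E})$. As $H_{loc}$ embeds compactly into $H^{-L}_{loc}(\R^2;\R^2)$, the Arzel\`{a}-Ascoli theorem yields, after passing to a subsequence, convergence of $(u^n)_{n\in\N}$ in $C([0,T];H^{-L}_{loc}(\R^2;\R^2))$ to some $u \in L^\infty(0,T;\mathbb{E})$; correspondingly $\omega(u^n) \overset{\ast}{\rightharpoonup} \omega(u)$ in $L^\infty(0,T;\CalM(\R^2))$.

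Next I would identify the initial datum: compactness of $S$ in $\mathbb{E}$ forces $u^n(0) \to u(0)$ in $\mathbb{E}$ with $u(0) \in S$, and compactness of $K$ in $L^1(\R^2)$ forces $\omega(u^n)(0) \to \omega(u)(0)$ in $L^1(\R^2)$ with $\omega(u)(0) \in K$, so that $u(0) \in S \cap (\curl^1)^{-1}(K)$.

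The heart of the argument is to verify that $u$ is again a weak solution of the Euler equations via \Cref{thm: M L^1 limit result}. After a further subsequence, $(|\omega(u^n)|)_{n\in\N}$ converges weak-* to some $\omega^+$ in $L^\infty(0,T;\CalM^+(\R^2))$, and it remains to show that $\omega^+(t)$ is atomless for almost every $t$; this is precisely where the signed case departs from \Cref{lem: comp VS solutions}, in which one could simply invoke $|\omega| = \omega$. Since $K$ is compact in $L^1(\R^2)$, the family $\{\omega(u^n_0)\}_{n\in\N}$ is uniformly integrable, so that $g(r) := \sup_{n\in\N}\sup_{|E| = \pi r^2}\int_E |\omega(u^n_0)|\,dx \to 0$ as $r \to 0$. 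By \eqref{eq: local conservation vorticity} we then have $\int_{B_r(x_0)}|\omega(u^n)(t)|\,dx \leq g(r)$ for all $n \in \N$, all $x_0 \in \R^2$ and all $0 \leq t \leq T$. Testing the weak-* convergence $|\omega(u^n)| \overset{\ast}{\rightharpoonup} \omega^+$ against products $\psi(t)\varphi(x)$ with $0 \leq \psi \in L^1(0,T)$ and $\varphi \in C_c(\R^2)$, $0 \leq \varphi \leq \chi_{B_r(x_0)}$, this bound passes to the limit and gives $\omega^+(t)(B_r(x_0)) \leq g(r)$ for almost every $t$; running over a countable family of rational centres and radii yields $\omega^+(t)(\{x_0\}) \leq \lim_{r\to 0}g(r) = 0$ for every $x_0 \in \R^2$ and almost every $t$, i.e.\ $\omega^+(t)$ is atomless. \Cref{thm: M L^1 limit result} then applies and shows that $u$ is a weak solution of the Euler equations. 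The $L^1$-membership $\omega(u) \in L^\infty(0,T;L^1(\R^2))$ follows from the same uniform integrability: once one knows that it propagates to the solutions (the distribution function of $|\omega(u^n)(t)|$ being inherited from that of $|\omega(u^n_0)|$ through the transport structure underlying \eqref{eq: local conservation vorticity}), Dunford-Pettis forces the weak-* limit into $L^1$.

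Finally I would check that $u$ satisfies the remaining a priori estimates \eqref{eq: energy ineq finite kin part}, \eqref{eq: a priori est time derivative}, \eqref{eq: a priori est vort L1,VS} and \eqref{eq: local conservation vorticity} demanded in the definition of $\CalU^1$: the left-hand sides are controlled by weak and weak-* lower semicontinuity of the relevant norms along $(u^n)_{n\in\N}$ and $(\omega(u^n))_{n\in\N}$, while the right-hand sides converge because they depend only on the initial data, which converge strongly by the compactness of $S$ and $K$. The weak continuity $\omega(u) \in C_w([0,T];\CalM(\R^2))$ follows, exactly as in \Cref{lem: comp VS solutions}, from the uniform bound on $(\partial_t\omega(u^n))_{n\in\N}$ in $L^\infty(0,T;H^{-L-1}(\R^2;\R^2))$ combined with \cite{Schochet1995}[Lemma 3.2]. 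This gives $u \in \kappa^1$ and hence compactness. I expect the main obstacle to be exactly the atomlessness of $\omega^+$ (and the companion point that $\omega(u)$ stays in $L^1$): it is the only step that is not a routine lower-semicontinuity or Arzel\`{a}-Ascoli argument, and it is the reason \eqref{eq: local conservation vorticity} was built into the definition of $\CalU^1$ in the first place.
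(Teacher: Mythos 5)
Your proof follows the paper's overall architecture — the Arzel\`{a}-Ascoli/Aubin-Lions compactness in $C([0,T];H^{-L}_{loc}(\R^2;\R^2))$, identification of the initial data from the compactness of $S$ and $K$, \Cref{thm: M L^1 limit result} for the weak-solution property, lower semicontinuity plus strong convergence of the initial data for the remaining a priori bounds, and \cite{Schochet1995}[Lemma 3.2] for the $C_w([0,T];\CalM(\R^2))$ continuity — but it diverges from the paper at the central non-concentration step. The paper never argues on $\omega^+$ via a modulus on balls: it first upgrades, for every fixed $t$, the weak-* convergence $\omega(u^n)(t) \overset{\ast}{\rightharpoonup} \omega(u)(t)$ in $\CalM(\R^2)$ (obtained from the $C_w$ convergence) to weak convergence in $L^1_{loc}(\R^2)$, by combining \eqref{eq: local conservation vorticity} with the Dunford-Pettis theorem (uniform integrability being equivalent to weak precompactness in $L^1_{loc}(\R^2)$); this simultaneously yields $\omega(u)(t) \in L^1(\R^2)$, the bound \eqref{eq: a priori est vort L1,VS} by lower semicontinuity, and — applying the same reasoning to $(|\omega(u^n)|)_{n\in\N}$ — that $\omega^+$ lies in $L^\infty(0,T;L^1(\R^2))$ and is therefore atomless. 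Your route to atomlessness is more elementary: you extract the uniform modulus $g(r)$ from the uniform integrability of the compact set $K$ together with \eqref{eq: local conservation vorticity}, pass it directly to the weak-* limit $\omega^+$, and conclude non-atomicity without ever placing $\omega^+(t)$ in $L^1$. This is correct, and it is arguably the more careful argument, since it uses only concentration bounds on balls, which is literally what \eqref{eq: local conservation vorticity} provides; the Dunford-Pettis step implicitly reads that estimate as controlling integrals over arbitrary sets of small measure, which is how it arises from the rearrangement structure of the approximating solutions but is more than the stated inequality.

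The genuine soft spot in your write-up is the $L^1$-membership $\omega(u) \in L^\infty(0,T;L^1(\R^2))$, which is needed for $u \in \CalU^1$. Elements of $\CalU^1$ are, by definition, only weak solutions satisfying the listed a priori estimates; nothing guarantees that "the distribution function of $|\omega(u^n)(t)|$ is inherited from that of $|\omega(u^n_0)|$ through a transport structure", so that parenthetical cannot be invoked for the members of $\kappa^1$. The way to close this is the paper's argument: for each fixed $t$, by \eqref{eq: local conservation vorticity} and Dunford-Pettis, $(\omega(u^n)(t))_{n\in\N}$ is weakly precompact in $L^1_{loc}(\R^2)$; since this sequence also converges weakly-* to $\omega(u)(t)$ in $\CalM(\R^2)$, the two limits must coincide, so $\omega(u)(t) \in L^1_{loc}(\R^2)$, and weak lower semicontinuity on balls $B_r(0)$ together with $\liminf_{n\to\infty}\|\omega(u^n)(t)\|_{L^1(\R^2)} \leq \|\omega(u)(0)\|_{L^1(\R^2)}$, followed by $r \to \infty$, gives both the $L^1$ membership and \eqref{eq: a priori est vort L1,VS}. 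Substituting this for your "transport" remark makes the proof complete; with that replacement, your treatment of the atomlessness hypothesis stands as a clean alternative to the paper's.
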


\begin{proof}
Let $(u^n)_{n\in\N} \subset \kappa^1$ be a sequence. The precompactness of $(u^n)_{n\in\N}$ in the space $C([0,T];H^{-L}_{loc}(\R^2;\R^2))$ follows again from the Aubin-Lions lemma as in \Cref{lem: comp VS solutions}. We denote the limit of a subsequence by $u$ and keep on writing $(u^n)_{n\in\N}$ for this and for further subsequences. Due to the compactness of $K$ in $L^1(\R^2)$, we may also suppose that $\omega(u^n)(0) \to \omega(u)(0)$ in $L^1(\R^2)$ and we can also see $u$ as the weak-* limit of $(u^n)_{n\in\N}$ in $L^\infty(0,T;\mathbb{E})$ as this sequence is bounded in that space by \eqref{eq: energy ineq finite kin part}.\\
We now argue that $u$ satisfies \eqref{eq: a priori est vort L1,VS}. As at the end of the proof of \Cref{lem: comp VS solutions}, we not only obtain weak-* convergence of $(\omega(u^n))_{n\in\N}$ in $L^\infty(0,T;\CalM(\R^2))$ to $\omega(u)$, but in fact convergence also holds in $C_w([0,T];\CalM(\R^2))$. We particularly obtain $\omega(u^n)(t) \overset{\ast}{\rightharpoonup} \omega(u)(t)\,(n\to\infty)$ in $\CalM(\R^2)$ for every $0 \leq t \leq T$. We combine this with the fact that due to \eqref{eq: local conservation vorticity} and the Dunford-Pettis theorem, by which uniform integrability and weak (sequential) precompactness in $L^1_{loc}(\R^2)$ are equivalent (see Chapter 4 in \cite{Bogachev2007} and in particular Theorem 4.7.18), $(\omega(u^n)(t))_{n\in\N}$ is weakly precompact in $L^1_{loc}(\R^2)$ for every $0 \leq t \leq T$. We then necessarily obtain $\omega(u^n)(t) \rightharpoonup \omega(u)(t)$ in $L^1_{loc}(\R^2)$ for every $0 \leq t \leq T$. Weak lower semicontinuity of the norm then yields for every $r > 0$
\begin{align}
\|\omega(u)(t)\|_{L^1(B_r(0))} &\leq \liminf_{n\to\infty}\|\omega(u^n)(t)\|_{L^1(B_r(0))} \leq \liminf_{n\to\infty}\|\omega(u^n)(t)\|_{L^1(\R^2)}\\
&\leq \liminf_{n\to\infty}\|\omega(u^n)(0)\|_{L^1(\R^2)} = \|\omega(u)(0)\|_{L^1(\R^2)}
\end{align}
for every $0 \leq t \leq T$. This means that $u$ also
satisfies \eqref{eq: a priori est vort L1,VS}, which we demanded in the definition of $\CalU^1$.\\
Likewise, one may argue that $|\omega(u^n)|$ converges weakly-* in $L^\infty(0,T;\CalM(\R^2))$ to an element of $L^\infty(0,T;L^1(\R^2))$ so that the continuity assumption in \Cref{thm: M L^1 limit result} is satisfied. Therefore, $u$ is indeed a weak solution of the Euler equations.\\
The properties \eqref{eq: energy ineq finite kin part}, \eqref{eq: a priori est time derivative} and \eqref{eq: a priori est vort} demanded in the definition of $\CalU^1$ follow as in \Cref{lem: comp VS solutions}. Property \eqref{eq: local conservation vorticity} follows from strong convergence $\lim_{n\to\infty} \omega(u^n)(0) = \omega(u)(0)$ in $L^1(\R^2)$ and from weak-* lower semicontinuity due to weak-* convergence of $(\omega(u^n))_{n\in\N}$ in $L^\infty(0,T;\CalM^+(\R^2))$.
\end{proof}

We are now able to state the existence result of trajectory statistical solutions in the classes considered in this subsection. As mentioned in the introduction to this subsection, for the case of $X^p = H_{loc}$ being the phase space, we will provide the arguments used in the proof of \Cref{thm: ex traj stats sol}, applied to our specific situation for illustrative purposes. In the other two cases we will directly refer to the statement of \Cref{thm: ex traj stats sol v2}.\\
In the following, for $u \in X$, we define
\begin{equation}\label{eq: norm vorticity}
\|\omega(u)\|_X := \begin{cases}
\|\omega(u)\|_{L^1(\R^2) \cap L^p(\R^2)} &: X = X^p\\
\int_{\R^2}\omega(u_0) &: X = X^{VS}\\
\|\omega(u)\|_{L^1(\R^2)} &: X = X^1
\end{cases},
\end{equation}
with the right-hand side being $\infty$ if $\omega(u)$ does not lie in the appropriate space. For the next theorem, recall the constant $a$ as given in \eqref{eq: constant a}.

\begin{thm}\label{thm: ex traj stats sol Euler}
Let $\mu_0$ be an arbitrary Borel probability measure on $X_0$. Then there exists a $\CalU$-trajectory statistical solution $\rho$ satisfying $\Pi_0^X\rho  = \mu_0$.\\
If $\int_{\mathbb{E}}a^{|m(u_0)|}\|u_{0,kin}\|_{L^2(\R^2)}\,d\mu_0(u_0) < \infty$, then
\begin{equation}\label{eq: energy inequ traj stats sols}
\int_{C([0,T];\mathbb{E})}\|u_{kin}(t)\|_{L^2(\R^2)}\,d\rho(u) \leq \int_{\mathbb{E}}a^{|m(u_0)|}\|u_{0,kin}\|_{L^2(\R^2)}\,d\mu_0(u_0)
\end{equation}
for almost every $0 \leq t \leq T$.\\
Moreover, if $\mu_0$ satisfies $\int_{X} \|\omega(u_0)\|_{X}\,d\mu_0(u_0) < \infty$, then
\begin{equation}\label{eq: energy inequ vort traj stats sol}
\int_{C([0,T];X)} \|\omega(u(t))\|_{X}\,d\rho(u) \leq \int_{X} \|\omega(u_0)\|_{X}\,d\mu_0(u_0),
\end{equation}
for every $0 \leq t \leq T$ with equality in case $X = X^p$.
\end{thm}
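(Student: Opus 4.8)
The existence of $\rho$ is an application of \Cref{thm: ex traj stats sol v2}, so the task is to verify its hypotheses (H1'), (H2') and (H3') for each triple $(X,X_0,\CalU) \in \{(X^p,X_0^p,\CalU^p),(X^{VS},X_0^{VS},\CalU^{VS}),(X^1,X_0^1,\CalU^1)\}$. Hypothesis (H1'), that $\Pi_0^X\CalU \supset X_0$, asserts that every admissible initial datum carries at least one trajectory lying in $\CalU$; this is exactly the existence content of cases B), C) and D) of \Cref{thm: solutions of euler}, combined with the a priori estimates from \Cref{sec: euler} which ensure that the solution so produced actually satisfies all the bounds built into the definition of $\CalU$. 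Hypothesis (H2'), inner regularity of $\mu_0$ with respect to $\CalK'(X_0)$, is \Cref{lem: inner reg vort}. Hypothesis (H3'), compactness of $(\Pi_0^X)^{-1}(K)\cap\CalU$ for $K \in \CalK'(X_0)$, is furnished by \Cref{lem: comp Lp solutions}, \Cref{lem: comp VS solutions} and \Cref{lem: comp L1 solutions} in the three cases. Granting these, \Cref{thm: ex traj stats sol v2} produces a $\CalU$-trajectory statistical solution $\rho$ with $\Pi_0^X\rho = \mu_0$, for arbitrary $\mu_0$ (no moment assumption being needed here).

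For $X = X^p$ I would, as announced, reproduce the proof of \Cref{thm: ex traj stats sol v2} rather than merely quote it. Using (H2') one first reduces to $\mu_0$ carried by a single set $K = S \cap (\curl^p)^{-1}(\widetilde K) \in \CalK'(X_0^p)$, the general case following by the same inner-regular exhaustion of $\supp\mu_0$ used in the abstract proof. On the compact metrizable set $K$, the measure $\mu_0$ is a weak-* limit of finitely supported probability measures $\mu_0^{(n)} = \sum_j \lambda_j^{(n)}\delta_{u_{0,j}^{(n)}}$, because the Borel probability measures on $K$ form the weak-* closed convex hull of the Dirac masses (Krein--Milman). Choosing for each atom a solution with initial datum $u_{0,j}^{(n)}$ and setting $\rho^{(n)} = \sum_j \lambda_j^{(n)}\delta_{S(u_{0,j}^{(n)})}$ gives discrete $\CalU^p$-trajectory statistical solutions with $\Pi_0^{X^p}\rho^{(n)} = \mu_0^{(n)}$. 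By (H3') all their supports sit inside the fixed compact set $(\Pi_0^{X^p})^{-1}(K)\cap\CalU^p \subset C([0,T];H_{loc})$, so $(\rho^{(n)})_{n}$ is tight; Prokhorov's theorem yields a weak-* limit $\rho$, which is carried by $\CalU^p$ since $(\Pi_0^{X^p})^{-1}(K)\cap\CalU^p$ is closed, and satisfies $\Pi_0^{X^p}\rho = \mu_0$ upon passing to the limit in $\Pi_0^{X^p}\rho^{(n)} = \mu_0^{(n)}$.

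For the energy inequality \eqref{eq: energy inequ traj stats sols}, recall that $\rho$ is carried by $\CalU$ and that every $u \in \CalU$ obeys the pointwise bound \eqref{eq: energy ineq finite kin part}, i.e. $\|u_{kin}(t)\|_{L^2(\R^2)} \leq a^{|m(u(0))|}\|u_{kin}(0)\|_{L^2(\R^2)}$ for almost every $t$. Since this exceptional null set of $t$ may depend on $u$, I would run a Tonelli argument on $[0,T] \times C([0,T];\mathbb{E})$ against $dt \otimes d\rho$: joint measurability of $(t,u) \mapsto \|u_{kin}(t)\|_{L^2(\R^2)}$ follows from continuity of the evaluation map $(t,u)\mapsto u(t)$ together with \Cref{lem: measurability lp-norm vorticity}, exactly as in the proof of \Cref{thm: ex traj stats sol infty}, and it then follows that for almost every fixed $t$ the bound holds for $\rho$-almost every $u$. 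Integrating in $u$ at such a $t$, the right-hand side depends only on $u(0)$, so the identity $\Pi_0^X\rho = \mu_0$ converts $\int a^{|m(u(0))|}\|u_{kin}(0)\|_{L^2(\R^2)}\,d\rho(u)$ into $\int_{\mathbb{E}} a^{|m(u_0)|}\|u_{0,kin}\|_{L^2(\R^2)}\,d\mu_0(u_0)$, which is finite by assumption; this is \eqref{eq: energy inequ traj stats sols}.

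For the vorticity estimate \eqref{eq: energy inequ vort traj stats sol} the same integration scheme applies, but now the pointwise bounds hold for \emph{every} $t$, so no Tonelli step is required. In the case $X = X^p$, renormalization yields conservation of every $L^q$ norm, \eqref{eq: a priori est vort}, hence $\|\omega(u)(t)\|_{L^1(\R^2)\cap L^p(\R^2)} = \|\omega(u(0))\|_{L^1(\R^2)\cap L^p(\R^2)}$ for all $t$, and integration produces the asserted equality; in the cases $X = X^{VS}$ and $X = X^1$ the one-sided bounds \eqref{eq: a priori est vort L1,VS} hold for all $t$, and integration produces the stated inequality. Measurability of $u \mapsto \|\omega(u(t))\|_X$ at fixed $t$ again comes from \Cref{lem: measurability lp-norm vorticity} and continuity of evaluation, and $\Pi_0^X\rho = \mu_0$ converts the $\rho$-integral of the right-hand side into the stated $\mu_0$-integral, finite by hypothesis. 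I expect the main obstacle to be precisely the joint-measurability and Tonelli step in the energy inequality — reconciling the almost-everywhere-in-$t$ nature of \eqref{eq: energy ineq finite kin part} with a bound at almost every fixed $t$ — rather than the existence argument, which is essentially bookkeeping layered onto the compactness lemmas already in hand.
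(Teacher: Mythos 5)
Your proposal is correct and follows essentially the same route as the paper: existence via the hypotheses (H1')--(H3') of \Cref{thm: ex traj stats sol v2} (checked through \Cref{thm: solutions of euler}, \Cref{lem: inner reg vort}, and \Cref{lem: comp Lp solutions}, \Cref{lem: comp VS solutions}, \Cref{lem: comp L1 solutions}), with the abstract argument reproduced in the $X^p$ case by Krein--Milman discrete approximation plus compactness (the paper extracts the limit measure by Banach--Alaoglu on $C(\kappa^p)'$ rather than by Prokhorov, which is equivalent on a compact metric set), and the estimates \eqref{eq: energy inequ traj stats sols} and \eqref{eq: energy inequ vort traj stats sol} obtained by integrating the deterministic bounds \eqref{eq: energy ineq finite kin part}, \eqref{eq: a priori est vort}, \eqref{eq: a priori est vort L1,VS} against $\rho$ and converting the right-hand side via $\Pi_0^X\rho=\mu_0$. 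Your explicit Tonelli step for the almost-every-$t$ statement is precisely what the paper's ``analogously'' glosses over (though the joint measurability of $(t,u)\mapsto\|u_{kin}(t)\|_{L^2(\R^2)}$ rests on \Cref{lem: borel algebras on E} rather than \Cref{lem: measurability lp-norm vorticity}, which concerns the vorticity norms).
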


\begin{proof}
We consider the case of $\mu_0$ being a Borel probability measure on $X_0^p$. Assume that $\mu_0$ is concentrated on some compact set $\kappa_0^p := S \cap (\curl^p)^{-1}(K)$ in the family $\CalK'(X^p_0)$, where $S$ is a subset of $X^p_0$ which is compact with respect to $\|\cdot\|_{\mathbb{E}}$ and $K$ is a compact set in $L^1(\R^2) \cap L^p(\R^2)$.\\ 
Then $C(\kappa_0^p)'$ is the space of finite (signed) Borel measures on $\kappa_0^p$. As $C(\kappa_0^p)$ is separable due to the compactness of $\kappa_0^p$, the unit ball $\mathbb{B}$ in $C(\kappa_0^p)'$ is weak-* sequentially compact by the Banach-Alaoglu theorem. The non-negative Borel measures $\mathbb{B}_+$ of total mass less than or equal to one form a weakly-* closed, hence weakly-* compact, and also convex subset of $\mathbb{B}$. Now the Krein-Milman theorem (see \cite{Rudin1991}[Theorem 3.23]) implies that $\mathbb{B}_+$ is equal to the closed convex hull of its extremal points, which are the Dirac measures in our case. Hence, for each $n\in\N$, there exists $J^n \in \N$ such that for every $j = 1,...,J^n$ there are
\[\theta_j^n \in (0,1], u_{0,j}^n \in \kappa_0^p\]
satisfying $\sum_{j=1}^{J^n} \theta_j^n = 1$ and
\[\mu_0^n := \sum_{j=1}^{J^n}\theta_j^n\delta_{u_{0,j}^n} \overset{\ast}{\rightharpoonup}\mu_0\,(n\to\infty)\]
on $C(\kappa_0^p)$. For each $n\in\N$ and $j \in \lbrace 1,...,n\rbrace$ let $u_j^n \in \CalU^p$ be a weak solution of the Euler equations with initial data $u_{0,j}^n$. The sequence of Borel probability measures on $C([0,T];H_{loc})$
\[\rho^n := \sum_{j=1}^{J^n}\theta_j^n\delta_{u_{j}^n}, n\in\N,\]  
then belongs already to $C(\kappa^p)'$, where $\kappa^p := (\Pi^{X^p}_0)^{-1}(\kappa_0^p) \cap \CalU^p$ is compact in $C([0,T];H_{loc})$ by \Cref{lem: comp Lp solutions}. Consequently, $C(\kappa^p)$ is separable and by the Banach-Alaoglu theorem there exists a Borel probability measure $\rho \in C(\kappa^p)'$ such that, after possibly passing to a subsequence, $\rho^n \overset{\ast}{\rightharpoonup} \rho\,(n\to\infty)$. In particular, $\rho$ is a $\CalU^p$-trajectory statistical solution as it is carried by the compact set $\kappa^p$ in $\CalU^p$.\\
Now we show that $\Pi_0^{X^p}\rho = \mu_0$. On the one hand, due to the very definition of the measures $\rho^n$, $n\in\N$,
\[\Pi^{X^p}_0\rho^n = \mu_0^n \overset{\ast}{\rightharpoonup}\mu_0\,(n\to\infty)\]
on $C(\kappa_0^p)'$. On the other hand, for every $\varphi \in C(\kappa_0^p)$, we have
\begin{equation}
\begin{split}
\int_{\kappa_0^p}\varphi(u_0)\,d(\Pi^{X^p}_0\rho^n)(u_0) &= \int_{\kappa^p} \underbrace{(\varphi\circ\Pi^{X^p}_0)}_{\in C(\kappa^p)}(u)\,d\rho^n(u)\\
&\to \int_{\kappa^p} (\varphi\circ\Pi^{X^p}_0)(u)\,d\rho(u)\,(n\to\infty)\\
&= \int_{\kappa_0^p} \varphi(u_0)\,d(\Pi^{X^p}_0\rho)(u_0),
\end{split}
\end{equation}
i.e.\ $\Pi^{X^p}_0\rho^n \overset{\ast}{\rightharpoonup} \Pi^{X^p}_0\rho\,(n\to\infty)$ in $C(\kappa_0^p)'$. Due to uniqueness of the weak-* limit, we obtain equality of $\Pi^{X^p}_0\rho$ and $\mu_0$ as measures on $\kappa_0^p$. But then equality also holds as measures on $X^p = H_{loc}$.\\
Now we consider the case where $\mu_0$ is an arbitrary Borel probability measure on $X_0^p$, not concentrated on an element of $\CalK'(X_0^p)$. By \Cref{lem: inner reg vort}, $\mu_0$ is inner regular with respect to the family $\CalK'(X_0^p)$ and we may find a sequence $(\kappa_0^{p,n})_{n\in\N} \subset \CalK'(X_0^p)$ such that 
\[\mu_0\left(\bigcup_{n=1}^\infty\kappa_0^{p,n} \right) = 1.\]
Furthermore, we may assume $\mu_0(\kappa_0^{p,n+1}) > \mu_0(\kappa_0^{p,n}) > 0$ and $\kappa_0^{p,n+1}\supset \kappa_0^{p,n}$ for all $n\in\N$. Also, let $D_n := \kappa_0^{p,n} \setminus \kappa_0^{p,n-1}$ for all $n\in\N$ ($\kappa_0^{p,0} := \emptyset$). Then
\[\mu_0\left(\bigcup_{n=1}^\infty D_n \right) = \mu_0\left(\bigcup_{n=1}^\infty\kappa_0^{p,n} \right) = 1\]
and for every Borel measurable set $A$ in $X_0^p$, we have
\[\mu_0(A) = \mu_0\left(A \cap \bigcup_{n=1}^\infty D_n \right) = \sum_{n=1}^\infty \mu_0(A \cap D_n).\]
Since $\mu_0(D_n) > 0$ for all $n\in\N$, we may define the Borel probability measures
\[\mu_0^n := \frac{\mu_0(\cdot \cap D_n)}{\mu_0(D_n)}\]
for all $n\in\N$. For each $n\in\N$, $\mu_0^n$ is concentrated on $\kappa_0^{p,n}$ and, by the first part of this proof, we may find a $\CalU^p$-trajectory statistical solution $\rho^n$ satisfying $\Pi^{X^p}_0\rho^n = \mu_0^n$.\\
Now we define the Borel probability measure
\[\rho := \sum_{n=1}^\infty \mu_0(D_n)\rho^n.\]
Since for each $n\in\N$, the measure $\rho^n$ is carried by a measurable subset of $\CalU^p$, the same applies for $\rho$, which means that $\rho$ is a $\CalU^p$-trajectory statistical solution. So it only remains to show that $\Pi^{X^p}_0\rho = \mu_0$. Let $\varphi \in C_b(H_{loc})$. Then
\begin{align}
&\int_{H_{loc}} \varphi(u_0)\,d\Pi^{X^p}_0\rho(u_0) = \int_{C([0,T];H_{loc})} \varphi(u(0))\,d\rho(u)\\
=&\sum_{n=1}^\infty \int_{C([0,T];H_{loc})}\mu_0(D_n) \varphi(u(0))\,d\rho^n(u) = \sum_{n=1}^\infty \mu_0(D_n)\int_{H_{loc}}\varphi(u_0)\,d\Pi^{X^p}_0 \rho^n(u_0)\\
=& \sum_{n=1}^\infty \mu_0(D_n)\int_{H_{loc}} \varphi(u_0)\,d\mu_0^n(u_0) = \int_{H_{loc}}\varphi(u_0)\,d\mu_0(u_0),
\end{align}  
which yields the claim.\\
We remark here that in all cases, \eqref{eq: energy inequ traj stats sols} can be proved same as \eqref{eq: energy inequ traj stats sols infty} or \eqref{eq: energy est traj stats sol infty}, based on the deterministic inequality \eqref{eq: energy ineq finite kin part}. Estimate \eqref{eq: energy inequ vort traj stats sol} also follows analogously to \eqref{eq: energy est traj stats sol infty} based on \eqref{eq: a priori est vort} and a measurability result for the $p$-norm of the vorticity, similar to that of \Cref{lem: measurability lp-norm vorticity}.\\
Next, we consider the case of $\mu_0$ being a Borel probability measure on $X_0^{VS}$. We apply \Cref{thm: ex traj stats sol v2}. Condition (\textit{H1'}) is satisfied by Delort's theorem (see \Cref{sec: euler}). Condition (\textit{H2'}) of inner regularity of $\mu_0$ with respect to the family $\CalK'(X_0^{VS})$ is satisfied due to \Cref{lem: inner reg vort}.
The final condition (\textit{H3'}) in \Cref{thm: ex traj stats sol v2} was just proved in \Cref{lem: comp VS solutions}. Likewise to the previous case, estimate \eqref{eq: energy inequ vort traj stats sol} can be proved analogously to \eqref{eq: energy est traj stats sol infty} based on \eqref{eq: a priori est vort L1,VS} and the Borel measurability of the mapping $\mathbb{E} \to \R, u \mapsto \int_{\R^2}\omega(u_0)$ with respect to $\mathbb{E}$ or the topology of $H^{-L}_{loc}(\R^2;\R^2)$ (see \Cref{lem: borel algebras on E}), which may be proved similarly to \Cref{lem: measurability lp-norm vorticity}.\\
Finally, we also apply \Cref{thm: ex traj stats sol v2} to the case of $\mu_0$ being a Borel probability measure on $X_0^1$. Condition (\textit{H1'}) is satisfied as described in \Cref{sec: euler}, i.e.\ for every $u_0 \in X_0^1$ there exists a weak solution of the Euler equations in $\CalU^1$. Condition (\textit{H2'}) of inner regularity of $\mu_0$ with respect to the family $\CalK'(X_0^1)$ is satisfied by \Cref{lem: inner reg vort}.
The final condition (\textit{H3'}) in \Cref{thm: ex traj stats sol v2} was just proved in \Cref{lem: comp L1 solutions}. Estimate \eqref{eq: energy inequ vort traj stats sol} may also be proved similarly to \eqref{eq: energy est traj stats sol infty} based on \eqref{eq: a priori est vort} and a measurability result for the $1$-norm of the vorticity, similar to that of \Cref{lem: measurability lp-norm vorticity} in combination with \Cref{lem: borel algebras on E}.
\end{proof}

In the final part of this subsection, we also provide arguments by which $\CalU^p$, $\CalU^{VS}$ or $\CalU^1$-trajectory statistical solutions can alternatively be constructed by consideration of the inviscid limit of $\CalU^\nu_{NS}$-trajectory statistical solutions as $(\nu \to 0)$. Our strategy here will be to first consider the $\CalU^\nu_{NS}$-trajectory statistical solutions as probability measures on a compact set, similar to those in $\CalK'(X_0^p), \CalK'(X_0^{VS})$ or $\CalK'(X_0^1)$, uniformly in $\nu$ and obtain a measure as a weak-* limit using the Banach-Alaoglu theorem. Then we argue that this measure does have support in the desired class of weak solutions so that it will ultimately be a trajectory statistical solution as desired.

\begin{lem}\label{lem: supp weak conv}
Let $M$ be a compact metric space and $(\pi_n)_{n\in\N}$ be a sequence of measures on the Borel-$\sigma$-algebra of $M$ converging weakly-* to some measure $\pi$. For every $x \in M$ denote by $\CalO_x$ its system of open neighbourhoods. Then
\begin{align}
\supp\pi &\subset \liminf_{n\to\infty} (\supp\pi_n)\\
&:= \lbrace x \in M: \forall O \in \CalO_x \text{ and all but finitely many }n\in\N: O \cap \supp\pi_n \neq \emptyset\rbrace\\
&= \lbrace x \in M: x \text{ is the limit of a sequence }(x_n)_{n\in\N}\in \prod_{n=1}^\infty\supp\pi_n \rbrace. 
\end{align}
\end{lem}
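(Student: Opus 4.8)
The statement comprises two independent assertions: the set-theoretic equality of the two descriptions of $\liminf_{n\to\infty}(\supp\pi_n)$, and the inclusion $\supp\pi \subset \liminf_{n\to\infty}(\supp\pi_n)$. The plan is to dispatch the equality using only the metric structure of $M$, and then to prove the inclusion via a lower-semicontinuity (portmanteau-type) estimate for open sets.

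For the equality, write $A$ for the first description (the neighbourhood condition) and $B$ for the second (the sequential-limit condition). The inclusion $B \subset A$ is immediate: if $x = \lim_n x_n$ with $x_n \in \supp\pi_n$, then every $O \in \CalO_x$ eventually contains all $x_n$, so $x_n \in O \cap \supp\pi_n$ for all but finitely many $n$. For $A \subset B$ I would exploit that $M$ is metric, so $x$ admits the countable neighbourhood basis $\{B(x,1/k)\}_{k\in\N}$. Applying the condition in $A$ to each $B(x,1/k)$ yields an increasing sequence $(N_k)_{k\in\N}$ with $B(x,1/k) \cap \supp\pi_n \neq \emptyset$ whenever $n \geq N_k$; choosing $x_n$ from this intersection for $N_k \leq n < N_{k+1}$ (and arbitrarily from the nonempty set $\supp\pi_n$ for $n < N_1$) produces a sequence $x_n \in \supp\pi_n$ with $x_n \to x$. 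Here the supports are nonempty since in the present application the $\pi_n$ are probability measures.

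For the inclusion $\supp\pi \subset \liminf_{n\to\infty}(\supp\pi_n)$, the key ingredient is the estimate
\[\liminf_{n\to\infty}\pi_n(O) \geq \pi(O) \qquad \text{for every open } O \subset M.\]
To prove it I would approximate the indicator $\chi_O$ from below by the continuous functions $f_k(x) := \min\{1,\, k\,\dist(x, M\setminus O)\}$, which vanish outside $O$, satisfy $0 \leq f_k \leq \chi_O$, and increase pointwise to $\chi_O$. Since $f_k \in C(M)$, weak-* convergence gives $\liminf_n \pi_n(O) \geq \liminf_n \int f_k\,d\pi_n = \int f_k\,d\pi$, and letting $k \to \infty$ with monotone convergence yields $\int f_k\,d\pi \to \pi(O)$, which establishes the estimate. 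Now take $x \in \supp\pi$ and suppose, for contradiction, that $x$ fails the condition in $A$: there is $O \in \CalO_x$ with $O \cap \supp\pi_n = \emptyset$, hence $\pi_n(O) = 0$, for infinitely many $n$. Then $\liminf_n \pi_n(O) = 0$, whereas $\pi(O) > 0$ by the definition of $\supp\pi$, contradicting the estimate.

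The only genuine obstacle is the lower-semicontinuity estimate for open sets; everything else is elementary bookkeeping around the metric structure and the definition of support. I would prove the estimate by hand as above rather than invoke a named portmanteau theorem, since the measures here are only assumed finite and it is cleanest to exhibit the approximating functions $f_k$ explicitly.
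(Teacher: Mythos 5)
Your strategy is essentially the paper's: both proofs test weak-* convergence against continuous minorants of indicators of open sets, and then convert positivity of measure on an open set into nonemptiness of its intersection with the support. You package the first ingredient as a full portmanteau inequality $\liminf_n \pi_n(O) \geq \pi(O)$ for all open $O$ (via the increasing functions $f_k$), where the paper instead uses a single bump function squeezed between $\chi_{B_{r/2}(x)}$ and $\chi_{B_r(x)}$ and works with two radii; both are fine. You additionally prove the equality of the two descriptions of $\liminf_{n\to\infty}(\supp\pi_n)$, which the paper treats as standard and does not address, and you are right to flag the nonemptiness of the supports needed there (if some $\pi_n = 0$, the sequential description is empty while the neighbourhood description need not be).

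The one place where your proof has a genuine hole is the clause ``$O \cap \supp\pi_n = \emptyset$, hence $\pi_n(O) = 0$''. This implication is not formal: in a general topological space an open set disjoint from the support can have positive measure, since the complement of the support is a union of open null sets that may be uncountable. The implication is true here, but it needs an argument, and it is exactly the point on which the paper spends the second half of its proof: assuming $B_r(x) \cap \supp\pi_n = \emptyset$, it covers the compact balls $\overline{B}_{r-\varepsilon}(x)$ by finitely many open null sets and uses subadditivity plus $\sigma$-continuity to conclude $\pi_n(B_r(x)) = 0$. In your formulation the quickest repair is to invoke second countability: a compact metric space is Lindel\"of, so the open set $M \setminus \supp\pi_n$ is a countable union of open null sets and is therefore null, giving $\pi_n(O) \leq \pi_n(M \setminus \supp\pi_n) = 0$. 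Either way, one or two lines restore the step; as written, your proof silently assumes the only non-trivial measure-theoretic fact in the lemma.
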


\begin{proof}
Let $x\in\supp\pi$ and consider an open ball $B_r(x)$ with radius $r > 0$, centred at $x$.\\ Let $\varphi \in C(M)$ be given by
\[\varphi(y) = \max\bigg\lbrace 0 , 1 - \frac{2}{r}\dist(y,B_{r/2}(x)) \bigg\rbrace, y \in M.\]
Then, as $B_{r/2}(x)$ is an open neighbourhood of $x$, we have
\[0 < \pi(B_{r/2}(x)) \leq \int_M \varphi\,d\pi = \lim_{n\to \infty} \int_M \varphi\,d\pi_n \leq \liminf_{n\to\infty}\pi_n(B_r(x)).\]
Therefore, for $n \in \N$ large enough, $\pi_n(B_r(x)) > 0$. This does suffice to show that $B_r(x) \cap \supp\pi_n \neq \emptyset$ for such an $n \in \N$. Indeed, suppose the opposite was true, i.e.\ $B_r(x) \cap \supp\pi_n = \emptyset$. Then for every $y \in B_r(x)$, there exists an open neighbourhood $O_y$ of $y$ in $B_r(x)$ such that $\pi_n(O_y) = 0$. For every $\varepsilon > 0$ small enough, the sets $(O_y)_{y \in \overline{B}_{r-\epsilon}(x)}$ form an open cover of the compact ball $\overline{B}_{r-\varepsilon}(x)$. Hence, for finitely many $y_1^\varepsilon,...,y^\varepsilon_{N_\varepsilon} \in B_r(x)$, $\overline{B}_{r-\varepsilon}(x) \subset \bigcup_{j=1}^{N_\varepsilon} O_{y_j^\varepsilon}$. In particular $\pi_n(\overline{B}_{r-\varepsilon}(x)) = 0$ by subadditivity. From $\sigma$-continuity, we arrive at the contradiction $\pi_n(B_r(x)) = 0$.\\
Hence, $B_r(x) \cap \supp\pi_n \neq \emptyset$ for all but finitely many $n\in\N$, which suffices as the balls $(B_r(x))_{r>0}$ form a basis of the neighbourhood system $\CalO_x$.
\end{proof}

To be able to tell that the support of the $\CalU_{NS}^\nu$-trajectory statistical solutions are indeed contained in $\CalU_{NS}^\nu$, we may show closedness of this space. The following lemma will be of use in the following sections. 

\begin{lem}\label{lem: comp NSE E}
Let $\kappa_0$ be a compact set in $\mathbb{E}$. Then $\kappa := (\Pi_0^\mathbb{E})^{-1}(\kappa_0) \cap \CalU_{NS}^\nu$ is compact in $C([0,T];\mathbb{E})$.
\end{lem}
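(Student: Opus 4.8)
The plan is to realise $\kappa$ as the continuous image of the compact set $\kappa_0$ under the Navier--Stokes solution operator $S^\nu_{NS}$. First I would observe that, by uniqueness of weak solutions (\Cref{thm: unique existence NSE}), every $u\in\kappa$ equals $S^\nu_{NS}(u(0))$ with $u(0)\in\kappa_0$, while conversely $S^\nu_{NS}(u_0)\in\kappa$ for each $u_0\in\kappa_0$; hence $\kappa=S^\nu_{NS}(\kappa_0)$. Since $C([0,T];\mathbb{E})$ is a Banach space, hence a complete metric space, it then suffices to show that $S^\nu_{NS}$ is sequentially continuous on $\kappa_0$, for then $\kappa$ is the continuous image of a compact set and therefore compact.

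For the continuity I would take a sequence $u_0^n\to u_0$ in $\kappa_0$ and show that $(S^\nu_{NS}(u_0^n))_{n\in\N}$ converges in $C([0,T];\mathbb{E})$. The decisive tool is the fact recorded after \eqref{eq: stab est grad smooth sols}: a sequence of weak Navier--Stokes solutions whose initial data is Cauchy in $\mathbb{E}$ and whose gradients are bounded in $L^2(0,T;L^2(\R^{2\times 2}))$ is automatically Cauchy in $C([0,T];\mathbb{E})$. The initial data $u_0^n$ is convergent, hence Cauchy. For the uniform gradient bound I would exploit the compactness of $\kappa_0$: the functionals $u_0\mapsto|m(u_0)|$ and $u_0\mapsto\|u_{0,kin}\|_{L^2(\R^2)}$ are continuous on $\mathbb{E}$ and therefore bounded on $\kappa_0$. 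The energy inequality \eqref{eq: energy inequ NSE} then bounds $\nu\|\nabla (S^\nu_{NS}(u_0^n))_{kin}\|_{L^2(0,T;L^2(\R^2))}$ uniformly in $n$, while the stationary part of the solution contributes $m(u_0^n)\nabla\Sigma$, whose norm in $L^2(0,T;L^2(\R^2))$ is controlled by $|m(u_0^n)|\sqrt{T}\,\|\nabla\Sigma\|_{L^2(\R^2)}$, using $\nabla\Sigma\in L^2(\R^{2\times 2})$ and the conservation of $m$ along the flow. Adding the two pieces yields a uniform bound on the full gradients, so the cited fact applies and $(S^\nu_{NS}(u_0^n))_{n\in\N}$ is Cauchy, converging to some $w\in C([0,T];\mathbb{E})$.

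It remains to identify $w$. Since $\CalU_{NS}^\nu$ is closed in $C([0,T];\mathbb{E})$ by \Cref{lem: meas U infty nu} and each $S^\nu_{NS}(u_0^n)$ lies in $\CalU_{NS}^\nu$, we get $w\in\CalU_{NS}^\nu$; moreover $w(0)=\lim_{n\to\infty} u_0^n=u_0$, so uniqueness (\Cref{thm: unique existence NSE}) forces $w=S^\nu_{NS}(u_0)$. This proves sequential continuity of $S^\nu_{NS}$ on $\kappa_0$, and hence continuity in the metric setting, so that $\kappa=S^\nu_{NS}(\kappa_0)$ is compact.

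I expect the only genuine subtlety to be the uniform gradient bound, specifically that one must bound the \emph{full} gradient $\nabla S^\nu_{NS}(u_0^n)$ rather than only its finite-kinetic-energy part: the energy inequality controls the kinetic part directly, whereas the stationary contribution must be handled separately via $\nabla\Sigma\in L^2(\R^{2\times 2})$ together with the fact that $m$ is constant along Navier--Stokes trajectories (the solution remains in $E_m$). Everything else amounts to assembling results already established in the excerpt, so the heavy analytic work—the stability estimate forcing the Cauchy property—has effectively been done beforehand.
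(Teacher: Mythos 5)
Your argument is correct and rests on the same analytic core as the paper's proof: the Cauchy-stability fact recorded after \eqref{eq: stab est grad smooth sols} (Cauchy initial data plus uniformly bounded gradients imply Cauchy in $C([0,T];\mathbb{E})$), a uniform gradient bound obtained from \eqref{eq: energy inequ NSE} together with the boundedness of $u_0\mapsto|m(u_0)|$ and $u_0\mapsto\|u_{0,kin}\|_{L^2(\R^2)}$ on the compact set $\kappa_0$, and the closedness of $\CalU^\nu_{NS}$ from \Cref{lem: meas U infty nu}. The packaging, however, differs. The paper proves sequential compactness of $\kappa$ directly: given an arbitrary sequence in $\kappa$, compactness of $\kappa_0$ yields a subsequence with convergent (hence Cauchy) initial data, the stability fact makes the corresponding solutions Cauchy in $C([0,T];\mathbb{E})$, and the limit lies in $\kappa$ by closedness of $\CalU^\nu_{NS}$ --- uniqueness of Navier--Stokes solutions is never invoked. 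You instead write $\kappa=S^\nu_{NS}(\kappa_0)$ and prove continuity of the solution operator on $\kappa_0$, which avoids subsequence extraction and is consistent with how the paper treats $\CalU^\nu_{NS}$ in \Cref{thm: ex traj stats sol NS}, where the same identification of elements of $\CalU^\nu_{NS}$ with $S^\nu_{NS}$ of their initial data is used. The one caveat in your route: \Cref{thm: unique existence NSE} states uniqueness within $C([0,T];E_m)$, whereas elements of $\CalU^\nu_{NS}$ are a priori only continuous with values in $\mathbb{E}$, so the identification $\kappa=S^\nu_{NS}(\kappa_0)$ tacitly assumes that such a weak solution keeps a time-independent stationary part; the paper's uniqueness-free argument sidesteps this point (though the paper itself leans on the same identification elsewhere), and this robustness is also what lets the paper reuse the direct template in the non-unique Euler classes. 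Finally, your insistence on bounding the \emph{full} gradient by adding the stationary contribution $m(u_0^n)\nabla\Sigma$, with $\nabla\Sigma\in L^2(\R^{2\times 2})$, to the kinetic part controlled by \eqref{eq: energy inequ NSE} is exactly the right care point and is what the paper's appeal to \eqref{eq: stab est grad smooth sols} accomplishes implicitly.
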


\begin{proof}
Let $(u^n)_{n\in\N}$ be any sequence in $\kappa$. From combining the relative energy inequality \eqref{eq: stab est smooth sols} with the gradient control \eqref{eq: stab est grad smooth sols} (see also the comments thereafter), the compactness of the initial data in $\kappa_0$ yields the existence of a subsequence of $(u^n_{kin})_{n\in\N}$, which is Cauchy in $C([0,T];H)$. Then the corresponding subsequence of $(u^n)_{n\in\N}$ will be Cauchy in $C([0,T];\mathbb{E})$, whose limit $u \in C([0,T];\mathbb{E})$ has initial data in $\kappa_0$. From closedness of $\CalU_{NS}^\nu$ in $C([0,T];\mathbb{E})$ (\Cref{lem: meas U infty nu}) we obtain $u \in \kappa$ as desired.
\end{proof}

\begin{lem}\label{lem: supp traj stats sol NSE}
Let $\mu_0$ be a Borel probability measure on $\mathbb{E}$ which is concentrated on some compact set $\kappa_0$ in $\CalK'(X_0)$. Then the support of $S^{\nu}_{NS}\mu_0$ with respect to $C([0,T];\mathbb{E})$ is contained in $(\Pi^\mathbb{E}_0)^{-1}(\kappa_0) \cap \CalU^\nu_{NS}$. 
\end{lem}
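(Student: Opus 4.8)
The plan is to show that the pushforward measure $S^\nu_{NS}\mu_0$ assigns full mass to the compact set $\kappa := (\Pi^\mathbb{E}_0)^{-1}(\kappa_0) \cap \CalU^\nu_{NS}$, and then invoke the description of the support as the intersection of all closed sets of full measure. Since the support is always contained in any closed set of full measure, it then suffices to produce such a closed set contained in $\kappa$; in fact $\kappa$ itself is compact, hence closed, by \Cref{lem: comp NSE E}, so the whole argument reduces to verifying $S^\nu_{NS}\mu_0(\kappa) = 1$.

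First I would unwind the definition of the pushforward. By \Cref{thm: ex traj stats sol NS}, we have $S^\nu_{NS}\mu_0(A) = \mu_0((S^\nu_{NS})^{-1}(A))$ for every Borel set $A$ in $C([0,T];\mathbb{E})$, where $S^\nu_{NS}$ is the solution operator, which is Borel measurable by \Cref{lem: measurability S infty nu}. Hence $S^\nu_{NS}\mu_0(\kappa) = \mu_0((S^\nu_{NS})^{-1}(\kappa))$, and it is enough to show that $\mu_0$-almost every initial datum $u_0$ satisfies $S^\nu_{NS}(u_0) \in \kappa$. Since $\mu_0$ is concentrated on $\kappa_0$, it suffices to check that $S^\nu_{NS}(u_0) \in \kappa$ for every $u_0 \in \kappa_0$.

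The verification is now direct. Fix $u_0 \in \kappa_0$. By the very definition of the solution operator, $S^\nu_{NS}(u_0)$ is a weak solution of the Navier-Stokes equations with initial data $u_0$ and viscosity $\nu$ lying in $C([0,T];\mathbb{E})$ with $\nabla u \in L^2(0,T;L^2(\R^{2\times 2}))$, so that $S^\nu_{NS}(u_0) \in \CalU^\nu_{NS}$. Moreover $\Pi^\mathbb{E}_0(S^\nu_{NS}(u_0)) = u_0 \in \kappa_0$, since $\Pi^\mathbb{E}_0 \circ S^\nu_{NS} = \Id_{\mathbb{E}}$, so that $S^\nu_{NS}(u_0) \in (\Pi^\mathbb{E}_0)^{-1}(\kappa_0)$. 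Combining the two memberships yields $S^\nu_{NS}(u_0) \in \kappa$. Thus $\kappa_0 \subset (S^\nu_{NS})^{-1}(\kappa)$, and since $\mu_0(\kappa_0) = 1$ we conclude $S^\nu_{NS}\mu_0(\kappa) = 1$.

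Finally, as $\kappa$ is compact in $C([0,T];\mathbb{E})$ by \Cref{lem: comp NSE E} and hence closed, the support of $S^\nu_{NS}\mu_0$, being the intersection of all closed sets of full measure, is contained in $\kappa = (\Pi^\mathbb{E}_0)^{-1}(\kappa_0) \cap \CalU^\nu_{NS}$, as claimed. I do not anticipate a genuine obstacle here: the statement is essentially a packaging of the defining property of the pushforward together with the compactness established in \Cref{lem: comp NSE E}; the only point requiring mild care is keeping straight that the inclusion $\supp(S^\nu_{NS}\mu_0) \subset \kappa$ follows from $\kappa$ being closed of full measure, rather than trying to compute the support pointwise.
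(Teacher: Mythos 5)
Your proof is correct and follows essentially the same route as the paper: the paper likewise observes that $\kappa_0$ is compact in $\mathbb{E}$, invokes \Cref{lem: comp NSE E} to get that $(\Pi^\mathbb{E}_0)^{-1}(\kappa_0) \cap \CalU^\nu_{NS}$ is compact (hence closed) in $C([0,T];\mathbb{E})$, notes it has full measure under $S^\nu_{NS}\mu_0$, and concludes. You merely spell out the full-measure verification via the pushforward definition, which the paper leaves implicit.
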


\begin{proof}
We recall that in each case $\kappa_0$ is in fact compact in $\mathbb{E}$. \Cref{lem: comp NSE E} then implies that $(\Pi^\mathbb{E}_0)^{-1}(\kappa_0) \cap \CalU_{NS}^\nu$ is compact in $C([0,T];\mathbb{E})$ and in particular closed.\\
Since in each case this set has measure one with respect to $S_{NS}^\nu\mu_0$, we conclude the proof.
\end{proof}

\begin{thm}\label{thm: traj inviscid limit}
Let $\mu_0$ be a Borel probability measure on $X_0$. Then there exists a sequence $\nu^n \searrow 0$ such that $(S_{NS}^{\nu^n}\mu_0)_{n\in\N}$ converges to some Borel probability measure $\rho$ on $C([0,T];X)$ in the sense that 
\begin{equation}
\int_{C([0,T];X)} \varphi(u)\,dS_{NS}^{\nu^n}\mu_0(u) \to \int_{C([0,T];X)} \varphi(u)\,d\rho(u)\,(n \to \infty)
\end{equation}
for all bounded and continuous functions $\varphi$ on $C([0,T];X)$.\\
Moreover, $\rho$ is a $\CalU$-trajectory statistical solution of the Euler equations satisfying $\Pi_0^{X}\rho = \mu_0$.  
\end{thm}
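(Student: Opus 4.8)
The plan is to follow the strategy announced before \Cref{lem: supp weak conv}: view each $S_{NS}^\nu\mu_0$ as a probability measure supported in a compact subset of $C([0,T];X)$ that can be chosen \emph{uniformly} in $\nu$, extract a weak-* limit along a vanishing sequence of viscosities by Banach-Alaoglu, and then use \Cref{lem: supp weak conv} together with the deterministic inviscid limit to place the support of the limit measure inside $\CalU$. First I would reduce to the case where $\mu_0$ is concentrated on a single set $\kappa_0 \in \CalK'(X_0)$, exactly as in the first part of the proof of \Cref{thm: ex traj stats sol Euler}; recall every such $\kappa_0$ is compact in $\mathbb{E}$. By \Cref{lem: supp traj stats sol NSE},
\[
\supp(S_{NS}^\nu\mu_0) \subset (\Pi_0^\mathbb{E})^{-1}(\kappa_0) \cap \CalU_{NS}^\nu
\]
for every $\nu > 0$. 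The central claim is that the union $\CalA := \bigcup_{0 < \nu \leq 1}\big((\Pi_0^\mathbb{E})^{-1}(\kappa_0) \cap \CalU_{NS}^\nu\big)$ is relatively compact in $C([0,T];X)$. This is established by repeating, for a sequence of Navier-Stokes solutions $(u^n)_{n\in\N} \subset \CalA$ with viscosities $\nu_n \in (0,1]$ and initial data in $\kappa_0$, the compactness arguments of \Cref{lem: comp Lp solutions}, \Cref{lem: comp VS solutions} and \Cref{lem: comp L1 solutions}: the a priori bounds \eqref{eq: comp bdd L2_loc weak sols}, \eqref{eq: a priori est time derivative}, \eqref{eq: a priori W^1,p}, \eqref{eq: a priori est vort L1,VS} and \eqref{eq: local conservation vorticity} hold for the Navier-Stokes equations with constants independent of $\nu \in (0,1]$, so the Aubin-Lions lemma (respectively Arzel\`{a}-Ascoli) yields a subsequence converging in $C([0,T];X)$. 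Consequently every $S_{NS}^\nu\mu_0$, $0 < \nu \leq 1$, is a Borel probability measure carried by the compact closure $\overline{\CalA}$.

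Since $\overline{\CalA}$ is compact metric, $C(\overline{\CalA})$ is separable, and the Banach-Alaoglu theorem furnishes a sequence $\nu^n \searrow 0$ and a Borel probability measure $\rho$ on $C([0,T];X)$, carried by $\overline{\CalA}$, with $S_{NS}^{\nu^n}\mu_0 \overset{\ast}{\rightharpoonup} \rho$; the stated convergence against bounded continuous $\varphi$ is precisely this weak-* convergence. To see that $\rho$ is carried by $\CalU$, I would invoke \Cref{lem: supp weak conv}, which gives $\supp\rho \subset \liminf_{n\to\infty}\supp(S_{NS}^{\nu^n}\mu_0)$. Any element of the right-hand side is a limit in $C([0,T];X)$ of Navier-Stokes solutions $u^n \in \CalU_{NS}^{\nu^n}$ with $\nu^n \searrow 0$ and initial data in $\kappa_0$; by the inviscid-limit parts of \Cref{lem: comp Lp solutions}, \Cref{lem: comp VS solutions} and \Cref{lem: comp L1 solutions} (using \Cref{lem: stab renormalized sols} in the $L^p$ case and \Cref{thm: M L^1 limit result} in the $\CalM^+$ and $L^1$ cases) such a limit is a weak solution of the Euler equations lying in $(\Pi_0^X)^{-1}(\kappa_0) \cap \CalU$. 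Hence $\supp\rho \subset \CalU$, and since $C([0,T];X)$ is Polish, $\rho$ is automatically inner regular, so $\rho$ is a $\CalU$-trajectory statistical solution. Finally $\Pi_0^X\rho = \mu_0$ follows by testing the weak-* convergence against $\varphi \circ \Pi_0^X$ for $\varphi \in C_b(X)$ and using $\Pi_0^X(S_{NS}^{\nu^n}\mu_0) = \mu_0$ for every $n$.

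For a general Borel probability measure $\mu_0$ on $X_0$, I would use \Cref{lem: inner reg vort} to write $\mu_0 = \sum_{k} \mu_0(D_k)\,\mu_0^k$ with each $\mu_0^k$ concentrated on some $\kappa_0^k \in \CalK'(X_0)$, exactly as in \Cref{thm: ex traj stats sol Euler}. A diagonal argument over $k$ then produces a \emph{single} sequence $\nu^n \searrow 0$ along which $S_{NS}^{\nu^n}\mu_0^k \overset{\ast}{\rightharpoonup} \rho^k$ for every $k$, and I would set $\rho := \sum_k \mu_0(D_k)\rho^k$. Since $S_{NS}^{\nu^n}\mu_0 = \sum_k \mu_0(D_k)\,S_{NS}^{\nu^n}\mu_0^k$ and $\sum_k \mu_0(D_k) = 1$, dominated convergence in the summation index (for fixed bounded continuous $\varphi$ on $C([0,T];X)$ the summand is dominated by $\|\varphi\|_\infty\,\mu_0(D_k)$) gives $S_{NS}^{\nu^n}\mu_0 \overset{\ast}{\rightharpoonup}\rho$, while $\rho$ inherits being carried by $\CalU$ and satisfying $\Pi_0^X\rho = \mu_0$ from the individual pieces.

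I expect the main obstacle to be the uniform-in-$\nu$ compactness underlying the central claim, together with the identification of the limit: one must know both that the a priori estimates defining $\CalU$ hold for the Navier-Stokes solutions with constants independent of the (small) viscosity, and that limits of such solutions as $\nu \to 0$ genuinely solve the Euler equations and inherit those estimates. Both ingredients are precisely the deterministic inviscid-limit results recorded in \Cref{sec: euler} (notably \Cref{lem: stab renormalized sols} and \Cref{thm: M L^1 limit result}), so once these are in hand the remaining work — the Banach-Alaoglu extraction and the bookkeeping of a common vanishing sequence in the general case — is routine.
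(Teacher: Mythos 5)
Your proposal follows essentially the same route as the paper's proof: reduce to $\mu_0$ concentrated on a compact set $\kappa_0 \in \CalK'(X_0)$; find a single compact subset of $C([0,T];X)$ carrying all the measures $S_{NS}^\nu\mu_0$ uniformly in $\nu$ (the paper takes the set of trajectories with data in $\kappa_0$ satisfying the $\nu$-uniform a priori estimates from the definition of $\CalU$, you take the closure of the union $\CalA$ of the Navier-Stokes solution sets -- these are interchangeable, resting on the same uniform estimates and the same Aubin-Lions/Arzel\`{a}-Ascoli arguments); extract a weak-* limit along $\nu^n \searrow 0$ by Banach-Alaoglu; place $\supp\rho$ inside $\CalU$ via \Cref{lem: supp weak conv} together with the deterministic inviscid-limit arguments of \Cref{lem: comp Lp solutions}, \Cref{lem: comp VS solutions} and \Cref{lem: comp L1 solutions}; recover $\Pi_0^X\rho = \mu_0$ by testing with $\varphi\circ\Pi_0^X$; and treat general $\mu_0$ by the inner-regularity decomposition and a diagonal argument. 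This is the paper's proof in all but notation.

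One step is glossed over. \Cref{lem: supp traj stats sol NSE} controls the support of $S_{NS}^\nu\mu_0$ \emph{with respect to the topology of} $C([0,T];\mathbb{E})$, whereas your application of \Cref{lem: supp weak conv} on the compact metric space $\overline{\CalA} \subset C([0,T];X)$ needs the same inclusion for the support with respect to the coarser topology of $C([0,T];X)$ -- and passing to a coarser topology can only enlarge the support, so the inclusion does not transfer for free. The paper closes exactly this point: $(\Pi_0^\mathbb{E})^{-1}(\kappa_0) \cap \CalU_{NS}^\nu$ is compact in $C([0,T];\mathbb{E})$ by \Cref{lem: comp NSE E}, hence compact and in particular closed in $C([0,T];X)$ by the continuous embeddings, and since it carries full $S_{NS}^\nu\mu_0$-measure, the support taken in $C([0,T];X)$ is contained in it as well. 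With that one observation inserted, your argument is complete.
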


\begin{proof}
We will discuss all three cases simultaneously. First, we suppose that $\mu_0$ is concentrated on some compact set $\kappa_0 \in \CalK'(X_0)$. The right-hand side in each of the a priori estimates demanded in the definition of $\CalU$ does not depend on the viscosity $\nu$, i.e.\ they are uniform. Therefore, if we let $\kappa \subset (\Pi^X_0)^{-1}(\kappa_0) \subset C([0,T];X)$ be the set of functions with initial data in $\kappa_0$, which satisfy the same a priori estimates demanded in the definition of $\CalU$, then as in \Cref{lem: comp Lp solutions}, \Cref{lem: comp VS solutions} or \Cref{lem: comp L1 solutions}, using the Aubin-Lions lemma or the Arzel\`{a}-Ascoli theorem yields the compactness of $\kappa$ in $C([0,T];X)$.\\
Due to the Banach-Alaoglu theorem, there exists a sequence $\nu^n \searrow 0$ and a Borel probability measure $\rho$ on $\kappa$ such that $\rho_{NS}^{\nu^n} := S_{NS}^{\nu^n}\mu_0 \overset{\ast}{\rightharpoonup} \rho$ in $C(\kappa)'$. The measure $\rho$ may be seen as a measure on $C([0,T];X)$ due to the Borel measurability of $\kappa$. In particular, the convergence of $(S_{NS}^{\nu^n}\mu_0)_{n\in\N}$ towards $\rho$ holds in the sense described in the theorem.\\
We now show that $\rho$ is indeed a trajectory statistical solution satisfying $\Pi_0^X\rho = \mu_0$.\\
By \Cref{lem: supp traj stats sol NSE}, the support of $\rho_{NS}^{\nu^n}$ with respect to $C([0,T];\mathbb{E})$ is contained in $(\Pi_0^\mathbb{E})^{-1}(\kappa_0) \cap \CalU_{NS}^{\nu^n}$ for every $n \in \N$, which is compact in $C([0,T];\mathbb{E})$ due to \Cref{lem: comp NSE E}. In general, the support is of course depending on the considered topology and not just the Borel-$\sigma$-algebra. However, as we have the compact embeddings $\mathbb{E} \hookrightarrow H_{loc} \hookrightarrow H^{-L}_{loc}(\R^2;\R^2)$, for any $n \in \N$, $(\Pi_0^\mathbb{E})^{-1}(\kappa_0) \cap \CalU_{NS}^{\nu^n}$ is compact in $C([0,T];X)$. Consequently, the support in $C([0,T];X)$ of $\rho_{NS}^{\nu^n}$ will be contained in $(\Pi_0^\mathbb{E})^{-1}(\kappa_0) \cap \CalU_{NS}^{\nu^n}$ in all three cases and any $n\in\N$.\\
Let $u \in \supp\rho \subset \kappa$. By the point we just made about the support of $(\rho_{NS}^{\nu^n})_{n\in\N}$ and \Cref{lem: supp weak conv}, there exists a sequence $(u_{NS}^{\nu^n})_{n\in\N}$ such that $u_{NS}^{\nu^n} \in (\Pi_0^\mathbb{E})^{-1}(\kappa_0) \cap \CalU^{\nu^n}_{NS}$ for all $n\in\N$ and $u_{NS}^{\nu^n} \to u\,(n\to\infty)$ in $C([0,T];X)$.\\
To conclude that $u$ is a weak solution of the Euler equations in the desired class $\CalU^p$, $\CalU^{VS}$ or $\CalU^1$, one would need to consider each case individually as we have done for \Cref{lem: comp Lp solutions}, \Cref{lem: comp VS solutions} and \Cref{lem: comp L1 solutions}. The arguments provided in those lemmata can be used analogously here mainly for three reasons: First, for the convergence to $0$ of the term involving the viscosity, one only needs a uniform bound on the local kinetic energy for the Navier-Stokes equations, which holds due to \eqref{eq: comp bdd L2_loc weak sols} and by $\kappa_0$ being compact in $\mathbb{E}$. Second, the a priori estimates to extract a subsequence which converges in an appropriate way are likewise satisfied in each case by the Navier-Stokes equations as outlined in \Cref{sec: euler}. Finally, $u_{NS}^{\nu^n}(0) \in \kappa_0$ for every $n \in \N$, which allows us to conclude that the a priori estimates demanded in the definition of $\CalU^p$, $\CalU^{VS}$ and $\CalU^1$ are satisfied by $u$ in the respective case. This latter fact of strong convergence of the initial vorticities also implies in the case $X = X^p$, due to \Cref{lem: stab renormalized sols}, that $\omega(u)$ will be a renormalized solution of the vorticity formulation of the Euler equations (see \Cref{lem: stab renormalized sols}).\\
Omitting further details, we conclude that $\rho$ is a trajectory statistical solution on the considered set of weak solutions of the Euler equations.\\
It remains to show $\Pi_0^X\rho = \mu_0$. Note that for any continuous function $\varphi$ on $X$, $\varphi \circ \Pi^X_0$ is continuous on $C([0,T];X)$, from which we immediately derive
\begin{align}
&\int_{X} \varphi(u_0)\,d\Pi^X_0\rho(u_0) = \int_{C([0,T];X)}(\varphi \circ \Pi^X_0)(u)\,d\rho(u) = \int_{\kappa}(\varphi \circ \Pi^X_0)(u)\,d\rho(u)\\
=& \lim_{n \to \infty} \int_{\kappa}(\varphi \circ \Pi^X_0)(u)\,dS_{NS}^{\nu^n}\mu_0(u) = \int_{\kappa_0} \varphi(u_0)\,d\mu_0(u_0) = \int_{X} \varphi(u_0)\,d\mu_0(u_0).
\end{align}
Hence, $\Pi^X_0\rho = \mu_0$ as desired.\\
The general case when $\mu_0$ is no longer concentrated on some set $\kappa_0 \in \CalK'(X_0)$ is quite similar to that in \Cref{thm: ex traj stats sol Euler}. By \Cref{lem: inner reg vort}, $\mu_0$ is inner regular with respect to the family $\CalK'(X_0)$ and we may find a sequence $(\kappa_0^n)_{n\in\N} \subset \CalK'(X_0)$ such that 
\[\mu_0\left(\bigcup_{n=1}^\infty\kappa_0^n \right) = 1.\]
We may assume $\mu_0(\kappa_0^{n+1}) > \mu_0(\kappa_0^{n}) > 0$ and $\kappa_0^{n+1}\supset \kappa_0^{n}$ for all $n\in\N$. Also, let $D_n := \kappa_0^{n} \setminus \kappa_0^{n-1}$ for all $n\in\N$ ($\kappa_0^{0} := \emptyset$). Then
\[\mu_0\left(\bigcup_{n=1}^\infty D_n \right) = \mu_0\left(\bigcup_{n=1}^\infty\kappa_0^{n} \right) = 1\]
and for every Borel measurable set $A$ in $X_0$, we have
\[\mu_0(A) = \mu_0\left(A \cap \bigcup_{n=1}^\infty D_n \right) = \sum_{n=1}^\infty \mu_0(A \cap D_n).\]
Since $\mu_0(D_n) > 0$ for all $n\in\N$, we may define the Borel probability measures
\[\mu_0^n := \frac{\mu_0(\cdot \cap D_n)}{\mu_0(D_n)}\]
for all $n\in\N$. For each $n\in\N$, $\mu_0^n$ is concentrated on $\kappa_0^{n}$ and by the first part of this proof and a diagonal sequence argument, we can find a sequence $\nu^n \searrow 0$ such that $(S_{NS}^{\nu^n}\mu_0^k)_{n\in\N}$ converges to $\rho^k$ for every $k \in \N$ in the sense described in the statement of the theorem,
where $\rho^k$ is a trajectory statistical solution on the considered class, satisfying $\Pi_0^X\rho^k = \mu_0^k$.\\
Next, we now show that $(S_{NS}^{\nu^n}\mu_0)_{n\in\N}$ converges to $\rho := \sum_{k=1}^\infty \mu_0(D_k)\rho^k$ in the described way. In before, we mention that $\rho$ is a trajectory statistical solution of the Euler equations satisfying $\Pi_0^X\rho = \mu_0$, which can be shown identically to \Cref{thm: ex traj stats sol Euler}.\\
Let $\varphi$ be a bounded continuous function on $C([0,T];X)$ and let $\varepsilon > 0$. Then there exists $k_0 \in \N$ such that for every $k \in \N$ satisfying $k \geq k_0$
\[\mu_0\left( \bigcup_{l=1}^k D_l \right) \geq 1- \varepsilon.\]
For $k \geq k_0$, we thereby obtain
\begin{align}
&\limsup_{n \to \infty} \bigg| \int_{C([0,T];X)}\varphi(u)\,dS_{NS}^{\nu^n}\mu_0(u) - \int_{C([0,T];X)}\varphi(u)\,d\rho(u)\bigg|\\
\leq& \limsup_{n\to\infty}\sum_{l=1}^k\mu_0(D_l) \bigg|\int_{C([0,T];X)} \varphi(u)\,dS_{NS}^{\nu^n}\mu_0^l(u) - \int_{C([0,T];X)} \varphi(u)\,d\rho^l(u)\bigg|\\
&\, + 2\|\varphi\|_\infty\sum_{l=k+1}^\infty\mu_0(D_l)\\
=& 2\|\varphi\|_{\infty}\varepsilon.
\end{align}
\end{proof}

\section{Statistical solutions of the Euler equations in phase space}

In this section we construct phase space statistical solutions of the Euler equations by projecting the trajectory statistical solutions, which we constructed in the previous section, in time as suggested by
\Cref{thm: phase space stats sol}.\\
So far, we have worked with the velocity formulation of the (deterministic) Euler and Navier-Stokes equations and will continue to do so here. As the vorticity does not explicitly play a role in this formulation, we may actually use more or less the same definition of phase space statistical solution of the Euler equations in all considered cases, by which we also mean the usage of the same class of test functionals throughout, based on the set of divergence-free test functions $C_c^\infty(\R^2;\R^2) \cap H$. We remark though that for instance when considering the Yudovich class, one could relatively effortlessly use a larger class of test functionals due to more integrability of the velocity field. As it makes little to no difference for our work here, we will restrain ourselves from doing so.\\

Throughout this section, we let $Y = \CalD \cap H$ be the space of divergence-free test functions, considered as a subspace of $\CalD := C^\infty_c(\R^2;\R^2)$, endowed with the inductive limit topology (see \cite{Rudin1991}[Chapter 6]): We define the spaces $\CalD_K := \lbrace \varphi \in C^\infty(\R^2;\R^2) : \supp\varphi \subset K\rbrace$ for every compact set $K \subset \R^2$ as Fr\'{e}chet spaces with the topology $\tau_K$ coming from the norms $\|\cdot\|_N := \max_{|\alpha| \leq N} \|\partial^\alpha\cdot\|_{L^\infty(\R^2)}$, $N\in\N$. On $\CalD$, we then consider the topology associated to the local basis of the origin of convex, balanced sets $W \subset \CalD$ such that $\CalD_K \cap W \in \tau_K$ for every compact set $K \subset \R^2$.\\
On $Y'$, we consider the strong topology of topological dual spaces, i.e.\ the topology of uniform convergence on bounded sets. If $E$ is a bounded subset of $Y$, then $E \subset \CalD_K$ for some $K \subset \R^2$ and there are numbers $M_N < \infty$ such that
\begin{equation}\label{eq: bd on Y}
\|\varphi\|_N \leq M_N
\end{equation}
for all $N \in \N, \varphi \in E$.\\
Throughout all considered cases, in the setting of \Cref{def: phase space stats sol}, we will choose $Z = \mathbb{E}$ and $X$ will be one of the spaces $X^\infty = \mathbb{E}, X^p = H_{loc}$ or $X^{VS} = X^1 = H^{-L}_{loc}(\R^2;\R^2)$ as in the previous section. We are likewise going to use $X_0$ and $\CalU$ as general placeholders for the times we wish to prove or formulate a statement that is valid in all cases.\\ 
We have the continuous embeddings
\[\mathbb{E} \hookrightarrow X \hookrightarrow Y'_{w*}\]
and we are going to consider the function $F: \mathbb{E} \to Y'$, given by
\[\langle F(u),v \rangle_{Y',Y} = \int_{\R^2} (u \otimes u) : \nabla v\,dx\]
for every $u \in \mathbb{E}, v \in Y$.\\
Clearly, $u \in L^\infty(0,T;\mathbb{E}) \cap C([0,T];X)$ is a weak solution of the Euler equations in the sense of \Cref{def: Euler weak vel} if and only if $u_t = F(u)$ in the weak sense described in \Cref{thm: phase space stats sol}. We note that compared to the introductory section on the abstract framework of statistical solutions, we omitted the time interval $[0,T]$ from the domain of $F$, as $F$ does not explicitly depend on the time.

\begin{lem}\label{lem: meas F}
The mapping $F: \mathbb{E} \to Y'$ is $\CalB(\mathbb{E})$-$\CalB(Y')$ measurable. Moreover, the corresponding Nemytskii operator $(t,u) \mapsto F(u(t))$ is $\CalL([0,T]) \otimes \CalB(C([0,T];X))$-$\CalB(Y')$ measurable when $F$ is extended by $0$ from $\mathbb{E}$ to $X$.
\end{lem}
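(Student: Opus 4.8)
The plan is to treat the two assertions separately: first I would establish continuity of $F$ on $\mathbb{E}$ (which is stronger than the stated measurability), and then reduce the joint measurability of the Nemytskii operator to the composition of a continuous evaluation map with a suitably extended version of $F$.

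For the first claim I would show that $F$ is continuous as a map from $\mathbb{E}$, endowed with $\|\cdot\|_{\mathbb{E}}$, into $Y'$ with its strong topology. Since $\mathbb{E}$ is metrizable, continuity is equivalent to sequential continuity. Let $u^n \to u$ in $\mathbb{E}$; from $m(u^n)\to m(u)$ in $\R$, $u^n_{kin}\to u_{kin}$ in $L^2(\R^2)$ and the boundedness of $\Sigma$ one deduces $u^n \to u$ in $L^2_{loc}(\R^2;\R^2)$. Given a bounded set $E \subset Y$, by \eqref{eq: bd on Y} there is a compact $K \subset \R^2$ with $E \subset \CalD_K$ and $\|\nabla v\|_{L^\infty(\R^2)} \le M_1$ for all $v \in E$. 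Using the identity $u^n\otimes u^n - u\otimes u = (u^n-u)\otimes u^n + u\otimes(u^n-u)$ and the Cauchy-Schwarz inequality,
\[
\sup_{v\in E}\,\bigl|\langle F(u^n)-F(u),v\rangle_{Y',Y}\bigr| \le M_1\,\|u^n-u\|_{L^2(K)}\bigl(\|u^n\|_{L^2(K)}+\|u\|_{L^2(K)}\bigr),
\]
which tends to $0$ because $\|u^n-u\|_{L^2(K)}\to 0$ while $\|u^n\|_{L^2(K)}$ stays bounded. Thus $F(u^n)\to F(u)$ uniformly on bounded subsets of $Y$, i.e.\ in the strong topology (the same estimate shows $F(u)\in Y'$). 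Continuity then yields the $\CalB(\mathbb{E})$-$\CalB(Y')$ measurability of $F$.

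For the second claim I would factor the Nemytskii operator as $\tilde F \circ U$, where $U\colon [0,T]\times C([0,T];X)\to X$, $(t,u)\mapsto u(t)$, is the jointly continuous evaluation map and $\tilde F\colon X\to Y'$ denotes $F$ extended by $0$ to $X\setminus\mathbb{E}$. The map $U$ is continuous, hence Borel measurable, and therefore $\CalL([0,T])\otimes\CalB(C([0,T];X))$-$\CalB(X)$ measurable (all factors being second countable, the product Borel $\sigma$-algebra factors as the product of the Borel $\sigma$-algebras). It remains to prove that $\tilde F$ is $\CalB(X)$-$\CalB(Y')$ measurable. By \Cref{lem: borel algebras on E}, $\mathbb{E}$ is a Borel subset of $X$ and the trace of $\CalB(X)$ on $\mathbb{E}$ coincides with the $\|\cdot\|_{\mathbb{E}}$-Borel $\sigma$-algebra $\CalB(\mathbb{E})$. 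Hence the restriction $F|_{\mathbb{E}}$, measurable by the first part, is measurable with respect to $\CalB(X)|_{\mathbb{E}}\subset\CalB(X)$; on the Borel set $X\setminus\mathbb{E}$ the map $\tilde F$ is the constant $0$. For any $B\in\CalB(Y')$ one then has $\tilde F^{-1}(B)=(F|_{\mathbb{E}})^{-1}(B)\cup R$, where $R$ equals $X\setminus\mathbb{E}$ or $\emptyset$ according to whether $0\in B$; both pieces lie in $\CalB(X)$. Composing with $U$ gives the claimed joint measurability.

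The main obstacle is the interplay of topologies in the second part: $F$ is only well-behaved on $\mathbb{E}$ with its genuine norm topology, whereas the trajectories are evaluated into the coarser space $X$. This mismatch is exactly what \Cref{lem: borel algebras on E} resolves, by guaranteeing that $\mathbb{E}$ is Borel in $X$ and that no Borel information is lost when passing from the norm topology of $\mathbb{E}$ to the subspace topology inherited from $X$. The first part, by contrast, is a routine continuity estimate once the structure of bounded sets in $Y$ recorded in \eqref{eq: bd on Y} is exploited.
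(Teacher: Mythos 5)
Your proposal is correct. The first half coincides with the paper's argument: the paper also proves the stronger statement that $F$ is continuous from $\mathbb{E}$ into $Y'$ by exactly the route you take --- fix a bounded set in $Y$, extract via \eqref{eq: bd on Y} a common compact support $K$ and a uniform bound $M_1$ on the gradients, and bound the dual pairing by $M_1\|u^n\otimes u^n-u\otimes u\|_{L^1(K)}$, which vanishes because $\mathbb{E}$-convergence implies $L^2_{loc}$-convergence; your use of the bilinear identity plus Cauchy--Schwarz merely spells out the $L^1(K)$ convergence that the paper states in one line. Where you genuinely diverge is the Nemytskii part: the paper disposes of it with a single citation (``a general result, Proposition 2.1 in \cite{Bronzi2016}''), whereas you prove it from scratch by factoring the operator as $\tilde F\circ U$ with $U$ the jointly continuous evaluation map, and by checking that the zero-extension $\tilde F$ is $\CalB(X)$-$\CalB(Y')$ measurable via \Cref{lem: borel algebras on E} (Borel measurability of $\mathbb{E}$ in $X$ and agreement of the trace $\sigma$-algebra with $\CalB(\mathbb{E})$), together with second countability of the factors to pass from the Borel $\sigma$-algebra of the product topology to $\CalL([0,T])\otimes\CalB(C([0,T];X))$. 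Your version buys self-containedness and makes visible precisely where the topological mismatch between $\mathbb{E}$ and $X$ enters --- a point the abstract citation hides --- at the cost of a longer argument; it is also consistent with how the paper itself treats the evaluation map elsewhere (in the proof of \Cref{thm: ex traj stats sol infty}, continuity of $U$ is invoked for exactly this kind of joint measurability). All steps check out: the extension-by-zero preimage decomposition, the trace-$\sigma$-algebra argument, and the separability of $C([0,T];X)$ for each of the three choices of $X$ are all valid.
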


\begin{proof}
We show that $F$ is even continuous. As $\mathbb{E}$ is a normed space, it suffices to show sequential continuity. Let $(u^n)_{n\in\N}$ be a converging sequence in $\mathbb{E}$ with limit $u$. We now show $\lim_{n\to\infty} F(u^n) = F(u)$ in $Y'$. Let $B \subset Y$ be bounded. By \eqref{eq: bd on Y}, there exists some compact set $K \subset \R^2$ and $M_1 > 0$ such that for all $\varphi \in B$, we have
\[\supp\varphi \subset K \text{ and } \|\nabla\varphi\|_{L^\infty(\R^2)} \leq M_1.\]
Consequently, 
\[\sup_{\varphi \in B} |\langle F(u^n) - F(u),\varphi \rangle_{Y',Y}| \leq \|(u^n \otimes u^n) - (u \otimes u)\|_{L^1(K)} M_1.\]
The right-hand side converges to $0$ since convergence in $\mathbb{E}$ particularly implies convergence in $L^2_{loc}(\R^2;\R^2)$ so that $(u^n \otimes u^n)_{n\in\N}$ converges in $L^1_{loc}(\R^2;\R^{2\times 2})$.\\
The measurability of the corresponding Nemytskii operator is a general result (see Proposition 2.1 in \cite{Bronzi2016}).
\end{proof}

As before, $\Pi_t^X: C([0,T];X) \to X$ denotes the time evaluation mapping at time $t \in [0,T]$.

We are now going to construct projected phase space statistical solutions. In the Yudovich class, we may construct them in an elementary way using the solution operator $S^\infty: X_0^\infty \to C([0,T];X^\infty)$ that we introduced in  
\Cref{sec: traj sols infty and NS}. The arguments are very close to the ones used in the proof of \Cref{thm: ex phase space stats sol} and \Cref{thm: ex phase space stats sol v2} in \cite{Bronzi2016}, to which we will refer in the other cases.\\
In the following, for $u \in X$, $\|\omega(u)\|_X$ is defined as in \eqref{eq: norm vorticity} if $X=X^p,X^{VS},X^1$ and $\|\omega(u)\|_{X^\infty} := \|\omega(u)\|_{(L^1\cap L^\infty)(\R^2)}$. 

\begin{thm}\label{thm: ex phase space stats sol infty}
Let $\mu_0$ be a Borel probability measure on $X_0$ satisfying
\begin{equation}\label{eq: integrability infty mu_0}
\int_{X_0} \gamma(u_0)^2\,d\mu_0(u_0) < \infty
\end{equation}
with $\gamma$ as in \eqref{eq: comp bdd L2_loc weak sols}.\\ Let $\rho$ be a $\CalU$-trajectory statistical solution satisfying $\Pi_0^X\rho = \mu_0$. Then the family of Borel probability measures $\lbrace \rho_t \rbrace_{0 \leq t \leq T}, \rho_t = \Pi_t^X\rho$ for every $0 \leq t \leq T$, is a statistical solution in phase space of the equation $u_t = F(u)$, satisfying $\rho_0 = \mu_0$ as well as
\begin{equation}\label{eq: energy inequ phase space stats sols}
\int_{\mathbb{E}}\|u_{kin}\|_{L^2(\R^2)}\,d\rho_t(u) \leq \int_{\mathbb{E}}a^{|m(u_0)|}\|u_{0,kin}\|_{L^2(\R^2)}\,d\mu_0(u_0)
\end{equation}
for almost every $0 \leq t \leq T$ in the cases $X = X^p, X^{VS}, X^1$ and every $0 \leq t \leq T$ if $X = X^\infty$.\\
Moreover, if $\mu_0$ satisfies $\int_{X} \|\omega(u_0)\|_{X}\,d\mu_0(u_0) < \infty$, then
\begin{equation}\label{eq: energy est phase space stats sol}
\int_{X} \|\omega(u)\|_{X}\,d\rho_t(u) \leq \int_{X} \|\omega(u_0)\|_{X}\,d\mu_0(u_0)
\end{equation}
for every $0 \leq t \leq T$, with equality if $X = X^\infty$ or $X= X^p$.
\end{thm}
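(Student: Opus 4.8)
The plan is to verify the hypotheses (i)--(iv) of \Cref{thm: phase space stats sol} for the given trajectory statistical solution $\rho$ (with carrier $\CalV \subset \CalU$, $\rho(\CalV)=1$), apply that theorem to obtain the Foias--Liouville identity and the continuity of $t \mapsto \int_X \varphi\,d\rho_t$ for $\varphi \in C_b(X)$, and finally deduce the two energy estimates by transporting the corresponding trajectory bounds through the evaluation maps $\Pi_t^X$. Throughout we take $Z = \mathbb{E}$, $Y = \CalD \cap H$ and $X$ to be one of $X^\infty, X^p, X^{VS}, X^1$, with $F$ as defined above; recall that, by \Cref{def: Euler weak vel}, membership of $u$ in $\CalU$ is exactly the statement $u_t = F(u)$ in the weak sense.

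Conditions (i)--(iii) are essentially bookkeeping against earlier results. Condition (i), $\CalB(\mathbb{E}) \subset \CalB(X)$, is \Cref{lem: borel algebras on E}. For (ii), every $u \in \CalU$ lies in $L^\infty(0,T;\mathbb{E})$, so $u(t) \in \mathbb{E} = Z$ for a.e.\ $t$, placing $u$ in $\CalZ$; and the recalled absolute continuity of $t\mapsto \int_{\R^2} v\cdot u(t)\,dx$, with derivative $\int_{\R^2}\nabla v:(u(t)\otimes u(t))\,dx = \langle F(u(t)),v\rangle_{Y',Y}$ for $v \in Y$, places $u$ in $\CalY_1$, hence in $\CalX_1 = \CalZ\cap\CalY_1$. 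For (iii), the $\CalB(\mathbb{E})$-$\CalB(Y')$ measurability of $F$ and of the associated Nemytskii operator is \Cref{lem: meas F}, the scalar weak-* local integrability of $t\mapsto F(u(t))$ follows from the pointwise bound below, and $u_t = F(u)$ weakly is the definition of a weak solution.

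The one substantial point is (iv), and this is where hypothesis \eqref{eq: integrability infty mu_0} is consumed. Fix $v \in Y$; since $\supp v$ is compact there is a ball $B_r(0)$ containing it, and Hölder's inequality together with the local kinetic energy bound \eqref{eq: comp bdd L2_loc weak sols} gives, for a.e.\ $t$ and every $u \in \CalV$,
\[
|\langle F(u(t)),v\rangle_{Y',Y}| \le \|u(t)\|_{L^2(B_r(0))}^2\,\|\nabla v\|_{L^\infty(\R^2)} \le C\,\gamma(u(0))^2\,\|\nabla v\|_{L^\infty(\R^2)},
\]
with $C$ depending only on $v$ (through $r$) and on $\Sigma, T$, and crucially independent of both $t$ and $u$. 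Integrating over $\CalV$ and using $\Pi_0^X\rho = \mu_0$ to change variables,
\[
\int_{\CalV} |\langle F(u(t)),v\rangle_{Y',Y}|\,d\rho(u) \le C\,\|\nabla v\|_{L^\infty(\R^2)}\int_{X_0}\gamma(u_0)^2\,d\mu_0(u_0),
\]
which is finite by \eqref{eq: integrability infty mu_0} and bounded uniformly in $t$, hence lies in $L^1_{loc}([0,T])$. This establishes (iv), and is precisely the manifestation in the present setting of the hypothesis (H6)/\eqref{eq: finiteness moments phase space stats sol} of \Cref{thm: ex phase space stats sol}. \Cref{thm: phase space stats sol} then yields that $\{\rho_t\}_{0\le t\le T}$ is a phase space statistical solution of $u_t=F(u)$, and $\rho_0 = \Pi_0^X\rho = \mu_0$.

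The energy estimates follow by pushing the trajectory inequalities forward along $\Pi_t^X$: for measurable $g$ on $X$ the change-of-variables formula gives $\int_X g\,d\rho_t = \int_{C([0,T];X)} g(u(t))\,d\rho(u)$, so taking $g(w)=\|w_{kin}\|_{L^2(\R^2)}$ and $g(w)=\|\omega(w)\|_X$ (measurable on $\mathbb{E}$, hence on $X$, by \Cref{lem: borel algebras on E}) converts \eqref{eq: energy inequ traj stats sols} and \eqref{eq: energy inequ vort traj stats sol} into \eqref{eq: energy inequ phase space stats sols} and \eqref{eq: energy est phase space stats sol}. In the Yudovich case $X=X^\infty$ one invokes the every-$t$ versions \eqref{eq: energy inequ traj stats sols infty} and \eqref{eq: energy est traj stats sol infty}, whose equalities propagate to equality in \eqref{eq: energy est phase space stats sol}, and the equality for $X=X^p$ follows identically from the equality in \eqref{eq: energy inequ vort traj stats sol}. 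The main obstacle is the uniform bound in (iv): it is exactly the quadratic growth of the local $L^2$ norm in $\gamma$ that forces the second-moment hypothesis, while every remaining condition reduces to an already-established measurability statement.
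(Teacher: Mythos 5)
Your proof is correct, and its technical core coincides with the paper's: \Cref{lem: borel algebras on E} for the Borel structures, \Cref{lem: meas F} for the measurability of $F$ and its Nemytskii operator, the quadratic bound $|\langle F(u(t)),v\rangle_{Y',Y}| \leq C\|\nabla v\|_{L^\infty(\R^2)}\gamma(u(0))^2$ from \eqref{eq: comp bdd L2_loc weak sols} as the precise place where the second-moment hypothesis \eqref{eq: integrability infty mu_0} is consumed, and the pushforward of the trajectory estimates along $\Pi_t^X$ for the energy inequalities. The organization is genuinely different, though. The paper splits into cases: for $X = X^\infty$ it uses uniqueness to write $\rho = S^\infty\mu_0$ and verifies the four conditions of \Cref{def: phase space stats sol} by hand, including an explicit chain-rule computation of $\frac{d}{ds}\Phi(u(s))$ for cylindrical $\Phi$ followed by Fubini to obtain the Foias--Liouville equation; for $X = X^p, X^{VS}, X^1$ it instead invokes the combined result \Cref{thm: ex phase space stats sol v2}, after checking (H1')--(H3') together with (H4)--(H6). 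You verify hypotheses (i)--(iv) of the projection theorem \Cref{thm: phase space stats sol} once, uniformly in all four cases, applied directly to the given $\rho$. Your route avoids the case split and actually matches the statement more precisely: the theorem concerns the projection of an arbitrary given trajectory statistical solution, while \Cref{thm: ex phase space stats sol v2} as stated only produces some projected statistical solution with the right initial distribution --- a cosmetic mismatch in the paper's citation that your phrasing closes, since \Cref{thm: phase space stats sol} is exactly the statement about a given $\rho$. What the paper's longer detour buys is its declared didactic aim of exhibiting the abstract argument of \cite{Bronzi2016} concretely in the simplest ($L^\infty$-vorticity) setting. One point to make explicit in your writeup: in the Yudovich case your appeal to the every-$t$ (in)equalities \eqref{eq: energy inequ traj stats sols infty} and \eqref{eq: energy est traj stats sol infty}, and hence the equality claim in \eqref{eq: energy est phase space stats sol} for $X = X^\infty$, silently uses the uniqueness clause of \Cref{thm: ex traj stats sol infty} to identify the given $\rho$ with $S^\infty\mu_0$; this is harmless but should be said, exactly as the paper does when it sets $\rho^\infty = S^\infty\mu_0$ at the start of its proof.
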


\begin{proof}
We begin with the case $X = X^\infty = \mathbb{E}$. Let $\rho^\infty = S^\infty\mu_0$ be the $\CalU^\infty$-trajectory statistical solution from \Cref{thm: ex traj stats sol infty} so that $\rho^\infty_t = \Pi^\mathbb{E}_t\rho^\infty$ for all $0 \leq t \leq T$.
We will check in an elementary way in this case that $\lbrace \rho_t^\infty\rbrace_{0\leq t \leq T} = \lbrace (\Pi_t^\mathbb{E} \circ S^\infty)\mu_0\rbrace_{0\leq t \leq T}$ satisfies all four conditions in \Cref{def: phase space stats sol}. 
\begin{enumerate}
\item[i)] Let $\varphi \in C_b(\mathbb{E})$. We obtain from the Lebesgue dominated convergence theorem that
\[t \mapsto \int_{\mathbb{E}}\varphi(u)\,d\rho_t(u) = \int_{C([0,T];\mathbb{E})}\varphi(u(t))\,d\rho(u)\]
is continuous and obviously bounded.
\item[ii), iii)] Since solutions in $\CalU^\infty$ are in $C([0,T];\mathbb{E})$, it is clear that $\rho_t$ is carried by $Z = \mathbb{E}$ for every $0 \leq t \leq T$. As we have already shown sufficient measurability properties of $F$ and its associated Nemytskii operator in \Cref{lem: meas F}, to verify ii) and iii) in \Cref{def: phase space stats sol}, we only need to show that $F$ is bounded appropriately by integrable functions, which we will do in one step.\\
Let $0 \leq t \leq T$ and $u_0 \in X_0^\infty$ with associated weak solution of the Euler equations $u = S^\infty(u_0) \in \CalU^\infty$. For any $v \in Y$, there exists $r > 0$ such that $v$ has compact support in $B_r(0)$. Then we may estimate using \eqref{eq: comp bdd L2_loc weak sols} 
\begin{align}
|\langle F^\infty(u(t)),v\rangle_{Y,Y'}|&\leq \|\nabla v\|_{L^\infty(\R^2)} \|u(t)\|^2_{L^2(B_r(0))}\\
&\leq C\|\nabla v\|_{L^\infty(\R^2)}\max\lbrace1,r^2\rbrace\gamma(u_0)^2
\end{align}
with the right-hand side being independent of $t$. Therefore, the integrability assumption \eqref{eq: integrability infty mu_0} shows that $|\langle F(u(t)),v\rangle_{Y',Y}|$ can even be dominated by a $dt \otimes \rho$ integrable function on $[0,T]\times C([0,T];\mathbb{E})$. As $\rho_t$ is given as the projection of $\rho$ at time $t$, this shows that both ii) and iii) hold.
\item[iv)] Finally, let $\Phi$ be a cylindrical test function in $Y'$, i.e.\ there exists $\phi \in C^1_c(\R^k)$ for some $k \in \N$ and $v_1,...,v_k \in Y$, such that
\[ \Phi(w) = \phi(\langle w,v_1\rangle_{Y',Y},...,\langle w,v_k\rangle_{Y',Y}) \]
for all $w \in Y'$. Let $u \in \CalU^\infty$ be a weak solution. We note that for almost every $0 \leq s \leq T$, we have
\begin{equation}
\begin{split}
&\frac{d}{ds}\Phi(u(s))\\
=& \frac{d}{ds}\phi(\langle u(s),v_1\rangle_{Y',Y},...,\langle u(s),v_k\rangle_{Y',Y})\\
=& \sum_{j=1}^k \partial_{x_j} \phi(\langle u(s),v_1\rangle_{Y',Y},...,\langle u(s),v_k\rangle_{Y',Y})\frac{d}{ds}\langle u(s),v_j \rangle_{Y',Y}\\
=& \sum_{j=1}^k \partial_{x_j} \phi(\langle u(s),v_1\rangle_{Y',Y},...,\langle u(s),v_k\rangle_{Y',Y})\int_{\R^2} (u(s)\otimes u(s)) : \nabla v_j\,dx\\
=& \langle u(s)\otimes u(s),\sum_{j=1}^k \partial_{x_j} \phi(\langle u(s),v_1\rangle_{Y',Y},...,\langle u(s),v_k\rangle_{Y',Y})\nabla v_j \rangle_{L^2(\R^2),L^2(\R^2)}\\
=& \langle u(s)\otimes u(s),\nabla\Phi'(u(s))\rangle_{L^2(\R^2),L^2(\R^2)}\\
=& \langle F(u(s)),\Phi'(u(s))\rangle_{Y',Y}.
\end{split}
\end{equation}
Integrating from $t'$ to $t$, $0 \leq t' \leq t \leq T$, then yields
\[\Phi(u(t)) = \Phi(u(t')) + \int_{t'}^t \langle F(u(s)),\Phi'(u(s))\rangle_{Y',Y}\,ds.\]
Due to the measurability of $F$ shown in \Cref{lem: meas F} and the estimates in part ii), we may integrate with respect to $\mu_0$, apply Fubini's theorem and use the definition of $\lbrace \rho_t \rbrace_{0\leq t \leq T}$ to obtain
\[\int_{\mathbb{E}} \Phi(u)\,d\rho_t(u) = \int_{\mathbb{E}}\Phi(u)\,d\rho_{t'}(u) + \int_{t'}^t\int_{\mathbb{E}} \langle F(u),\Phi'(u)\rangle_{Y',Y}\,d\rho_s(u)\,ds.\]
\end{enumerate}
Estimates \eqref{eq: energy inequ phase space stats sols} and \eqref{eq: energy est phase space stats sol} are immediate consequences of \eqref{eq: energy inequ traj stats sols infty} and \eqref{eq: energy est traj stats sol infty}.\\
For the remaining cases, we will directly apply \Cref{thm: ex phase space stats sol v2}. Conditions (\textit{H1'}) - (\textit{H3'}) have been checked in preparation for \Cref{thm: ex traj stats sol Euler}. Condition (\textit{H4}) is satisfied by \Cref{lem: borel algebras on E}. (\textit{H5}) follows from $\CalU \subset L^\infty(0,T;\mathbb{E})$, our choice of $F$, \Cref{lem: meas F} and the weak formulation of the Euler equations. Finally, (\textit{H6}) can be verified as we just did in step ii) in the case of $X = X^\infty = \mathbb{E}$.\\
Finally, \eqref{eq: energy est phase space stats sol} and \eqref{eq: energy inequ phase space stats sols} are consequences of \eqref{eq: energy inequ traj stats sols} and \eqref{eq: energy inequ vort traj stats sol}.
\end{proof}

\begin{remark}
From the uniqueness of the $\CalU^\infty$-trajectory statistical solutions we can immediately conclude that $\lbrace \rho_t\rbrace_{0\leq t \leq T}$ in \Cref{thm: ex phase space stats sol infty} is the unique projected statistical solution in phase space in that specific case.\\
However, there may be other statistical solutions in phase space which cannot be obtained from projecting a $\CalU^\infty$-trajectory solution. Formally, computations as in \cite{Foias2001} in the case of the two-dimensional Navier-Stokes equations on bounded domains, or adaptations for the case of $\R^2$, as indicated in \cite{Kelliher2009}, may be used to show uniqueness among all statistical solutions in phase space. However, making those computations rigorous requires some kind of Fr\'{e}chet-differentiable dependence on the initial data. In \cite{Foias2001}, this problem was overcome with quite some effort by considering the Galerkin approximations which stem from an ordinary differentiable equation, where differentiable dependence on the initial data clearly holds. 
\end{remark}

To close this section, let us briefly point out that the inviscid limit results \Cref{thm: traj inviscid limit infty} and \Cref{thm: traj inviscid limit} also yield analogous results for the phase space statistical solutions.\\
For phase space statistical solutions of the Navier-Stokes equations with viscosity $\nu > 0$ in the framework of \cite{Bronzi2016}, we let $Z =  X = \mathbb{E}$ and choose $Y$ as in the previously considered case of phase space statistical solutions of the Euler equations. To be more in line with previous work on phase space statistical solutions of the Navier-Stokes equations, in particular \cite{Kelliher2009}, we could also choose $X = \mathbb{E}$ and $Z_{NS} = \lbrace u \in \mathbb{E} : \nabla u \in L^2(\R^{2 \times 2})\rbrace,$ endowed with the norm $\|\cdot\|_{Z_{NS}} := \|\cdot\|_{\mathbb{E}} + \|\nabla\cdot\|_{L^2(\R^{2 \times 2})}$ and for the space of test functions let $Y_{NS} = H^1_c(\R^2;\R^2) \cap H$ with a similar type of inductive limit topology. We remark that the different choice here makes little difference since in either case, the projected phase space statistical solution is given by $\lbrace(S_{NS}^\nu\circ\Pi_t^\mathbb{E})\mu_0\rbrace_{0 \leq t \leq T}$ and the only difference is that we would view them as measures on different subspaces.\\
Let $F_{NS}^\nu: \mathbb{E} \to Y'$ be given by
\begin{equation}
\langle F_{NS}^\nu(u),v\rangle_{Y',Y} = \int_{\R^2} (u \otimes u) : \nabla v\,dx + \nu \int_{\R^2}  u \cdot \Delta v\,dx
\end{equation}
for every $u \in \mathbb{E}$ and $v \in Y$. A function $u \in C([0,T];\mathbb{E})$ with gradient $\nabla u \in L^2(0,T;L^2(\R^{2\times 2}))$ is a weak solution of the Navier-Stokes equations with viscosity $\nu$ in the sense of \Cref{def: NSE weak vel} if and only if $\partial_t u = F_{NS}^\nu(u)$ in the weak sense described in \Cref{thm: phase space stats sol}. Moreover, similarly to \Cref{lem: meas F}, $F_{NS}^\nu$ can be shown to be continuous so that all desired measurability properties are satisfied.

\begin{lem}\label{lem: meas F NSE}
The mapping $F_{NS}^\nu: \mathbb{E} \to Y'$ is $\CalB(\mathbb{E})$-$\CalB(Y')$ measurable. Moreover, the corresponding Nemytskii operator $(t,u) \mapsto F_{NS}^\nu(u(t))$ is $\CalL([0,T]) \otimes \CalB(C([0,T];\mathbb{E}))$-$\CalB(Y')$ measurable.
\end{lem}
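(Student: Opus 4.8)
The plan is to establish that $F_{NS}^\nu$ is continuous from $\mathbb{E}$ into $Y'$ endowed with the strong topology, exactly in the manner of the proof of \Cref{lem: meas F}; continuity of course yields the $\CalB(\mathbb{E})$-$\CalB(Y')$ measurability. Since $\mathbb{E}$ is a normed and in particular metrizable space, it suffices to verify sequential continuity, so I would fix a sequence $(u^n)_{n\in\N}$ converging to some $u$ in $\mathbb{E}$ and show that $F_{NS}^\nu(u^n) \to F_{NS}^\nu(u)$ uniformly on every bounded subset $B \subset Y$.

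The operator $F_{NS}^\nu$ differs from the Euler operator $F$ only by the linear viscous term $\nu\int_{\R^2} u\cdot\Delta v\,dx$, so the quadratic contribution $\int_{\R^2}(u\otimes u):\nabla v\,dx$ is handled verbatim as in \Cref{lem: meas F}: using \eqref{eq: bd on Y} one finds a compact set $K$ and a constant $M_1$ with $\supp\varphi\subset K$ and $\|\nabla\varphi\|_{L^\infty(\R^2)}\leq M_1$ for all $\varphi\in B$, whence the quadratic part of $\sup_{\varphi\in B}|\langle F_{NS}^\nu(u^n)-F_{NS}^\nu(u),\varphi\rangle_{Y',Y}|$ is dominated by $\|(u^n\otimes u^n)-(u\otimes u)\|_{L^1(K)}M_1\to 0$. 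For the viscous term I would again invoke \eqref{eq: bd on Y} to obtain a constant $M_2$ with $\|\Delta\varphi\|_{L^\infty(\R^2)}\leq M_2$ for all $\varphi\in B$, so that
\[
\sup_{\varphi\in B}\Abs{\nu\int_{\R^2}(u^n-u)\cdot\Delta\varphi\,dx}\leq \nu M_2\,\|u^n-u\|_{L^1(K)},
\]
which tends to $0$ because convergence in $\mathbb{E}$ implies convergence in $L^2_{loc}(\R^2;\R^2)$ and hence in $L^1(K)$. Adding the two estimates gives the desired uniform convergence on $B$, proving continuity.

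Continuity yields the first assertion, and the measurability of the associated Nemytskii operator $(t,u)\mapsto F_{NS}^\nu(u(t))$ then follows from the general result Proposition 2.1 in \cite{Bronzi2016}, precisely as for $F$. I do not anticipate any genuine obstacle: the only new ingredient relative to \Cref{lem: meas F} is the viscous term, which, being linear, is if anything easier to control than the quadratic one. The single point to verify is that the $\mathbb{E}$-topology controls $u$ in $L^1$ on compact sets, which is immediate from the embedding $\mathbb{E}\hookrightarrow L^2_{loc}(\R^2;\R^2)$.
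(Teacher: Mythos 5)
Your proposal is correct and follows exactly the route the paper intends: the paper proves this lemma simply by remarking that, "similarly to \Cref{lem: meas F}, $F_{NS}^\nu$ can be shown to be continuous," and your argument fills in precisely that analogy, handling the quadratic term verbatim and the linear viscous term via the uniform bound \eqref{eq: bd on Y} on $\|\Delta\varphi\|_{L^\infty(\R^2)}$ over bounded sets, with the Nemytskii measurability again from Proposition 2.1 in \cite{Bronzi2016}.
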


We then obtain (see also Theorem 7.1 in \cite{Kelliher2009}).

\begin{thm}\label{thm: ex phase space stats sol NSE}
Let $\mu_0$ be a Borel probability measure on $\mathbb{E}$ satisfying
\begin{equation}\label{eq: integrability NSE mu_0}
\int_{\mathbb{E}} \gamma(u_0)^2\,d\mu_0(u_0) < \infty
\end{equation}
with $\gamma$ as in \eqref{eq: comp bdd L2_loc weak sols}. Then the family of Borel probability measures $\lbrace \rho^\nu_t \rbrace_{0 \leq t \leq T}, \rho^\nu_t = (\Pi^\mathbb{E}_t \circ S_{NS}^\nu)\mu$ for every $0 \leq t \leq T$, is a statistical solution in phase space of the equation $u_t = F_{NS}^\nu(u)$, satisfying $\rho^\nu_0 = \mu_0$.
\end{thm}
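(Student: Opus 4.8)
The plan is to show that the time projections $\rho_t^\nu = \Pi_t^\mathbb{E}\rho_{NS}^\nu$ of the trajectory statistical solution $\rho_{NS}^\nu = S_{NS}^\nu\mu_0$ furnished by \Cref{thm: ex traj stats sol NS} satisfy the four conditions of \Cref{def: phase space stats sol}, proceeding verbatim along the case $X = X^\infty = \mathbb{E}$ in the proof of \Cref{thm: ex phase space stats sol infty}, but with $F$ replaced by $F_{NS}^\nu$ and $S^\infty$ by $S_{NS}^\nu$. Equivalently, one could invoke the abstract \Cref{thm: phase space stats sol} with $\rho = \rho_{NS}^\nu$ and carrier $\CalV = \CalU_{NS}^\nu$, which is Borel measurable (indeed closed) in $C([0,T];\mathbb{E})$ by \Cref{lem: meas U infty nu}. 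Throughout we keep $Z = X = \mathbb{E}$ and $Y = \CalD \cap H$, and the required measurability of $F_{NS}^\nu$ and of its associated Nemytskii operator is exactly \Cref{lem: meas F NSE}.

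For condition i), since $\rho_{NS}^\nu$ is carried by $\CalU_{NS}^\nu \subset C([0,T];\mathbb{E})$, I would write $\int_\mathbb{E}\varphi\,d\rho_t^\nu = \int_{C([0,T];\mathbb{E})}\varphi(u(t))\,d\rho_{NS}^\nu(u)$ for $\varphi \in C_b(\mathbb{E})$; as each trajectory $u$ is continuous in time, the dominated convergence theorem gives continuity and boundedness in $t$. Carrying of $\rho_t^\nu$ by $Z = \mathbb{E}$ is immediate because the solutions lie in $C([0,T];\mathbb{E})$. The substance of conditions ii) and iii) then reduces to a single integrable domination of $u \mapsto \langle F_{NS}^\nu(u(t)),v\rangle_{Y',Y}$.

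The main (and essentially only new) point compared to the Euler case is the viscous contribution to this domination, and I expect it to be mild. Fixing $v \in Y$ with $\supp v \subset B_r(0)$ and $u = S_{NS}^\nu(u_0)$, one estimates
\[
|\langle F_{NS}^\nu(u(t)),v\rangle_{Y',Y}| \leq \|\nabla v\|_{L^\infty(\R^2)}\|u(t)\|_{L^2(B_r(0))}^2 + \nu\|\Delta v\|_{L^\infty(\R^2)}\|u(t)\|_{L^1(B_r(0))}.
\]
The quadratic term is controlled exactly as in \Cref{thm: ex phase space stats sol infty} via \eqref{eq: comp bdd L2_loc weak sols}, yielding a bound $C\gamma(u_0)^2$ uniform in $t$; the new linear term, after the Cauchy--Schwarz inequality on the bounded set $B_r(0)$, is bounded by $C\gamma(u_0) \leq C\gamma(u_0)^2$ since $\gamma \geq 1$. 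Hence $|\langle F_{NS}^\nu(u(t)),v\rangle_{Y',Y}|$ is dominated by a $dt \otimes \rho_{NS}^\nu$-integrable function on $[0,T]\times C([0,T];\mathbb{E})$ thanks to \eqref{eq: integrability NSE mu_0}, establishing ii) and iii).

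For condition iv), I would take a cylindrical test function $\Phi$ generated by $\phi \in C_c^1(\R^k)$ and $v_1,\dots,v_k \in Y$, and use the weak Navier--Stokes formulation $\frac{d}{ds}\langle u(s),v_j\rangle_{Y',Y} = \langle F_{NS}^\nu(u(s)),v_j\rangle_{Y',Y}$ together with the chain rule to obtain $\frac{d}{ds}\Phi(u(s)) = \langle F_{NS}^\nu(u(s)),\Phi'(u(s))\rangle_{Y',Y}$ for almost every $s$; here $s \mapsto \langle u(s),v_j\rangle_{Y',Y}$ is absolutely continuous because its distributional derivative lies in $L^\infty(0,T) \subset L^1(0,T)$ by the bound just established. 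Integrating over $[t',t]$, then over $\mu_0$, and applying Fubini (legitimate by the same domination) yields the Foias--Liouville identity \eqref{eq: Foias-Liouville equ}. Finally $\rho_0^\nu = \Pi_0^\mathbb{E}S_{NS}^\nu\mu_0 = \mu_0$ since $\Pi_0^\mathbb{E}\circ S_{NS}^\nu = \Id_\mathbb{E}$, completing the verification. The only genuine obstacle is the domination estimate above; all remaining steps transcribe the $X = \mathbb{E}$ argument of \Cref{thm: ex phase space stats sol infty} unchanged.
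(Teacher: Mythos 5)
Your proposal is correct and follows essentially the same route as the paper, which states \Cref{thm: ex phase space stats sol NSE} as an immediate consequence of \Cref{lem: meas F NSE}, the solution operator $S_{NS}^\nu$ from \Cref{thm: ex traj stats sol NS}, and the verbatim transcription of the $X = X^\infty = \mathbb{E}$ argument in \Cref{thm: ex phase space stats sol infty} (citing Kelliher's Theorem 7.1) without writing out the details. Your explicit domination of the viscous term $\nu\int u\cdot\Delta v\,dx$ by $C\gamma(u_0) \leq C\gamma(u_0)^2$ via Cauchy--Schwarz and \eqref{eq: comp bdd L2_loc weak sols} is exactly the one new estimate the paper leaves implicit, and it is handled correctly.
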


\begin{remark}
\begin{enumerate}[i)]
\item As we pointed out for the trajectory statistical solutions of the Navier-Stokes equations, one can also obtain an energy inequality for phase space statistical solutions of the Navier-Stokes equations as well as other properties, derived from the deterministic equations. As these serve no further purpose here, we again refer the reader to Sections $6$ and $7$ in \cite{Kelliher2009}.
\item In \cite{Kelliher2009}[Theorem 7.1], it has been shown that under the assumption of $\mu_0$ having support in $\lbrace u \in \mathbb{E} : \nabla u \in L^2(\R^{2\times 2})\rbrace$, bounded with respect to $\|\cdot\|_{\mathbb{E}}$, the phase space statistical solution in \Cref{thm: ex phase space stats sol NSE} is unique. The case of uniqueness of phase space statistical solutions with unbounded initial supports appears to be open. The assumption of a bounded support of the initial distribution could also not be completely omitted in the classical case, where the underlying domain is bounded, but only be relaxed by instead making further assumptions on $\lbrace \rho_t^\nu \rbrace_{0\leq t \leq T}$ (see for instance the discussion on p. 266 in \cite{Foias2001}). 
\end{enumerate}
\end{remark} 

Going back to the phase space statistical solutions of the Euler equations with phase space $X$, for any bounded continuous function $\varphi: X \to \R$, the composition $\varphi \circ \Pi^X_t$ is a real-valued, bounded continuous function on $C([0,T];X)$.\\
Therefore, we immediately obtain the following result from \Cref{thm: traj inviscid limit infty}, \Cref{thm: traj inviscid limit} and the Lebesgue dominated convergence theorem.

\begin{thm}\label{thm: phase space inviscid limit infty}
Let $\mu_0$ be a Borel probablity measure on $X_0$ satisfying
\begin{equation}
\int_{X_0} \gamma(u_0)^2\,d\mu_0(u_0) < \infty
\end{equation} 
with $\gamma$ as in \eqref{eq: comp bdd L2_loc weak sols}. Then there exists a $\CalU$-trajectory statistical solution $\rho$ satisfying $\Pi_0\rho = \mu_0$ and a sequence $\nu^n \searrow 0$ such that the projected phase space statistical solutions $\lbrace \Pi_t^X S^{\nu^n}_{NS}\mu_0 \rbrace_{0 \leq t \leq T}$ of the Navier-Stokes equations converge to a family of projected phase space statistical solutions $\lbrace \Pi^X_t\rho\rbrace_{0 \leq t \leq T}$ of the Euler equations, in the sense that for every bounded continuous function $\varphi$ on $X$ and every $0 \leq t \leq T$
\[\int_{X} \varphi(u)\,d(\Pi^X_t S_{NS}^{\nu^n}\mu_0)(u) \to \int_{X} \varphi(u)\,d(\Pi^X_t\rho)(u)\,(n \to \infty).\]
In the case $X = X^\infty$, this holds for every subsequence $\nu^n \searrow 0$.
\end{thm}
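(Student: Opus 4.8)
The plan is to deduce the phase space statement as a direct corollary of the trajectory-level inviscid limits in \Cref{thm: traj inviscid limit infty} and \Cref{thm: traj inviscid limit}, exploiting the elementary remark recorded just above the theorem: for any $\varphi \in C_b(X)$ and any fixed $t \in [0,T]$, the composition $\varphi \circ \Pi_t^X$ is a bounded continuous real-valued function on $C([0,T];X)$. Since $\Pi_t^X S_{NS}^{\nu^n}\mu_0$ and $\Pi_t^X\rho$ are by definition the pushforwards of $S_{NS}^{\nu^n}\mu_0$ and $\rho$ under $\Pi_t^X$, the change-of-variables formula rewrites each phase space integral as a trajectory integral of the test function $\varphi\circ\Pi_t^X$, and the trajectory convergence then applies verbatim.

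Concretely, I would first fix the limiting object. In the cases $X = X^p, X^{VS}, X^1$, \Cref{thm: traj inviscid limit} directly supplies both the sequence $\nu^n \searrow 0$ and a $\CalU$-trajectory statistical solution $\rho$ with $\Pi_0^X\rho = \mu_0$, together with the weak-$\ast$ convergence $S_{NS}^{\nu^n}\mu_0 \overset{\ast}{\rightharpoonup} \rho$ tested against bounded continuous functions on $C([0,T];X)$. In the case $X = X^\infty$ I would instead take $\rho = S^\infty\mu_0$, the unique $\CalU^\infty$-trajectory statistical solution of \Cref{thm: ex traj stats sol infty}, for which \Cref{thm: traj inviscid limit infty} yields the analogous convergence along \emph{every} sequence $\nu \to 0$; this is exactly the source of the final clause of the theorem, the lack of an analogous uniqueness in the other classes being why one can only assert subsequential convergence there.

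Before passing to the limit I would note that the hypothesis $\int_{X_0}\gamma(u_0)^2\,d\mu_0(u_0) < \infty$ is precisely what certifies, via \Cref{thm: ex phase space stats sol NSE} and \Cref{thm: ex phase space stats sol infty}, that the Navier-Stokes families $\{\Pi_t^X S_{NS}^{\nu^n}\mu_0\}_{0\le t\le T}$ and the Euler family $\{\Pi_t^X\rho\}_{0\le t\le T}$ are genuine projected phase space statistical solutions (viewing the Navier-Stokes projections as measures on $X$ through the continuous embedding $\mathbb{E} \hookrightarrow X$). With this in hand, fixing $\varphi \in C_b(X)$ and $t \in [0,T]$ and applying the trajectory convergence to $\varphi\circ\Pi_t^X$ produces the claimed limit. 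In the $X = X^\infty$ case the conclusion can alternatively be obtained more transparently by writing $\int_{\mathbb{E}}\varphi\,d(\Pi_t^\mathbb{E} S_{NS}^\nu\mu_0) = \int_{X_0^\infty}\varphi(S_{NS}^\nu(u_0)(t))\,d\mu_0(u_0)$, invoking the deterministic inviscid limit $S_{NS}^\nu(u_0)(t) \to S^\infty(u_0)(t)$ of \Cref{thm: inviscid lim deterministic} pointwise in $u_0$, and passing to the limit under the integral by the Lebesgue dominated convergence theorem, which is legitimate since $\varphi$ is bounded and $\mu_0$ is a probability measure.

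I do not anticipate a genuine obstacle, as the theorem is essentially a corollary of the trajectory-level results. The only matters requiring care are bookkeeping the moment bound across both families so that each is a bona fide phase space statistical solution, and confirming that the pushforward rewriting respects the relevant Borel structures; the latter is immediate from the continuity of $\Pi_t^X$ together with the identification of Borel $\sigma$-algebras in \Cref{lem: borel algebras on E}.
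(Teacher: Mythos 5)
Your proposal is correct and takes essentially the same approach as the paper: the paper likewise derives the result immediately from \Cref{thm: traj inviscid limit infty} and \Cref{thm: traj inviscid limit} by noting that $\varphi \circ \Pi_t^X$ is a bounded continuous function on $C([0,T];X)$, with the Lebesgue dominated convergence theorem handling the case $X = X^\infty$ exactly as in your alternative argument via \Cref{thm: inviscid lim deterministic}. Your additional bookkeeping (the moment bound certifying, via \Cref{thm: ex phase space stats sol infty} and \Cref{thm: ex phase space stats sol NSE}, that the projected families are genuine phase space statistical solutions) matches what the paper relies on implicitly.
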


\begin{remark}
The inviscid limit result of phase space statistical solutions here is fundamentally different from previous work that we pointed at in the introduction of this article such as \cite{Chae1991p}, \cite{Constantin1997} or \cite{Kelliher2009}. Unlike we did here, in these articles, the phase space 
statistical solutions of the Euler equations have been constructed by an inviscid limit argument. Not only does this require the construction of a family of measures as some sort of limit of the time-parametrized measures that are the phase space statistical solutions of the Navier-Stokes equations. The sense in which this convergence holds needs to be strong enough to prove that each term in the Foias-Liouville equation \eqref{eq: Foias-Liouville equ} corresponding to the Navier-Stokes equations converges appropriately to conclude that this limit satisfies the Foias-Liouville equation corresponding to the Euler equations.\\
In our work, in the cases $X = \mathbb{E}$ or $X = H_{loc}$, we have convergence of the phase space statistical solutions in an $H_{loc}$ related sense, which would suffice to argue that each integral in the Foias-Liouville equation of the Navier-Stokes equations converges appropriately. However, this is not even necessary, as we do already know from the discussions on the inviscid limit for the trajectory statistical solutions that we obtain convergence of a subsequence to a projected phase space statistical solution of the Euler equations, where we know independently from this convergence that it satisfies the desired Foias-Liouville equation.
\end{remark}

\bibliography{Statistical_solutions_of_the_incompressible_Euler_equations.bib}
\bibliographystyle{abbrv}

\end{document}